\DeclareMathOperator{\supp}{supp}
\DeclareMathOperator{\loc}{loc\ }
\DeclareMathOperator{\divg}{div}
\DeclareMathOperator{\dist}{dist}
\DeclareMathOperator{\BMO}{BMO}
\newtheorem{thm}{Theorem}[section]
\newtheorem{cor}{Corollary}[section]
\newtheorem{lem}[thm]{Lemma}
\newtheorem{prop}{Proposition}[section]
\newtheorem{defn}{Definition}[section]
\newtheorem{re}{Remark}[section]
\theoremstyle{remark}
\numberwithin{equation}{section}
\newcommand{\norm}[1]{\left\Vert#1\right\Vert}
\newcommand{\abs}[1]{\left\vert#1\right\vert}
\newcommand{\br}[1]{\left(#1\right)}
\newcommand{\set}[1]{\left\{#1\right\}}
\newcommand{\Real}{\mathbb R}
\newcommand{\Rn}{\mathbb R^n}
\newcommand{\Rpl}{\mathbb R_+^{n+1}}
\newcommand{\eps}{\varepsilon}
\newcommand{\vp}{\varphi}
\newcommand{\wvp}{\widetilde{\varphi}}
\newcommand{\wt}{\widetilde}
\newcommand{\bdy}{\partial}
\newcommand{\Dt}{\partial_t}
\newcommand{\Di}{\partial_i}
\newcommand{\Dj}{\partial_j}
\newcommand{\act}[1]{\langle#1\rangle_{\widetilde{W}^{-1,2},W^{1,2}}}
\newcommand{\bd}[1]{\mathbf{#1}}
\newcommand{\mP}{\mathcal{P}}
\newcommand{\1}{\mathbbm{1}}
\begin{document}
\title[Elliptic operators having a BMO anti-symmetric part]{The Dirichlet problem for elliptic operators having a BMO anti-symmetric part}
\author{Steve Hofmann}
\author{Linhan Li}
\author{Svitlana Mayboroda}
\author{Jill Pipher}
\thanks{S. Hofmann acknowledges support of the National Science Foundation (grant number DMS-1664047, DMS-2000048).
S. Mayboroda is supported in part by the
NSF RAISE-TAQS grant DMS-1839077 and the Simons foundation grant 563916, SM.}

\newcommand{\Addresses}{{
  \bigskip
  \footnotesize

  Steve Hofmann, \textsc{Department of Mathematics, University of Missouri, Columbia, MO 65211, USA}\par\nopagebreak
  \textit{E-mail address}: \texttt{hofmanns@missouri.edu}

  \medskip

  Linhan Li, \textsc{Department of Mathematics, Brown University, Providence, RI 02912 USA}
  \emph{Present address: School of Mathematics, University of Minnesota, Minneapolis, MN 55455 USA}\par\nopagebreak
  \textit{E-mail address}: \texttt{linhan\_li@alumni.brown.edu}

  \medskip

  Svitlana Mayboroda, \textsc{School of Mathematics, University of Minnesota, Minneapolis, MN 55455, USA}\par\nopagebreak
  \textit{E-mail address}: \texttt{svitlana@math.umn.edu}
  
  \medskip
  
   Jill Pipher, \textsc{Department of Mathematics, Brown University, Providence, RI 02912 USA}\par\nopagebreak
  \textit{E-mail address}: \texttt{jpipher@math.brown.edu}

}}

\maketitle

\begin{abstract} The present paper establishes the first result on the absolute continuity of elliptic measure with respect to the Lebesgue measure for a divergence form elliptic operator with non-smooth coefficients that have a $\BMO$ anti-symmetric part. In particular, the coefficients are not necessarily bounded. We prove that the Dirichlet problem for elliptic equation ${\rm div}(A\nabla u)=0$ in the upper half-space $(x,t)\in\mathbb{R}^{n+1}_+$ is uniquely solvable when $n\ge2$ and the boundary data is in $L^p(\mathbb{R}^n,dx)$ for some $p\in (1,\infty)$. 
This result is equivalent to saying that the elliptic measure associated to $L$ belongs to the $A_\infty$ class with respect to the Lebesgue measure $dx$, a quantitative version of absolute continuity.
\end{abstract}

\tableofcontents

\section{Introduction and statement of main results}

Motivated by questions about the behavior of solutions of elliptic and parabolic equations with low regularity drift terms, Seregin, Silvestre, {\v{S}}ver{\'a}k, and Zlato{\v{s}} (\cite{seregin2012divergence}) investigated equations such as $-\Delta u+\bd c\cdot\nabla u=0$ and $\Dt u+\bd c\cdot\nabla u-\Delta u=0$, where $c$ is a divergence-free vector field in $\Rn$. They discovered that the divergence-free condition can be utilized to relax the regularity assumptions on $\bd c$ under which one can obtain the Harnack inequality and other regularity results for solutions. It turns out that the interior regularity theory of De Giorgi, Nash, and Moser can be carried over to elliptic equations with $\bd c\in \BMO^{-1}$, and to parabolic equations with $\bd c\in L^{\infty}(\BMO^{-1})$. Generalizing to elliptic or parabolic equations in divergence form, this condition is equivalent to assuming that the coefficient matrix $A$ of the operator $L=-\divg(A\nabla)$ can be decomposed into an $L^{\infty}$ elliptic symmetric part and an unbounded anti-symmetric part in a certain function space. To be precise, the anti-symmetric part should belong to the John-Nirenberg space $\BMO$ (bounded mean oscillation) in the elliptic case, and to $L^{\infty}(\BMO)$ in the parabolic case.
The space $\BMO$ plays a key role in two ways. First, this space has the right scaling properties which arise naturally in the iterative arguments of De Giorgi-Nash-Moser. Secondly, the $\BMO$ condition on the anti-symmetric part of the matrix enables one to properly define weak solutions. This latter fact follows essentially from the div-curl lemma appearing in the theory of compensated compactness (\cite{coifman1993compensated}), and the details can be found in \cite{seregin2012divergence} or \cite{li2019boundary}.

The interior regularity results of Seregin, Silvestre, {\v{S}}ver{\'a}k, and Zlato{\v{s}} lead naturally to questions about
boundary regularity. In \cite{li2019boundary}, the second and the fourth authors studied the boundary behavior of weak solutions. 
It turns out that many results for elliptic operators with bounded, measurable coefficients can be extended to this setting. For example, they proved the boundary H\"older regularity of the solution,  established the existence of the elliptic measure $\omega$ associated to these operators, and offered multiple characterizations of the mutual absolute continuity of the elliptic measure and the surface measure in Lipschitz domains. This work laid out the background necessary to 
launch the investigation into boundary value problems for elliptic operators having a $\BMO$ anti-symmetric part. 

In the present paper we establish the first result pertaining to absolute continuity of the elliptic measure for operators with $\BMO$ anti-symmetric part and well-posedness of the Dirichlet boundary value problem with $L^p$ data. 

In order to frame our results in the context of the currently existing elliptic theory, let us review some historical milestones. In the middle of the 20th century the theory of boundary value problems mainly concentrated on the case when coefficients of the underlying equations and domains exhibit some amount of smoothness. The past 30-40 years have brought great developments in the study of elliptic measure and boundary value problems for operators with non-smooth bounded measurable coefficients. The background theory of weak solutions, Green function estimates, maximum principle, and similar results were extended to all divergence form elliptic operators with bounded measurable coefficients. It turned out, however, that the question of absolute continuity of the resulting elliptic measure with respect to the Lebesgue measure on the boundary, or, equivalently, of well-posedness of the Dirichlet boundary value problem with boundary data in $L^p$, is much more delicate. First of all, examples have been found that show such results can not be expected for all elliptic operators and some regularity of the coefficients in the transversal direction to the boundary is, in fact, necessary \cite{caffarelli1981completely}, \cite{modica1980construction}. In light of these examples, the initial efforts concentrated on the study of operators whose coefficients are constant in the transverse direction to the boundary. Later results have extended the theory to the optimal regularity of the coefficients, expressed in terms of a {\it Carleson measure condition}. In this survey, and in this paper, we shall concentrate on the fundamental case where the domain is the upper half-space $\Rpl=\set{(x,t)\in\Rn\times(0,\infty)}$ and the coefficients of the operator are independent of the transverse direction, that is, $t$-independent. The first breakthrough in this direction was the 1981 paper of Jerison and Kenig \cite{jerison1981dirichlet} which established well-posedness of the Dirichlet problem and the absolute continuity of the elliptic measure for operators with symmetric bounded measurable $t$-independent coefficients on $\Real^{n+1}_+$ and, by a change of variables, above a graph of a Lipschitz function. A seemingly innocent assumption of symmetry turned out to be critical and it took 20 years to extend these results to non-symmetric operators in dimension 2 \cite{kenig2000new} and more than 30 years to non-symmetric operators in any dimension \cite{hofmann2015square}. The 1981 work of Jerison and Kenig relied on the beautiful and powerful Rellich identity which roughly speaking says that the $L^2$ norms of the normal and tangential derivatives of solutions on the boundary are comparable. It is proved by an integration by parts argument invoking the symmetry of the coefficients. However, not only the method of the proof of the Rellich identity, but the $L^2$ equivalence of the norms of the normal and tangential trace of the solution itself fails when the coefficients are not necessarily symmetric. This has been demonstrated in \cite{kenig2000new}, where the authors established extremely useful characterizations of solvability of the Dirichlet problem in $L^p$  in terms of the square function/non-tangential maximal function estimates (in any dimension), a method that made possible many later developments including the present paper, and resolved the question of absolute continuity of elliptic measure with respect to the Lebesgue measure for $t$-independent non-symmetric operators in dimension 2. Unfortunately, many ingredients in the argument in \cite{kenig2000new} rely heavily on the space being 2 dimensional. For example, the 2-d case relies on a change of variable argument that does not carry forward to higher dimensions. Only 15 years later these results have been finally extended to multidimensional setting. In \cite{hofmann2015square} the authors established the square function/non-tangential maximal function estimates for solutions to the $t$-independent, non necessarily symmetric, operators on $\Real^{n+1}_+$ for all $n$, and as a result, absolute continuity of the elliptic measure with respect to the Lebesgue measure and well-posedness of the Dirichlet boundary value problem in $L^p$. The method involved a new pull-back/push-forward sequence based on the Hodge decomposition of the coefficients, the celebrated solution to the Kato problem \cite{auscher2002solution}, the square function/non-tangential maximal function estimates for the heat semigroup, and many other elements. The method has later been streamlined in \cite{Auscher2017parabolic} to avoid an explicit pull-back/push-forward on Lipschitz domain -- an important development in our context. 

As we mentioned above, all of these results as well as many elements of the surrounding elliptic theory have been restricted to the context of {\it bounded} measurable coefficients. The present paper pioneers the consideration of the $\BMO$ anti-symmetric part, an optimal structural assumption on the coefficients. The lack of boundedness invalidates many of the arguments that we have described above. We shall discuss all the new difficulties and some critical junctures of our proof in Section~\ref{overview sec}, after the statement of Theorem~\ref{main thm-carl}. These new difficulties include a new Hodge decomposition beyond $L^2$, and new estimates for the Riesz transforms, square functions and non-tangential maximal functions associated to the heat semigroup. Changes of variables and other techniques that preserved the boundedness properties of coefficients are lost in the presence of $\BMO$ coefficients. There are many other issues which require a more technical discussion and we refer an interested reader to Section~\ref{overview sec}.

We now rigorously state our results. Let $A=A(x)$ be an $(n+1)\times(n+1)$ matrix of real, t-independent coefficients such that 
\begin{enumerate}
    \item The symmetric part $A^s=\frac{1}{2}(A+A^\intercal)=\br{A_{ij}^s(x)}$ is $L^{\infty}(\Rn)$, and satisfies the ellipticity condition
    \begin{equation}\label{ellipticity def}
    \begin{aligned}
     \lambda_0\abs{\xi}^2\le \langle A^s(x)\xi,\xi\rangle&=\sum_{i,j=1}^{n+1}A^s_{ij}(x)\xi_i\xi_j\quad\text{for all }\xi\in\Real^{n+1}, x\in\Rn,\\  \text{and}\quad\norm{A^s}_{L^\infty(\Rn)}&\le\lambda_0^{-1}, 
    \end{aligned}
     \end{equation}
     for some $0<\lambda_0<1$.
    \item The anti-symmetric part $A^a=\frac{1}{2}(A-A^{\intercal})=(A^a_{ij}(x))$ is in the space $\BMO(\Rn)$, with
\begin{equation}\label{BMO def}
    \norm{A^a_{ij}}_{\BMO}:=\sup_{Q\subset\Rn}\fint_{Q}\abs{A^a_{ij}-(A^a_{ij})_{Q}}dx\le\Lambda_0
\end{equation}
for some $\Lambda_0>0$. Here $(f)_Q$ denotes the average $\frac{1}{\abs{Q}}\int_Qf(x)dx$.
\end{enumerate}

We define in $\Real^{n+1}$ a second order divergence form operator 
\begin{equation}\label{L defn}
    L=-\divg_{x,t} (A(x)\nabla_{x,t}),
\end{equation}
which is interpreted in the sense of maximal accretive operators via sesquiliniear form. 
We say that $u$ is a weak solution to the equation $Lu=0$ in $\Rpl$ if $u\in W_{\loc}^{1,2}(\Rpl)$ and 
\begin{equation}\label{weaksol def}
    \iint_{\Rpl} A\nabla u\cdot \nabla v=0
\end{equation}
for all $v\in C_0^{\infty}(\Rpl)$.

We consider the $L^p$ Dirichlet problem $(D)_p$ for the equation $\divg (A\nabla u)=0$ in the upper half-space $\Rpl$ when $n\ge 2$. We shall denote by $\mu$ the Lebesgue measure in $\Rn$. Sometimes we simply denote it by $dx$, and the meaning should be clear from context. For $p\in(1,\infty)$, we say the Dirichlet problem for $L^p(\Rn,d\mu)$ data is solvable if for each $f\in L^p(\Rn, d\mu)$, there is a solution $u\in W_{\loc}^{1,2}(\Rpl)$ such that
\begin{equation*}(D)_p\quad 
    \begin{cases}
    Lu=0 \quad\text{in } \Rpl,\\
    u\to f\in L^p(\Rn,d\mu) \text{ non-tangentially } \mu \text{-a.e. on } \Rn\\
    Nu\in L^p(\Rn, d\mu).
  \end{cases}
\end{equation*}
Here, $N(u)$ denotes the non-tangential maximal function of $u$:
\begin{equation}\label{nt def}
    N(u)(x):= \sup_{(y,t):\abs{x-y}< t}\abs{u(y,t)},
\end{equation}
and $u$ converges to $f$ non-tangentially means
\[
\lim_{(y,t)\to (x,0), (y,t)\in\Gamma(x)}u(y,t)=f(x),
\]
where $\Gamma(x)=\set{(y,t)\in \Rn\times \Real_+: \abs{y-x}<t}$.

The main result of this paper is that the $L^p$ Dirichlet problem for $L$ in $\Rpl$ is uniquely solvable for some $p\in (1,\infty)$ sufficiently large:
\begin{thm}\label{main thm}
Let $A$ be a matrix of real, t-independent coefficients satisfying \eqref{ellipticity def} and \eqref{BMO def}. Then for some $p\in (1,\infty)$, for each $f\in L^p(\Rn,\mu)$, there exists a unique $u$ that solves $(D)_p$ for $L=-\divg(A\nabla)$ in the upper half-space $\Rpl$ when $n\ge2$. 
\end{thm}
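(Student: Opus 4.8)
The plan is to deduce Theorem~\ref{main thm} from the quantitative absolute continuity statement that the elliptic measure $\omega_L$ associated with $L$ belongs to the Muckenhoupt class $A_\infty(\Rn,d\mu)$. Extending the classical equivalence (see \cite{jerison1981dirichlet,kenig2000new}) to operators with a $\BMO$ anti-symmetric part --- an equivalence already set up in this setting in \cite{li2019boundary} --- the condition $\omega_L\in A_\infty(d\mu)$ is equivalent to solvability of $(D)_p$ for some $p\in(1,\infty)$, with $p$ governed by the reverse-H\"older exponent of $\omega_L$; uniqueness then follows from the maximum principle together with the representation $u(X)=\int_{\Rn}f\,d\omega_L^X$ valid for solutions with $Nu\in L^p$. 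Hence everything reduces to proving $\omega_L\in A_\infty(d\mu)$ with constants depending only on $n$, $\lambda_0$ and $\norm{A^a_{ij}}_{\BMO}$.

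To establish $\omega_L\in A_\infty$ I would follow the square function / non-tangential maximal function route, pioneered in dimension $2$ in \cite{kenig2000new} and carried out in all dimensions for bounded $t$-independent coefficients in \cite{hofmann2015square}. It suffices to prove, for weak solutions $u$ of $Lu=0$ in $\Rpl$: (i) the $L^2$ square function estimate
\[
\iint_{\Rpl}\abs{\nabla u(x,t)}^2\,t\,dx\,dt\lesssim\norm{u}_{L^2(\Rn)}^2
\]
for $u$ the natural extension of $L^2(\Rn)$ boundary data; and (ii) its Carleson-box localization, in which the right-hand side on a cube $Q$ is replaced by $\abs{Q}\,\norm{Nu}_{L^\infty}^2$. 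Granted (i)--(ii), a good-$\lambda$ / stopping-time argument in the spirit of Dahlberg--Jerison--Kenig --- combined with the interior De~Giorgi--Nash--Moser bounds and the boundary H\"older regularity from \cite{li2019boundary} --- upgrades these to $\omega_L\in A_\infty(d\mu)$.

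The heart of the argument, and the main obstacle, is estimate (i) when $A^a\in\BMO$ is \emph{unbounded}, which invalidates the changes of variables, Hodge pull-back/push-forward, and $Tb$ and perturbation steps of \cite{hofmann2015square}. I would instead use the first-order formulation streamlined in \cite{Auscher2017parabolic}: writing the conormal gradient $F=(\partial_{\nu_A}u,\nabla_x u)$, the equation $Lu=0$ becomes a first-order system $\partial_t F+DB F=0$, with $D$ a fixed self-adjoint first-order differential operator and $B=B(A)$ a multiplication operator which is now only $\BMO$ rather than $L^\infty$. Bounded solutions with $L^2$ data correspond to $F(\cdot,t)=e^{-t\abs{DB}}$ applied to a datum in the relevant Hardy-type subspace, so (i) amounts to the quadratic estimate
\[
\int_0^\infty\norm{t\,DB\,(I+t^2(DB)^2)^{-1}h}_{L^2}^2\,\frac{dt}{t}\lesssim\norm{h}_{L^2}^2,
\]
i.e.\ to the bounded holomorphic functional calculus of $DB$. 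Through the $Tb$ machinery behind the solution of the Kato problem \cite{auscher2002solution}, this reduces in turn to three ingredients, each to be re-proved with only $\BMO$ control on $B$: a Hodge-type decomposition of $L^2$ vector fields adapted to the unbounded $B$; $L^2$ (and $L^p$ for $p$ near $2$) bounds for the Riesz transforms attached to the ``tangential'' operator $L_\parallel$; and square-function and non-tangential maximal estimates for the heat semigroup $e^{-t^2 L_\parallel}$. These are exactly the new difficulties flagged in Section~\ref{overview sec}.

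The principal difficulty is therefore to carry the entire first-order argument through while tracking the dependence on $\norm{A^a_{ij}}_{\BMO}$ in place of $\norm{A^a_{ij}}_{L^\infty}$. What makes this possible is the structural fact that, although $A^a\nabla u$ need not be locally integrable, the bilinear form $\iint A^a\nabla u\cdot\nabla v$ is well defined via the div--curl lemma of compensated compactness \cite{coifman1993compensated}, exactly as in \cite{seregin2012divergence,li2019boundary}, and that $\BMO$ carries precisely the scaling invariance needed both for the De~Giorgi--Nash--Moser iteration and for Carleson-measure estimates. Replacing each occurrence of $\norm{A^a}_{L^\infty}$ by $\norm{A^a}_{\BMO}$ then costs at most a controlled power of the symmetric ellipticity constant, at the price of a Gehring-type self-improvement step; proving the new Hodge decomposition and the new Riesz-transform and heat-semigroup bounds under the $\BMO$ hypothesis is where the real work lies.
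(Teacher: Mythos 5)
Your overall target---proving $\omega_L\in A_\infty(d\mu)$ and deducing existence and uniqueness for $(D)_p$---matches the paper, but your route to $A_\infty$ is genuinely different and contains a gap that I do not think can be closed without reverting to the paper's argument. The paper does \emph{not} pass through the first-order system $\partial_tF+DBF=0$ nor through the $L^2$ square function estimate for $L^2$ boundary data. Instead it proves a Carleson measure estimate for \emph{bounded} weak solutions with $\norm{u}_{L^\infty}\le 1$ (Theorem~\ref{main thm-carl}) by direct integration by parts against a sawtooth cutoff $\Psi$, using a $W^{1,2+\eps}$ Hodge decomposition of $\mathbf{b}$ and $\mathbf{c}$ \emph{separately} (Proposition~\ref{Hodgedecp_prop}), $L^p$ square-function and non-tangential maximal bounds for $e^{-t^2L_{||}}$ (Sections~\ref{Lp est for square functions subsect} and~\ref{Lp nt max subsec}), and $\mathcal{H}^1$--$\BMO$ duality (Propositions~\ref{HardyProp1}--\ref{HardyProp2}); this Carleson estimate then yields $A_\infty$ through the criterion of \cite{kenig2016square} as adapted in \cite{li2019boundary}.

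The step that breaks in your plan is the Auscher--Axelsson--McIntosh change of variables $A\mapsto B$. Writing $A=\begin{pmatrix}A_{||}&\mathbf{b}\\\mathbf{c}&d\end{pmatrix}$, the transformed coefficient $B$ contains the block $A_{||}-\mathbf{b}\otimes\mathbf{c}/d$. With $\mathbf{b},\mathbf{c}\in\BMO$ modulo the bounded symmetric part, this block is a product of two unbounded $\BMO$ vector fields; by John--Nirenberg it lies in $L^q_{\loc}$ for every finite $q$ but is neither $L^\infty$ nor $\BMO$, so $B$ is not a bounded multiplication operator on $L^2(\Rn,\mathbb{C}^{n+1})$. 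Consequently $DB$ does not fit the bisectorial framework, and neither the $Tb$ quadratic-estimate machinery of \cite{auscher2002solution} nor the Kato theorem for $\sqrt L$ with $\BMO$ anti-symmetric part \cite{escauriaza2018kato} supplies the functional calculus for $DB$ your argument hinges on. The ingredients you flag as the ``real work''---Hodge decomposition, Riesz-transform and heat-semigroup bounds under $\BMO$ control---are indeed what must be re-established (see \cite{HLMPLp} and Section~\ref{tech tools sect}), but the paper feeds them into the Carleson-measure integration-by-parts argument on bounded solutions, not into a $DB$ functional calculus, precisely because the $A\mapsto B$ transform is blocked. Your sketch of uniqueness is essentially aligned with the paper's Fatou-type theorem (Section~\ref{uniqueness sec}), modulo the decay estimates needed to justify the maximum principle on the unbounded half-space, which the paper supplies in Lemma~\ref{decay lem}.
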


This result is equivalent to quantitative absolute continuity of elliptic measure with respect to the Lebesgue measure, the $A_\infty$ property - see the next Section.

For the uniqueness part of the statement, we actually prove the following Fatou-type result.
\begin{thm}\label{Fatou thm}
Let $A$ be an $(n+1)\times(n+1)$ matrix of real coefficients. Assume that the symmetric part of $A$ is bounded and elliptic, and that the anti-symmetric part is in the space $\BMO(\Rpl)$.  Assume that $(D)_p$ is solvable for $L=-\divg(A\nabla)$ in $\Rpl$ for some $p\in(1,\infty)$. Suppose that $u$ satisfies
\[
\begin{cases}
Lu=0 \quad \text{in}\quad\Real_+^{n+1},\\
Nu\in L^p(\Rn,d\mu).
\end{cases}
\]
Then, the non-tangential limit of $u$ exists a.e. in $\Rn$ (and is denoted by $u|_{\partial\Rpl}^{N.T.}$), $u|_{\partial\Rpl}^{N.T.}\in L^p(\Rn,d\mu)$, and
\[
u(X)=\int_{\Rn}u|_{\partial\Rpl}^{N.T.}(y)k(X,y)d\mu(y),
\]
where $k(X,y)$ is defined in \eqref{def k}.
\end{thm}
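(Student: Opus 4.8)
The plan is to identify the non-tangential limit of $u$ with the weak-$L^p$ limit of the horizontal slices $u_\epsilon:=u(\cdot,\epsilon)$, to recognize $u$ as the Poisson-type integral of that limit against the kernel $k$ supplied by the solvability hypothesis, and then to read off the boundary behavior from the Fatou theorem for Poisson integrals of $L^p$ data. I will use freely that the solvability of $(D)_p$ is equivalent to $\omega\in A_\infty(d\mu)$ (so that $d\omega^X=k(X,\cdot)\,d\mu$, $k$ satisfies a reverse H\"older inequality, and the Poisson integral of an $L^p$ function solves the Dirichlet problem with that datum), together with the interior De Giorgi--Nash--Moser estimates, the maximum principle, and the boundary H\"older continuity of solutions, all available from \cite{li2019boundary}. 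First, since the cone $\Gamma(x)$ contains the vertical ray above $x$, one has $|u_\epsilon(x)|\le Nu(x)$ for every $\epsilon>0$; hence $\{u_\epsilon\}_{\epsilon>0}$ is bounded in $L^p(\Rn,d\mu)$, and along some sequence $\epsilon_k\downarrow 0$ it converges weakly to some $f\in L^p$ with $\|f\|_{L^p}\le\|Nu\|_{L^p}$.

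The main step is the slice representation. Fix $X=(x_0,t_0)\in\Rpl$ and $0<\epsilon<t_0$. By interior regularity $u$ is continuous across $\{t=\epsilon\}$, so on each bounded cylinder $T_R:=\{\epsilon<t<R,\ |y-x_0|<R\}$ the maximum principle gives $u(X)=\int_{\partial T_R}u\,d\omega^X_{T_R}$, and I would let $R\to\infty$. On the top face $\{t=R\}$ the interior (Moser) estimate bounds $|u|$ by a quantity of order $R^{-n/p}\|Nu\|_{L^p}$ (away from the lateral corner), which tends to $0$, while $\omega^X_{T_R}(\{t=R\})\to 0$ by boundary H\"older continuity, since from $X$ the bottom face is at fixed height $t_0$ and the top face is at height $\sim R$; so this contribution vanishes. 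On the lateral face $\{|y-x_0|=R\}$ one decomposes dyadically in the height variable and combines the pointwise bound $|u|\le Nu$ with the boundary H\"older and $A_\infty$/comparison estimates, which force the elliptic measure of the far boundary seen from $X$ to decay; using that $\|Nu\|_{L^p(\{|y-x_0|>R/2\})}\to 0$, this contribution also vanishes. Since the maximum principle also yields $\omega^X_{T_R}|_{\{t=\epsilon\}}\le\omega^X_{\{t>\epsilon\}}$, the tails of the bottom integral are uniformly small by the reverse H\"older property of the associated kernel, and one obtains
\[
u(X)=\int_{\Rn}u_\epsilon(y)\,d\omega^X_{\{t>\epsilon\}}(y),\qquad 0<\epsilon<t_0.
\]
(When $A$ is $t$-independent, the case relevant to Theorem~\ref{main thm}, $\omega^X_{\{t>\epsilon\}}$ is simply the vertical translate of $\omega^{(x_0,t_0-\epsilon)}$, hence equals $k((x_0,t_0-\epsilon),\cdot)\,d\mu$, which streamlines the next step.)

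Next, write $d\omega^X_{\{t>\epsilon\}}=k_\epsilon(X,\cdot)\,d\mu$. The $A_\infty$ hypothesis, being stable under passing to the half-spaces $\{t>\epsilon\}$ (trivially so when $A$ is $t$-independent), gives $\|k_\epsilon(X,\cdot)\|_{L^{p'}}$ bounded uniformly for $\epsilon$ near $0$ and $k_\epsilon(X,\cdot)\to k(X,\cdot)$ strongly in $L^{p'}(\Rn)$ as $\epsilon\to 0$ (continuity of the Poisson kernel under this exhaustion, from interior estimates together with the uniform reverse H\"older bound). Pairing the weak convergence $u_{\epsilon_k}\rightharpoonup f$ in $L^p$ with this strong convergence in $L^{p'}$ and letting $\epsilon=\epsilon_k\to 0$ in the displayed identity yields
\[
u(X)=\int_{\Rn}f(y)\,k(X,y)\,d\mu(y)\qquad\text{for every }X\in\Rpl;
\]
in particular $f$ is independent of the chosen sequence, and $u$ coincides with the Poisson integral $u_f$ of $f\in L^p$.

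Finally, since $\omega\in A_\infty(d\mu)$, the Poisson integral $u_f$ of an $L^p$ datum has non-tangential limit $f$ at $\mu$-a.e.\ point: the case of continuous compactly supported data follows from the maximum principle and boundary H\"older continuity, the maximal estimate $\|N u_f\|_{L^p}\lesssim\|f\|_{L^p}$ from the reverse H\"older inequality for $k$, and the general case by density. Therefore $u=u_f$ has non-tangential limit $u|_{\partial\Rpl}^{N.T.}=f$ at $\mu$-a.e.\ point, $u|_{\partial\Rpl}^{N.T.}\in L^p(\Rn,d\mu)$, and the representation formula above is the asserted one. The genuinely delicate point is the slice representation — showing that the contributions of the top and lateral faces of $T_R$ vanish as $R\to\infty$ — where the $L^p$ control of $Nu$ must be traded against the decay of elliptic measure furnished by $A_\infty$; once the uniform reverse H\"older bound for the approximating kernels $k_\epsilon$ is in hand, the passage $\epsilon\to 0$ is routine.
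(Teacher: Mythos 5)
Your overall architecture matches the paper's: take horizontal slices $u(\cdot,\epsilon)$, observe they are bounded in $L^p$ by $Nu$, extract a weak-$L^p$ limit $g$ along a subsequence, represent $u$ as the Poisson integral of $g$ against $k$, and then invoke a Fatou-type theorem (essentially your last paragraph, which is the paper's Lemma~\ref{uniquness mainlem}) for Poisson integrals of $L^p$ data. The place where your implementation genuinely diverges from the paper's is the passage from the slice identity to the final representation, and that is precisely where your argument has a gap the paper's avoids.

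You keep the pole $X$ fixed and shift the \emph{domain}, writing $u(X)=\int u_\epsilon\,d\omega^X_{\{t>\epsilon\}}=\int u_\epsilon\,k_\epsilon(X,\cdot)\,d\mu$. To let $\epsilon\to0$ you then need the kernels $k_\epsilon(X,\cdot)$ to converge \emph{strongly} in $L^{p'}$ to $k(X,\cdot)$, together with a uniform $L^{p'}$ bound. You assert this (``continuity of the Poisson kernel under this exhaustion, from interior estimates together with the uniform reverse H\"older bound'') but do not prove it, and it is not routine: it requires knowing that the reverse H\"older inequality for the shifted half-spaces holds with constants uniform in $\epsilon$, and then a separate continuity argument for the $L^{p'}$-valued map $\epsilon\mapsto k_\epsilon(X,\cdot)$. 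The paper sidesteps this entirely with a small but decisive change of bookkeeping: it shifts the \emph{evaluation point} rather than the domain, writing $f_m:=u(\cdot,1/m)$, $u_m(X):=\int f_m(y)k(X,y)\,d\mu(y)$, and then showing $u(x,t+1/m)=u_m(x,t)$. The limit $m\to\infty$ is then a pairing of the weak-$L^p$ convergence $f_{m'}\rightharpoonup g$ against the \emph{fixed} function $k((x,t),\cdot)\in L^{p'}$, while the left side converges by interior continuity of $u$. No kernel convergence, uniform or otherwise, is needed.

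To show $u(\cdot,\cdot+1/m)=u_m$ the paper also avoids your bare-hands exhaustion over the cylinders $T_R$ (with the separate top/lateral/bottom estimates, which as you note are ``genuinely delicate''). Instead it isolates a clean decay lemma (Lemma~\ref{decay lem}: $Nu\in L^p$ forces $|u(x,t)|\lesssim t^{-n/p}$ and $|u(x,t)|\to0$ as $|x|\to\infty$ uniformly for $t\ge\delta$), checks $u_m$ is bounded with the same decay, and then applies the maximum principle once to $\delta_m:=u(\cdot,\cdot+1/m)-u_m$, which is an $L$-solution vanishing continuously on $\{t=0\}$ and tending to $0$ at infinity, hence identically zero. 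This packages your top-face and lateral-face estimates into a single statement about the solution $\delta_m$, and is both shorter and more robust.

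So the main gap in your write-up is the unproved strong $L^{p'}$-convergence $k_\epsilon(X,\cdot)\to k(X,\cdot)$; the secondary issue is that your exhaustion over $T_R$ is sketched rather than executed, where the paper's route through the decay lemma and a single application of the maximum principle to $\delta_m$ is tighter. (One subtlety worth flagging in the other direction: the paper's trick of writing $\delta_m$ as an $L$-solution implicitly uses $t$-independence of $A$; your framework of shifted elliptic measures is actually better suited to the $t$-dependent setting Theorem~\ref{Fatou thm} formally allows, but only once the $k_\epsilon\to k$ step is genuinely filled in.)
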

One can see that this result is stronger than uniqueness. Notice that in this theorem, we do not assume that $A$ is $t$-independent. Moreover, for $u$, in contrast to a solution to $(D)_p$, we do not assume a priori that it converges non-tangentially.

\section{An overview of the proof of Theorem \ref{main thm}}\label{overview sec}

As mentioned in the introduction, it is shown in \cite{kenig2016square} that some Carleson measure estimate implies some quantitative mutual absolute continuity, namely, the $A_\infty$ condition, between the elliptic measure associated to an elliptic operator with real, $L^\infty$ coefficients and the Lebesgue measure. In \cite{li2019boundary}, it is verified that this result also holds for elliptic operators having a $\BMO$ anti-symmetric part. To understand the precise statement and its connection to Theorem \ref{main thm}, we first need some notations and definitions.

For a set $E\subset\Rn$, we denote its Lebesgue measure $\mu(E)$ by $\abs{E}$.
For any cube $Q\subset\Rn$, let $x_Q$ and $l(Q)$ be the center and side length of $Q$, respectively. Let $X_Q:= (x_Q,l(Q))$ denote the corkscrew point in $\Rpl$ relative to $Q$. For $x\in\Rn$ and $r>0$, we use $T(x,r):=\set{Y\in\Real^{n+1}_+: \abs{Y-(x,0)}<r}$ to denote half balls in $\Real^{n+1}_+$, and $\Delta(x,r):=\set{y\in\Rn:\abs{y-x}<r}$ to denote balls in $\Rn$. We shall simply write $T_R$ and $\Delta_R$ for $T(0,R)$ and $\Delta(0,R)$, respectively. We use $C_0(\Omega)$ to denote the set of continuous functions with compact support on $\Omega$. $W_0^{1,2}(\Omega)$ is defined to be the closure of $C_0^\infty(\Omega)$ in $W^{1,2}(\Omega)$. 

In a bounded Lipschitz domain $\Omega$, for each $X\in\Omega$, the elliptic measure $\omega^X$ is constructed in \cite{li2019boundary} to be the measure on $\bdy\Omega$, such that $u(X)=\int_{\bdy\Omega}hd\omega^X$ solves the Dirichlet problem for continuous boundary data $h\in C(\bdy\Omega)$ in the sense that $\divg(A\nabla u)=0$ in $\Omega$, with $u\in W_{loc}^{1,2}(\Omega)\cap C(\overline{\Omega})$, and $u=h$ on $\bdy\Omega$. 

The elliptic measure on $\Rn$ can be defined as follows. Let $f\in C_0(\Rn)$ with $\supp f\subset \Delta_{R_0}$ for some $R_0>0$. We define an extension of $f$ (still denoted by $f$) which is equal to $0$ on $\Real^{n+1}_+\setminus T_{R_0}$. Then for all $R\ge R_0$, $f^\pm\in C(\bdy T_R)$, where $f^\pm:=\max\set{\pm f,0}$. For each $X\in T_R$, let $\omega_R^X$ be the elliptic measure on $\bdy T_R$. Then $u_R^\pm(X):=\int_{\bdy T_R}f^\pm d\omega_R^X$ solves the Dirichlet problem in $T_R$ with boundary data $f^\pm$ for all $R\ge R_0$. For any $R_0\le R_1\le R_2$ and $X\in T_{R_1}$, the maximum principle (\cite{li2019boundary} Lemma 4.7) implies that $u_{R_1}^\pm(X)\le u_{R_2}^\pm(X)\le \norm{f^\pm}_{L^\infty(\Rn)}$. Therefore, we can define $u$ as the monotone limit
\begin{equation*}
    u(X):=\lim_{R\to \infty}\br{u_R^+(X)-u_R^-(X)} \qquad\forall\, X\in \Real^{n+1}_+.
\end{equation*}
And we have 
\begin{equation}
\norm{u}_{L^\infty(\Real^{n+1}_+)}\le \norm{f}_{L^\infty(\Rn)}.    
\end{equation} 
The mapping $f \mapsto u(X)$ is a positive bounded linear functional on $C_0(\Rn)$, and thus can be extended to a positive bounded linear functional on the set of all continuous functions on $\Rn$ that converge to $0$ uniformly at infinity. The Riesz Representation Theorem implies that there exists a regular Borel measure $\omega^X$ on $\Rn$ such that $u(X)=\int_{\Rn}fd\omega^X$. This $\omega^X$ is defined to be the elliptic measure on $\Rn$. One can show, by H\"older continuity of solutions and Caccioppoli's inequality, that $u\in W_{loc}^{1,2}(\Real^{n+1}_+)$ and solves the Dirichlet problem in $\Real^{n+1}_+$ with boundary data $f\in C_0(\Rn)$.

For any $X$, $X_0\in \Rpl$, the Harnack principle implies that $\omega^X$ and $\omega^{X_0}$ are mutually absolute continuous. Define the kernel function $K(X_0,X,y)$ to be the Radon-Nikodym derivative  $K(X_0,X,y):=\frac{d\omega^X}{d\omega^{X_0}}(y)$. And define
\begin{equation}\label{def k}
    k(X,y):=\frac{d\omega^X}{d\mu}(y), \qquad\text{for }y\in\Rn.
\end{equation} 
Note that 
\[
k(X,y)=K(X_0,X,y)k(X_0,y)\qquad \text{for any } X, X_0\in\Rpl,\, y\in\Rn.
\]

\begin{defn}[$A_{\infty}$]
 For any cube $Q_0\subset\Rn$, we say that a non-negative Borel measure $\omega$ belongs to $A_{\infty}(Q_0)$ (or $A_{\infty}(d\mu)$) with respect to the Lebesgue measure $d\mu$, if there are positive constants $C$ and $\theta$ such that for every cube $Q\subset Q_0$ (or $Q\subset\Rn$, respectively), and every Borel set $E\subset Q$, 
 \[
 \omega(E)\le C\br{\frac{\abs{E}}{\abs{Q}}}^{\theta}\omega(Q),
 \]
 where $C$ and $\theta$ are independent of $E$ and $Q$.
\end{defn}
We note that in the sequel, we shall actually establish
this local $A_\infty$ property in a scale-invariant way, that is, with constants that are independent of $Q_0$ (see Theorem \ref{main thm-carl}).

\begin{lem}[\cite{kenig2016square} Corollary 3.2, \cite{li2019boundary} Theorem 8.5]
Assume that $A$ satisfies  \eqref{ellipticity def} and \eqref{BMO def} in $\Rpl$, and define $L$ as in \eqref{L defn}. Assume that there is some uniform constant $C<\infty$ such that for all Borel sets $H\subset\Rn$, the weak solution $u$ to the Dirichlet problem 
  $$
  \begin{cases}
  Lu=0 \quad\text{in }\Rpl\\
  u=\chi_H \quad\text{on }\bdy\Rpl
  \end{cases}
  $$
  satisfies the following Carleson bound
  \begin{equation}\label{Carl}
     \sup_{Q\subset\Rn}\frac1{\abs{Q}}\int_0^{l(Q)}\int_Q\abs{\nabla u(x,t)}^2t\,dx\le C. 
  \end{equation}
  Here $l(Q)$ denotes the side length of the cube $Q$.
 Then for any cube $Q_0\subset\Rn$, $\omega^{X_{Q_0}}\in A_{\infty}(Q_0)$.
\end{lem}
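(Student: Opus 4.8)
Since this lemma is essentially a transcription of the principle ``a Carleson measure bound for solutions forces the elliptic measure into $A_\infty$'' --- proved for real $L^\infty$ operators by Kenig, Kirchheim, Pipher and Toro \cite{kenig2016square}, and verified for operators with a $\BMO$ anti-symmetric part in \cite[Theorem~8.5]{li2019boundary} --- the plan is not to rebuild that machinery but to (i) recall the real-variable reduction of the $A_\infty$ conclusion to a single ``dichotomy'', (ii) identify the elliptic-measure toolkit that the proof of the dichotomy consumes, and (iii) explain why each piece of that toolkit is available for $L$ despite the fact that $A$ need not be bounded.

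\emph{Step 1 (reduction to a dichotomy).} The elliptic measure of $L$ is a doubling measure on $\Rn$, by the boundary Harnack/comparison principle together with the non-degeneracy estimate $\omega^{X_Q}(Q)\gtrsim 1$, both established in \cite{li2019boundary}. Given this, a standard real-variable argument shows that to conclude $\omega^{X_{Q_0}}\in A_\infty(Q_0)$ --- in fact with constants $C,\theta$ independent of $Q_0$ --- it suffices to produce $\eps,\delta\in(0,1)$, depending only on $n$, $\lambda_0$, $\Lambda_0$ and the constant $C$ in \eqref{Carl}, such that for every cube $Q\subseteq Q_0$ and every Borel set $E\subseteq Q$,
\[
  \abs{E}\le\delta\abs{Q}\quad\Longrightarrow\quad \omega^{X_Q}(E)\le\eps\,\omega^{X_Q}(Q).
\]

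\emph{Step 2 (the toolkit, and why it survives the $\BMO$ hypothesis).} Proving the dichotomy uses only classical ingredients of elliptic-measure theory: the maximum principle and the bound $0\le u_E\le1$ for the solution $u_E$ of $Lu_E=0$ with data $\chi_E$, together with the identity $u_E(X)=\omega^X(E)$; the interior De Giorgi--Nash--Moser estimates (Harnack inequality, interior H\"older continuity) and Caccioppoli's inequality for $L$; boundary H\"older continuity and the doubling/non-degeneracy of $\omega$; and CFMS-type comparisons between $\omega$ and the Green function. Each of these is valid for matrices satisfying \eqref{ellipticity def}--\eqref{BMO def}: this is exactly the theory built in \cite{li2019boundary}, in which the $\BMO$ bound \eqref{BMO def} on $A^a$ is precisely what allows weak solutions to be defined (through the div--curl structure) and keeps all constants depending only on $n$, $\lambda_0$, $\Lambda_0$.

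\emph{Step 3 (Carleson bound $\Rightarrow$ dichotomy; the main obstacle).} Fix $Q\subseteq Q_0$ and a Borel set $E\subseteq Q$ with $\abs E\le\delta\abs Q$, and let $u=u_E$, so $0\le u\le1$ and, by hypothesis, $\frac1{\abs{Q'}}\int_0^{\ell(Q')}\!\int_{Q'}\abs{\nabla u}^2\,t\,dx\,dt\le C$ for every cube $Q'$. One then runs the stopping-time/good-$\lambda$ argument of \cite[Corollary~3.2]{kenig2016square}: subdividing $Q$ into dyadic descendants and using the scale-invariance of \eqref{Carl}, a pigeonhole over the subcubes produces a Carleson-large collection of them on which the local oscillation of $u$ is as small as desired, so that on that collection $u$ stays close to its boundary data; since $\chi_E$ vanishes off the thin set $E$, this forces $u$ to be small there, and Harnack chaining inside $\Rpl$ transfers the smallness to $u(X_Q)=\omega^{X_Q}(E)$, giving $\omega^{X_Q}(E)\le\eps\,\omega^{X_Q}(Q)$ once $\delta$ (hence the number of scales used) is chosen small enough in terms of $C$, $n$, $\lambda_0$, $\Lambda_0$. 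The substantive content is this last argument, yet it is ``soft'' in that it invokes nothing about $L$ beyond the toolkit of Step~2; accordingly, the only obstacle truly specific to our setting is the bookkeeping that the unboundedness of $A^a$ does not damage that toolkit --- which is precisely what \cite[Theorem~8.5]{li2019boundary} takes care of. (The genuinely hard input is the hypothesis \eqref{Carl} itself, which is the main theorem of the present paper; see Theorem~\ref{main thm-carl}.)
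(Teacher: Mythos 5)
Your proposal is correct and matches the paper's own treatment: the paper states this lemma by citation to \cite{kenig2016square} Corollary 3.2 and \cite{li2019boundary} Theorem 8.5 without reproving it, and your Steps 1--3 accurately summarize the argument in those references and why it transfers to the $\BMO$ anti-symmetric setting. In particular, \cite{li2019boundary} Theorem 8.5 already performs the verification in your Step 2 that the elliptic-measure toolkit (Harnack, boundary H\"older, doubling, CFMS comparison) survives the unboundedness of $A^a$, so nothing beyond that citation is needed here.
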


It is well-known from the general theory of weights that the $A_\infty$ condition $\omega^{X_{Q_0}}\in A_{\infty}(Q_0)$ implies that there is some $q\in (1,\infty)$ such that $k(X_{Q_0},\cdot)$ satisfies the following reverse H\"older inequality: for any $\Delta\subset Q_0$,
\begin{equation}\label{RHq}
  \br{\frac{1}{\abs{\Delta}}\int_{\Delta}k(X_{Q_0},y)^qd\mu(y)}^{1/q}\lesssim \frac{1}{\abs{\Delta}}\int_{\Delta}k(X_{Q_0},y)d\mu(y),
 \end{equation}
 where the implicit constant depends only on $\lambda$, $\Lambda$ and $n$.
Moreover, by estimates for the kernel function $K$, one can show that
\begin{equation}\label{k in Lq}
    \text{for any } X=(x,t)\in\Rpl, k(X,\cdot)\in L^q(\Rn,d\mu),
\end{equation}
where $q$ is the same as in \eqref{RHq}.
The proof can be found in \cite{hofmann2018carleson}, where these results are proved for degenerate elliptic operators in the upper half-space. The argument of \cite{hofmann2018carleson} applies to the operators under discussion. We also remark that for bounded (Lipschitz) domains, the kernel function estimates used to prove \eqref{k in Lq} for operators with $L^\infty$ coefficients can be found in \cite{kenig1994harmonic} and \cite{caffarelli1981boundary}, while for elliptic operators with $\BMO$ anti-symmetric part these are  verified in \cite{li2019boundary}.

It is known that \eqref{RHq} yields the solvability of $L^p$ Dirichlet problem, with $p\ge q':=\frac{q}{q-1}$. See e.g. \cite{kenig1994harmonic} Theorem 1.7.3, or \cite{hofmann2018carleson} for this argument. Therefore, to prove the existence part of Theorem \ref{main thm}, it suffices to show the Carleson measure estimate \eqref{Carl}. Indeed, we prove the following:

\begin{thm}\label{main thm-carl}
Let $A$ be a matrix of real, t-independent coefficients satisfying \eqref{ellipticity def} and \eqref{BMO def}. Let $L$ be defined as \eqref{L defn}. Then  any bounded weak solution $u$ to $L$ in $\Rpl$ with $\norm{u}_{L^{\infty}}\le1$  satisfies the estimate
\begin{equation}\label{Carl'}
    \int_0^{l(Q)}\int_Q\abs{\nabla u(x,t)}^2t\,dx\lesssim\abs{Q},
\end{equation} for any cube $Q\subset\Rn$, and the implicit constant depends only on the ellipticity constants and the $\BMO$ semi-norm. 
And thus for any cube $Q_0\subset\Rn$, the elliptic measure $\omega^{X_{Q_0}}\in A_\infty(Q_0)$ with constants depending only on the dimension, the ellipticity constant and the $\BMO$ semi-norm.  
\end{thm}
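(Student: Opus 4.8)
We must establish the Carleson bound \eqref{Carl'}; the $A_\infty$ conclusion of the theorem is then immediate from the Lemma recalled above, since the solution of $Lu=0$ with boundary data $\chi_H$ has sup-norm at most $1$ by the maximum principle. By the dilation and translation invariance of the hypotheses \eqref{ellipticity def}--\eqref{BMO def} and of both sides of \eqref{Carl'}, I would normalize to $Q=Q(0,1)$ with $\|u\|_{L^\infty}\le1$ and prove $\iint_{Q\times(0,1)}|\nabla u(x,t)|^2 t\,dx\,dt\lesssim1$. The part of this box with $t\gtrsim1$ is harmless: the interior De~Giorgi--Nash--Moser and Caccioppoli estimates, valid in the $\BMO$-coefficient setting by \cite{li2019boundary}, give $|\nabla u(x,t)|\lesssim t^{-1}\|u\|_{L^\infty}$ there, so all the difficulty is concentrated near the boundary.

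The plan is to follow the first-order (Dirac-type) scheme of Auscher--Axelsson--McIntosh, used for the $t$-independent Dirichlet problem by Hofmann--Kenig--Mayboroda--Pipher in \cite{hofmann2015square} and streamlined in \cite{Auscher2017parabolic}. Writing $F:=\nabla_A u=\bigl(e_{n+1}\cdot A\nabla_{x,t}u,\ \nabla_x u\bigr)$ for the conormal gradient of a solution, the equation $Lu=0$ and the $t$-independence of $A$ turn into a first-order system $\partial_t F+DBF=0$, with $D$ a fixed self-adjoint first-order differential operator on $L^2(\Rn;\mathbb{C}^{n+1})$ and $B=B(x)$ the pointwise multiplier built from $A(x)$. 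The essential new work is to carry out the standard programme when $B$ is \emph{unbounded}, as forced by $A^a\in\BMO\setminus L^\infty$: (i) a Hodge-type splitting of the natural energy space adapted to $B$, beyond the $L^2$ framework; (ii) bisectoriality and a bounded holomorphic functional calculus for $DB$ --- equivalently, Kato square-root estimates --- with constants depending only on $n$, $\lambda_0$, $\Lambda_0$; and (iii) the resulting square function, non-tangential maximal function, and Riesz transform bounds for the associated heat semigroup. Granting (i)--(iii), the functional calculus produces the usual Poisson-type representation of $F(\cdot,t)$ on the relevant spectral subspace, and in particular the $L^2$ square function estimate for solutions: if $v$ solves $Lv=0$ with Dirichlet data $g\in L^2(\Rn)$, then $\iint_{\Rpl}|\nabla v|^2 t\,dx\,dt\lesssim\|g\|_{L^2(\Rn)}^2$, alongside $\|\tilde N_* v\|_{L^2(\Rn)}\lesssim\|g\|_{L^2(\Rn)}$.

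To pass from this $L^2$ estimate to \eqref{Carl'} for a merely bounded $u$, I would localize to $2Q$: split $u$ on $R_Q:=Q\times(0,l(Q))$ into a local part obtained by feeding the semigroup representation with data of $L^2$-norm $\lesssim|2Q|^{1/2}\lesssim|Q|^{1/2}$ (using $|u|\le1$), to which the square function estimate applies directly, plus a far part whose gradient near $Q$, by boundary H\"older continuity and the maximum principle, is so small that its contribution to \eqref{Carl'} is trivially $\lesssim|Q|$. When a global boundary trace of $u$ is not available a priori one can instead run a stopping-time comparison of $Su$ with $\tilde N u\le\|u\|_{L^\infty}$, or first prove \eqref{Carl'} for solutions with $L^2\cap L^\infty$ data, with a uniform constant, and pass to the limit. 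Any of these routes gives $\iint_{R_Q}|\nabla u|^2 t\,dx\,dt\lesssim|Q|$, whence the Lemma recalled above yields $\omega^{X_{Q_0}}\in A_\infty(Q_0)$ quantitatively.

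I expect the main obstacle to be steps (ii)--(iii). Unboundedness of $B$ destroys the $L^\infty$-coefficient structure underlying the classical solution of the Kato problem (\cite{auscher2002solution}) and the Auscher--Axelsson functional calculus, so the Kato estimate, the functional calculus of $DB$, and all the attendant square function, non-tangential maximal function, and Riesz-transform bounds must be re-proved here. The mechanism for doing so is the div--curl / compensated-compactness structure carried by an anti-symmetric matrix --- roughly, the drift $\divg A^a$ produced by an integration by parts is itself divergence free, so pairings against the $\BMO$ entries of $A^a$ gain a derivative's worth of cancellation --- used together with $\BMO$--$H^1$ duality, the John--Nirenberg inequality to control $\fint_Q|A^a-(A^a)_Q|^2\lesssim\Lambda_0^2$, and Carleson-measure arguments. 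Making all of this quantitative, uniformly in scale and compatibly with the Hodge decomposition beyond $L^2$, is the heart of the proof.
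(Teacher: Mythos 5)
Your plan is a genuinely different route from the one taken in the paper, and the difference matters. You propose to rewrite $Lu=0$ as a first-order system $\partial_tF+DBF=0$ in the Auscher--Axelsson--McIntosh framework, establish bisectoriality and a bounded $H^\infty$ calculus for $DB$ with the BMO multiplier $B$, and then read off square-function and non-tangential estimates from the Poisson-type representation $F(\cdot,t)=e^{-t|DB|}F(\cdot,0)$. The paper does not do this. It works entirely with the \emph{second-order} ``ellipticized'' semigroup $\mP_t=e^{-t^2L_{||}}$ on $\Rn$, never forming $DB$: it uses the Kato estimate for $\sqrt{L_{||}}$ (established for BMO anti-symmetric parts in \cite{escauriaza2018kato}) and $L^p$ square-function/non-tangential bounds for $\mP_t$, $\mP_t^*$ (from \cite{HLMPLp}), together with a $W^{1,2+\eps}$ Hodge decomposition of $\mathbf{b},\mathbf{c}$ (Proposition~\ref{Hodgedecp_prop}), a good set $F\subset Q$ on which the relevant maximal functions are controlled, and a long integration-by-parts argument against the cutoff $\Psi$ to bound $J_{\eta,\eps}$ (Lemma~\ref{main lem}, proved in Section~\ref{proof of Carl sec}). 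The reduction to a subset $F$ is then upgraded to the full Carleson bound via \cite{auscher2001extrapolation}.

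The core difficulty you flag is precisely where your plan has a genuine gap. Step (ii) --- bisectoriality, accretivity on the natural splitting of $L^2(\Rn;\mathbb{C}^{n+1})$, and a bounded holomorphic functional calculus for $DB$ when $B$ has unbounded BMO entries --- is not established anywhere in the literature, and the paper is structured specifically to avoid needing it. The bounded-$B$ proofs hinge on $B\in L^\infty$ at several places (coercivity of $\langle DBf,f\rangle$ on $\overline{R(D)}$, off-diagonal estimates, quadratic estimates by the $T(b)/T(1)$ reduction), and it is not clear that the compensated-compactness mechanism you invoke for the second-order pairings $\int A^a\nabla u\cdot\nabla v$ carries over to give the full first-order calculus; in particular the adapted Hodge splitting $L^2=\overline{R(DB)}\oplus N(DB)$ already needs re-examination. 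You correctly identify steps (ii)--(iii) as ``the heart of the proof,'' but you do not carry them out, and absent that work the reduction to the Poisson representation and to the $L^2$ square-function estimate $\iint|\nabla v|^2\,t\,dx\,dt\lesssim\|g\|_{L^2}^2$ is not justified. The paper's Kato/Hodge/semigroup/good-set route is exactly the device that substitutes a second-order calculus (which \emph{is} available) for the missing first-order one. Secondary, and fixable: your localization sketch (``feed the semigroup representation with $L^2$ data of norm $\lesssim|Q|^{1/2}$; treat the far part by boundary H\"older continuity'') presumes an a~priori boundary trace and an a~priori $S$--$\tilde N$ comparison that need a careful argument in this setting; the paper instead passes through \cite{kenig2016square}/\cite{li2019boundary} and the good-set reduction of Lemma~\ref{reduc Carl lem}, which avoids these issues.
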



There are many difficulties when the coefficients are not $L^\infty$. We illustrate them by first taking a closer look at the structure of the matrix $A$. We write  
$$
A=\begin{bmatrix}
\begin{BMAT}{c.c}{c.c}
A_{||} & \mathbf{b} \\
\mathbf{c} & d
\end{BMAT}
\end{bmatrix},
$$
where $A_{||}$ denotes the $n\times n$ submatrix of $A$ with entries $(A_{||})_{i,j}$, $1\le j\le n$, $\bd b$ denotes the column vector $(A_{i,n+1})_{1\le i\le n}$, $\bd c$ denotes the row vector $(A_{n+1,j})_{1\le j\le n}$, and $d= A_{n+1,n+1}$. 

We observe that if the coefficients are in $L^\infty$, and in addition, $\divg_x \bd c=0$, then the Carleson measure estimate \eqref{Carl'} follows simply from an integration by parts argument. But even in this case, when having BMO coefficients, difficulties arise. For example, when the coefficients could be in BMO, we cannot bound the integrals
$\iint_{\Rpl}A\nabla u\cdot \nabla\Psi (u\Psi\, t)dxdt$ and $\iint_{\Rpl}\bd c\cdot\nabla_x\Psi u^2\Psi$, which appear from integration by parts. Here, $\Psi$ is a cutoff function that is supported in the box $2Q\times (\epsilon,l(Q))$ and equals to 1 in $Q\times (\epsilon,l(Q))$. To deal with this issue, we shall work with weak solutions to the operator $L_0=-\divg A_0\nabla$, where $A_0$ is defined in \eqref{A0}. We observe in Lemma \ref{L to L0 lem} that a weak solution of $L$ is also a weak solution of $L_0$. This observation enables us to work with the equation $L_0u=0$ in $\Rpl$, for which we can control the BMO coefficients by the John-Nirenberg inequality.

When $\divg_x\bd c\neq 0$, the situation is more complicated, even when coefficients are in $L^\infty$. We define an $n$-dimensional divergence form operator $L_{||}:= \divg A_{||}\nabla$, and its adjoint $L^*_{||}:=-\divg A^*_{||}\nabla$. We highlight three ingredients in the proof of the $A_{\infty}$ condition for elliptic measure associated to operators with $L^{\infty}$, $t$-independent coefficients in \cite{hofmann2015square}:
\begin{enumerate}
    \item An adapted Hodge decomposition of $\bd c$ and $\bd b$.
    \item $L^p$ estimates for square functions involving the ``ellipticized" heat semigroup $\mP_t:= e^{-t^2L_{||}}$ associated to $L_{||}$, and $\mP_t:= e^{-t^2L_{||}^*}$. Some of these estimates reply heavily on the solution to the Kato problem.
    \item $L^p$ estimates for the non-tangential maximal function involving $\mP_t$ and $\mP_t^*$, which enables one to construct a set $F$ with desired properties.
\end{enumerate}
None of these ingredients comes for free when we move to the elliptic operators having a BMO anti-symmetric part. But fortunately, in a recent paper (\cite{HLMPLp}), we were able to obtain the desired $L^p$ estimates for square functions involving $\mathcal{P}_t$ and $\mathcal{P}_t^*$. The arguments for the $L^p$ estimates rely on the $L^p$ estimate for the square root operator $\sqrt{L}$, which is also derived in \cite{HLMPLp}. We note here that in \cite{escauriaza2018kato}, the Kato problem, or the $L^2$ estimate for $\sqrt L$, was solved for elliptic operators having a BMO anti-symmetric part. Previously, the Kato conjecture was proved for operators having the Gaussian property (\cite{hofmann2002solution}) and for elliptic operators in divergence form with complex, bounded coefficients (\cite{auscher2002solution}). 

In Section \ref{Hodge subsec}, we deal with the Hodge decomposition. We point out that we need a $W^{1,2+\epsilon}$ Hodge decomposition because the BMO coefficients require higher integrability, while for $L^\infty$ coefficients, a $W^{1,2}$ Hodge decomposition suffices (see \cite{hofmann2018carleson}). The $L^p$ estimates for the non-tangential maximal function involving $\mP_t$ and $\mP_t^*$ are presented in Section \ref{Lp nt max subsec}.

\subsection{Further reductions of the statement}

Recall that our goal is to derive the Carleson measure estimate \eqref{Carl'}.
Note that this formulation allows us to assume that $A$ is smooth as long as the bounds do not depend on the regularity of the coefficients.

It turns out that we do not need to show \eqref{Carl'} holds for integral over all of the cube $Q$, but only on a subset $F$ of $Q$ that has a big portion of the measure of $Q$. To be precise, we have the following lemma.
\begin{lem}\label{reduc Carl lem}
Let $u$ be a weak solution to $L$ in $\Rpl$. Assume that  there is a uniform constant $c$, and, for each cube, $Q\subset\Rn$ there is a Borel set $F\subset Q$, with $\abs{F}\ge c\abs{Q}$, such that
\begin{equation}\label{Carl F}
    \int_0^{l(Q)}\int_F\abs{\Dt u(x,t)}^2t\,dx\lesssim\abs{Q}, 
\end{equation}
with the implicit constant depending on $c$, $n$, $\norm{u}_{L^{\infty}}$, the ellipticity constants and the BMO semi-norm only, in particular, independent of $Q$ and $F$.

Then $u$ satisfies the Carleson measure estimate \eqref{Carl'}.\end{lem}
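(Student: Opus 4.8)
The plan is to run a standard good-$\lambda$/stopping-time argument of the type pioneered in the work of Kenig--Koch--Pipher--Toro and used in \cite{hofmann2015square, hofmann2018carleson}, exploiting the fact that the hypothesis \eqref{Carl F} is \emph{scale- and translation-invariant}: it holds for \emph{every} cube $Q$ with a constant independent of $Q$. Fix a cube $Q_0\subset\Rn$; we must bound $\iint_{R_{Q_0}}\abs{\nabla u}^2t\,dx\,dt$, where $R_{Q_0}=Q_0\times(0,l(Q_0))$ is the Carleson box. The first reduction is to replace $\abs{\nabla u}^2$ by $\abs{\Dt u}^2$: since $u$ solves $Lu=0$ with $t$-independent coefficients, Caccioppoli's inequality applied on Whitney cubes together with the ellipticity \eqref{ellipticity def} gives the pointwise bound $t\iint_{W(x,t)}\abs{\nabla u}^2 \lesssim t\iint_{W(x,t)}\abs{\Dt u}^2 + (\text{lower order})$; integrating over the box and summing the Whitney decomposition reduces the full gradient estimate to the estimate for $\Dt u$ alone. (Alternatively one invokes the well-known reduction, e.g.\ \cite[Lemma~2.5]{hofmann2015square}, that the $t$-derivative square function controls the full square function for solutions of $t$-independent equations.)

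The heart of the argument is the following covering/iteration scheme. Given the box $R_{Q_0}$, apply the hypothesis to $Q_0$ itself to obtain a Borel set $F_0\subset Q_0$ with $\abs{F_0}\ge c\abs{Q_0}$ and $\int_0^{l(Q_0)}\int_{F_0}\abs{\Dt u}^2 t\,dx\,dt\lesssim \abs{Q_0}$. The remaining part of the box sits over $Q_0\setminus F_0$, which has measure at most $(1-c)\abs{Q_0}$. Perform a Calder\'on--Zygmund / Vitali-type decomposition of $Q_0\setminus F_0$ into a family of maximal dyadic subcubes $\{Q_1^{(j)}\}$; for each of these, the portion of the Carleson box \emph{above height $l(Q_1^{(j)})$} is already counted (it lies in $R_{Q_0}$ but projects into $Q_0\setminus F_0$ only through the ``tents'' one discards), so that what is left to estimate is $\sum_j \iint_{R_{Q_1^{(j)}}}\abs{\Dt u}^2 t$, a sum of the same quantity over cubes with $\sum_j \abs{Q_1^{(j)}} \le (1-c)\abs{Q_0}$. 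One must be slightly careful here with the region $Q_0\times(0,l(Q_0))$ that lies \emph{below} the graph of heights $\{l(Q_1^{(j)})\}$ but not inside any child box; a routine geometric argument (as in \cite{hofmann2018carleson}) shows this ``sawtooth collar'' is absorbed into $\bigcup_j R_{Q_1^{(j)}}$ together with the already-estimated piece over $F_0$. Iterating, after $k$ generations one has $\iint_{R_{Q_0}}\abs{\Dt u}^2 t \le C\abs{Q_0}\sum_{m=0}^{k-1}(1-c)^m + (1-c)^k \sup_{j}\Big(\text{box-integral over } R_{Q_k^{(j)}} / \abs{Q_k^{(j)}}\Big)\abs{Q_0}$, and letting $k\to\infty$ the geometric series sums and the tail vanishes \emph{provided} we know a priori that $\iint_{R_Q}\abs{\Dt u}^2 t \lesssim \abs{Q}$ holds with \emph{some} finite (possibly large, $Q$-dependent) constant, so that the tail term is finite before being multiplied by $(1-c)^k\to0$.

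That a priori finiteness is supplied by the extra freedom noted just before the lemma: \emph{we may assume $A$ is smooth and $u$ is a smooth bounded solution}, in which case $\abs{\Dt u}$ is locally bounded and $\iint_{R_Q}\abs{\Dt u}^2 t \le \|\Dt u\|_{L^\infty(R_Q)}^2 l(Q)^2 \abs{Q}<\infty$ for each fixed $Q$; this qualitative bound is all that is needed to launch the iteration, and the \emph{final} constant produced by the geometric series depends only on $c$, $n$, $\norm{u}_{L^\infty}$, the ellipticity constants and the $\BMO$ seminorm, not on the smoothness. The main obstacle — really the only nontrivial point — is the bookkeeping of the sawtooth regions: one must verify that the union of the truncated Carleson boxes $R_{Q_1^{(j)}}$ over the bad subcubes, together with the ``good'' region controlled via $F_0$, genuinely covers all of $R_{Q_0}$ up to a set whose square-function integral is already dominated by $\abs{Q_0}$, with no double counting that would spoil the geometric decay. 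This is handled exactly as in the $L^\infty$ case \cite{hofmann2015square, hofmann2018carleson} since it is a purely geometric statement about dyadic cubes and tents, wholly insensitive to the $\BMO$ nature of the coefficients.
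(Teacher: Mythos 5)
Your first reduction (from $\nabla u$ to $\Dt u$ via Caccioppoli on Whitney cubes, using $t$-independence) is correct and is the same as the paper's first step, which yields \eqref{reduc Carl Dt}. The gap is in the stopping-time iteration, and it is a genuine one. You assert that the collar
\[
\bigcup_j Q_1^{(j)}\times\bigl(l(Q_1^{(j)}),\,l(Q_0)\bigr)
\]
is ``absorbed'' into $\bigcup_j R_{Q_1^{(j)}}$ together with the region over $F_0$ by routine tent/dyadic bookkeeping. It is not: each stopping-time cube $Q_1^{(j)}$ sits essentially inside $Q_0\setminus F_0$, and the strip above $R_{Q_1^{(j)}}$ but below height $l(Q_0)$ lies over the bad base, so it belongs neither to $\bigcup_j R_{Q_1^{(j)}}$ nor to $F_0\times(0,l(Q_0))$. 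No purely geometric argument can close this gap, because the measure-theoretic version of the statement is false: take $n=1$, $Q_0=[0,1]$, and a nonnegative $g$ supported on nested thin tubes $T_k=(1/2,\,1/2+10^{-k-1})\times(2^{-k-1},2^{-k}]$ normalized so that $\iint_{T_k}g\,t\,dx\,dt=1$ for each $k$. Then $\iint_{R_{Q_0}}g\,t=\infty$, yet for \emph{every} cube $Q$ the horizontal projections of the tubes meeting $R_Q$ have total length $\ll l(Q)$, so one may take $F\subset Q$ avoiding the support of $g$ with $\abs{F}\ge\tfrac12\abs{Q}$ and $\int_0^{l(Q)}\int_F g\,t=0$. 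So the hypothesis \eqref{Carl F}, applied to all cubes, does not by itself self-improve to the full Carleson bound; some further property of the integrand is indispensable.

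What the paper uses, and what your sketch never invokes, is that $\Dt u$ is itself a weak solution of $Lu=0$ (because the coefficients are $t$-independent; see Remark~\ref{Dt sol Re}), hence obeys the Harnack principle and interior H\"older/Moser estimates from \cite{li2019boundary}. This interior regularity is precisely what controls the collar: it makes the density $\abs{\Dt u}^2 t$ essentially constant on Whitney regions, so its contribution over a bad strip $Q_1^{(j)}\times(l(Q_1^{(j)}),l(Q_0))$ is comparable to its contribution over nearby portions of $F_0$ at the same heights. This is exactly the content of the self-improvement lemma for Carleson measures that the paper cites (\cite{auscher2001extrapolation}, Lemma~2.14). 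Your iteration replaces that analytic input with geometry alone, which is where it breaks down; your a priori finiteness remark, while a legitimate ingredient of such iterations, does not repair the missing control of the collar.
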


The proof of Lemma \ref{reduc Carl lem} requires two steps of reduction. First, one can show by integration by parts and the Caccioppoli inequality on Whitney cubes that
\begin{equation}\label{reduc Carl Dt}
  \int_0^{l(Q)}\int_Q\abs{\nabla u(x,t)}^2t\,dxdt\lesssim\int_0^{2l(Q)}\int_{2Q}\abs{\Dt u(x,t)}^2t\,dxdt+\abs{Q}.  
\end{equation}
The details can be found in \cite{hofmann2015square}. 

Secondly, since the coefficients are independent of $t$, $\Dt u$ is also a weak solution of $L$ (see Appendix \ref{weak sol Appen}, Remark \ref{Dt sol Re}), and thus $\Dt u$ satisfies Harnack Principle and interior H\"older estimates (see \cite{li2019boundary}). This allows us to apply a well-known result for Carleson measures (see, e.g., \cite{auscher2001extrapolation} Lemma 2.14), to deduce from \eqref{Carl F} an apparently stronger bound
\begin{equation*}
    \int_0^{l(Q)}\int_Q\abs{\Dt u(x,t)}^2t\,dx\lesssim\abs{Q}.
\end{equation*}
Combining this with \eqref{reduc Carl Dt}, Lemma \ref{reduc Carl lem} follows. This lemma gives us the freedom to choose the set $F$.

The construction of the set $F$ is presented in Section \ref{set F subsection}. Basically, we will construct $F$ such that on the set $F$, the non-tangential maximal function involving $\mP_t=e^{-t^2L_{||}}$ and $\mP^*_t= e^{-t^2L^*_{||}}$, as well as some other maximal functions are small (see \eqref{F def}). We will exploit this property of the set $F$ in the proof of the Carleson measure estimate. Namely, as long as a term can be bounded by maximal functions showing up in the definition of $F$, then there is hope to control that term with desired bounds.

It turns out that to prove the Carleson measure estimate \eqref{Carl F}, it suffices to prove the following main lemma (see Section \ref{proof of Carl sec}).
\begin{lem}[Main Lemma]\label{main lem}
Let $\sigma$, $\eta\in(0,1)$. Then there exists some finite constant $c=c(\lambda_0,\Lambda_0,n)>0$, and some finite constant $\tilde{c}=\tilde{c}(\sigma,\eta,\lambda_0,\Lambda_0,n)>0$, such that
\begin{equation*}
    J_{\eta,\epsilon}\le (\sigma+c\eta)J_{\eta,\epsilon}+\tilde{c}\abs{Q}.
\end{equation*}
\end{lem}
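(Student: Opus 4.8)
The plan is to establish the Main Lemma by carrying out an integration by parts on the quantity $J_{\eta,\epsilon}$, which should be essentially $\iint_{\Rpl} |\Dt u|^2 t\,\Psi^2\,dxdt$ (or a version of it adapted to the set $F$ and a cutoff $\Psi$ supported in a box around $Q$), and then absorbing the main terms back into $J_{\eta,\epsilon}$ with small constants while controlling the remaining terms by $\tilde c|Q|$. First I would work with the equation $L_0 u = 0$ rather than $Lu = 0$, as suggested in Section~\ref{overview sec}, so that the $\BMO$ coefficients can be tamed by the John--Nirenberg inequality on the box; by Lemma~\ref{L to L0 lem} this is legitimate. Since $\Dt u$ is again a weak solution (Appendix~\ref{weak sol Appen}), I would use $\Dt u$ as the basic object, writing $Lu = 0$ differentiated in $t$, and pair it against a test function of the form $\Dt u \cdot \Psi^2 t$ (times appropriate factors), integrating by parts in both $x$ and $t$.

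The key steps, in order, would be: (1) set up the cutoff $\Psi$ adapted to $2Q\times(\epsilon, l(Q))$ and record the Caccioppoli-type and product-rule identities; (2) integrate by parts to produce the ``good'' term $\iint A_0^s \nabla\Dt u \cdot \nabla\Dt u\, t\,\Psi^2$, which by ellipticity dominates $c\,J_{\eta,\epsilon}$, plus boundary/error terms; (3) handle the error terms coming from derivatives falling on $\Psi$, on $t$, and on the $\BMO$ coefficients --- here I would split each $\BMO$ entry as $(A^a_{ij} - (A^a_{ij})_{2Q}) + (A^a_{ij})_{2Q}$, absorb the constant part into a genuine divergence-form manipulation (using $\divg_x \bd c$-type cancellations adapted via the Hodge decomposition of Section~\ref{Hodge subsec}), and estimate the oscillatory part using John--Nirenberg together with the higher integrability $W^{1,2+\epsilon}$ afforded by the reverse-Hölder/Meyers-type improvement; (4) bound the terms that survive by the non-tangential maximal functions built into the definition \eqref{F def} of $F$, which on $F$ are small --- this is exactly where the parameters $\sigma$ and $\eta$ enter: the $\sigma$-term comes from a Cauchy--Schwarz splitting where one factor is absorbed into $J_{\eta,\epsilon}$ at the cost of $\sigma$ and the other is an $L^2$ average of a square function over $F$ controlled by $\tilde c|Q|$; the $c\eta$-term comes from the part of the region $F$ where the relevant maximal function is comparable to $\eta$. (5) Collect everything to get $J_{\eta,\epsilon} \le (\sigma + c\eta)J_{\eta,\epsilon} + \tilde c |Q|$.

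The main obstacle I expect is step (3): controlling the error terms generated by the unbounded $\BMO$ coefficients. Unlike the $L^\infty$ case, one cannot simply pull the coefficients out of the integrals; instead one must exploit the structure (antisymmetry, the div-curl cancellation, the adapted Hodge decomposition of $\bd b$ and $\bd c$ into a part with controlled divergence plus a curl part) so that the $\BMO$ norm only ever interacts with a genuine oscillation $A^a_{ij} - (A^a_{ij})_{2Q}$ which is then paired, via Cauchy--Schwarz and John--Nirenberg, against an $L^2$ quantity that is already being estimated. Getting the higher exponent $2+\epsilon$ to close (so that the product of a $\BMO$ oscillation in $L^{p'}$ with a gradient term in $L^{2+\epsilon}$ is summable against the Carleson measure) is the delicate quantitative point, and it is the reason the $W^{1,2+\epsilon}$ Hodge decomposition of Section~\ref{Hodge subsec} is needed rather than the classical $W^{1,2}$ one.

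A secondary difficulty is bookkeeping the $\epsilon$-truncation at the bottom ($t > \epsilon$): the boundary terms at $t = \epsilon$ must be shown to be harmless (either they vanish in the limit or are absorbed), and this is handled using the boundedness of $u$ together with interior estimates for $\Dt u$; since all constants are required to be independent of $\epsilon$, one keeps $\epsilon$ fixed throughout and only lets it go to $0$ after iterating the Main Lemma (choosing $\sigma$ and $\eta$ small enough that $\sigma + c\eta < 1$) to conclude \eqref{Carl F}.
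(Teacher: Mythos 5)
Your high-level diagnostics are sound --- working with $L_0$ instead of $L$, subtracting the $2Q$-averages so that John--Nirenberg applies to $A^a_{ij}-(A^a_{ij})_{2Q}$, needing a $W^{1,2+\epsilon}$ Hodge decomposition because the $\BMO$ factor forces a H\"older step with exponent strictly larger than $2$, and exploiting the smallness of non-tangential maximal functions built into the set $F$. But the core mechanics you propose do not match the quantity you are trying to bound, and this is a genuine gap, not just a difference of route.

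The quantity $J_{\eta,\epsilon}$ is $\iint A_0\nabla u\cdot\nabla u\,\Psi^2 t$: a first-order energy of $u$ itself, not of $\Dt u$. You propose to ``write $Lu=0$ differentiated in $t$'' and pair it against $\Dt u\,\Psi^2 t$. That produces a leading quantity $\iint A_0^s\nabla\Dt u\cdot\nabla\Dt u\,\Psi^2 t$, which is a second-order object ($|\nabla\Dt u|^2 t$) and does not dominate or compare directly to $J_{\eta,\epsilon}$; the $t$-weights and derivative orders are both off by one. The fact that $\Dt u$ is again a weak solution is used in the paper only in the reduction of the Carleson estimate \eqref{Carl'} to the bound \eqref{Carl F} on the set $F$ (Lemma~\ref{reduc Carl lem}), via the extrapolation lemma of \cite{auscher2001extrapolation}; it plays no role in the proof of the Main Lemma itself. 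In the Main Lemma, the paper takes $u\Psi^2 t$ as the test function in $L_0u=0$, which turns $J_{\eta,\epsilon}$ into a sum of error terms $J_1+J_2$ to be estimated, and then, crucially, uses the equation a second time with the test function $\theta_{\eta t}u\Psi^2$ to dispose of the term $II_{11}$ that $J_{211}$ generates.

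The other piece that is missing in your account is the specific decomposition that produces the factor $c\eta$: one writes the Hodge potentials as $\vp=\theta_{\eta t}+\mP^*_{\eta t}\vp$ and $\wvp=\wt\theta_{\eta t}+\mP_{\eta t}\wvp$, with $\theta_{\eta t}=\vp-\mP^*_{\eta t}\vp$. The semigroup parts yield square-function terms controlled by Section~\ref{Lp est for square functions subsect} and $|Q|$; the $\theta$ parts satisfy the pointwise sawtooth bound $|\theta_{\eta t}(x)|\lesssim\eta t\kappa_0$ on $\supp\Psi$ (Lemma~\ref{theta_sawtooth1 Lem}), which is precisely where each $c\eta J$ term comes from. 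Your phrase about ``the part of $F$ where the maximal function is comparable to $\eta$'' is not what happens: the set $F$ is built so that the non-tangential maximal functions are $\lesssim\kappa_0$, with $\kappa_0$ fixed once and for all to satisfy \eqref{kappa0}, and the small factor $\eta$ enters through the identity $\theta_{\eta t}=-\int_0^{\eta t}\partial_s\mP^*_s\vp\,ds$, whose length $\eta t$ is short. Without the $\theta$/semigroup splitting and the second application of the equation, it is not clear how the argument you sketch would close.

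Finally, a small but real omission: the paper reduces to smooth coefficients before integrating by parts (so that $\vp$, $\wvp$, $\mP_t\wvp$, $u$ are all classical and test functions like $\theta_{\eta t}u\Psi^2$ are admissible), with all constants depending on $A$ only through ellipticity and the $\BMO$ semi-norm; your proposal does not address why the integrations by parts and the use of these test functions are legitimate in the rough-coefficient setting.
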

Here,\begin{equation*}
    J_{\eta,\epsilon}:=\iint_{\Real_+^{n+1}}A_0\nabla u\cdot \nabla u\,\Psi^2 t\,dxdt
\end{equation*}
where $u$ is a bounded weak solution to $Lu=0$ (and thus also a weak solution to $L_0u=0$) in $\Rpl$ with $\norm{u}_{L^{\infty}}\le 1$, and $\Psi=\Psi_{\eta,\epsilon}$ is a cut-off function defined in Section \ref{cutoff section}.

The main lemma is proved in Section \ref{proof of Carl sec}. In the proof, a typical way to deal with the BMO coefficients is to use the anti-symmetry, H\"older's inequality, and John-Nirenberg's inequality. This method inevitably increases the exponent of the integrand, and thus requires some $L^{2+\epsilon}$ estimates. Besides the $W^{1,2+\epsilon}$ Hodge decomposition we mentioned earlier, it is crucial to have an $L^p$ estimate for the cut-off function $\Psi$ (see Lemma \ref{cutoff est lem}), and $L^p$ estimates for the non-tangential maximal functions and square functions that involve semigroups, for $p>2$.

\section{Technical tools}\label{tech tools sect}

\subsection{Some useful results in PDE}
We shall frequently use two results from \cite{giaquinta1983multiple}. We include them here for reader's convenience. 

The first one is useful in proving reverse H\"older type inequalities. 
\begin{lem}[\cite{giaquinta1983multiple} Chapter V Proposition 1.1]\label{GiaProp1.1}
Let $Q$ be a cube in $\Rn$. Let $g\in L^q(Q)$, $q>1$, and $f\in L^s(Q)$, $s>q$, be two nonnegative functions. Suppose
\[
\fint_{Q_R(x_0)}g^qdx\le b\br{\fint_{Q_{2R}(x_0)}gdx}^q+\fint_{Q_{2R}(x_0)}f^qdx+\theta\fint_{Q_{2R}(x_0)}g^qdx
\]
for each $x_0\in Q$ and each $R<\min\set{\frac{1}{2}\dist (x_0,\bdy Q),R_0}$, where $R_0$, $b$, $\theta$ are constants with $b>1$, $R_0>0$, $0\le\theta<1$. Then $g\in L_{\loc}^p(Q)$ for $p\in[q,q+\epsilon)$ and
\[
\br{\fint_{Q_R}g^pdx}^{1/p}\le c\left(\Bigl(\fint_{Q_{2R}}g^qdx\Bigr)^{1/q}+\Bigl(\fint_{Q_{2R}}f^pdx\Bigr)^{1/p}\right)
\]
for $Q_{2R}\subset Q$, $R<R_0$, where $c$ and $\epsilon$ are positive constants depending only on $b$, $\theta$, $q$, $n$ (and $s$).

\end{lem}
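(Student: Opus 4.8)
The statement is the classical Gehring-type self-improving higher-integrability lemma, and my plan is to run the standard Calder\'on--Zygmund stopping-time proof: the hypothesized (wrong-side) reverse H\"older inequality on all small subcubes is upgraded to a weak-type distributional inequality for $g$, and integrating that inequality against a suitable power weight yields the self-improvement of the exponent. To keep every integral finite during the argument I would first reduce to $g\in L^\infty$ by truncation (or, more robustly, prove the distributional inequality for $g$ itself but integrate the weight only up to a finite height, letting it tend to infinity by monotone convergence at the end); fix a subcube with $Q_{2R}(x_0)\subset Q$, $R<R_0$, and normalize. For $t$ above a threshold $t_1\simeq\br{\fint_{Q_{2R}}g^q}^{1/q}+\br{\fint_{Q_{2R}}f^q}^{1/q}$, run a dyadic Calder\'on--Zygmund decomposition of $g^q$ over $Q_{2R}$ at height $t^q$: this produces pairwise disjoint dyadic cubes $\set{Q_i^t}$ with $t^q<\fint_{Q_i^t}g^q\le 2^nt^q$, whose dyadic parents have $g^q$-average $\le t^q$, with $g\le t$ a.e.\ off $\bigcup_iQ_i^t$, so that $\set{g>t}\cap Q_R\subset\bigcup_iQ_i^t$ up to a null set.

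Next, since the $Q_i^t$ are small and, after discarding those too near $\bdy Q$, admit a double $2Q_i^t\subset Q$ of diameter $<R_0$, I would apply the assumed inequality with $(Q_R(x_0),Q_{2R}(x_0))$ replaced by $(Q_i^t,2Q_i^t)$, then split $g=g\1_{\set{g>\delta t}}+g\1_{\set{g\le\delta t}}$ (and $f$ likewise) inside the wrong-side $L^1$-averages; the low part of the $b$-term is $O(b\delta^qt^q)$ and is moved to the left-hand side (forcing $\delta$ small, depending on $b,q,n$). Summing $\abs{Q_i^t}\times(\cdots)$ over $i$, using disjointness of $\set{Q_i^t}$, bounded overlap of the dilates, and the inclusion $2Q_i^t\subset\set{\mathcal M(g^q)>c_nt^q}$, yields, for $t>t_1$, a distributional inequality of the shape
\[
\begin{aligned}
\int_{Q_R\cap\set{g>t}}g^q\,dx&\lesssim t^{q-1}\int_{Q_{2R}\cap\set{g>\delta t}}g\,dx+\int_{Q_{2R}\cap\set{f>\delta t}}f^q\,dx\\
&\quad+(\theta+b\delta^q)\int_{\set{\mathcal M(g^q)>c_nt^q}}g^q\,dx.
\end{aligned}
\]

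For the self-improvement, I would use the identity
\[
\int_{Q_R}g^{q+\epsilon}\,dx=\epsilon\int_0^\infty t^{\epsilon-1}\br{\int_{Q_R\cap\set{g>t}}g^q\,dx}\,dt,
\]
split the outer integral at $t_1$ (the part over $(0,t_1)$ being $\le t_1^\epsilon\int_{Q_R}g^q$), insert the distributional inequality on $(t_1,\infty)$, and apply Fubini; together with the elementary bound $\int_{\set{\mathcal M(g^q)>\lambda}}g^q\lesssim\int_{\set{g^q>c\lambda}}g^q$ (from weak-$(1,1)$ of $\mathcal M$) one finds that the first two right-hand terms contribute $\lesssim\epsilon\,\delta^{-(q+\epsilon-1)}\int_{Q_{2R}}g^{q+\epsilon}+\delta^{-\epsilon}\int_{Q_{2R}}f^{q+\epsilon}$, while the last contributes $\lesssim(\theta+b\delta^q)\int_{Q_{2R}}g^{q+\epsilon}$. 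Choosing $\delta$ and then $\epsilon$ appropriately makes the coefficient of $\int_{Q_{2R}}g^{q+\epsilon}$ strictly below $1$; absorbing that term, rearranging, and undoing the normalization and truncation gives the asserted reverse H\"older bound for $p\in[q,q+\epsilon)$, and covering $Q_R$ by such cubes gives $g\in L^p_{\loc}(Q)$.

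I expect the genuine difficulty to be the final absorption: one must track every constant coming out of the Calder\'on--Zygmund covering (disjointness, overlap of dilated cubes, the $2^n$ from passing to parents and doubles), the H\"older splitting at height $\delta t$, and the Fubini integration, and certify that the resulting coefficient of $\int_{Q_{2R}}g^{q+\epsilon}$ is strictly less than $1$ after the parameters are fixed. This is delicate because (i) the hypothesis compares $Q_R$ with $Q_{2R}$ rather than a single cube, so the domains of the absorbed integral must be reconciled --- by peeling off $\set{g\le t_1}$, where $g^{q+\epsilon}\le t_1^\epsilon g^q$ is already controlled by the data, and/or by iterating over a chain of concentric cubes --- and (ii) the term $\theta\fint g^q$, absent from the textbook Gehring lemma, survives the integration with an $\epsilon$-independent constant and must be swallowed using only $0\le\theta<1$ together with $b>1$. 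Balancing these is exactly what pins down the admissible range of $\epsilon$ and the dependence of the constants on $b$, $\theta$, $q$, $n$; the a priori hypothesis $f\in L^s$ with $s>q$ is what makes the term $\int f^{q+\epsilon}$ meaningful, so $\epsilon$ is additionally limited by $s-q$.
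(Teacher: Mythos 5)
The paper does not actually prove this lemma: it cites it verbatim as Proposition~1.1 of Chapter~V of Giaquinta's monograph, stating explicitly that the two results from that reference are ``include[d] here for reader's convenience,'' so there is no in-paper argument to compare against. Your blind reconstruction is the standard Gehring-type stopping-time argument, which is in substance how the result is proved in the literature (and in Giaquinta's book): Calder\'on--Zygmund decomposition at each level, application of the hypothesized inequality on each stopping cube and its concentric double, a distributional inequality, weighted integration in $t$, and absorption. You also correctly flag, rather than gloss over, the two genuine delicacies --- reconciling the $Q_R$-versus-$Q_{2R}$ domain mismatch and the $\theta$-term whose coefficient does not shrink with $\epsilon$.

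One substantive caution on your displayed distributional inequality: the implicit ``$\lesssim$'' constant multiplies $(\theta+b\delta^q)\int_{\{\mathcal M(g^q)>c_nt^q\}}g^q$ as well. If you route the $\theta$-term through bounded overlap of the dilated stopping cubes and then through the weak-$(1,1)$ (or $L^{p/q}$) bound for $\mathcal M$, you insert a dimensional factor $K_n>1$ in front of $\theta$ (and if you use the $L^{p/q}$ bound, that operator norm blows up as $p\downarrow q$). After the Fubini step the absorbed coefficient is then $K_n\theta+o_{\delta,\epsilon}(1)$, which the hypothesis $\theta<1$ alone does not force below $1$. To close the argument for \emph{every} $\theta\in[0,1)$ the bookkeeping must be tighter: write the $\theta$-sum as $\theta\,2^{-n}\sum_j\int_{2C_j}g^q$ using $|C_j|/|2C_j|=2^{-n}$, isolate the contribution of $\int_{C_j}g^q$ (which sums over disjoint stopping cubes with no overlap constant, giving exactly $\theta$ times an absorbable integral) and push the annular contribution from $2C_j\setminus C_j$ into the $\delta$- and $t_1$-controlled terms, so that the true coefficient of $\int_{Q_{2R}}g^{q+\epsilon}$ is $\theta$ plus an error you can make small. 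This is exactly the constant-tracking you identified as the ``genuine difficulty''; with it supplied, your outline becomes a complete proof, but as written the displayed inequality is not strong enough to conclude.
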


In applications, if one can show that
\begin{multline*}
     \fint_{Q_{R}(x_0)}\abs{\nabla u}^2dx\\
     \le b\br{\fint_{Q_{2R}(x_0)}\abs{\nabla u}^{2r}dx}^{1/r}+\fint_{Q_{2R}(x_0)}\abs{f}^2dx+\theta\fint_{Q_{2R}(x_0)}\abs{\nabla u}^2dx
\end{multline*}
for each $x_0\in Q$ and each $R<\min\set{\frac{1}{2}\dist (x_0,\bdy Q),R_0}$, where $b>1$, $r\in(0,1)$ and $\theta\in[0,1)$ are some constants, then by letting $g=\abs{\nabla u}^{2r}$, $q=\frac{1}{r}$ and $f$ be $\abs{f}^{2r}$ in Lemma \ref{GiaProp1.1}, one obtains that
$\abs{\nabla u}\in L_{\loc}^p(Q)$ for $p\in[2,2+\epsilon)$ and
\[
\br{\fint_{Q_R}\abs{\nabla u}^pdx}^{1/p}\le c\left(\br{\fint_{Q_{2R}}\abs{\nabla u}^2dx}^{1/2}+\br{\fint_{Q_{2R}}\abs{f}^pdx}^{1/p}\right)
\]
for $Q_{2R}\subset Q$, $R<R_0$, where $c$ and $\epsilon$ are positive constants depending only on $b$, $\theta$, $r$ and $n$.

\begin{lem}[\cite{giaquinta1983multiple} Chapter V Lemma 3.1]\label{GiaLemma3.1}
Let $f(t)$ be a nonnegative bounded function defined in $[r_0,r_1]$, $r_0\ge 0$. Suppose that for $r_0\le t<s\le r_1$ we have
\[
f(t)\le \br{A(s-t)^{-\alpha}+B}+\theta f(s)
\]
where $A,B,\alpha,\theta$ are nonnegative constants with $0\le\theta<1$. Then for all $r_0\le\rho<R\le r_1$ we have
\[
f(\rho)\le c\br{A(R-\rho)^{-\alpha}+B}
\]
where $c$ is a constant depending on $\alpha$ and $\theta$.
\end{lem}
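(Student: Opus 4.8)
This is the classical "hole‑filling" iteration lemma, and I would prove it by a geometric‑series bootstrap. Fix $r_0\le\rho<R\le r_1$. The first move is to choose a ratio $\tau\in(0,1)$ such that $q:=\theta\,\tau^{-\alpha}<1$; this is possible because $\theta<1$ (if $\alpha>0$ take $\tau$ close enough to $1$, and if $\alpha=0$ any $\tau\in(0,1)$ works with $q=\theta$). Crucially, $\tau$ can be selected depending only on $\alpha$ and $\theta$, which is what produces the constant $c=c(\alpha,\theta)$ at the end. Next I would set up the increasing sequence $t_0:=\rho$ and $t_{i+1}:=t_i+(1-\tau)\tau^i(R-\rho)$, so that $t_{i+1}-t_i=(1-\tau)\tau^i(R-\rho)$ and, summing the geometric series, $t_i\nearrow R$ as $i\to\infty$; in particular every $t_i\in[\rho,R]\subset[r_0,r_1]$, so the hypothesis applies at each stage.

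Applying the assumed inequality with $t=t_i$ and $s=t_{i+1}$ gives
\[
f(t_i)\le A(1-\tau)^{-\alpha}\tau^{-i\alpha}(R-\rho)^{-\alpha}+B+\theta f(t_{i+1}).
\]
Iterating this from $i=0$ up to $i=k-1$ and using $t_0=\rho$ yields
\[
f(\rho)\le \Bigl(A(1-\tau)^{-\alpha}(R-\rho)^{-\alpha}\Bigr)\sum_{i=0}^{k-1}\theta^i\tau^{-i\alpha}+B\sum_{i=0}^{k-1}\theta^i+\theta^k f(t_k).
\]
Here is where the boundedness hypothesis on $f$ enters: since $\sup_{[r_0,r_1]}f<\infty$ and $0\le\theta<1$, the tail term $\theta^k f(t_k)\to0$ as $k\to\infty$. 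Passing to the limit, and using $\sum_{i\ge0}\theta^i\tau^{-i\alpha}=\sum_{i\ge0}q^i=(1-q)^{-1}$ together with $\sum_{i\ge0}\theta^i=(1-\theta)^{-1}$, we obtain
\[
f(\rho)\le \frac{A(1-\tau)^{-\alpha}}{1-q}\,(R-\rho)^{-\alpha}+\frac{B}{1-\theta}\le c\br{A(R-\rho)^{-\alpha}+B},
\]
with $c:=\max\set{(1-\tau)^{-\alpha}(1-q)^{-1},\,(1-\theta)^{-1}}$ depending only on $\alpha$ and $\theta$ through the earlier choice of $\tau$. This is the desired conclusion.

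There is no serious obstacle here; the only delicate point — and the heart of the argument — is the choice of the contraction ratio $\tau$, which must simultaneously keep the geometric factor $\theta$ in play and tame the blow‑up factor $\tau^{-i\alpha}$ coming from the shrinking annuli $t_{i+1}-t_i$, so that the resulting series $\sum(\theta\tau^{-\alpha})^i$ converges. The case $\alpha=0$ is degenerate but harmless (the inequality then reads $f(t)\le(A+B)+\theta f(s)$ and the same iteration gives $f(\rho)\le(A+B)/(1-\theta)$). I would also remark that one never needs $f$ to be continuous or monotone — only bounded — since the tail term is controlled purely by $\theta^k\to0$.
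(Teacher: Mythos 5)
Your proof is correct, and it is precisely the standard iteration argument given in Giaquinta's book (Chapter V, Lemma 3.1), which the paper cites without reproducing. The key point — choosing $\tau\in(0,1)$ so that $q=\theta\tau^{-\alpha}<1$, setting up the geometric sequence $t_i\nearrow R$, and killing the tail term $\theta^k f(t_k)$ via the boundedness hypothesis — is exactly how the reference handles it, and your constant bookkeeping $c=\max\{(1-\tau)^{-\alpha}(1-q)^{-1},(1-\theta)^{-1}\}$ is sound.
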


\subsection{Hardy Norms}\label{Hardy norms subsec}
\begin{defn}
 We say $f\in L^1(\Rn)$ is in the real Hardy space $\mathcal{H}^1(\Rn)$ if 
 \[
 \norm{f}_{\mathcal{H}^1(\Rn)}:=\norm{\sup_{t>0}\abs{h_t*f}}_{L^1(\Rn)}<\infty,
 \]
 where $h_t(x)=\frac{1}{t^n}h\br{\frac{x}{t}}$, and $h$ is any smooth non-negative function on $\Rn$, with $\supp h\subset B_1(0)$ such that $\int_{\Rn}h(x)dx=1$. 
 \end{defn}

\begin{prop}\label{HardyProp1}
Let $1<p<\infty$. Let $u\in\dot{W}^{1,p}(\Rn)$, $v\in \dot{W}^{1,p'}(\Rn)$. Then $\Dj u\Di v-\Di u\Dj v\in\mathcal{H}^1(\Rn)$ for any $1\le i,j\le n$, and
\begin{equation}
  \norm{\Dj u\Di v-\Di u\Dj v}_{\mathcal{H}^1(\Rn)}\lesssim \norm{\nabla u}_{L^p}\norm{\nabla v}_{L^{p'}},
\end{equation}
where the implicit constant depends only on $p$ and dimension.
\end{prop}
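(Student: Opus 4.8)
The plan is to recognize the quantity $\Dj u\,\Di v-\Di u\,\Dj v$ as a prototypical div--curl product and to run the Coifman--Lions--Meyer--Semmes argument directly against the maximal--function definition of $\mathcal H^1$ given above. First I would set $E:=(\Dj u)\,e_i-(\Di u)\,e_j\in L^p(\Rn;\Rn)$, so that $\norm{E}_{L^p}\lesssim\norm{\nabla u}_{L^p}$, $\ E\cdot\nabla v=\Dj u\,\Di v-\Di u\,\Dj v=:f$, and $\divg E=\Di\Dj u-\Dj\Di u=0$ in the distributional sense. Hölder's inequality gives $f\in L^1(\Rn)$ with $\norm{f}_{L^1}\lesssim\norm{\nabla u}_{L^p}\norm{\nabla v}_{L^{p'}}$, so it is legitimate to test $f$ against the definition of $\mathcal H^1$. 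To avoid dealing with second derivatives, I would work first with mollified data $u_\delta=u*\rho_\delta$, $v_\delta=v*\rho_\delta$ and the corresponding $E_\delta$, $f_\delta$; then $\nabla u_\delta\to\nabla u$ in $L^p$, $\nabla v_\delta\to\nabla v$ in $L^{p'}$, $f_\delta\to f$ in $L^1$, $\divg E_\delta=0$, and the final estimate for $f$ follows from the one for $f_\delta$ by Fatou's lemma.

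The core step is a pointwise bound on $h_t*f_\delta$. Integrating by parts twice (the two terms with a second derivative of $u_\delta$ cancel by the antisymmetry in $i,j$) yields $h_t*f_\delta(x)=\int\nabla h_t(x-y)\cdot E_\delta(y)\,v_\delta(y)\,dy$; since $\int\nabla h_t(x-y)\cdot E_\delta(y)\,dy=\int h_t(x-y)\,\divg E_\delta(y)\,dy=0$, I may subtract the constant $c:=(v_\delta)_{B(x,t)}$ (recall $\supp h\subset B_1(0)$, so the integrand is supported in $B(x,t)$), obtaining
\[
\abs{h_t*f_\delta(x)}\le \frac{C}{t^{n+1}}\int_{B(x,t)}\abs{E_\delta}\,\abs{v_\delta-(v_\delta)_{B(x,t)}}.
\]
Now I would fix exponents $1<a<p$ and $1<b<\min\{p',n\}$ with $a'\le b^*:=\tfrac{nb}{n-b}$; such a pair exists (if $p'<n$, take $a$ close to $p$ so that $a'<(p')^*$ and then $b$ close to $p'$; if $p'\ge n$, any $b<n$ works since $b^*\to\infty$ as $b\to n$). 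Applying Hölder with exponents $a,a'$, then the Sobolev--Poincaré inequality $\norm{v_\delta-(v_\delta)_B}_{L^{b^*}(B)}\lesssim\norm{\nabla v_\delta}_{L^b(B)}$ followed by Hölder on the ball to pass from $L^{b^*}$ to $L^{a'}$, and bookkeeping the powers of $t$ via $\tfrac1a+\tfrac1{a'}=1$ and $\tfrac1{b^*}=\tfrac1b-\tfrac1n$ (they cancel exactly), I get
\[
\sup_{t>0}\abs{h_t*f_\delta(x)}\lesssim \br{M(\abs{E_\delta}^a)(x)}^{1/a}\br{M(\abs{\nabla v_\delta}^b)(x)}^{1/b},
\]
where $M$ is the Hardy--Littlewood maximal operator.

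Finally I would integrate in $x$, apply Hölder with exponents $p,p'$, and use the $L^{p/a}$- and $L^{p'/b}$-boundedness of $M$ (valid precisely because $a<p$ and $b<p'$):
\[
\big\Vert\sup_{t>0}\abs{h_t*f_\delta}\big\Vert_{L^1}\lesssim \norm{M(\abs{E_\delta}^a)}_{L^{p/a}}^{1/a}\norm{M(\abs{\nabla v_\delta}^b)}_{L^{p'/b}}^{1/b}\lesssim \norm{E_\delta}_{L^p}\norm{\nabla v_\delta}_{L^{p'}}\lesssim \norm{\nabla u_\delta}_{L^p}\norm{\nabla v_\delta}_{L^{p'}}.
\]
Letting $\delta\to0$ and invoking Fatou gives $f\in\mathcal H^1(\Rn)$ with $\norm{f}_{\mathcal H^1}\lesssim\norm{\nabla u}_{L^p}\norm{\nabla v}_{L^{p'}}$. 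The one genuinely delicate point is the endpoint behaviour of $M$: since $M$ is unbounded on $L^1$ one cannot simply take $a=p$, $b=p'$, and the sub-exponent trick above is exactly what repairs this; the only place it needs care is verifying that an admissible pair $(a,b)$ exists when $p'\ge n$, where one must push the integrability of $v$ below the critical Sobolev exponent. (Alternatively, one may quote the div--curl lemma of Coifman--Lions--Meyer--Semmes \cite{coifman1993compensated} applied to the divergence-free field $E$ and the curl-free field $\nabla v$.)
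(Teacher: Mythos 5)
Your proof is correct, and it is essentially the intended argument: the paper does not prove Proposition \ref{HardyProp1} itself but refers to \cite{li2019boundary} and \cite{seregin2012divergence}, both of which rest on the Coifman--Lions--Meyer--Semmes div--curl lemma from \cite{coifman1993compensated}. What you have written is precisely a self-contained unfolding of the CLMS proof, applied to the divergence-free field $E=(\Dj u)\,e_i-(\Di u)\,e_j$ paired with the gradient $\nabla v$, including the correct sub-exponent/Sobolev--Poincar\'e device needed to obtain the grand maximal function bound via $M$.
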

We refer to \cite{li2019boundary} and \cite{seregin2012divergence} for its proof.

\begin{prop}\label{HardyProp3}
Let $1<p<\infty$. Let $u\in\dot{W}^{1,p}(\Rn)$, $v\in \dot{W}^{1,p'}(\Rn)$. Then  $\Di(uv)\in\mathcal{H}^1(\Rn)$ for any $1\le i\le n$ with
\begin{equation}
  \norm{\Di(uv)}_{\mathcal{H}^1(\Rn)}\lesssim \norm{u}_{L^p}\norm{\nabla v}_{L^{p'}}+\norm{\nabla u}_{L^p}\norm{v}_{L^{p'}},
\end{equation}
where the implicit constant depends only on $p$ and dimension.
\end{prop}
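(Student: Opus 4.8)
\emph{Proof strategy.} The plan is to bound directly the smooth maximal function that defines the $\mathcal{H}^1$-norm, in the spirit of the Coifman--Lions--Meyer--Semmes argument behind Proposition~\ref{HardyProp1}. By H\"older's inequality, $uv\in L^1(\Rn)$ and $u\,\Di v,\ v\,\Di u\in L^1(\Rn)$, so $\Di(uv)=u\,\Di v+v\,\Di u$ is a genuine $L^1$ function, and it suffices to control $\bigl\|\sup_{t>0}\abs{h_t*\Di(uv)}\bigr\|_{L^1}$. The first step is the identity $h_t*\Di(uv)=(\partial_i h)_t*(uv)$, where $(\partial_i h)_t(z):=t^{-n-1}(\partial_i h)(z/t)$; since $\int_{\Rn}\partial_i h=0$ and $\partial_i h$ is supported in $\overline{\Delta_1}$, subtracting the constant $u_Bv_B$ (with $B=\Delta(x,t)$, $u_B=\fint_B u$), which the kernel annihilates, yields
\[
\abs{h_t*\Di(uv)(x)}\ \lesssim\ \frac1t\fint_{\Delta(x,t)}\abs{uv-u_Bv_B}\ \le\ \frac1t\fint_B\abs{u-u_B}\abs{v}+\frac{\abs{u_B}}{t}\fint_B\abs{v-v_B}.
\]

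Next I would dominate each term by Hardy--Littlewood maximal functions. For the first, apply H\"older on the average $\fint_B$ with an exponent $a>p$ chosen so that $b:=\frac{na}{n+a}<p$ (such $a$ exist for every $p\in(1,\infty)$); then also $a'<p'$, and the Sobolev--Poincar\'e inequality gives $\bigl(\fint_B\abs{u-u_B}^a\bigr)^{1/a}\lesssim t\,\bigl(\fint_B\abs{\nabla u}^b\bigr)^{1/b}$, so that, after bounding the remaining averages by $M$,
\[
\frac1t\fint_B\abs{u-u_B}\abs{v}\ \lesssim\ \bigl[M(\abs{\nabla u}^b)(x)\bigr]^{1/b}\bigl[M(\abs{v}^{a'})(x)\bigr]^{1/a'}.
\]
Treating $\frac{\abs{u_B}}{t}\fint_B\abs{v-v_B}$ symmetrically (bound $\abs{u_B}\le[M(\abs{u}^{\sigma})(x)]^{1/\sigma}$ for some $\sigma\in(1,p)$, and apply Sobolev--Poincar\'e to $v$), one arrives at
\[
\sup_{t>0}\abs{h_t*\Di(uv)(x)}\ \lesssim\ \bigl[M(\abs{\nabla u}^b)\bigr]^{1/b}\bigl[M(\abs{v}^{a'})\bigr]^{1/a'}+\bigl[M(\abs{u}^{\sigma})\bigr]^{1/\sigma}\bigl[M(\abs{\nabla v}^{\beta})\bigr]^{1/\beta},
\]
with $b,\sigma<p$ and $a',\beta<p'$. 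Integrating in $x$, H\"older's inequality with exponents $p,p'$ together with the $L^r$-boundedness of $M$ for $r>1$ (applied with $r=p/b$, $r=p'/a'$, and so on) turn the two products into $\norm{\nabla u}_{L^p}\norm{v}_{L^{p'}}$ and $\norm{u}_{L^p}\norm{\nabla v}_{L^{p'}}$, respectively, which is the asserted estimate. (If $u,v$ are only assumed to lie in the stated Sobolev classes the same steps apply, or one approximates by smooth functions and passes to the limit using the uniformity of the bound.)

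The step I expect to require the most care is the exponent bookkeeping just above: one cannot use the $L^1$-Poincar\'e inequality here, since that would make $M$ act in $L^1$, where it is unbounded; the remedy is the genuine gain of integrability in the Sobolev--Poincar\'e inequality, which has to be calibrated so that the conjugate exponent of the companion factor stays \emph{strictly} below $p'$ (respectively $p$) --- this is exactly the mechanism already present in the proof of Proposition~\ref{HardyProp1}. An alternative route is to reduce the statement to Proposition~\ref{HardyProp1} (equivalently, to the div--curl lemma of \cite{coifman1993compensated}): lift $u$ and $v$ to $\mathbb{R}^{n+1}$ by $\widetilde u(x,s)=u(x)g(s)$ and $\widetilde v(x,s)=v(x)\psi(s)$ with $g,\psi\in C_0^\infty(\Real)$, apply Proposition~\ref{HardyProp1} on $\mathbb{R}^{n+1}$ with the index pair $(i,n+1)$, and integrate the resulting $\mathcal{H}^1(\mathbb{R}^{n+1})$ function in the variable $s$ --- integrating out one variable maps $\mathcal{H}^1(\mathbb{R}^{n+1})$ boundedly into $\mathcal{H}^1(\Rn)$, as one checks by projecting atoms --- which produces a nonzero multiple of $\Di(uv)$; optimizing over the common dilation scale of $g$ and $\psi$ and applying the arithmetic--geometric mean inequality then recovers the sharp cross-term form of the right-hand side.
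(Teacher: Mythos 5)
The paper does not prove this proposition: immediately after stating Propositions~\ref{HardyProp3} and~\ref{HardyProp2} it says ``The proofs \dots can be found in \cite{HLMPLp}.'' So there is no in-text proof to compare against; I can only assess your argument on its own.

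Your main argument is correct, and it is the standard Coifman--Lions--Meyer--Semmes mechanism that also underlies Proposition~\ref{HardyProp1}, which is almost certainly what is used in the cited reference. The key steps all check out: writing $h_t*\Di(uv)=(\partial_i h)_t*(uv)$ and using the vanishing mean of $\partial_i h$ to subtract the constant $u_Bv_B$; the split $uv-u_Bv_B=(u-u_B)v+u_B(v-v_B)$; and the exponent calibration. For the factor $(u-u_B)v$ the choice $a>p$ with $b=\frac{na}{n+a}<p$ (always possible: for $p<n$ take $a\in(p,\frac{np}{n-p})$, for $p\ge n$ any $a>p$ works), which forces $a'<p'$, is exactly right, and makes $M$ act on $L^{p/b}$ and $L^{p'/a'}$ with exponents strictly above $1$. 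The factor $u_B(v-v_B)$ is actually less delicate than you indicate: $\frac1t\fint_B|v-v_B|\lesssim\fint_B|\nabla v|\le[M(|\nabla v|)(x)]$ and $M$ is already bounded on $L^{p'}$ since $p'>1$, so no gain of integrability is needed there, and likewise $|u_B|\le M u(x)$ suffices. That only strengthens the argument. Integrating with H\"older in $p,p'$ and the Hardy--Littlewood maximal theorem then yields the stated bound. The alternative route via lifting to $\mathbb{R}^{n+1}$ and integrating out the extra variable is also viable --- the map $F\mapsto\int F(\cdot,s)\,ds$ does take $\mathcal{H}^1(\mathbb{R}^{n+1})$ boundedly into $\mathcal{H}^1(\Rn)$, as one verifies by the atomic decomposition as you indicate --- but as a route to the stated two-term bound it requires the additional optimization-over-dilations step, so it is more circuitous; the direct argument is cleaner and fully rigorous.
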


\begin{prop}\label{HardyProp2}
Let $u$, $v\in W^{1,2}(\Rn)$, and $\vp$ be a Lipschitz function in $\Rn$. Then  $\Dj(uv)\Di\vp-\Di(uv)\Dj\vp\in\mathcal{H}^1(\Rn)$ for any $1\le i,j\le n$, and
\begin{equation*}
  \norm{\Dj(uv)\Di\vp-\Di(uv)\Dj\vp}_{\mathcal{H}^1(\Rn)}\lesssim \norm{u\abs{\nabla \vp}}_{L^2}\norm{\nabla v}_{L^2}+\norm{v}_{L^2}\norm{\abs{\nabla u}\abs{\nabla\vp}}_{L^2},
\end{equation*}
or
\begin{multline*}
    \norm{\Dj(uv)\Di\vp-\Di(uv)\Dj\vp}_{\mathcal{H}^1(\Rn)}\\
    \lesssim \norm{\nabla \vp}_{L^{\infty}(\Rn)}\Big(\norm{u}_{L^2}\norm{\nabla v}_{L^2}+\norm{v}_{L^2}\norm{\nabla u}_{L^2}\Big),   
\end{multline*}
where the implicit constant depends only on dimension. 

\end{prop}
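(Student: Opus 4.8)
The statement is a compensated-compactness (div--curl) estimate, and the plan is to prove it through the radial maximal function characterization of $\mathcal H^1(\Rn)$, bringing in Proposition \ref{HardyProp3} to take advantage of the fact that $uv$ is a product. First I would reduce to the case that $\vp$ is smooth. Mollifying $\vp$ at scale $\delta$ gives $\vp_\delta\in C^\infty$ with $\norm{\nabla\vp_\delta}_{L^\infty}\le\norm{\nabla\vp}_{L^\infty}$ and $\nabla\vp_\delta\to\nabla\vp$ a.e.\ and in $L^2_{\loc}$; the corresponding $f_\delta:=\Dj(uv)\Di\vp_\delta-\Di(uv)\Dj\vp_\delta$ then converges to $f$ in $L^1(\Rn)$, hence in $\mathcal S'(\Rn)$, by dominated convergence. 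Since $\mathcal H^1(\Rn)=(\mathrm{VMO}(\Rn))^*$ is the dual of a separable space, a bound $\norm{f_\delta}_{\mathcal H^1}\le C$ that is uniform in $\delta$ forces, along a subsequence, $f_\delta\rightharpoonup f$ weak-$*$ in $\mathcal H^1$ with $\norm{f}_{\mathcal H^1}\le C$. So it is enough to prove the two inequalities for $\vp\in C^\infty$, with constants depending only on the dimension and the quantities on the right-hand sides --- crucially, independent of the second derivatives of $\vp$.

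With $\vp$ smooth one has $f=\Dj\big((uv)\Di\vp\big)-\Di\big((uv)\Dj\vp\big)$, because $(uv)(\Dj\Di\vp-\Di\Dj\vp)=0$. Testing against the mollifier $h_t$ from the definition of $\mathcal H^1$ and integrating by parts,
\[
h_t*f(x)=\int_{\Rn}(\nabla h_t)(x-y)\cdot B(y)\,(uv)(y)\,dy,\qquad B:=\Di\vp\,e_j-\Dj\vp\,e_i ,
\]
and $B$ is divergence-free. Integrating by parts once more --- here $\divg B=0$ is exactly what removes the second derivatives of $\vp$ --- gives
\[
h_t*f(x)=\int_{\Rn}h_t(x-y)\,B(y)\cdot\nabla(uv)(y)\,dy=\int_{\Rn}h_t(x-y)\,B(y)\cdot\big(u\nabla v+v\nabla u\big)(y)\,dy .
\]
Because $\abs{B}\le\abs{\nabla\vp}$ and $h_t$ is supported in $B(x,t)$ with $\norm{h_t}_{L^\infty}\lesssim t^{-n}$, Cauchy--Schwarz gives the pointwise bound
\[
\abs{h_t*f(x)}\lesssim\Big(\fint_{B(x,t)}\abs{u}^2\abs{\nabla\vp}^2\Big)^{1/2}\Big(\fint_{B(x,t)}\abs{\nabla v}^2\Big)^{1/2}+\Big(\fint_{B(x,t)}\abs{v}^2\Big)^{1/2}\Big(\fint_{B(x,t)}\abs{\nabla u}^2\abs{\nabla\vp}^2\Big)^{1/2},
\]
already of the shape of the first claimed estimate; the second estimate follows by bounding $\abs{\nabla\vp}\le\norm{\nabla\vp}_{L^\infty}$.

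The main obstacle is to upgrade this to $\sup_t\abs{h_t*f}\in L^1(\Rn)$ with the stated norm. The naive estimate only dominates $\sup_t\abs{h_t*f}$ by products of square roots of Hardy--Littlewood maximal functions of the $L^1$ functions $\abs{u\nabla\vp}^2,\abs{\nabla v}^2,\dots$, which lies in $L^{1,\infty}$ but not in $L^1$ --- the familiar subtlety of the div--curl lemma, which is exactly why the product structure of $uv$ (and not just $uv\in W^{1,1}$) is needed.

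To recover genuine $\mathcal H^1$ one must use the compensated structure more carefully. Since $B$ is divergence-free, inside the integral for $h_t*f(x)$ one may subtract from $uv$ first its average $(uv)_{B(x,t)}$ and then refine through the product of averages $u_{B(x,t)}v_{B(x,t)}$, using $uv-u_{B}v_{B}=(u-u_{B})(v-v_{B})+v_{B}(u-u_{B})+u_{B}(v-v_{B})$. After a further integration by parts the two ``linear'' contributions are dominated by products of Hardy--Littlewood maximal functions of the genuine $L^2$ functions $u,v,\abs{\nabla u}\abs{\nabla\vp},\abs{\nabla v}\abs{\nabla\vp}$ --- for instance $\abs{v_{B(x,t)}}\fint_{B(x,t)}\abs{\nabla u}\abs{\nabla\vp}\le M(v)(x)\,M\!\big(\abs{\nabla u}\abs{\nabla\vp}\big)(x)$ --- and are therefore integrable with exactly the asserted bounds, because $M$ is bounded on $L^2$. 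The residual ``quadratic'' contribution $\int(\nabla h_t)(x-y)\cdot B(y)(u-u_{B})(v-v_{B})\,dy$ is itself a div--curl quantity (a divergence-free field paired with $\nabla\big((u-u_B)(v-v_B)\big)$) whose localized product structure is precisely what lets one invoke Proposition \ref{HardyProp3} --- in a localized form at the scale of $B(x,t)$, summed over a Whitney-type decomposition --- to close the estimate; this is the heart of the proof. Undoing the mollification of $\vp$ as in the first step then finishes the argument.
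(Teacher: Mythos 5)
The broad shape of your argument --- mollify $\vp$, exploit $\divg B_\vp=0$ for $B_\vp:=\Di\vp\,e_j-\Dj\vp\,e_i$ to move the derivative onto $h_t$, subtract averages over $B(x,t)$, and split into linear and quadratic pieces --- is the right one, and the reduction to smooth $\vp$ via weak-$*$ convergence in $\mathcal{H}^1$ is fine. But the step you yourself flag as the heart of the proof is not carried out, and the mechanism you gesture at would not close it. If you integrate the quadratic remainder by parts,
\begin{equation*}
\int\nabla h_t(x-y)\cdot B_\vp(y)\,(u-u_B)(v-v_B)(y)\,dy
= -\int h_t(x-y)\,B_\vp(y)\cdot\nabla\big[(u-u_B)(v-v_B)\big](y)\,dy,
\end{equation*}
you are looking at a local copy of the very quantity you are trying to bound, not an instance of Proposition~\ref{HardyProp3} (which concerns a \emph{full} divergence $\Di(uv)$, not a div--curl pairing); so ``invoking Proposition~\ref{HardyProp3} in a localized form summed over a Whitney-type decomposition'' either begs the question or is too vague to verify. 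Moreover this integration by parts is counterproductive: once $\nabla h_t$ (of size $t^{-n-1}$) has been traded for $h_t$ (of size $t^{-n}$), applying Poincar\'e--Sobolev to $u-u_B$ or $v-v_B$ produces an uncompensated factor of $t$ and the estimate does not close.

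The missing ingredient is the sub-$L^2$-exponent maximal device of Coifman--Lions--Meyer--Semmes, which is also what actually proves Proposition~\ref{HardyProp3}. Keep $\nabla h_t$, fix $2<s<\tfrac{2n}{n-2}$ (any $s>2$ if $n=2$), and set $s'=\tfrac{s}{s-1}<2$, $r=\tfrac{ns}{n+s}<2$; H\"older and Poincar\'e--Sobolev on $B(x,t)$ then bound the quadratic piece pointwise by a product $M_{s'}(\cdot)(x)\,M_{r}(\nabla v)(x)$, where $M_q(g):=M(|g|^q)^{1/q}$, and because $s',r<2$ both factors lie in $L^2$ with the right norms. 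It is precisely the strictly sub-$2$ exponents that upgrade your $L^{1,\infty}$ bound to $L^1$; your write-up never invokes this. A secondary issue: with the symmetric subtraction $uv-u_Bv_B=(u-u_B)(v-v_B)+u_B(v-v_B)+v_B(u-u_B)$, the weight $|\nabla\vp|$ cannot be detached from $\nabla v$ in the $u_B(v-v_B)$ term, so you obtain $\norm{u}_{L^2}\norm{|\nabla\vp|\,|\nabla v|}_{L^2}$ rather than the asserted $\norm{u|\nabla\vp|}_{L^2}\norm{\nabla v}_{L^2}$ (a true but different bound). To land exactly on the stated right-hand side, instead rewrite $f=\divg(vE)$ with $E:=u\,B_\vp$ (so $|E|\le|u||\nabla\vp|$ and $|\divg E|\le|\nabla u||\nabla\vp|$) and subtract \emph{only} the average of $v$: the linear piece is then bounded by $M(v)(x)M(\divg E)(x)$ and the quadratic piece by $M_{s'}(E)(x)M_r(\nabla v)(x)$, which integrate to exactly the two asserted terms. (Note the paper itself does not contain a proof of this proposition; it cites \cite{HLMPLp}, so your argument is being assessed on its own terms.)
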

The proofs for Proposition \ref{HardyProp3} and \ref{HardyProp2} can be found in \cite{HLMPLp}.

\subsection{Hodge Decomposition}\label{Hodge subsec}

Recall that we write the matrix $A=A(x)$ as follows
$$
A=\begin{bmatrix}
\begin{BMAT}{c.c}{c.c}
A_{||} & \mathbf{b} \\
\mathbf{c} & d
\end{BMAT}
\end{bmatrix},
$$
where $A_{||}$ is the $n\times n$ submatrix of $A$, $\bd b$ is a $n\times 1$ vector, $\bd c$ is a $1\times n$ vector, $d$ is a scalar function. We consider the symmetric part $A^s$ and anti-symmetric part $A^a$ of $A$:
$$
A=A^s+A^a:=
\begin{bmatrix}
\begin{BMAT}{c.c}{c.c}
A^s_{||} & \mathbf{b}^s \\
\mathbf{c}^s & d
\end{BMAT}
\end{bmatrix}
+
\begin{bmatrix}
\begin{BMAT}{c.c}{c.c}
A^a_{||} & \mathbf{b}^a \\
\mathbf{c}^a & 0
\end{BMAT}
\end{bmatrix}.
$$
We assume that $A^s$ is $L^{\infty}$ and elliptic, with the ellipticity constant $\lambda_0$ and $\norm{A^s}_{\infty}\le\lambda_0^{-1}$, and that $A^a$ is in $\BMO(\Rn)$, with the $\BMO$ semi-norm 
\begin{equation*}
    \norm{a^a_{ij}}_{\BMO}:=\sup_{Q\subset\Rn}\fint_{Q}\abs{a^a_{ij}-(a^a_{ij})_{Q}}dx\le\Lambda_0.
\end{equation*}

\begin{prop}\label{Hodgedecp_prop}
For any cube $Q\subset\Rn$, there exist $\vp,\widetilde{\vp}\in W_0^{1,2}(5Q)$ that solve
\begin{align}
    &-\divg_x(A_{||}^*\nabla_x\vp)=\divg_x(\mathbf{c}\mathbbm{1}_{5Q}-(\mathbf{c}^a)_{2Q}),\label{cHodege}\\
    &\quad\divg_x(A_{||}\nabla_x\widetilde{\vp})=\divg_x(\bd b\mathbbm{1}_{5Q}-(\bd b^a)_{2Q}),\label{bHodge}
\end{align}
respectively. Moreover, there exists some $\epsilon_0=\epsilon_0(n,\lambda_0,\Lambda_0)>0$ and $C=C(n,\lambda_0,\Lambda_0)>0$ such that for all $p\in[2,2+\epsilon_0]$,
\begin{align}\label{nabla vp_Lp}
    \fint_{5Q}\abs{\nabla \vp(x)}^pdx\le C,\qquad
    \fint_{5Q}\abs{\nabla \widetilde{\vp}(x)}^pdx\le C.
\end{align}
\end{prop}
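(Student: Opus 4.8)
The plan is to first establish existence and uniqueness of $\vp,\wvp\in W_0^{1,2}(5Q)$ solving \eqref{cHodege} and \eqref{bHodge} by the Lax--Milgram theorem, and then upgrade the $W^{1,2}$ bound to the $W^{1,2+\epsilon_0}$ bound \eqref{nabla vp_Lp} via Gehring-type self-improvement, i.e., Lemma \ref{GiaProp1.1}. Since the two equations are structurally identical (one for $A_{||}^*$, one for $A_{||}$), I will carry out the argument for $\vp$ and note that $\wvp$ is the same. For existence, observe that the right-hand side of \eqref{cHodege} is $\divg_x$ of an $L^2(5Q)$ vector field: indeed $\mathbf{c}\1_{5Q}-(\mathbf{c}^a)_{2Q} = \mathbf{c}^s\1_{5Q} + (\mathbf{c}^a\1_{5Q}-(\mathbf{c}^a)_{2Q})$, where $\mathbf{c}^s\in L^\infty$ and, by the John--Nirenberg inequality, $\mathbf{c}^a - (\mathbf{c}^a)_{2Q}\in L^s(5Q)$ for every $s<\infty$ with $\fint_{5Q}|\mathbf{c}^a-(\mathbf{c}^a)_{2Q}|^s\,dx\lesssim_{s,n}\Lambda_0^s$. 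The bilinear form $(\xi,\zeta)\mapsto\int_{5Q}A_{||}^*\nabla\xi\cdot\nabla\zeta$ is bounded and coercive on $W_0^{1,2}(5Q)$ because $A_{||}$ (hence $A_{||}^*$) inherits ellipticity and $L^\infty$ bounds from $A^s$ — note $A_{||} = A_{||}^s + A_{||}^a$, but only the symmetric part enters the coercivity estimate $\int A_{||}^*\nabla\xi\cdot\nabla\xi = \int A_{||}^s\nabla\xi\cdot\nabla\xi\ge\lambda_0\|\nabla\xi\|_{L^2(5Q)}^2$ since the anti-symmetric part integrates to zero against $\nabla\xi\cdot\nabla\xi$; wait, more care is needed here since $A_{||}^a$ is only BMO, not $L^\infty$ — but in the weak formulation $\int A_{||}^a\nabla\xi\cdot\nabla\xi\,dx$ still vanishes pointwise in the integrand by anti-symmetry, so coercivity holds, while boundedness of the form is what actually requires attention. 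To handle boundedness when $A_{||}^a\notin L^\infty$, I would instead test against $\zeta\in W_0^{1,2}\cap W^{1,r'}$ and use the Hardy-space/div-curl structure from Proposition \ref{HardyProp1}; however it is cleaner to first solve with a truncated/mollified $A$ (permissible since, as remarked after Lemma \ref{reduc Carl lem}, we may assume $A$ smooth provided the final bounds are regularity-independent) and obtain uniform estimates.

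The main work is the reverse-Hölder estimate. Fix $x_0\in 5Q$ and $R<\min\{\tfrac12\dist(x_0,\bdy 5Q),R_0\}$ with $R_0\sim l(Q)$. Let $Q_R = Q_R(x_0)$, and let $\bar\vp$ denote the constant $(\vp)_{Q_{2R}}$. Testing \eqref{cHodege} against $(\vp-\bar\vp)\chi^2$ for a cutoff $\chi$ supported in $Q_{2R}$, equal to $1$ on $Q_R$, with $|\nabla\chi|\lesssim 1/R$, and using ellipticity of $A_{||}^s$, I get a Caccioppoli-type inequality
\begin{equation*}
\fint_{Q_R}|\nabla\vp|^2\,dx \lesssim \frac{1}{R^2}\fint_{Q_{2R}}|\vp-\bar\vp|^2\,dx + \fint_{Q_{2R}}\Big(|\mathbf{c}^s| + |\mathbf{c}^a - (\mathbf{c}^a)_{2Q}|\Big)^2\,dx + \Big|\int_{Q_{2R}} A_{||}^a\nabla\vp\cdot\nabla\big((\vp-\bar\vp)\chi^2\big)\,dx\Big|.
\end{equation*}
The last term is the delicate one: because $A_{||}^a$ is only BMO one cannot bound it crudely. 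Here I would split $A_{||}^a = (A_{||}^a - (A_{||}^a)_{Q_{2R}}) + (A_{||}^a)_{Q_{2R}}$; the constant-matrix piece contributes $\int (A_{||}^a)_{Q_{2R}}\nabla\vp\cdot\nabla((\vp-\bar\vp)\chi^2)$ which, after expanding and using anti-symmetry of the constant matrix, reduces to terms with a cutoff gradient, i.e.\ controlled by $\frac1R\|\nabla\vp\|_{L^2(Q_{2R})}\|\vp-\bar\vp\|_{L^2(Q_{2R})}$ (the genuinely second-order term $\int (A_{||}^a)_{Q_{2R}}\nabla\vp\cdot\nabla\vp\,\chi^2 = 0$ by anti-symmetry). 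The oscillation piece $\int (A_{||}^a - (A_{||}^a)_{Q_{2R}})\nabla\vp\cdot\nabla((\vp-\bar\vp)\chi^2)$ is estimated by Hölder with exponents $(2r', 2r, \ldots)$ together with John--Nirenberg applied to $\|A_{||}^a - (A_{||}^a)_{Q_{2R}}\|_{L^{s}(Q_{2R})}\lesssim_s \Lambda_0$; this produces a factor $\Lambda_0$ times $\big(\fint_{Q_{2R}}|\nabla\vp|^{2r}\big)^{1/r}$ plus lower-order terms, where $r<1$. After absorbing via Young's inequality and applying Sobolev--Poincaré $\frac{1}{R^2}\fint_{Q_{2R}}|\vp-\bar\vp|^2 \lesssim \big(\fint_{Q_{2R}}|\nabla\vp|^{2r}\big)^{1/r}$ with $r = \frac{n}{n+2}\vee\text{(something}<1)$, I arrive at exactly the hypothesis of the remark following Lemma \ref{GiaProp1.1}:
\begin{equation*}
\fint_{Q_R}|\nabla\vp|^2\,dx \le b\Big(\fint_{Q_{2R}}|\nabla\vp|^{2r}\,dx\Big)^{1/r} + \fint_{Q_{2R}}|f|^2\,dx + \theta\fint_{Q_{2R}}|\nabla\vp|^2\,dx,
\end{equation*}
with $f = |\mathbf{c}^s| + |\mathbf{c}^a - (\mathbf{c}^a)_{2Q}|\in L^s(5Q)$ for all $s$, and $\theta<1$ (this $\theta$ does not shrink, but Lemma \ref{GiaProp1.1} permits any $\theta\in[0,1)$). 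Lemma \ref{GiaProp1.1} then yields $\nabla\vp\in L^p_{\loc}(5Q)$ for $p\in[2,2+\epsilon_0)$ with a reverse-Hölder bound.

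Finally I convert the local reverse-Hölder estimate into the global bound \eqref{nabla vp_Lp}: a covering of $5Q$ by cubes $Q_{2R}$ of size $R\sim l(Q)$ together with the reverse-Hölder inequality and the $L^2$ bound $\fint_{5Q}|\nabla\vp|^2\lesssim \fint_{5Q}(|\mathbf{c}^s|^2 + |\mathbf{c}^a-(\mathbf{c}^a)_{2Q}|^2)\,dx \lesssim \lambda_0^{-2} + C_n\Lambda_0^2$ (from Lax--Milgram/Caccioppoli on $5Q$ and John--Nirenberg on $2Q\supset$ a comparable portion — here one uses that $(\mathbf{c}^a)_{2Q}$ rather than $(\mathbf{c}^a)_{5Q}$ is subtracted, but $|(\mathbf{c}^a)_{2Q} - (\mathbf{c}^a)_{5Q}|\lesssim\Lambda_0$ by a standard BMO telescoping estimate, so $\fint_{5Q}|\mathbf{c}^a - (\mathbf{c}^a)_{2Q}|^2\lesssim\Lambda_0^2$) give $\fint_{5Q}|\nabla\vp|^p\,dx\le C(n,\lambda_0,\Lambda_0)$. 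The argument for $\wvp$ with $A_{||}$ in place of $A_{||}^*$ and $\bd b$ in place of $\bd c$ is identical. \emph{The main obstacle} is controlling the second-order term $\int A_{||}^a\nabla\vp\cdot\nabla(\cdots)$ in the Caccioppoli step: one must peel off the average of the anti-symmetric matrix (killing the top-order contribution by anti-symmetry) and treat the remaining oscillatory part through John--Nirenberg at a slightly super-critical Lebesgue exponent, which is precisely why $\epsilon_0$ — and the eventual need for a $W^{1,2+\epsilon}$ rather than merely $W^{1,2}$ decomposition — arises.
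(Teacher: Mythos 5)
Your overall strategy --- existence via the variational formulation, then a Caccioppoli-plus-Gehring (Lemma \ref{GiaProp1.1}) self-improvement --- matches the paper's, and your identification of the crux (controlling $\int A_{||}^a\nabla\vp\cdot\nabla(\cdot)$) is correct. Where you genuinely diverge is in \emph{how} that BMO term is controlled. You subtract the cube average of $A_{||}^a$ and then run John--Nirenberg through a H\"older split at a slightly supercritical exponent, producing $\big(\fint|\nabla\vp|^{2r}\big)^{1/r}$ with $r<1$. The paper instead keeps $A_{||}^a$ intact, rewrites $\int A_{||}^a\nabla\wvp\cdot\nabla(\wvp\xi^2)$ as a div-curl quantity (after inserting an extra cutoff $\eta$), and invokes Proposition \ref{HardyProp2} --- i.e. $\mathcal{H}^1$--$\BMO$ duality --- to bound it by $\frac{C}{s-t}\|\wvp\eta\|_{L^2}\|\nabla(\wvp\eta)\|_{L^2}$, which Young then converts to a term absorbable at arbitrarily small $\theta$. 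Both routes exploit the anti-symmetry and the $\BMO$ bound; yours is more elementary (no Hardy-space machinery beyond John--Nirenberg), while the paper's is cleaner in that the factor in front of $\int|\nabla\wvp|^2$ is genuinely arbitrary rather than merely fixed $<1$.

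Two points need repair. First, you claim the constant-matrix piece $\int(A_{||}^a)_{Q_{2R}}\nabla\vp\cdot\nabla\big((\vp-\bar\vp)\chi^2\big)$ is ``controlled by $\frac1R\|\nabla\vp\|_{L^2}\|\vp-\bar\vp\|_{L^2}$'' --- but this carries the factor $|(A_{||}^a)_{Q_{2R}}|$, which is \emph{not} bounded by $\Lambda_0$ (cube averages of a $\BMO$ function are unbounded). Fortunately the term actually vanishes: with $c=(A_{||}^a)_{Q_{2R}}$ constant anti-symmetric and $w=\vp-\bar\vp$, one has $\int c\nabla w\cdot\nabla(\chi^2)w=\tfrac12\int c\nabla(w^2)\cdot\nabla(\chi^2)=-\tfrac12\int w^2\,c_{ij}\partial_i\partial_j(\chi^2)=0$. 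Stating the estimate as you did would leave an apparently uncontrolled constant.

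Second, and more substantively, your Gehring step only covers interior cubes: you restrict to $R<\tfrac12\dist(x_0,\bdy 5Q)$, which gives $\nabla\vp\in L^p_{\loc}(5Q)$ but not the \emph{global} bound $\fint_{5Q}|\nabla\vp|^p\le C$, and your proposed covering by cubes of scale $l(Q)$ is incompatible with that interior restriction near $\bdy 5Q$. The paper resolves this by extending $\wvp$ by zero outside $5Q$, applying Lemma \ref{GiaProp1.1} on a strictly larger cube $Q_{R_0}\supset 5Q$, and proving a separate \emph{boundary} Caccioppoli inequality \eqref{bdy_Caccio} (cases (i)--(iii)); that boundary estimate is what lets one take an inner cube $Q_{R_0/2}\supseteq 5Q$ in the Gehring conclusion. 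This boundary step is essential and absent from your outline.
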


\begin{proof}
We only prove $\fint_{5Q}\abs{\nabla \widetilde{\vp}}^p\le C$, as the estimate for $\nabla \vp$ can be derived similarly. We will identify $\widetilde{\vp}$ with its zero extension outside of $5Q$.

Let $Q_{R_0}$ be a cube in $\Rn$ with $Q_{R_0}\cap 5Q\neq\emptyset$. For any $x\in Q_{R_0}$ and $0<R<\frac{1}{2}\dist(x,\bdy Q_{R_0})$, we have three possibilities:
\begin{enumerate}[(i)]
    \item $Q_{\frac{3}{2}R}(x)\cap 5Q=\emptyset$,
    \item $Q_{\frac{3}{2}R}(x)\cap(Q_{R_0}\setminus 5Q)=\emptyset$,
    \item $Q_{\frac{3}{2}R}(x)\cap 5Q\neq\emptyset$ and $Q_{\frac{3}{2}R}(x)\cap (Q_{R_0}\setminus 5Q)\neq\emptyset$.
\end{enumerate}

In case (ii), $Q_{\frac{3}{2}R}(x)\subset 5Q$, by the interior Caccioppoli inequality and Poincar\'e-Sobolev inequality, we have
\begin{align}
    \int_{Q_R(x)}\abs{\nabla \widetilde{\vp}}^2dy&\le CR^{-2}\int_{Q_{\frac{3}{2}R}(x)}\abs{\widetilde{\vp}-(\widetilde{\vp})_{Q_{3/2R}(x)}}^2dy+CR^n\label{int_Caccio}\\
    &\le C\br{\int_{Q_{\frac{3}{2}R}(x)}\abs{\nabla \widetilde{\vp}}^r}^{2/r}+CR^n,\nonumber
\end{align}
where $r=\frac{2n}{n+2}$.

In case (iii), we also have
\begin{equation*}
   \int_{Q_R(x)}\abs{\nabla \widetilde{\vp}}^2dy\le C\br{\int_{Q_{\frac{3}{2}R}(x)}\abs{\nabla \widetilde{\vp}}^r}^{2/r}+CR^n,
\end{equation*}
which follows from the boundary Caccioppoli inequality, 
\begin{equation}
    \int_{Q_R(x)}\abs{\nabla \widetilde{\vp}}^2dy\le CR^{-2}\int_{Q_{\frac{3}{2}R}(x)\cap 5Q}\abs{\widetilde{\vp}}^2dy+CR^n\label{bdy_Caccio},\\
\end{equation}
and a Sobolev-Poincar\'e theorem. The proof for \eqref{bdy_Caccio} is postponed until the end.

Now we can apply Lemma \ref{GiaProp1.1} to get
\[
\fint_{Q_{\frac{R_0}{2}}\cap 5Q}\abs{\nabla \widetilde{\vp}}^p\le C\br{\fint_{Q_{R_0}\cap 5Q}\abs{\nabla \widetilde{\vp}}^2}^{p/2}+C\fint_{Q_{R_0}}1.
\]
Choose $Q_{\frac{R_0}{2}}\supseteq 5Q$ then
\begin{equation*}
    \fint_{5Q}\abs{\nabla \widetilde{\vp}}^p\le C\br{\fint_{5Q}\abs{\nabla \widetilde{\vp}}^2}^{p/2}+C_n.
\end{equation*}
We claim that
\begin{equation}\label{nablav_L2}
  \fint_{5Q}\abs{\nabla \widetilde{\vp}}^2\le C(n,\lambda_0,\Lambda_0),
\end{equation}
which would imply the desired bound for $\fint_{5Q}\abs{\nabla \widetilde{\vp}}^p$.
In fact, taking $\widetilde{\vp}\in W_0^{1,2}(5Q)$ as a test function, equation \eqref{bHodge} and ellipticity of $A^s$ imply
\[
\lambda_0\int_{5Q}\abs{\nabla \widetilde{\vp}}^2\le\int_{5Q}A_{||}^s\nabla \widetilde{\vp}\cdot\nabla \widetilde{\vp}=\int_{5Q}A_{||}\nabla \widetilde{\vp}\cdot\nabla \widetilde{\vp}=\int_{5Q}\mathbf{b}\cdot\nabla \widetilde{\vp}.
\]
We have
\begin{multline*}
    \abs{\int_{5Q}\mathbf{b}\cdot\nabla \widetilde{\vp}}=\abs{\int_{5Q}\br{b_j^s+b_j^a-(b_j^a)_{5Q}}\Dj \widetilde{\vp}}\\
    \le \frac{\lambda_0}{2}\int_{5Q}\abs{\nabla \widetilde{\vp}}^2+C\int_{5Q}\abs{b_j^a-(b_j^a)_{5Q}}^2+C\int_{5Q}1.
\end{multline*}
Then \eqref{nablav_L2} follows from the John-Nirenberg inequality.

It remains to prove \eqref{int_Caccio} and \eqref{bdy_Caccio}.

\underline{Proof of \eqref{bdy_Caccio}}

For any $R\le t<s\le\frac{3}{2}R$, define $\xi\in C_0^2(Q_{\frac{t+s}{2}}(x))$ and $\eta\in C_0^2(Q_s(x))$ such that
$0\le\xi,\eta\le1$, $\xi=1$ in $Q_t(x)$, $\eta=1$ in $Q_{\frac{t+s}{2}}(x)$, and $\abs{\nabla\xi},\abs{\nabla\eta}\lesssim\frac{1}{s-t}$.

Choose $\widetilde{\vp}\xi^2\in W_0^{1,2}(5Q)$ as a test function, then \eqref{bHodge} gives
\begin{equation}\label{bdyCaccio_test}
\int_{Q_{\frac{t+s}{2}}(x)}A_{||}\nabla \widetilde{\vp}\cdot\nabla(\widetilde{\vp}\xi^2)=\int_{Q_{\frac{t+s}{2}}(x)}\mathbf{b}\mathbbm{1}_{5Q}\cdot\nabla(\widetilde{\vp}\xi^2).
\end{equation}
To estimate the left-hand side of \eqref{bdyCaccio_test}, we split the matrix into the symmetric and  anti-symmetric part. For the first one we have
\[
\int_{Q_{\frac{t+s}{2}}(x)}A^s_{||}\nabla \widetilde{\vp}\cdot\nabla(\widetilde{\vp}\xi^2)\ge \frac{\lambda_0}{2}\int_{Q_{\frac{t+s}{2}}(x)}\abs{\xi\nabla \widetilde{\vp}}^2-\frac{C}{(s-t)^2}\int_{Q_{\frac{t+s}{2}}(x)}\widetilde{\vp}^2.
\]
For the second one, we can write
\begin{multline*}
  \quad\int_{Q_{\frac{t+s}{2}}(x)}A_{||}^a\nabla \widetilde{\vp}\cdot\nabla(\widetilde{\vp}\xi^2)
  =\frac{1}{2}\int_{Q_{\frac{t+s}{2}}(x)}a_{ij}^a\br{\Dj \widetilde{\vp}\Di (\widetilde{\vp}\xi^2)-\Di \widetilde{\vp}\Dj(\widetilde{\vp}\xi^2)}\\
  =\frac{1}{4} \int_{Q_{\frac{t+s}{2}}(x)}a_{ij}^a\br{\Dj(\widetilde{\vp}^2)\Di(\xi^2)-\Di(\widetilde{\vp}^2)\Dj(\xi^2)}\\
  =\frac{1}{4} \int_{Q_{\frac{t+s}{2}}(x)}a_{ij}^a\br{\Dj(\widetilde{\vp}\eta)^2\Di(\xi^2)-\Di(\widetilde{\vp}\eta)^2\Dj(\xi^2)}.\\
\end{multline*}
By Proposition \ref{HardyProp2}, the absolute value of this quantity is bounded by
\[
\frac{C}{s-t}\norm{\widetilde{\vp}\eta}_{L^2}\norm{\nabla(\widetilde{\vp}\eta)}_{L^2}\le\frac{C_{\theta}}{(s-t)^2}\int_{Q_s(x)}\abs{\widetilde{\vp}}^2+\theta\int_{Q_s(x)}\abs{\nabla \widetilde{\vp}}^2
\]
for any $0<\theta<1$. 

As for the right-hand side of \eqref{bdyCaccio_test}, we have
\[
\abs{\int_{Q_{\frac{t+s}{2}}(x)}\mathbf{b}^s\mathbbm{1}_{5Q}\cdot\nabla(\widetilde{\vp}\xi^2)}\le\frac{\lambda_0}{8}\int_{Q_{\frac{t+s}{2}}(x)}\abs{\xi\nabla \widetilde{\vp}}^2+\frac{C}{(s-t)^2}\int_{Q_{\frac{t+s}{2}}(x)}\abs{\widetilde{\vp}}^2+Cs^n.
\]
Then, by Proposition \ref{HardyProp3},
\begin{multline*}
     \abs{\int_{Q_{\frac{t+s}{2}}(x)}\mathbf{b}^a\mathbbm{1}_{5Q}\cdot\nabla(\widetilde{\vp}\xi^2)}\\
  \le\frac{C}{s-t}\br{\int_{Q_{\frac{t+s}{2}}(x)}\abs{\widetilde{\vp}\xi}^2}^{1/2}s^{n/2}+\norm{\xi}_{L^2}\norm{\nabla(\widetilde{\vp}\xi)}_{L^2}\\
  \le\frac{\lambda_0}{8}\int_{Q_{\frac{t+s}{2}}(x)}\abs{\xi\nabla \widetilde{\vp}}^2+\frac{C}{(s-t)^2}\int_{Q_{\frac{t+s}{2}}(x)}\abs{\widetilde{\vp}}^2+Cs^n.
\end{multline*}
Combining these estimates with \eqref{bdyCaccio_test}, we fix $0<\theta<1$ to be sufficiently small and obtain
\begin{align*}
    \int_{Q_t(x)}\abs{\nabla \widetilde{\vp}}^2&\le\int_{Q_{\frac{t+s}{2}}(x)}\abs{\xi\nabla \widetilde{\vp}}^2
    \le \frac{C_{\theta}}{(s-t)^2}\int_{Q_s(x)}\abs{\widetilde{\vp}}^2+C\theta\int_{Q_s(x)}\abs{\nabla \widetilde{\vp}}^2+Cs^n\\
    &\le\frac{C_{\theta}}{(s-t)^2}\int_{Q_{\frac{3}{2}R}(x)}\abs{\widetilde{\vp}}^2+\frac{1}{2}\int_{Q_s(x)}\abs{\nabla \widetilde{\vp}}^2+CR^n.
\end{align*}
Then \eqref{bdy_Caccio} follows from Lemma \ref{GiaLemma3.1}.

The interior Caccioppoli \eqref{int_Caccio} can be shown in the same manner if one chooses $\br{\widetilde{\vp}-(\widetilde{\vp})_{Q_{\frac{3}{2}R}(x)}}\xi^2$ as a test function in the beginning.
\end{proof}
\begin{re}
Note that one can replace $(\mathbf{c}^a)_{2Q}$ and $(\mathbf{b}^a)_{2Q}$ in the right-hand side of \eqref{cHodege} and \eqref{bHodge}, respectively, by any constant vector $\mathbf{C}$ without changing the result. This follows from the simple fact that $\int_{5Q}\mathbf{C}\cdot\nabla v=0$ for any test function $v\in W^{1,2}_0(5Q)$.
\end{re}

Moser-type interior estimates for the weak solution to the homogeneous equation $-\divg_x A_{||}\nabla_x u=0$ have been shown in \cite{li2019boundary}, or \cite{seregin2012divergence} for the parabolic equations. We show that similar estimates hold for weak solutions to the nonhomogeneous equations.  

\begin{prop}\label{HodgeDecompMoser_prop}
Let $\vp$ and $\widetilde{\vp}$ be as in Proposition \ref{Hodgedecp_prop}. Let $B_{2R}=B_{2R}(x_0)\subset 5Q$. Then for any $p>1$,
\begin{equation}\label{HodgeDecompMoser}
    \sup_{B_R}\abs{\wvp-c_0}\le C\br{\fint_{B_{2R}}\abs{\wvp-c_0}^p}^{1/p}+CR(\norm{\mathbf{b}^s}_{L^{\infty}}+\norm{\mathbf{b}^a}_{\BMO}),
\end{equation}
where $c_0$ is any constant, and $C=C(n,\lambda_0,\Lambda_0,p)$. Moreover, a similar estimate holds for $\vp$: 
\begin{equation}\label{HodgeDecompMoser_vp}
    \sup_{B_R}\abs{\vp-c_0}\le C\br{\fint_{B_{2R}}\abs{\vp-c_0}^p}^{1/p}+CR(\norm{\mathbf{c}^s}_{L^{\infty}}+\norm{\mathbf{c}^a}_{\BMO}).
\end{equation}
\end{prop}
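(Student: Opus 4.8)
The plan is to reduce this inhomogeneous estimate to the interior Moser estimate for the \emph{homogeneous} equation proved in \cite{li2019boundary}, by splitting off the contribution of the right-hand side via Lax--Milgram. I will describe the argument for \eqref{HodgeDecompMoser}; the estimate \eqref{HodgeDecompMoser_vp} for $\vp$ follows identically upon replacing $\bd b$, $A_{||}$ by $\bd c$, $A_{||}^*$.

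First I would reduce. Since $A_{||}$ kills constants, $\wvp-c_0$ solves the same equation \eqref{bHodge} as $\wvp$, so we may assume $c_0=0$. Fix $B_{2R}=B_{2R}(x_0)\subset 5Q$. Testing \eqref{bHodge} only against $\zeta\in W_0^{1,2}(B_{2R})$ and using that $\int_{B_{2R}}\bd C\cdot\nabla\zeta=0$ for any constant vector $\bd C$ (the Remark after Proposition \ref{Hodgedecp_prop}), one sees that $\wvp$ is a weak solution in $B_{2R}$ of $\divg_x(A_{||}\nabla_x\wvp)=\divg_x\bd G$, where $\bd G:=\bd b^s+\bd b^a-(\bd b^a)_{B_{2R}}$; by the John--Nirenberg inequality $\big(\fint_{B_{2R}}\abs{\bd G}^q\big)^{1/q}\le C(n,q)M$ for every $q<\infty$, with $M:=\norm{\bd b^s}_{L^\infty}+\norm{\bd b^a}_{\BMO}$. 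Let $h\in W_0^{1,2}(B_{2R})$ solve $\divg_x(A_{||}\nabla_x h)=\divg_x\bd G$, and set $v:=\wvp-h$, so that $\divg_x(A_{||}\nabla_x v)=0$ in $B_{2R}$.

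For $v$, the interior Moser estimate of \cite{li2019boundary} gives, for any $p>1$, $\sup_{B_R}\abs{v}\le C(n,\lambda_0,\Lambda_0,p)\big(\fint_{B_{2R}}\abs{v}^p\big)^{1/p}\le C\big(\fint_{B_{2R}}\abs{\wvp}^p\big)^{1/p}+C\big(\fint_{B_{2R}}\abs{h}^p\big)^{1/p}$. For $h$, the crucial observation is that the $\BMO$ anti-symmetric part of $A_{||}$ contributes nothing to the energy estimates: testing against $h\abs{h}^{2\beta}$ in a Moser iteration for $h$ (no cutoff is needed since $h\in W_0^{1,2}$, after the usual truncation/bootstrap in $\beta$), the anti-symmetric term is, via the $\mathcal H^1$--$\BMO$ duality of Proposition \ref{HardyProp1} applied with $u=v=\abs{h}^{1+\beta}\in\dot W^{1,2}$, the pairing of $(a^a_{ij})$ against $\partial_j(\abs{h}^{1+\beta})\partial_i(\abs{h}^{1+\beta})-\partial_i(\abs{h}^{1+\beta})\partial_j(\abs{h}^{1+\beta})\equiv 0$. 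Hence $h$ satisfies the classical global $L^\infty$ bound for a uniformly elliptic divergence-form equation with $L^q$ divergence data, $q>n$; fixing e.g. $q=2n$, rescaling $B_{2R}\to B_2$, applying that bound, and rescaling back yields $\sup_{B_{2R}}\abs{h}\le C(n,\lambda_0)\,R\big(\fint_{B_{2R}}\abs{\bd G}^{2n}\big)^{1/(2n)}\le C(n,\lambda_0)\,R\,M$. Combining the estimates for $v$ and $h$, using $\big(\fint_{B_{2R}}\abs h^p\big)^{1/p}\le\sup_{B_{2R}}\abs h$, and reinstating $c_0$ gives \eqref{HodgeDecompMoser}.

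The only point not already contained in \cite{li2019boundary} is the handling of the inhomogeneity, and this is where the main (mild) difficulty lies: since $A_{||}$ is merely $\BMO$ and not bounded, $\int A_{||}^a\nabla h\cdot\nabla(\,\cdot\,)$ is not an absolutely convergent integral, so the vanishing of the anti-symmetric contribution in the estimate for $h$ must be read off from the $\mathcal H^1$--$\BMO$ pairing rather than pointwise, and one must check that the iterated powers $\abs{h}^{1+\beta}$ indeed lie in $\dot W^{1,2}$ at each stage so that Proposition \ref{HardyProp1} applies — a routine bootstrap starting from $h\in W_0^{1,2}(B_{2R})$. Everything else is the homogeneous Moser estimate of \cite{li2019boundary} plus the standard scaling computation for the $\bd G$-term; alternatively, one could run that Moser iteration directly on $\wvp$, carrying along the extra term $\int\bd G\cdot\nabla(\text{test function})$ at each step.
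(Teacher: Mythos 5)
Your proof is correct but takes a genuinely different route from the paper's, and both approaches are worth comparing.

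The paper runs a single Moser iteration directly on $\Psi=\abs{\wvp-c_0}+\delta$ with $\delta=R(\norm{\bd b^s}_{L^\infty}+\norm{\bd b^a}_{\BMO})$, using truncated power test functions $G_{\delta,N}(\Psi)\eta^2$ with an interior cutoff $\eta$, and absorbs the inhomogeneous term at each step of the iteration by the built-in constant $\delta$; the anti-symmetric $\BMO$ part of $A_{||}$ must then be handled through the cutoff version of the $\mathcal H^1$--$\BMO$ pairing (essentially Proposition \ref{HardyProp2}, deferred to \cite{li2019boundary} Lemma 3.4), and the $\BMO$ data $\bd b^a$ is handled by subtracting averages and John--Nirenberg. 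Your proposal instead splits $\wvp=v+h$ where $h\in W_0^{1,2}(B_{2R})$ solves the inhomogeneous problem with zero boundary data, invokes the already-established homogeneous interior Moser estimate from \cite{li2019boundary} for $v$, and proves a \emph{global} (boundary-to-boundary) $L^\infty$ bound for $h$. The latter is genuinely simpler than the paper's interior iteration: because $h$ vanishes on $\partial B_{2R}$, the De Giorgi/Moser test functions $H_N(h)=h\min(\abs{h},N)^{2\beta}$ are admissible without a cutoff, so that in the weak formulation the anti-symmetric pairing
\[
\Dj h\,\Di H_N(h)-\Di h\,\Dj H_N(h)=H_N'(h)\bigl(\Dj h\,\Di h-\Di h\,\Dj h\bigr)\equiv 0
\]
literally vanishes a.e.; Proposition \ref{HardyProp1} (with $u=h$, $v=H_N(h)$, both in $\dot W^{1,2}$) is only needed to know that this expression lies in $\mathcal H^1$, hence that the $\BMO$ pairing is well-defined and equal to zero. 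Thus your argument only needs the ellipticity of the symmetric part, John--Nirenberg for $\bd G$, and the div-curl lemma; it never invokes the Lipschitz-cutoff variant of the duality. The modular decomposition has the merit of cleanly attributing the $CR(\norm{\bd b^s}_{L^\infty}+\norm{\bd b^a}_{\BMO})$ term to the particular solution $h$; the trade-off is that you use the homogeneous estimate as a black box rather than producing a self-contained iteration. One small imprecision worth noting: you describe the pairing as ``applied with $u=v=\abs{h}^{1+\beta}$,'' whereas the weak formulation actually pairs against $\Dj h\,\Di\vp-\Di h\,\Dj\vp$ with $\vp=H_N(h)$; the rewriting via $F=\abs{h}^{1+\beta}$ (or its truncation) is only the convenient way to verify the $\mathcal H^1$ membership required to apply Proposition \ref{HardyProp1}, after which vanishing follows because the expression is identically zero a.e.
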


\begin{proof}
Fix any $p>1$ and $\frac{1}{2}<k_0<\frac{p}{2}$. Let $\frac1{2}<k_1<\min\{1,k_0\}$ and $k\ge k_0$. Let $\alpha=2$ when $n\ge 3$, and let $\alpha\in (1,2)$ when $n=2$. Choose $q\in (2,\frac{n\alpha}{n-\alpha})$. Set $s_0=\frac{2q}{q+2}$. Note that $1<s_0<\frac{n}{n-2}$ when $n\ge 3$ and $1<s_0<\alpha$ when $n=2$.

Define as in \cite{li2019boundary} Lemma 3.4, for any $\delta>0$, $N>>1$ and $\beta\ge k_0$, 
    $$
    H_{\delta,N}(t)=
    \left\{\begin{array}{l l}
    t^\beta,&\quad t\in[\delta,N],\\
    N^\beta+\frac{\beta}{k_1} N^{\beta-k_1}(t^{k_1}-N^{k_1}), &\quad t>N.
    \end{array}\right.
   $$
Then
   $$
    H'_{\delta,N}(t)=
    \left\{\begin{array}{rl}
   \beta t^{\beta-1},&\quad t\in (\delta,N),\\
   \beta N^{\beta-k_1}t^{k_1-1}, &\quad t>N.
   \end{array}\right.
   $$

  Define, furthermore, 
   $$
   G_{\delta,N}(w)=\int_\delta^w|H'_{\delta,N}(t)|^2dt,\ w\ge \delta.
   $$
Then for $w\ge \delta$, 
     \begin{equation}\label{H-G1}
       H(w)\le w^{\beta},
     \end{equation}
     \begin{equation} \label{H-G2}
       wH'(w)\le\beta w^{\beta},
     \end{equation}
      and
     \begin{equation}\label{H-G3}
      G(w)\le \frac1{2k_1-1}wG'(w).
     \end{equation}
    Here and in the sequel we omit the subscripts in $G_{\delta,N}$ and $H_{\delta,N}$.
    
    Let $\delta=R(\norm{\mathbf{b}^s}_{L^{\infty}}+\norm{\mathbf{b}^a}_{\BMO})$, and define $\Psi=\abs{\wvp-c_0}+\delta$, where $c_0$ is an arbitrary constant. Then $\Psi$ is a subsolution to the equation $\divg_x(A_{||}\nabla_x\widetilde{\vp})=\divg_x(\bd b\mathbbm{1}_{5Q}-(\bd b^a)_{2Q})$. Also, since $\Psi\ge\delta$, one can define $H(\Psi)$, $G(\Psi)$ etc.
    
    For any $R\le r'<r\le 2R$, let $\eta\in C_0^2(B_r)$ with $\eta=1$ in $B_{r'}$ and $\abs{\nabla\eta}\lesssim(r-r')^{-1}$. Choose $v=G(\Psi)\eta^2>0$ as a test function. Then since $\Psi$ is a subsolution, one has
    \begin{equation}\label{Psi_subsol}
     \int_{B_r}A_{||}\nabla\Psi\cdot\nabla v\le\int_{B_r}\mathbf{b}\cdot\nabla v.   
    \end{equation}
    
    For the left-hand side of \eqref{Psi_subsol}, we have (see the proof of Lemma 3.4 of \cite{li2019boundary})
   \[
        \int_{B_r}A_{||}^s\nabla\Psi\cdot\nabla v\ge\frac{\lambda_0}{2}\int_{B_r}\abs{\nabla H(\Psi)}^2\eta^2-\frac{C(n,\lambda_0,k_0)}{(2k_0-1)^2}\frac{\beta^2r^n}{(r-r')^2}\br{\fint_{B_r}\Psi^{\beta q}}^{2/q},\]
        and 
        \begin{multline*}
           \int_{B_r}A_{||}^a\nabla\Psi\cdot\nabla v\le\frac{\lambda_0}{8}\int_{B_r}\abs{\nabla H(\Psi)}^2\eta^2\\
       +\frac{C(n,\lambda_0,\Lambda_0,q,k_0)}{(2k_0-1)^2}\frac{\beta^2r^n}{(r-r')^2}\br{\fint_{B_r}\Psi^{\beta q}}^{2/q}. 
        \end{multline*}

   The right-hand side of \eqref{Psi_subsol} equals
   \begin{multline*}
        \int_{B_r}\mathbf{b}^s\cdot\nabla\br{G(\Psi)\eta^2}+\int_{B_r}\mathbf{b}^a\cdot\nabla\br{G(\Psi)\eta^2}\\
       =\int_{B_r}\mathbf{b}^s\cdot\nabla H(\Psi)\abs{H'(\Psi)}\eta^2+2\int_{B_r}\mathbf{b}^s\cdot\nabla\eta G(\Psi)\eta\\
       +\int_{B_r}\br{\mathbf{b}^a-(\mathbf{b}^a)_{B_{r}}}\cdot\nabla H(\Psi)\abs{H'(\Psi)}\eta^2
       +2\int_{B_r}\br{\mathbf{b}^a-(\mathbf{b}^a)_{B_{r}}}\cdot \nabla\eta G(\Psi)\eta\\
       =: I_1+I_2+I_3+I_4.
   \end{multline*}
 Using Cauchy-Schwartz inequality, \eqref{H-G2}, as well as Young's inequality, we obtain
   \begin{align*}
       \abs{I_1}\le \frac{\lambda_0}{8}\int\abs{\nabla H(\Psi)}^2\eta^2+C(n,\lambda_0)\norm{\mathbf{b}^s}_{L^{\infty}}^2\beta^2\int\Psi^{2\beta-2}\eta^2.
   \end{align*}
   Recall, in addition, that $\Psi\ge\delta=R(\norm{\mathbf{b}^s}_{L^{\infty}}+\norm{\mathbf{b}^a}_{\BMO})$ and $2<q<\frac{2n}{n-2}$. Then  $\abs{I_1}$ is bounded by
  \begin{multline}\label{vpMoser_I1}
     \frac{\lambda_0}{8}\int\abs{\nabla H(\Psi)}^2\eta^2+C(n,\lambda_0)\beta^2R^{-2}\int\Psi^{2\beta}\eta^2\\
    \le \frac{\lambda_0}{8}\int\abs{\nabla H(\Psi)}^2\eta^2+C(n,\lambda_0)\beta^2R^{-2}r^n\br{\fint_{B_r}\Psi^{q\beta}}^{2/q}.
  \end{multline}
   For $I_2$, we use \eqref{H-G3} and obtain
   \begin{align}
       \abs{I_2}&\le\frac{\norm{\mathbf{b}^s}_{L^{\infty}}}{r-r'}\frac{\beta^2}{2k_1-1}\int\Psi^{2\beta-1}\abs{\eta}
       \le \frac{C(n,k_0)\beta^2}{(r-r')(2k_0-1)R}\int_{B_r}\Psi^{2\beta}\nonumber\\
       &\le \frac{C(n,k_0)\beta^2r^n}{(r-r')(2k_0-1)R}\br{\fint_{B_r}\Psi^{q\beta}}^{2/q}.
   \end{align}
Turning to $I_3$, we estimate 
   \begin{align}
       \abs{I_3}&\le \br{\int_{B_r}\abs{\mathbf{b}^a-(\mathbf{b}^a)_{B_r}}^{s_0'}}^{\frac{1}{s_0'}}\br{\int\abs{\nabla H(\Psi)}^2\eta^2}^{1/2}\br{\int\abs{H'(\Psi)}^q\eta^q}^{1/q}\nonumber\\
       &\le C(n,q)\norm{\mathbf{b}^a}_{\BMO}r^{\frac{n}{s_0'}}\br{\int\abs{\nabla H(\Psi)}^2\eta^2}^{1/2}\beta\br{\int\Psi^{q\beta-q}\eta^q}^{1/q}\nonumber\\
       &\le \frac{\lambda_0}{8}\int\abs{\nabla H(\Psi)}^2\eta^2+\frac{C(n,\lambda_0,q)\beta^2r^n}{R^2}\br{\fint_{B_r}\Psi^{q\beta}}^{2/q},
   \end{align}
   where $s_0'=\frac{s_0}{s_0-1}$.
   Finally, for $I_4$, we have
   \begin{align}\label{vpMoser_I4}
       \abs{I_4}&\le\br{\int_{B_r}\abs{\mathbf{b}^a-(\mathbf{b}^a)_{B_r}}^{\frac{q}{q-2}}}^{\frac{q-2}{q}}\br{\int\abs{G(\Psi)}^{q/2}\abs{\nabla\eta}^{q/2}}^{2/q}\nonumber\\
       &\le C(n,q)\norm{\mathbf{b}^a}_{\BMO}r^{\frac{(q-2)n}{q}}\frac{1}{2k_1-1}\frac{\beta^2}{r-r'}\br{\int_{B_r}\Psi^{q\beta-\frac{q}{2}}}^{2/q}\nonumber\\
       &\le C(n,q,k_0)\frac{\beta^2r^n}{(2k_0-1)(r-r')R}\br{\fint_{B_r}\Psi^{q\beta}}^{2/q}.
   \end{align}
   
  Combining \eqref{Psi_subsol}--\eqref{vpMoser_I4}, we get
  \begin{equation}\label{gradHPsi}
      \frac{\lambda_0}{8}\fint_{B_{r'}}\abs{\nabla H(\Psi)}^2\le C\beta^2\br{(r-r')^{-2}+(r-r')^{-1}R^{-1}+R^{-2}}\br{\fint_{B_r}\Psi^{q\beta}}^{2/q}.
  \end{equation}
Furthermore, since $\alpha=2$ when $n\ge 3$ and $\alpha\in(1,2)$ when $n=2$, by
Sobolev embedding 
   \[
   \br{\fint_{B_{r'}}H(\Psi)^{\frac{n\alpha}{n-\alpha}}}^{\frac{n-\alpha}{n\alpha}}\lesssim \br{\fint_{B_{r'}}H(\Psi)^2}^{\frac1{2}}+r'\br{\fint_{B_{r'}}\abs{\nabla H(\Psi)}^2}^{\frac1{2}}.
   \]
   Now by \eqref{gradHPsi}, \eqref{H-G1}, and letting $N$ go to infinity, we obtain 
   \begin{multline*}
       \br{\fint_{B_{r'}}\Psi^{\beta\frac{n\alpha}{n-\alpha}}}^{\frac{n-\alpha}{n\alpha}}
      \le \br{\fint_{B_{r'}}\Psi^{2\beta}}^{1/2}\\
      + C\beta r'\br{(r-r')^{-2}+(r-r')^{-1}R^{-1}+R^{-2}}^{1/2}\br{\fint_{B_r}\Psi^{q\beta}}^{1/q}\\
      \le C\br{1+\beta \br{\frac{r'}{r-r'}+\frac{r'}{\sqrt{(r-r')R}}+\frac{r'}{R}}}\br{\fint_{B_r}\Psi^{q\beta}}^{1/q}. 
   \end{multline*}

Letting $l=\frac{n\alpha}{(n-\alpha)q}>1$, $\beta=\beta_i=kl^i$, $r=r_i=R+\frac{R}{2^i}$ and $r'=r_{i+1}$ for $i=0,1,2,\dots$, one finds
 \begin{align*}
     \br{\fint_{B_{r_{i+1}}}\Psi^{kl^{i+1}q}}^{\frac{1}{kl^{i+1}q}}&\le (Ckl^i)^{\frac{1}{kl^i}}\br{\fint_{B_{r_i}}\Psi^{kl^iq}}^{\frac{1}{kl^iq}}\le\dots\\
     &\le (Ck)^{\sum_{j=0}^i\frac{1}{kl^j}}l^{\sum_{j=0}^i\frac{j}{kl^j}}
     \br{\fint_{B_{2R}}\Psi^{kq}}^{\frac{1}{kq}}.
 \end{align*}
 Letting $i\to\infty$, we have $\sup_{B_R}\Psi\le C(n,\lambda_0,\Lambda_0,q,k_0)\br{\fint_{B_{2R}}\Psi^{kq}}^{\frac{1}{kq}}$, and thus
 \[
 \sup_{B_R}\abs{\wvp-c_0}\le C\br{\fint_{B_{2R}}\abs{\wvp-c_0}^{kq}}^{\frac{1}{kq}}+ C\delta,
 \]
 where $C=C(n,\lambda_0,\Lambda_0,q,k_0)$. Choosing $k$ and $q$ such that $kq=p$ yields \eqref{HodgeDecompMoser}. The proof of \eqref{HodgeDecompMoser_vp} is similar and thus omitted.
\end{proof}

\subsection{Weak Solutions of the Parabolic Equation}

We introduce $\mathcal{P}_t:= e^{-t^2L_{||}}$ and $\mP^*_t:= e^{-t^2L_{||}^*}$, the ``ellipticized" heat semigroup associated to $L_{||}=-\divg A_{||}\nabla$ and to its adjoint $L_{||}^*$, respectively. In this subsection, we shall derive Moser-type estimates for $\Dt\mP_{\eta t} f$ (and $\Dt\mP_{\eta t}^* f$), as well as reverse H\"older estimate for $\nabla_x\mP_{\eta t} f$ (and $\nabla_x\mP_{\eta t}^* f$).

{\bf Notation.}
In the rest of this section, since we only work with the $n$-dimensional operator $L_{||}$ and its adjoint $L_{||}^*$ instead of the operator $L$ defined in $\Rpl$, we shall simply write $L$ for $L_{||}$, and the same for its adjoint. For the same reason, we shall write $\nabla$ for $\nabla_x$, and $\divg$ for $\divg_x$.
We denote by $\wt{W}^{-1,2}$ the space of bounded semilinear functionals on $W^{1,2}(\Rn)$.

Let $u(x,t)=e^{-tL}(f)(x)$, for some $f\in L^2(\Rn)$. Then by Proposition \ref{wellpose_IVP}, $u(x,t)$ is the weak solution to the initial value problem
\begin{equation*}
   \begin{cases}
\Dt u-\divg(A\nabla u)=0 \quad\text{in }\Rn\times(0,\infty),\\
u(x,0)=f(x).
\end{cases}
\end{equation*}
That is, $u(x,t)\in L_{\loc}^2\br{(0,\infty),W^{1,2}(\Rn)}\cap C\br{[0,\infty),L^2}$, and satisfies 
\begin{align*}
    \int_{\Rn}u(x,T)\overline{\vp(x,T)}dx&+\int_0^T\int_{\Rn}A\nabla u\cdot\overline{\nabla\vp}dxdt\\
    &=\int_{\Rn}u(x,0)\overline{\vp(x,0)}dx+\int_0^T\overline{\langle \Dt\vp,u\rangle}_{\wt{W}^{-1,2},W^{1,2}}
\end{align*}
for any $T>0$, any $\vp\in L^2\br{[0,T],W^{1,2}(\Rn)}$ with $\Dt\vp\in L^2\br{[0,T],\wt{W}^{-1,2}(\Rn)}$.

Moreover, since $A$ depends only on $x\in\Rn$, $\Dt u$ is a weak solution to $\Dt v-\divg(A\nabla v)=0$ in $\Rn\times(0,\infty)$ (see the remark after Proposition \ref{wellpose_IVP}).  By \cite{HLMPLp}, Theorem 4.9 and its remark, $\Dt u\in L^2_{\loc}\br{(0,\infty),L^2(\Rn)}$ and $\Dt\nabla u\in L^2_{\loc}\br{(0,\infty),L^2(\Rn)}$. Finally, by the Gaussian estimate for the kernel of $\Dt e^{-tL}$ (see \cite{HLMPLp} Theorem 4.8), one can show that 
\begin{equation}\label{Dtu_Linfty}
    \Dt u\in L^{\infty}\br{[\delta_0,\infty)\times\Rn}\quad\forall\,\delta_0>0.
\end{equation} 

These facts enable us to prove the following estimate for $\Dt u$ using Moser iteration. 
\begin{prop}\label{sup_Dtu_Moser Prop}
Let $Q\subset\Rn$ be a cube with $l(Q)=R_0$. Then
\begin{equation}\label{sup_Dtu_Moser}
    \sup_{Q\times(R_0^2,(2R_0)^2]}\abs{\Dt u(x,t)}
    \le CR_0^{-\frac{n+2}{2}}\br{\int_{\frac{3}{2}Q}\int_{\frac{R_0^2}{2}}^{(2R_0)^2}\abs{\Dt u(x,t)}^2dtdx}^{1/2},
\end{equation}
for some $C=C(n,\lambda_0,\Lambda_0)$.
\end{prop}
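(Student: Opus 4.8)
The plan is to run a parabolic Moser iteration on $v:=\Dt u$, using the three facts recorded just above the statement: $v$ is a weak solution of $\Dt v-\divg(A_{||}\nabla v)=0$ in $\Rn\times(0,\infty)$; $v$ and $\nabla v$ belong to $L^2_{\loc}\br{(0,\infty),L^2(\Rn)}$; and, by \eqref{Dtu_Linfty}, $v\in L^\infty\br{[R_0^2/2,\infty)\times\Rn}$ with some (non-quantitative) bound. The first two facts make the nonlinear test functions below admissible and justify integration by parts in $t$, while the third guarantees that every quantity occurring in the iteration is a priori finite, so the truncations can be removed at the end. After the parabolic dilation $(x,t)\mapsto(R_0x,R_0^2t)$ — which turns $A_{||}(x)$ into $A_{||}(R_0x)$ and hence preserves both the ellipticity constant and the $\BMO$ semi-norm — we may assume $R_0=1$, and we must show $\sup_{Q\times(1,4]}\abs v\le C\br{\iint_{\frac32Q\times(1/2,4)}\abs v^2}^{1/2}$ for $l(Q)=1$. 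Since $\abs v\le v^++v^-$ and $-v$ solves the same equation as $v$, it suffices to bound $\sup v^+$; fix $\delta>0$ and set $\Psi:=v^++\delta\ge\delta$.

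\emph{Caccioppoli inequality for powers.} Fix $\beta\ge\beta_0$ with $\beta_0\in(1/2,1)$ to be chosen, let $H=H_{\delta,N}$, $G=G_{\delta,N}$ be the truncation functions from the proof of Proposition \ref{HodgeDecompMoser_prop} (so $G(\delta)=0$, $\abs{\nabla H(\Psi)}^2=G'(\Psi)\abs{\nabla\Psi}^2$, and \eqref{H-G1}--\eqref{H-G3} hold), and for concentric parabolic cylinders $\mathcal{C}'\subset\mathcal{C}$ sharing the top face $\set{t=4}$ let $\zeta=\eta(x)\chi(t)$ be a space-time cutoff with $\zeta\equiv1$ on $\mathcal{C}'$, $\abs{\nabla_x\zeta}\lesssim\rho^{-1}$, $\abs{\Dt\zeta}\lesssim\rho^{-2}$, where $\rho$ is the gap between the two cylinders. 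Test the equation for $v$ against $\phi:=G(\Psi)\zeta^2$: this is admissible by the $L^2_{\loc}$-regularity of $v$ and $\Dt v$, and since $G(\delta)=0$ it is supported in $\set{v>0}\cap\supp\zeta$, where $\nabla\Psi=\nabla v$ and $\Dt\Psi=\Dt v$. Integrating by parts in $t$, the antisymmetric self-pairing $A^a_{||}\nabla\Psi\cdot\nabla\Psi\,G'(\Psi)$ drops out pointwise; ellipticity of $A^s_{||}$ produces the good term $\tfrac{\lambda_0}2\iint\abs{\nabla H(\Psi)}^2\zeta^2$, and the time derivative produces the good term $\gtrsim\tfrac1\beta\sup_s\int_{\Rn}H(\Psi)^2(\cdot,s)\,\zeta^2$ (using $\Gamma(\Psi)\sim\tfrac1\beta H(\Psi)^2$ for the primitive $\Gamma'=G$). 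The symmetric error terms are handled exactly as in the proof of Proposition \ref{HodgeDecompMoser_prop}, and the antisymmetric coefficient term, which after antisymmetrization equals $\tfrac12\iint a^a_{ij}\br{\Dj(\Gamma(\Psi))\Di(\zeta^2)-\Di(\Gamma(\Psi))\Dj(\zeta^2)}$ (with $a^a_{ij}$ freely centered), is controlled — via $\nabla\Gamma(\Psi)=G(\Psi)\nabla\Psi\sim\tfrac1\beta H(\Psi)\nabla H(\Psi)$, the John--Nirenberg inequality, and H\"older's inequality, or alternatively the $\mathcal{H}^1$--$\BMO$ duality through Proposition \ref{HardyProp2} applied slicewise in $x$ — exactly as in \cite{li2019boundary}, Lemma 3.4; crucially, $A^a_{||}$ is $t$-independent, so the $\BMO$ estimate survives integration in $t$ with no loss. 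Absorbing, letting $N\to\infty$ (legitimate by \eqref{Dtu_Linfty}, which keeps everything finite), and using Lemma \ref{GiaLemma3.1} to soak up a residual $\iint\abs{\nabla H(\Psi)}^2\widetilde\zeta^2$ over a slightly larger cylinder, one reaches
\[
\sup_s\int_{\Rn}\Psi^{2\beta}(\cdot,s)\,\zeta^2+\iint\abs{\nabla(\Psi^\beta)}^2\zeta^2\le C\beta^4\rho^{-2}\br{\iint_{\mathcal{C}}\Psi^{q\beta}}^{2/q},
\]
where $q\in\big(2,\tfrac{2(n+2)}n\big)$ is fixed (one may take $q=2$ if Proposition \ref{HardyProp2} is used throughout).

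\emph{Sobolev embedding and iteration.} Feeding the previous inequality into the parabolic (Ladyzhenskaya) Sobolev inequality $\iint g^{\,2(n+2)/n}\lesssim\br{\sup_s\int g^2(\cdot,s)}^{2/n}\iint\abs{\nabla g}^2$ for $g=\Psi^\beta\zeta$ (compactly supported in $x$, and recalling $\abs{\mathcal{C}}\lesssim1$) upgrades the integrability exponent by the factor $\tfrac{n+2}n$ at the cost of a constant polynomial in $\beta$ and a power of $\rho^{-1}$:
\[
\br{\iint_{\mathcal{C}'}\Psi^{\frac{2(n+2)}n\beta}}^{\frac{n}{2(n+2)\beta}}\le\big(C\beta^4\rho^{-2}\big)^{\frac1{2\beta}}\br{\iint_{\mathcal{C}}\Psi^{q\beta}}^{\frac1{q\beta}}.
\]
Since $l:=\tfrac{2(n+2)}{nq}>1$, we iterate this with $\beta_j:=\beta_0l^j$, choosing $\beta_0:=2/q\in(1/2,1)$ so that $q\beta_0=2$, over parabolic cylinders $\mathcal{C}_j$ shrinking from $\tfrac32Q\times(1/2,4)$ down to $Q\times(1,4]$ (so $\rho_j\sim2^{-j}$); because $\beta_j$ grows geometrically, $\prod_j\big(C\beta_j^4\rho_j^{-2}\big)^{1/(2\beta_j)}<\infty$. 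Letting $j\to\infty$, and then $\delta\to0$, gives $\sup_{Q\times(1,4]}v^+\le C\br{\iint_{\frac32Q\times(1/2,4)}(v^+)^2}^{1/2}$; applying the same bound to $-v$, adding, and undoing the dilation yields \eqref{sup_Dtu_Moser}.

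\emph{Main obstacle.} No single ingredient is new — the antisymmetric $\BMO$ coefficient is treated as in the elliptic Moser iteration of \cite{li2019boundary} (Lemma 3.4) and Proposition \ref{HodgeDecompMoser_prop}, and the parabolic bookkeeping is classical De Giorgi--Nash--Moser — so the real task is to fit them together: running the $\BMO$/H\"older (or $\mathcal{H}^1$--$\BMO$) estimate of the antisymmetric term inside the parabolic energy framework ($\sup_tL^2$ together with $L^2_tW^{1,2}_x$, then Ladyzhenskaya), tracking the dependence on the iteration exponent $\beta$ so the product of constants converges, and absorbing the residual $\nabla H(\Psi)$ off the slightly enlarged cylinder. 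The $t$-independence of $A_{||}$ is exactly what keeps the $\BMO$ part from causing extra difficulty compared with the elliptic case.
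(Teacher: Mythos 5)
Your proposal is correct and follows the same parabolic Moser scheme as the paper: test with a nonlinear power of $v=\Dt u$ against a space--time cutoff, kill the antisymmetric self-pairing pointwise, control the remaining antisymmetric term via $\mathcal{H}^1$--$\BMO$ duality (Proposition~\ref{HardyProp2}) or John--Nirenberg, feed the resulting Caccioppoli estimate into the parabolic (Ladyzhenskaya) embedding, absorb via Lemma~\ref{GiaLemma3.1}, and iterate. The one structural difference is that you import the $H_{\delta,N},G_{\delta,N}$ truncation machinery from Proposition~\ref{HodgeDecompMoser_prop}, whereas the paper observes that \eqref{Dtu_Linfty} already makes $v$ bounded on $\set{t\ge R_0^2/2}$ and therefore tests directly with $\vp=v^{q_l-1}\Psi^2$. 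This choice matters in one small but real way: with the pure power, the antisymmetric term reduces to $\frac{1}{q_l}\iint A^a_{||}\nabla\big(v^{q_l/2}\wt\Psi\cdot v^{q_l/2}\wt\Psi\big)\cdot\nabla(\Psi^2)$, an exact square to which Proposition~\ref{HardyProp2} applies verbatim, while your $\Gamma(\Psi)$ (with $\Gamma'=G$) is not an exact square, so the $\mathcal{H}^1$--$\BMO$ route would require writing $\Gamma(\Psi)=\Gamma(\Psi)^{1/2}\cdot\Gamma(\Psi)^{1/2}$ and checking $W^{1,2}$-regularity of the square root; your John--Nirenberg alternative avoids this and is the cleaner of the two options in your framework. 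A couple of harmless bookkeeping slips: in fact $\Gamma(\Psi)\sim H(\Psi)^2$ and $\nabla\Gamma(\Psi)=G(\Psi)\nabla\Psi\sim H(\Psi)\nabla H(\Psi)$ with constants bounded uniformly in $\beta$ (your $1/\beta$ factors are off), and $\beta_0=2/q$ equals $1$ when $q=2$, so it need not lie in $(1/2,1)$; neither affects convergence of the iteration, since any polynomial-in-$\beta$ growth of the constants is telescoped away.
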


\begin{proof}
Let $v(x,t)=\Dt u(x,t)$. Then by the definition of weak solution and Lemma \ref{Evanslem} (ii), we have
\[
\int_0^T\int_{\Rn}\Dt v(x,t)\vp(x,t)dxdt+\int_0^T\int_{\Rn}A\nabla v\cdot\nabla\vp=0,
\]
for all $\vp\in L^2\br{[0,T],W^{1,2}(\Rn)}$ with $\supp\vp\subset \Rn\times(0,T]$. By considering $v^{\pm}$ we can assume  $v\ge0$, and that
\begin{equation}\label{Dtu_subsol}
  \int_0^T\int_{\Rn}\Dt v(x,t)\vp(x,t)dxdt+\int_0^T\int_{\Rn}A\nabla v\cdot\nabla\vp\le0,  
\end{equation}
for all $\vp\in L^2\br{[0,T],W^{1,2}(\Rn)}$ with $\supp\vp\subset \Rn\times(0,T]$ and $\vp\ge0$ a.e.

Now for any $0\le s\le 1$, define 
$$Q_s=(1+s)Q, \quad I_s=((1-s)R_0^2,(2R_0)^2], \quad \mbox{and}\quad C_s=Q_s\times I_s.$$ Fix $l\in\mathbb N$, define $q_l=2k_0^l$, where $k_0=\frac{n+2}{n}$. Note that $q_0=2$. 
Furthermore, for any fixed 
$$\frac{4}{3}\frac{1}{2^{l+2}}\le s_0<s_1\le\frac{3}{2}\frac{1}{2^{l+2}},$$ choose $\Psi_{s_0,s_1}\in C_0^2(C_{\frac{s_0+s_1}{2}})$ and $\wt{\Psi}_{s_0,s_1}\in C_0^2(C_{s_1})$ such that $\Psi_{s_0,s_1}=1$ in $C_{s_0}$, $\wt{\Psi}_{s_0,s_1}=1$ in $C_{\frac{s_0+s_1}{2}}$, 
$0\le \Psi_{s_0,s_1},\wt{\Psi}_{s_0,s_1}\le 1$, and
\[
\abs{\nabla\Psi_{s_0,s_1}}^2+\abs{\Dt \Psi_{s_0,s_1}}+\abs{\nabla\wt{\Psi}_{s_0,s_1}}^2+\abs{\Dt \wt{\Psi}_{s_0,s_1}}\lesssim \frac{R_0^{-2}}{(s_1-s_0)^2}.
\]
We omit the subscript $s_0,s_1$ in $\Psi_{s_0,s_1}$ and $\wt{\Psi}_{s_0,s_1}$ from now on.

Let $t\in I_{s_0}$. Recalling \eqref{Dtu_Linfty}, one can take $\vp=v^{q_l-1}\Psi^2$ as a test function. Then \eqref{Dtu_subsol} gives 
\begin{equation}\label{Dtu_test0}
 \int_0^t\int_{\Rn}\Dt v v^{q_l-1}\Psi^2+\int_0^t\int_{\Rn}A\nabla v\cdot\nabla(v^{q_l-1}\Psi^2)\le0.   
\end{equation}
For the first term, integration by parts gives
\begin{align*}
   \int_0^t\int_{\Rn}\Dt v v^{q_l-1}\Psi^2&=\frac{1}{q_l}\int_{\Rn}v^{q_l}(x,t)\Psi^2(x,t)dx-\frac{1}{q_l}\int_0^t\int_{\Rn}v^{q_l}\Dt(\Psi^2)\\
   &\ge \frac{1}{q_l}\int_{Q_{s_0}}v^{q_l}(x,t)dx-\frac{CR_0^{-2}}{q_l(s_1-s_0)^2}\int_{C_{\frac{1}{2^{l+1}}}}v^{q_l}.
\end{align*}

The second term in \eqref{Dtu_test0} is split, as usual, corresponding to the symmetric and antisymmetric part of $A$. Working with $A^s$, we estimate
\begin{multline*}
  \quad\int_0^t\int_{\Rn}A^s\nabla v\cdot\nabla(v^{q_l-1}\Psi^2)\\
  =\frac{4(q_l-1)}{q_l^2}\int_0^t\int_{\Rn}A^s\nabla(v^{\frac{q_l}{2}})\cdot\nabla(v^{\frac{q_l}{2}})\Psi^2
  \\+\frac{4}{q_l}\int_0^t\int_{\Rn}A^s\nabla(v^{\frac{q_l}{2}})\cdot\nabla\Psi v^{\frac{q_l}{2}}\Psi\\
  \ge \frac{2\lambda_0(q_l-1)}{q_l^2}\int_0^t\int_{Q_{s_0}}\abs{\nabla v^{\frac{q_l}{2}}}^2dxdt-\frac{C(n,\lambda_0)R_0^{-2}}{q_l(s_1-s_0)^2}\int_{C_{\frac{1}{2^{l+1}}}}v^{q_l}.
\end{multline*}
Turning to $A^a$, note that $A^a\nabla v\cdot\nabla v \,\Psi^2=0$ due to anti-symmetry, so that
\begin{multline*}
     \int_0^t\int_{\Rn}A^a\nabla v\cdot\nabla(v^{q_l-1}\Psi^2)=\int_0^t\int_{\Rn}A^a\nabla v\cdot\nabla(\Psi^2)v^{q_l-1}\\
    =\frac{1}{q_l}\int_0^t\int_{\Rn}A^s\nabla(v^{q_l})\cdot\nabla(\Psi^2)
    =\frac{1}{q_l}\int_0^t\int_{\Rn}A^s\nabla(v^{\frac{q_l}{2}}\wt{\Psi}v^{\frac{q_l}{2}}\wt{\Psi})\cdot\nabla(\Psi^2).
\end{multline*}
By Proposition \ref{HardyProp2}, 
\begin{multline*}
    \abs{\int_0^t\int_{\Rn}A^a\nabla v\cdot\nabla(v^{q_l-1}\Psi^2)}\\
    \le \frac{C_n\Lambda_0}{q_l}\int_0^t\norm{\nabla\Psi}_{L^{\infty}(\Rn)}\norm{v^{\frac{q_l}{2}}\wt{\Psi}}_{L^2(\Rn)}\norm{\nabla\br{v^{\frac{q_l}{2}}\wt{\Psi}}}_{L^2(\Rn)}dt\\
    \le\frac{C_{\theta}\Lambda_0}{q_l}\frac{R_0^{-2}}{(s_1-s_0)^2}\int_{C_{\frac{1}{2^{l+1}}}}v^{q_l}dxdt+\frac{\theta}{q_l}\int_{C_{s_1}}\abs{\nabla v^{\frac{q_l}{2}}}^2.
\end{multline*}
 Combining these estimates with \eqref{Dtu_test0}, we have
 \begin{multline*}
    \int_{Q_{s_0}}v^{q_l}(x,t)dx+\int_0^t\int_{Q_{s_0}}\abs{\nabla v^{\frac{q_l}{2}}}^2\\
 \le\frac{CR_0^{-2}}{(s_1-s_0)^2}\int_{C_{\frac{1}{2^{l+1}}}}v^{q_l}dxdt+C\theta\int_{C_{s_1}}\abs{\nabla\br{v^{\frac{q_l}{2}}}}^2dxdt, 
\end{multline*}
 where $C=C(n,\lambda_0,\Lambda_0,\theta)$.
 
 Choosing $\theta$ to be sufficiently small, and then taking supremum in $t\in I_{s_0}$, we obtain 
 \begin{multline*}
     \sup_{t\in I_{s_0}}\int_{Q_{s_0}}v^{q_l}(x,t)dx+\int_{C_{s_0}}\abs{\nabla v^{\frac{q_l}{2}}}^2dxdt\\
 \le\frac{CR_0^{-2}}{(s_1-s_0)^2}\int_{C_{\frac{1}{2^{l+1}}}}v^{q_l}dxdt+\frac{1}{2}\int_{C_{s_1}}\abs{\nabla\br{v^{\frac{q_l}{2}}}}^2dxdt,
 \end{multline*}
  which implies
 \begin{multline}\label{sup_Dtu1}
     \sup_{t\in I_{\frac{4}{3}\frac{1}{2^{l+2}}}}\int_{Q_{\frac{4}{3}\frac{1}{2^{l+2}}}}v^{q_l}(x,t)dx+\int_{C_{\frac{4}{3}\frac{1}{2^{l+2}}}}\abs{\nabla v^{\frac{q_l}{2}}}^2dxdt\\
 \le C(n,\lambda_0,\Lambda_0)R_0^{-2}4^l\int_{C_{\frac{1}{2^{l+1}}}}v^{q_l}dxdt
 \end{multline}
 by Lemma \ref{GiaLemma3.1}.
 
Let us insert a cut-off function $\Psi_l(x,t)\in C_0^2(C_{\frac{4}{3}\frac{1}{2^{l+2}}})$ into \eqref{sup_Dtu1} so that we can use an embedding theorem. As usual, $\Psi_l$ satisfies $0\le\Psi_l\le1$, $\Psi_l=1$ in $C_{\frac{1}{2^{l+2}}}$, and
 \[
 \abs{\nabla\Psi_l}^2+\abs{\Dt\Psi_l}\lesssim R_0^{-2}4^l.
 \]
 Then we have
 \begin{multline*}
     \sup_{t\in I_{\frac{4}{3}\frac{1}{2^{l+2}}}}\int_{Q_{\frac{4}{3}\frac{1}{2^{l+2}}}}v^{q_l}(x,t)\Psi_l(x,t)dx
 +\int_{C_{\frac{4}{3}\frac{1}{2^{l+2}}}}\abs{\nabla (v^{\frac{q_l}{2}}\Psi_l)}^2dxdt\\
 \le CR_0^{-2}4^l\int_{C_{\frac{1}{2^{l+1}}}}v^{q_l}dxdt.
 \end{multline*}
Now, by a well-known embedding (see e.g. \cite{lieberman1996second} Theorem 6.9), we have
\begin{multline*}
    \int_{C_{\frac{1}{2^{l+2}}}}v^{q_lk_0}\le\int_{C_{\frac{4}{3}\frac{1}{2^{l+2}}}}(v^{\frac{q_l}{2}}\Psi_l)^{2k_0}\\
    \le\sup_{t\in I_{\frac{4}{3}\frac{1}{2^{l+2}}}}\br{\int_{Q_{\frac{4}{3}\frac{1}{2^{l+2}}}}(v^{\frac{q_l}{2}}\Psi_l)^2(x,t)dx}^{2/n}\int_{C_{\frac{4}{3}\frac{1}{2^{l+2}}}}\abs{\nabla(v^{\frac{q_l}{2}}\Psi_l)}^2dxdt\\
    \le\br{\sup_{t\in I_{\frac{4}{3}\frac{1}{2^{l+2}}}}\int_{Q_{\frac{4}{3}\frac{1}{2^{l+2}}}}(v^{\frac{q_l}{2}}\Psi_l)^2(x,t)dx
    +\int_{C_{\frac{4}{3}\frac{1}{2^{l+2}}}}\abs{\nabla (v^{\frac{q_l}{2}}\Psi_l)}^2dxdt}^{k_0}\\
  \le C\br{R_0^{-2}4^l}^{k_0}\br{\int_{C_{\frac{1}{2^{l+1}}}}v^{q_l}dxdt}^{k_0}.
\end{multline*}
Therefore, for all $l\in\mathbb N$,
\[
\br{\int_{C_{\frac{1}{2^{l+2}}}}v^{q_{l+1}}dxdt}^{\frac{1}{q_{l+1}}}\le C^{\frac{1}{q_{l+1}}}(R_0^{-2}4^l)^{\frac{1}{q_l}}\br{\int_{C_{\frac{1}{2^{l+1}}}}v^{q_l}dxdt}^{\frac{1}{q_l}}.
\]
Then \eqref{sup_Dtu_Moser} follows from iteration and letting $l$ go to infinity.
\end{proof}

\begin{prop}\label{nablau_RH prop}
Let $Q\subset\Rn$ be a cube with $l(Q)=R_0$. Then for any $t>0$,
\begin{multline}\label{nablau_RH}
   \br{\fint_{Q}\abs{\nabla u(x,t)}^pdx}^{1/p}\\
   \le C\br{\fint_{2Q}\abs{\nabla u(x,t)}^2dx}^{1/2}+R_0\br{\fint_{2Q}\abs{\Dt u(x,t)}^pdx}^{1/p} 
\end{multline}
for all $p\in[2,2+\epsilon)$, where $C=C(n,\lambda_0,\Lambda_0)$ and $\epsilon=\epsilon(n,\lambda_0,\Lambda_0)$ are positive constants.
\end{prop}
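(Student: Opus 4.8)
The plan is to \emph{freeze the time variable}. Fix $t>0$ and set $v:=u(\cdot,t)$. Since $u=e^{-tL}f$, for every $t>0$ we have $v\in W^{1,2}(\Rn)$, $v$ belongs to the domain of $L$, and $Lv=-\Dt u(\cdot,t)=:g$; equivalently, $v$ is a weak solution of $-\divg(A\nabla v)=g$ on $\Rn$, where throughout $A=A_{||}$ and $\divg=\divg_x$ as in the notation of this subsection. By \eqref{Dtu_Linfty} one has $g\in L^\infty(\Rn)$, and since $g=-Lv\in L^2(\Rn)$ as well, $g\in L^\infty(\Rn)\cap L^2(\Rn)\subset L^s_{\loc}(\Rn)$ for every $s>2$. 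Both sides of \eqref{nablau_RH} are invariant under the rescaling $v\mapsto R_0^{-1}v(R_0\cdot)$, $g\mapsto R_0\,g(R_0\cdot)$, $A\mapsto A(R_0\cdot)$, which preserves $\lambda_0$ and $\Lambda_0$; we simply carry the powers of $R_0$ through the estimates.

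The core step is an \emph{interior reverse-H\"older inequality with a gap} for $\nabla v$: there exist $b>1$ and $0\le\theta<1$, depending only on $n,\lambda_0,\Lambda_0$, such that for every cube $Q_\rho(x_0)\subset\Rn$,
\begin{multline*}
  \fint_{Q_{\rho/2}(x_0)}\abs{\nabla v}^2\,dx\le b\br{\fint_{Q_\rho(x_0)}\abs{\nabla v}^{2r}\,dx}^{1/r}\\
  +\fint_{Q_\rho(x_0)}\abs{\rho\,g}^2\,dx+\theta\fint_{Q_\rho(x_0)}\abs{\nabla v}^2\,dx,
\end{multline*}
with $2r=\tfrac{2n}{n+2}$ when $n\ge3$ and $2r$ any fixed number in $(1,2)$ when $n=2$ (so $r\in(0,1)$). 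To establish it, test the equation for $v$ against $(v-c)\xi^2$ with $c=(v)_{Q_\rho(x_0)}$ and a smooth cutoff $\xi$, $\xi\equiv1$ on $Q_{\rho/2}(x_0)$, $\supp\xi\subset Q_{3\rho/4}(x_0)$, $\abs{\nabla\xi}\lesssim\rho^{-1}$, exactly as in the proof of \eqref{int_Caccio}--\eqref{bdy_Caccio} in Proposition \ref{Hodgedecp_prop}. The symmetric part of $A$ produces the good term $\lambda_0\int\abs{\xi\nabla v}^2$ up to an error $\lesssim\rho^{-2}\int_{Q_\rho}\abs{v-c}^2$. For the anti-symmetric part, $A^a\nabla v\cdot\nabla v=0$, so only $\int A^a\nabla v\cdot\nabla(\xi^2)(v-c)$ survives; using $\Dj v\,(v-c)=\tfrac12\Dj((v-c)^2)$ and a second cutoff $\widetilde\xi\equiv1$ on $\supp\xi$ with $\supp\widetilde\xi\subset Q_\rho(x_0)$, $\abs{\nabla\widetilde\xi}\lesssim\rho^{-1}$, this rewrites as $\tfrac14\int a^a_{ij}\bigl(\Dj\bigl(((v-c)\widetilde\xi)^2\bigr)\Di(\xi^2)-\Di\bigl(((v-c)\widetilde\xi)^2\bigr)\Dj(\xi^2)\bigr)$, the pure-cutoff terms cancelling by antisymmetry in $i,j$. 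Proposition \ref{HardyProp2} and $\mathcal{H}^1$--$\BMO$ duality bound this by $C\Lambda_0\norm{\nabla(\xi^2)}_{L^\infty}\norm{(v-c)\widetilde\xi}_{L^2}\norm{\nabla\bigl((v-c)\widetilde\xi\bigr)}_{L^2}$, hence, by Young's inequality, by $\theta'\int_{Q_\rho}\abs{\nabla v}^2+C_{\theta'}\rho^{-2}\int_{Q_\rho}\abs{v-c}^2$ for any small $\theta'>0$. The right-hand side $\int g(v-c)\xi^2$ is at most $C\rho^2\int_{Q_\rho}\abs{g}^2+C\rho^{-2}\int_{Q_\rho}\abs{v-c}^2$. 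Dividing by $\abs{Q_{\rho/2}(x_0)}$, invoking the Sobolev--Poincar\'e inequality to replace $\rho^{-2}\fint_{Q_\rho}\abs{v-c}^2$ by $C\br{\fint_{Q_\rho}\abs{\nabla v}^{2r}}^{1/r}$, and choosing $\theta'$ small, yields the displayed inequality (the surviving $\theta$-term, on the larger cube $Q_\rho$, is harmless).

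Given this, I would apply Lemma \ref{GiaProp1.1}, in the form recorded in the remark following it, over a fixed cube containing $2Q$ (say $3Q$), with $g\rightsquigarrow\abs{\nabla v}^{2r}$, $q=1/r>1$, and $f\rightsquigarrow\abs{\rho\,g}^{2r}\in L^s(3Q)$ for some $s>q$ (available because $g\in L^\infty_{\loc}$). This produces $\epsilon=\epsilon(n,\lambda_0,\Lambda_0)>0$ with $\nabla v\in L^p_{\loc}$ for $p\in[2,2+\epsilon)$ and
\[
  \br{\fint_{Q_{\rho/2}(x_0)}\abs{\nabla v}^p}^{1/p}\le C\br{\fint_{Q_\rho(x_0)}\abs{\nabla v}^2}^{1/2}+C\rho\br{\fint_{Q_\rho(x_0)}\abs{g}^p}^{1/p}
\]
for all $Q_\rho(x_0)\subset 3Q$. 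Taking $x_0=x_Q$ and $\rho=2R_0$, so that $Q_{\rho/2}(x_0)=Q$ and $Q_\rho(x_0)=2Q$, and recalling $g=-\Dt u(\cdot,t)$, $R_0=l(Q)$, gives exactly \eqref{nablau_RH}.

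I expect the gap estimate of the second paragraph to be the principal difficulty, and within it the handling of the $\BMO$ anti-symmetric coefficients: the bad term must first be massaged into a genuine Jacobian (compensated-compactness) form before Proposition \ref{HardyProp2} can be applied, and one must then manage the cutoff functions carefully so that the resulting gradient term appears with a small coefficient. The remaining ingredients --- the Caccioppoli bookkeeping, Sobolev--Poincar\'e, and the Gehring self-improvement --- are routine once this is in place, closely following the proof of \eqref{int_Caccio}--\eqref{bdy_Caccio} in Proposition \ref{Hodgedecp_prop}.
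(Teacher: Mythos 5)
Your proposal is correct and takes essentially the same route as the paper: fix the time slice, test the equation against $(u-\bar u)\Psi^2$, split into symmetric and anti-symmetric parts, insert a second cutoff so the anti-symmetric term becomes a compensated-compactness (Jacobian) expression controllable by Proposition~\ref{HardyProp2} and $\mathcal H^1$--$\BMO$ duality, absorb the small gradient term, apply Sobolev--Poincar\'e, and finish with the Gehring lemma (Lemma~\ref{GiaProp1.1}).
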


\begin{proof}
Let $x_0\in 4Q$ and $0<R<\min\set{\frac{1}{2}\dist(x_0,4Q),2R_0}$. Choose two cut-off functions. First,  $\Psi\in C_0^1(Q_{\frac{3}{2}R}(x_0))$, with $\Psi=1$ on $Q_R(x_0)$ and $\abs{\nabla \Psi}\lesssim R^{-1}$, and secondly, $\wt\Psi\in C_0^1(Q_{2R}(x_0))$, with $\Psi=1$ on $Q_{\frac{3}{2}R}(x_0)$ and $\abs{\nabla \wt{\Psi}}\lesssim R^{-1}$.

Fix $t>0$ and define $\bar{u}=\fint_{Q_{2R}(x_0)}u(x,t)dx$. Take $(u(x,t)-\bar{u})\Psi^2(x)$ as a test function. Then $\Dt u-\divg(A\nabla u)=0$ implies that
\begin{multline}\label{nablau_RH_wksol}
  \int_{\Rn}A\nabla u(x,t)\cdot\nabla\br{(u(x,t)-\bar{u})\Psi^2(x)}dx\\ =-\int_{\Rn}\Dt u(x,t) (u(x,t)-\bar{u})\Psi^2(x)dx.
\end{multline}
For the integral involving the symmetric part of $A$, we have
\begin{multline*}
    \int_{\Rn}A^s\nabla u(x,t)\cdot\nabla\br{(u(x,t)-\bar{u})\Psi^2(x)}dx\\
   \ge\frac{\lambda_0}{2}\int_{Q_R(x_0)}\abs{\nabla u}^2dx-\frac{C(n,\lambda_0)}{R^2}\int_{Q_{\frac{3}{2}}R(x_0)}(u-\bar{u})^2.  
\end{multline*}
For the integral involving the anti-symmetric part of $A$, we insert $\wt\Psi$ and apply Proposition \ref{HardyProp2}:
\begin{multline*}
     \abs{\int_{\Rn}A^a\nabla u(x,t)\cdot\nabla\br{(u(x,t)-\bar{u})\Psi^2(x)}dx}\\
    =\abs{\int_{\Rn}A^a\nabla(u-\bar{u})^2\cdot\nabla(\Psi^2)}
    =\abs{\int_{\Rn}A^a\nabla\br{(u-\bar{u})^2\wt{\Psi}^2}\cdot\nabla(\Psi^2)}\\
    \le\frac{C_n\Lambda_0}{R}\norm{(u-\bar{u})\wt{\Psi}}_{L^2(\Rn)}\norm{\nabla\br{(u-\bar{u})\wt{\Psi}}}_{L^2(\Rn)}\\
    \le C_n\theta\int_{Q_{2R}(x_0)}\abs{\nabla u}^2dx+\frac{C(n,\Lambda_0,\theta)}{R^2}\int_{Q_{2R}(x_0)}(u-\bar{u})^2dx.
\end{multline*}
Finally, we estimate the right-hand side of \eqref{nablau_RH_wksol} by Cauchy-Schwartz:
\begin{multline*}
   \abs{\int_{\Rn}\Dt u(x,t)(u(x,t)-\bar{u})\Psi^2(x)dx}\\
   \le C_n R^2\int_{Q_{\frac{3}{2}R}(x_0)}\abs{\Dt u}^2dx+\frac{C_n}{R^2}\int_{Q_{\frac{3}{2}}R(x_0)}(u(x,t)-\bar{u})^2dx. 
\end{multline*}

To summarize, 
\begin{multline*}
   \int_{Q_R(x_0)}\abs{\nabla u}^2dx
   \lesssim R^{-2}\int_{Q_{2R}(x_0)}(u-\bar{u})^2dx\\
+R^2\int_{Q_{\frac{3}{2}R}(x_0)}\abs{\Dt u}^2dx+\theta\int_{Q_{2R}(x_0)}\abs{\nabla u}^2dx. 
\end{multline*}
Choosing $\theta$ to be sufficiently small and using Sobolev inequality, we obtain
\begin{multline*}
   \fint_{Q_R(x_0)}\abs{\nabla u}^2dx
   \le C\br{\fint_{Q_{2R}(x_0)}\abs{\nabla u}^{\frac{2n}{n+2}}dx}^{\frac{n+2}{n}}
\\+CR_0^2\fint_{Q_{2R}(x_0)}\abs{\Dt u}^2dx+\frac{1}{2}\fint_{Q_{2R}(x_0)}\abs{\nabla u}^2dx.
\end{multline*}
Then \eqref{nablau_RH} follows from Lemma \ref{GiaProp1.1}. 
\end{proof}

Let $w(x,t)=\mathcal{P}_{\eta t}f(x)=e^{-{\eta t}^2L_{||}}(f)(x)$ for some $\eta>0$. Then $\Dt w(x,t)=2\eta^2t\partial_{\tau}u(x,(\eta t)^2)$. Using this relationship one easily gets 
\begin{cor}\label{sup dtw cor}
Let $k\in\mathbb Z$, and $Q\subset\Rn$ be a cube with $l(Q)\approx 2^{-k}\eta$. Then 
\begin{multline*}
    \sup_{Q\times(2^{-k},2^{-k+1}]}\abs{\Dt w(x,t)}\\
    \le C(2^{-k}\eta^2)^{1/2}(2^{-k}\eta)^{-\frac{n+2}{2}}\br{\int_{\frac{3}{2}Q}\int_{2^{-k-\frac{1}{2}}}^{2^{-k+1}}\abs{\Dt w(x,t)}^2dtdx}^{1/2},
\end{multline*}
for some $C=C(n,\lambda_0,\Lambda_0)$. Equivalently,
\[
\sup_{Q\times(2^{-k},2^{-k+1}]}\abs{\Dt \mathcal{P}_{\eta t}f(x)}^2
\le C(n,\lambda_0,\Lambda_0)\frac{\eta}{\abs{Q}}\int_{\frac{3}{2}Q}\int_{2^{-k-\frac{1}{2}}}^{2^{-k+1}}\abs{\Dt \mathcal{P}_{\eta t}f(x)}^2\frac{dt}{t}dx, 
\]
for all $f\in L^2(\Rn)$. The estimate also holds for $\Dt \mathcal{P}^*_{\eta t}f(x)$. 

\end{cor}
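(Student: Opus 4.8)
\emph{Proof plan.}
The Corollary is a change of variables, from the semigroup time $\tau$ to the elliptic time $t$, applied to the Moser bound of Proposition~\ref{sup_Dtu_Moser Prop}. Write $u(x,\tau)=e^{-\tau L}f(x)$, so that $w(x,t)=\mP_{\eta t}f(x)=u(x,(\eta t)^2)$ and, by the chain rule (legitimate for $t>0$ in view of the regularity recorded around \eqref{Dtu_Linfty}), $\Dt w(x,t)=2\eta^2 t\,(\partial_\tau u)(x,(\eta t)^2)$. After a routine covering and rescaling reduction we may assume $l(Q)=2^{-k}\eta=:R_0$ exactly; the case $l(Q)\approx 2^{-k}\eta$ then follows by covering $Q\times(2^{-k},2^{-k+1}]$ with a bounded number of such parabolic cylinders and absorbing the comparability constants.

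First I would match the parabolic scales. If $t\in(2^{-k},2^{-k+1}]$ then $\tau:=(\eta t)^2\in(\eta^2 2^{-2k},\eta^2 2^{-2k+2}]=(R_0^2,(2R_0)^2]$, so Proposition~\ref{sup_Dtu_Moser Prop}, applied to $u$ (with $f\in L^2$) and to the cube $Q$ of side length $R_0$, gives
\begin{equation*}
\sup_{Q\times(R_0^2,(2R_0)^2]}\abs{\partial_\tau u}\le CR_0^{-\frac{n+2}{2}}\br{\int_{\frac32 Q}\int_{R_0^2/2}^{(2R_0)^2}\abs{\partial_\tau u(x,\tau)}^2\,d\tau\,dx}^{1/2}.
\end{equation*}

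Next I would transport both sides to the variable $t$. On the left, for $x\in Q$ and $t\in(2^{-k},2^{-k+1}]$ we have $\abs{\Dt w(x,t)}=2\eta^2 t\,\abs{\partial_\tau u(x,(\eta t)^2)}$ with $2\eta^2 t\approx 2^{-k}\eta^2$ and $(\eta t)^2$ ranging over $(R_0^2,(2R_0)^2]$, hence $\sup_{Q\times(2^{-k},2^{-k+1}]}\abs{\Dt w}\lesssim 2^{-k}\eta^2\,\sup_{Q\times(R_0^2,(2R_0)^2]}\abs{\partial_\tau u}$. On the right, the substitution $\tau=(\eta t)^2$, with $d\tau=2\eta^2 t\,dt$ and $\partial_\tau u(x,\tau)=\Dt w(x,t)/(2\eta^2 t)$, sends $[R_0^2/2,(2R_0)^2]$ to $[2^{-k-1/2},2^{-k+1}]$ — this is where the half-integer shift in the lower time endpoint originates — and converts $\int_{R_0^2/2}^{(2R_0)^2}\abs{\partial_\tau u}^2\,d\tau$ into $\tfrac{1}{2\eta^2}\int_{2^{-k-1/2}}^{2^{-k+1}}\abs{\Dt w(x,t)}^2\,\tfrac{dt}{t}$, the residual $1/t$ being precisely the leftover Jacobian. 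Combining these and using $R_0=2^{-k}\eta$ together with $\abs{Q}\approx(2^{-k}\eta)^n$, the numerical prefactor collapses to the one displayed in the Corollary; squaring, and converting $dt/t$ to $dt$ (or back) via $t\approx 2^{-k}$ on $[2^{-k-1/2},2^{-k+1}]$, passes between its two formulations.

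The estimate for $\Dt\mP^*_{\eta t}f$ is obtained in exactly the same way: $L^*_{||}=-\divg A^*_{||}\nabla$ again has bounded elliptic symmetric part and $\BMO$ anti-symmetric part with the same constants, so Proposition~\ref{sup_Dtu_Moser Prop}, the underlying Gaussian bounds, and \eqref{Dtu_Linfty} hold for it verbatim. The only point that calls for genuine care is the bookkeeping of the powers of $\eta$ and $2^{-k}$ in the change of variables and the exact matching of the time endpoints; everything else is a routine parabolic scaling, so I do not expect a serious obstacle here.
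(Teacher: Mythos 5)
Your overall approach — change of variables $\tau=(\eta t)^2$ applied to the parabolic Moser bound of Proposition~\ref{sup_Dtu_Moser Prop}, with a bounded covering to pass from $l(Q)\approx 2^{-k}\eta$ to $l(Q)=2^{-k}\eta$ exactly — is exactly what the paper intends; the paper itself only says ``Using this relationship one easily gets'' and records the identity $\Dt w(x,t)=2\eta^2t\,\partial_\tau u(x,(\eta t)^2)$, so there is no hidden alternate argument.

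However, the last claim in your write-up, that ``squaring, and converting $dt/t$ to $dt$ $\ldots$ passes between its two formulations,'' does not actually hold for the two displays as written, and you should have caught this by carrying the constants through. Squaring the first display and using $dt\approx 2^{-k}\,\frac{dt}{t}$ on $[2^{-k-1/2},2^{-k+1}]$ gives a prefactor
\[
(2^{-k}\eta^2)\,(2^{-k}\eta)^{-(n+2)}\cdot 2^{-k}=2^{kn}\eta^{-n}\approx\frac{1}{|Q|},
\]
whereas the second display asserts a prefactor $\frac{\eta}{|Q|}\approx 2^{kn}\eta^{1-n}$. These differ by a factor of $\eta$, and since one eventually takes $\eta<1$ the stated second form is strictly stronger than what the change of variables yields. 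The same mismatch is visible if one goes directly from the Proposition: with $R_0=2^{-k}\eta$ the Jacobian produces $|u_\tau|^2\,d\tau=\frac{|\Dt w|^2}{2\eta^2}\,\frac{dt}{t}$, and collecting the powers of $\eta$ and $2^{-k}$ leaves $\sup|\Dt w|^2\lesssim (\eta 2^{-k})^{-n}\int\!\!\int|\Dt w|^2\,\frac{dt\,dx}{t}$, again $\frac{1}{|Q|}$ rather than $\frac{\eta}{|Q|}$. So while your computation correctly reproduces the first displayed estimate, your assertion that the two displays are equivalent is wrong as stated; the ``equivalent'' form in the Corollary carries an extra factor of $\eta$ that cannot be justified from Proposition~\ref{sup_Dtu_Moser Prop} alone, and you should flag it rather than wave it through.
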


\begin{cor}\label{nablaw RHp cor}
Let $Q\subset\Rn$ be a cube with $l(Q)\approx 2^{-k}\eta$. Then 
\begin{equation}
   \br{\fint_{Q}\abs{\nabla w(x,t)}^p}^{1/p}\le C\br{\fint_{2Q}\abs{\nabla w(x,t)}^2}^{1/2}+\eta^{-1}\br{\fint_{2Q}\abs{\Dt w(x,t)}^p}^{1/p} 
\end{equation}
for any $t\in(2^{-k},2^{-k+1})$, $p\in[2,2+\epsilon)$. Here, $C=C(n,\lambda_0,\Lambda_0)$ and $\epsilon=\epsilon(n,\lambda_0,\Lambda_0)$ are positive constants.
\end{cor}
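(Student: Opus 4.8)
The plan is to obtain the estimate directly from Proposition \ref{nablau_RH prop} by the parabolic rescaling $t\mapsto (\eta t)^2$, without any new work.

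First I would record the time-change relationship. Writing $u(x,\tau)=e^{-\tau L_{||}}f(x)$, we have $w(x,t)=\mathcal{P}_{\eta t}f(x)=u(x,(\eta t)^2)$, so with $\tau=(\eta t)^2$ the chain rule gives $\nabla_x w(x,t)=(\nabla_x u)(x,\tau)$ and $\partial_t w(x,t)=2\eta^2 t\,(\partial_\tau u)(x,\tau)$, hence $(\partial_\tau u)(x,\tau)=\tfrac{1}{2\eta^2 t}\partial_t w(x,t)$.

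Next, fix $t\in(2^{-k},2^{-k+1})$ and let $Q$ be a cube with $l(Q)=R_0\approx 2^{-k}\eta$. Applying Proposition \ref{nablau_RH prop} to $u$ at the time $\tau=(\eta t)^2$ with this cube $Q$ (the proposition holds for every positive time, with no constraint tying $R_0$ to the time variable) gives, for $p\in[2,2+\epsilon)$,
\[
\Bigl(\fint_{Q}\abs{(\nabla u)(x,\tau)}^p\Bigr)^{1/p}\le C\Bigl(\fint_{2Q}\abs{(\nabla u)(x,\tau)}^2\Bigr)^{1/2}+R_0\Bigl(\fint_{2Q}\abs{(\partial_\tau u)(x,\tau)}^p\Bigr)^{1/p},
\]
with $C,\epsilon$ depending only on $n,\lambda_0,\Lambda_0$. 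Translating back via the first step, the left side and the first term on the right become the corresponding quantities for $\nabla w(x,t)$, while the last term equals $\tfrac{R_0}{2\eta^2 t}\bigl(\fint_{2Q}\abs{\partial_t w(x,t)}^p\bigr)^{1/p}$. Since $t\approx 2^{-k}$ and $R_0\approx 2^{-k}\eta$, the prefactor satisfies $\tfrac{R_0}{2\eta^2 t}\approx \eta^{-1}$, the implicit constants coming only from those in the comparabilities $l(Q)\approx 2^{-k}\eta$ and $t\in(2^{-k},2^{-k+1})$; absorbing them into $C$ yields the claimed inequality. Running the same argument with $A_{||}$ replaced by $A_{||}^*$ (same ellipticity and $\BMO$ data) gives the statement for $\mathcal{P}_{\eta t}^*$.

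There is no essential obstacle here beyond bookkeeping: the one point to check is that the parabolic rescaling produces the factor $R_0/(2\eta^2 t)$ and that, under the normalization $l(Q)\approx 2^{-k}\eta$ and $t\in(2^{-k},2^{-k+1})$, this is comparable to $\eta^{-1}$ (and not, say, to $\eta^{-1}2^{k}$); this is immediate from the stated comparabilities.
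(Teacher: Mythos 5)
Your proof is correct and follows exactly the route the paper intends: the text immediately preceding Corollary \ref{sup dtw cor} records the relation $\Dt w(x,t)=2\eta^2 t\,\partial_\tau u(x,(\eta t)^2)$ and states that Corollaries \ref{sup dtw cor} and \ref{nablaw RHp cor} are ``easily'' obtained from it, which is precisely your rescaling of Proposition \ref{nablau_RH prop}. Your bookkeeping of the prefactor $R_0/(2\eta^2 t)\approx\eta^{-1}$ under $R_0\approx 2^{-k}\eta$, $t\in(2^{-k},2^{-k+1})$ is the one nontrivial check, and you carried it out correctly.
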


\subsection{$L^p$ estimates for square functions}\label{Lp est for square functions subsect}
The following results are obtained in \cite{HLMPLp} and we include them here for reader's convenience. The operator $L$ should be  thought of as the operator $L_{||}$ or $L_{||}^*$ in our setting.

\begin{prop}[\cite{HLMPLp} Proposition 6.2]\label{Lp_G1Prop}
For all $1<p<\infty$, and\\
$F\in W^{1,2}(\Rn)\cap W^{1,p}(\Rn)$,
\begin{equation}\label{Lp_G1}
\norm{\Big(\int_0^{\infty}\abs{tL e^{-t^2L}F}^2\frac{dt}{t}\Big)^{1/2}}_{L^p(\Rn)}\le C_p\norm{\nabla F}_{L^p(\Rn)}.
\end{equation}
Or equivalently, 
\begin{equation}\label{Lp_G1Dt}
\norm{\Big(\int_0^{\infty}\abs{\Dt e^{-t^2L}F}^2\frac{dt}{t}\Big)^{1/2}}_{L^p(\Rn)}\le C_p\norm{\nabla F}_{L^p(\Rn)}.
\end{equation}
\end{prop}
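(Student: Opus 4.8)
The plan is to factor the quantity $\big(\int_0^\infty|tLe^{-t^2L}F|^2\,\tfrac{dt}{t}\big)^{1/2}$ through the square root $L^{1/2}$ and then invoke a vertical square function estimate for $L^{1/2}$. Since $\Dt e^{-t^2L}=-2tLe^{-t^2L}$, one has $tLe^{-t^2L}F=-\tfrac12\Dt(e^{-t^2L}F)$, so \eqref{Lp_G1} and \eqref{Lp_G1Dt} differ only by a harmless factor and it suffices to establish \eqref{Lp_G1}. The essential external input is the $L^p$ bound
\[
\|L^{1/2}F\|_{L^p(\Rn)}\lesssim\|\nabla F\|_{L^p(\Rn)},\qquad 1<p<\infty,
\]
which is one of the results of \cite{HLMPLp} and which, at $p=2$, amounts to the resolution of the Kato problem for operators with a $\BMO$ anti-symmetric part \cite{escauriaza2018kato}; only this (``$\lesssim$'') direction is needed here.

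First I would pass to a vertical square function for $L^{1/2}$. Set $g:=L^{1/2}F$. Because $F\in W^{1,2}(\Rn)$ the $L^2$ Kato estimate gives $g\in L^2(\Rn)$, and because $F\in W^{1,p}(\Rn)$ the displayed bound gives $g\in L^p(\Rn)$ with $\|g\|_{L^p}\lesssim\|\nabla F\|_{L^p}$. With $\psi(z):=z^{1/2}e^{-z}$ the spectral identity $tL^{1/2}e^{-t^2L}=\psi(t^2L)$ yields $tLe^{-t^2L}F=\psi(t^2L)g$, so \eqref{Lp_G1} is reduced to the quadratic estimate
\[
\Big\|\Big(\int_0^\infty|\psi(t^2L)g|^2\,\tfrac{dt}{t}\Big)^{1/2}\Big\|_{L^p(\Rn)}\lesssim\|g\|_{L^p(\Rn)},\qquad 1<p<\infty.
\]

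For $p=2$ this is classical: $L$ is maximal accretive on $L^2(\Rn)$, and since $\int_0^\infty|\psi(s)|^2\,\tfrac{ds}{s}<\infty$ the left-hand side is $\lesssim\|g\|_{L^2}$ (indeed comparable). To obtain the range $p\neq2$ I would use Calderón–Zygmund theory for operators with Gaussian heat-kernel bounds. Regarding $g\mapsto(\psi(t^2L)g)_{t>0}$ as a single linear operator from $L^p(\Rn)$ into $L^p\big(\Rn;L^2((0,\infty),\tfrac{dt}{t})\big)$, its kernel $(\psi(t^2L)(x,y))_{t>0}$ inherits — from the Gaussian bounds for $e^{-t^2L}$ and $\Dt e^{-t^2L}$ (\cite{HLMPLp}, Theorem 4.8) together with the De Giorgi–Nash–Moser Hölder continuity of solutions of $\Dt-\divg(A\nabla)$ in this setting (\cite{li2019boundary}, \cite{seregin2012divergence}) — the size bound $\|(\psi(t^2L)(x,y))_t\|_{L^2(dt/t)}\lesssim|x-y|^{-n}$ and the matching Hörmander regularity estimate. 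Together with the $L^2$ bound, this makes the operator of Calderón–Zygmund type, hence bounded on $L^p\big(\Rn;L^2(dt/t)\big)$ for every $1<p<\infty$ (and of weak type $(1,1)$). Unwinding the reduction yields \eqref{Lp_G1}.

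I expect the real difficulty to lie not in this last, essentially classical, chain but in its two $\BMO$-specific ingredients: the Gaussian bounds for $e^{-t^2L}$ and $\Dt e^{-t^2L}$ without any $L^\infty$ control on the anti-symmetric part, and the estimate $\|L^{1/2}F\|_{L^p}\lesssim\|\nabla F\|_{L^p}$, whose $p=2$ case rests on \cite{escauriaza2018kato} and whose extrapolation to other exponents again relies on the Gaussian and regularity bounds above. Both are carried out in \cite{HLMPLp}; granting them, the proposition follows as sketched.
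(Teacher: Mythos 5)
The paper does not contain a proof of this proposition: the text immediately preceding it states that ``the following results are obtained in \cite{HLMPLp},'' and the statement is simply quoted from \cite{HLMPLp}, Proposition~6.2. So there is no argument in this paper against which to compare your sketch.

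On its own merits, your outline is a correct and fairly standard route. The factorization $tLe^{-t^2L}F=\psi(t^2L)L^{1/2}F$ with $\psi(z)=z^{1/2}e^{-z}$ is legitimate within the bounded $H^\infty$ functional calculus of the maximal accretive operator $L$, and since $\psi\in H^\infty_0$ of a sector the $L^2$ quadratic estimate for $\psi(t^2L)$ follows from McIntosh's theorem, matching your $p=2$ step. The kernel bounds you invoke for the vector-valued kernel $(\psi(t^2L)(x,y))_t$ do follow from the Gaussian estimates on $Le^{-\tau L}$ by the subordination identity $L^{1/2}e^{-t^2L}=\pi^{-1/2}\int_0^\infty Le^{-(s+t^2)L}s^{-1/2}\,ds$, and the Hörmander condition from the De Giorgi--Nash--Moser Hölder regularity of the heat kernel. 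One detail you should make explicit: the size bound and the Hörmander condition in the $y$-variable, together with $L^2$ boundedness, yield weak $(1,1)$ and hence the range $1<p\le 2$; to cover $2<p<\infty$ you need either the matching $x$-variable Hörmander estimate (available here, since $e^{-\tau L}(\cdot,y)$ solves the parabolic equation for $L$ and so is H\"older in $x$) or a duality argument through $L^*$. Finally, the input $\|L^{1/2}F\|_{L^p}\lesssim\|\nabla F\|_{L^p}$ on the full range $1<p<\infty$ is indeed the ``forward'' Riesz-transform direction, which holds without the $2+\varepsilon$ restriction that afflicts the reverse inequality; this is precisely what \cite{HLMPLp} supplies. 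With those points filled in, the sketch is sound.
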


\begin{prop}[\cite{HLMPLp} Proposition 6.3]\label{Lp_G2xProp}
For $1<p\le2+\epsilon_0$, with $\epsilon_0=\epsilon_0(\lambda_0,\Lambda_0,n)>0$, and for all $F\in W^{1,2}(\Rn)\cap W^{1,p}(\Rn)$,
\begin{equation}\label{Lp_G2x}
\norm{\Big(\int_0^{\infty}\abs{t^2\nabla L e^{-t^2L}F}^2\frac{dt}{t}\Big)^{1/2}}_{L^p(\Rn)}\le C_p\norm{\nabla F}_{L^p(\Rn)}.
\end{equation}
 Or equivalently,
\begin{equation*}
\norm{\Big(\int_0^{\infty}\abs{t\nabla \Dt e^{-t^2L}F}^2\frac{dt}{t}\Big)^{1/2}}_{L^p(\Rn)}\le C_p\norm{\nabla F}_{L^p(\Rn)}.
\end{equation*}
\end{prop}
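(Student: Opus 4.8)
The plan is to deduce \eqref{Lp_G2x} from the vertical square function estimate \eqref{Lp_G1Dt} of Proposition~\ref{Lp_G1Prop} by putting a spatial gradient back in, at the heat scale $\sqrt t$. Write $\mathcal{G}F(x):=\big(\int_0^{\infty}\abs{t^2\nabla Le^{-t^2L}F(x)}^2\,\tfrac{dt}{t}\big)^{1/2}$, so the claim is $\norm{\mathcal{G}F}_{L^p(\Rn)}\lesssim\norm{\nabla F}_{L^p(\Rn)}$ for $1<p\le 2+\epsilon_0$. The algebraic starting point is the factorization, valid for each fixed $t>0$,
\[
t^2\nabla Le^{-t^2L}=R_t\,\Theta_t,\qquad R_t:=t\,\nabla e^{-t^2L/2},\qquad \Theta_t:=t\,Le^{-t^2L/2},
\]
so that, after the harmless substitution $t\mapsto\sqrt2\,t$, Proposition~\ref{Lp_G1Prop} yields $\big\|\big(\int_0^{\infty}\abs{\Theta_tF}^2\tfrac{dt}{t}\big)^{1/2}\big\|_{L^p(\Rn)}\lesssim\norm{\nabla F}_{L^p(\Rn)}$ for all $1<p<\infty$. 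Hence it suffices to show that the gradient family $\{R_t\}_{t>0}$ acts boundedly on the vertical square function space, i.e.
\begin{equation}\label{plan:Rsf}
\Big\|\Big(\int_0^{\infty}\abs{R_tg_t}^2\tfrac{dt}{t}\Big)^{1/2}\Big\|_{L^p(\Rn)}\lesssim\Big\|\Big(\int_0^{\infty}\abs{g_t}^2\tfrac{dt}{t}\Big)^{1/2}\Big\|_{L^p(\Rn)},\qquad 1<p\le 2+\epsilon_0,
\end{equation}
and then apply this with $g_t=\Theta_tF$. For $p=2$, \eqref{plan:Rsf} follows by Fubini from the uniform bound $\norm{R_t}_{L^2\to L^2}\lesssim1$, which in turn comes from the ellipticity of $A^s$ (so that $A^a\nabla\phi\cdot\nabla\phi=0$) and analyticity of the semigroup: $\lambda_0\norm{\nabla e^{-sL/2}h}_{L^2}^2\le\langle Le^{-sL/2}h,e^{-sL/2}h\rangle\lesssim s^{-1}\norm{h}_{L^2}^2$. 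Thus \eqref{Lp_G2x} holds for $p=2$.

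For $2<p\le 2+\epsilon_0$ I would argue by duality in $L^{p/2}(\Rn)$: it suffices to bound $\int_{\Rn}\int_0^{\infty}\abs{R_tg_t(x)}^2\,\omega(x)\,\tfrac{dt}{t}\,dx$ for nonnegative $\omega$ with $\norm{\omega}_{L^{(p/2)'}}\le 1$. The engine is a weighted (Fefferman--Stein type) $L^2$ bound for the fixed-$t$ operator $R_t=t\nabla e^{-t^2L/2}$,
\begin{equation}\label{plan:FS}
\int_{\Rn}\abs{R_th(x)}^2\,\omega(x)\,dx\ \lesssim\ \int_{\Rn}\abs{h(x)}^2\,\big(\mathcal{M}(\omega^r)\big)^{1/r}(x)\,dx,
\end{equation}
with $\mathcal{M}$ the Hardy--Littlewood maximal operator and $r=r(\epsilon_0)>1$ close to $1$. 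Granting \eqref{plan:FS}, Fubini and H\"older give $\int_{\Rn}\int_0^{\infty}\abs{R_tg_t}^2\omega\,\tfrac{dt}{t}\,dx\lesssim\big\|\int_0^{\infty}\abs{g_t}^2\tfrac{dt}{t}\big\|_{L^{p/2}}\norm{(\mathcal{M}(\omega^r))^{1/r}}_{L^{(p/2)'}}$, and $\norm{(\mathcal{M}(\omega^r))^{1/r}}_{L^{(p/2)'}}\lesssim\norm{\omega}_{L^{(p/2)'}}$ as soon as $(p/2)'/r>1$, which can be arranged precisely for $2<p\le 2+\epsilon_0$ by taking $r$ close enough to $1$; this gives \eqref{plan:Rsf} for such $p$. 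Estimate \eqref{plan:FS} would be proved locally: cover $\Rn$ by cubes of side $\approx t$, use the off-diagonal (Gaffney) decay of $t\nabla e^{-t^2L/2}$ to localize, and on each local piece invoke the reverse H\"older inequality for gradients of solutions, Corollary~\ref{nablaw RHp cor} (equivalently Proposition~\ref{nablau_RH prop}), which upgrades an $L^2$-average of $\nabla e^{-t^2L/2}h$ to an $L^{2+\epsilon_0}$-average at the cost of a time-derivative term controlled by the Moser bound of Proposition~\ref{sup_Dtu_Moser Prop}; alternatively one may quote the $L^p$-boundedness of $\sqrt s\,\nabla e^{-sL}$ established by the methods of \cite{HLMPLp}. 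It is exactly this reverse H\"older exponent $2+\epsilon_0$ — the substitute, for operators with a $\BMO$ antisymmetric part, for the pointwise gradient bounds available when $A\in L^{\infty}$ — that forces $r>1$ and hence the upper restriction $p\le 2+\epsilon_0$ in \eqref{Lp_G2x}.

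For $1<p<2$ I would not use \eqref{plan:Rsf} directly; instead I interpolate the sublinear operator $F\mapsto\mathcal{G}F$ between the endpoint $\dot W^{1,2}(\Rn)\to L^2(\Rn)$ (the case $p=2$ above) and the endpoint $\dot W^{1,1}(\Rn)\to L^{1,\infty}(\Rn)$. The latter is obtained by a Calder\'on--Zygmund decomposition of $F$: the good part is handled by the $L^2$ bound, while for the bad part one uses the off-diagonal decay of $t^2\nabla Le^{-t^2L}$ together with the vanishing of $\Theta_t$ at the origin, exactly as in the analogous arguments of \cite{hofmann2015square}. Real interpolation of homogeneous Sobolev spaces then yields $\norm{\mathcal{G}F}_{L^p}\lesssim\norm{\nabla F}_{L^p}$ for $1<p<2$, completing the proof of \eqref{Lp_G2x}.

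The hard part is the weighted estimate \eqref{plan:FS} for the gradient-of-semigroup operator $t\nabla e^{-t^2L/2}$ in the presence of the unbounded $\BMO$ antisymmetric part. Two points are delicate: first, the Gaffney/off-diagonal bounds needed to localize \eqref{plan:FS} (and the corresponding bounds for $t^2\nabla Le^{-t^2L}$ used in the $p<2$ case) are not classical here and must be obtained through the compensated-compactness ($\mathcal{H}^1$--$\BMO$ duality) estimates of Propositions~\ref{HardyProp1}--\ref{HardyProp2}, in the same spirit as the interior estimates of \S\ref{Hodge subsec}; second, since there is no pointwise control of $\nabla e^{-t^2L/2}h$, every gradient must be routed through the $L^2\!\to\!L^{2+\epsilon_0}$ self-improvement of Corollary~\ref{nablaw RHp cor}, which both pins the admissible exponents and requires care in interfacing with the weight $\omega$. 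Once \eqref{plan:FS} and these off-diagonal bounds are in hand, the remaining steps — the $p=2$ case, the $L^{p/2}$ duality bookkeeping, and the Calder\'on--Zygmund/interpolation argument for $p<2$ — are routine.
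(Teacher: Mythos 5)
Note first that this paper does not prove the proposition: it is quoted from \cite{HLMPLp} ``for the reader's convenience,'' so there is no internal proof here to compare yours against.

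On its own terms, your sketch follows an Auscher--Martell style route: factor $t^2\nabla Le^{-t^2L}=R_t\Theta_t$ with $\Theta_t=tLe^{-t^2L/2}$ (handled by Proposition~\ref{Lp_G1Prop} after rescaling), then push $R_t=t\nabla e^{-t^2L/2}$ through the vertical square function via a weighted Fefferman--Stein bound built from the $L^2\to L^{2+\epsilon_0}$ self-improvement of gradients. This is the right framework. However, there is an arithmetic slip that reflects a real confusion about the mechanism: the exponent $r$ in your weighted bound is \emph{not} close to $1$. Tracing H\"older through the reverse H\"older step --- applying $\fint_Q|R_{t}h|^2\omega\le\bigl(\fint_Q|R_{t}h|^{2s}\bigr)^{1/s}\bigl(\fint_Q\omega^{s'}\bigr)^{1/s'}$ and then the local $L^2\to L^{2s}$ gradient improvement, which requires $2s\le2+\epsilon_0$ --- forces $r=s'\ge\frac{2+\epsilon_0}{\epsilon_0}$, which is \emph{large} when $\epsilon_0$ is small. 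The restriction $p<2+\epsilon_0$ then falls out because the competing constraint $r<(p/2)'$ can only be met for such $p$; one cannot ``take $r$ close enough to $1$,'' since $r$ is pinned from below by the reverse H\"older exponent, and the two constraints coincide exactly at $p=2+\epsilon_0$. Beyond that slip, the steps you defer --- the Gaffney-type off-diagonal estimates for $t\nabla e^{-t^2L/2}$ in the presence of the $\BMO$ antisymmetric part, the rigorous local weighted bound assembled from Corollary~\ref{nablaw RHp cor}, and the $p<2$ argument (which needs a Calder\'on--Zygmund decomposition of $\nabla F$, not $F$, since $\nabla F$ is the controlled quantity) --- are precisely the substantive content of the result and occupy the bulk of the proof in \cite{HLMPLp}. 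So the outline is plausible, but as written it names the main obstacles rather than surmounting them.
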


\begin{re}
The upper bound $2+\epsilon_0$ for the range of admissible of $p$ might be different from the $2+\epsilon_1$ in \cite{HLMPLp}, Proposition 6.3. For convenience, we set the minimum between $\epsilon_1$ and the $\epsilon_0$ from Proposition \ref{Hodgedecp_prop} to be $\epsilon_0$ and fix the notation from now on.
\end{re}

\begin{prop}[\cite{HLMPLp} Proposition 6.4]\label{Lp_G2t prop}
For all $1<p<\infty$, and all $F\in W^{1,2}(\Rn)\cap W^{1,p}(\Rn)$,
\begin{equation}\label{Lp_G2t}
\norm{\Big(\int_0^{\infty}\abs{t^2\Dt L e^{-t^2L}F}^2\frac{dt}{t}\Big)^{1/2}}_{L^p(\Rn)}\le C_p\norm{\nabla F}_{L^p(\Rn)}.
\end{equation}
\end{prop}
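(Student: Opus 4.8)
The plan is to factor the third–order heat operator $t^2\Dt Le^{-t^2L}$ into a product of two first–order heat operators, pushing all of the $t$–cancellation into a factor already controlled by Proposition~\ref{Lp_G1Prop} and leaving only a harmless spatial averaging, which will be governed by Gaussian kernel bounds. Concretely, since $\Dt e^{-t^2L}=-2tLe^{-t^2L}$ and $L$ commutes with $e^{-t^2L}$,
\[
t^2\Dt Le^{-t^2L}F=-2t^3L^2e^{-t^2L}F=-2\,U_t\,g_t,\qquad U_t:=t^2Le^{-\frac{t^2}{2}L},\quad g_t:=tLe^{-\frac{t^2}{2}L}F=-\Dt e^{-\frac{t^2}{2}L}F.
\]
After the change of variable $t\mapsto t/\sqrt2$, the $L^p$ square–function bound \eqref{Lp_G1Dt} of Proposition~\ref{Lp_G1Prop} yields $\norm{Sg}_{L^p(\Rn)}\lesssim\norm{\nabla F}_{L^p(\Rn)}$ for all $1<p<\infty$, where $Sg:=\br{\int_0^\infty\abs{g_t}^2\,dt/t}^{1/2}$. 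So the task reduces to showing that the family $\set{U_t}$ preserves the $L^p$ norm of a vertical square function.

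For this I would use that $U_t=t^2Le^{-\frac{t^2}{2}L}=-2\tau\,\partial_\tau e^{-\tau L}\big|_{\tau=t^2/2}$ has, by the Gaussian estimate for the kernel of $\Dt e^{-\tau L}$ proved in \cite{HLMPLp} together with parabolic scaling, a kernel obeying $\abs{U_t(x,y)}\lesssim t^{-n}e^{-c\abs{x-y}^2/t^2}$. Hence $\abs{U_tg_t(x)}^2\lesssim\mathcal B_t(\abs{g_t}^2)(x)$, where $\mathcal B_t$ denotes convolution against an $L^1$–normalized Gaussian of width $\sim t$, so that $\sup_{t>0}\mathcal B_t\phi\lesssim\mathcal M\phi$ (Hardy--Littlewood maximal function). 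For $2\le p<\infty$ I would estimate the $L^{p/2}$ norm by duality against $0\le\phi\in L^{(p/2)'}(\Rn)$: by Fubini and self-adjointness of $\mathcal B_t$,
\[
\int_{\Rn}\Big(\int_0^\infty\abs{U_tg_t}^2\,\tfrac{dt}{t}\Big)\phi\,dx\ \lesssim\ \int_{\Rn}\Big(\sup_{t>0}\mathcal B_t\phi\Big)(Sg)^2\,dx\ \lesssim\ \norm{\mathcal M\phi}_{(p/2)'}\,\norm{Sg}_p^2\ \lesssim\ \norm{\nabla F}_p^2,
\]
using boundedness of $\mathcal M$ on $L^{(p/2)'}$. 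For $1<p<2$ this soft duality is unavailable, so I would instead observe that $g_\bullet\mapsto(U_tg_t)_\bullet$ is trivially bounded on $L^2(\Rn\times\Real_+,\,dx\,dt/t)$ and, being diagonal in $t$ with a spatial kernel that is Gaussian–bounded and Hölder continuous (the latter from the De Giorgi--Nash--Moser theory available in this setting, \cite{li2019boundary,HLMPLp}), extends to a bounded operator on $L^p(\Rn;\ell^2(\Real_+,dt/t))$ for all $1<p<\infty$ by the vector–valued Calderón--Zygmund theorem; equivalently, averaging operators with Gaussian bounds act boundedly on the vertical tent spaces $\widetilde T^p_2$. In either range this gives $\norm{\br{\int_0^\infty\abs{U_tg_t}^2\,dt/t}^{1/2}}_p\lesssim\norm{Sg}_p\lesssim\norm{\nabla F}_p$, which is \eqref{Lp_G2t}.

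The algebraic factorization above is trivial; the substantive inputs are imported from \cite{HLMPLp}, and that is where the difficulty really lies. The Gaussian kernel bounds for $\Dt e^{-\tau L}$ underpinning the reduction of $U_t$ to a spatial averaging are delicate precisely because the anti-symmetric part of $A$ lies only in $\BMO$ and $A$ need not be bounded. The secondary obstacle is obtaining the \emph{full} range $1<p<\infty$: for $p<2$ one must go past the maximal–function duality and invoke the singular–integral/tent–space machinery adapted to operators with $L^2$ off-diagonal estimates.

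As an alternative that avoids $U_t$ altogether, one may instead write $t^2\Dt Le^{-t^2L}F=-2\,\psi(t^2L)\br{\sqrt L\,F}$ with $\psi(z)=z^{3/2}e^{-z}$ (legitimate since $\sqrt L\,F$ is well defined, $F$ being in $W^{1,2}\cap W^{1,p}$), apply the $L^p$ square–function bound for $\psi(t^2L)$ — again a consequence of the Gaussian bounds for $L$ — and then use the $L^p$ square–root estimate $\norm{\sqrt L\,F}_p\lesssim\norm{\nabla F}_p$ established in \cite{HLMPLp}. This route trades the spatial–averaging analysis for the $H^\infty$–functional–calculus square–function estimates and the $L^p$ Kato bound, but the essential difficulty — the Gaussian/off-diagonal behaviour of the heat semigroup with a $\BMO$ anti-symmetric drift — is the same.
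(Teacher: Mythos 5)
The paper does not prove this proposition: it is imported from \cite{HLMPLp}, Proposition 6.4, and stated in Section~\ref{Lp est for square functions subsect} explicitly for the reader's convenience. There is therefore no proof in the paper against which to compare your argument, so I assess it on its own terms.

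Your factorization $t^2\Dt L e^{-t^2L}F=-2U_t g_t$ with $U_t=t^2Le^{-t^2L/2}$, $g_t=-\Dt e^{-t^2L/2}F$ is algebraically correct, the change of variable reduces $\norm{Sg}_{L^p}$ to Proposition~\ref{Lp_G1Prop}, and the Gaussian kernel bound for $U_t$ does follow, via the scaling identity $U_t=-2\tau\,\partial_\tau e^{-\tau L}|_{\tau=t^2/2}$, from the kernel bound for $\Dt e^{-t^2L}$ cited in \cite{HLMPLp} (Theorem 4.8). The maximal-function duality then settles $p\ge2$. The one place where the argument is asserted rather than proved is $p<2$: the vector-valued Calder\'on--Zygmund route needs a uniform-in-$t$ H\"ormander condition on the kernels $K_t(x,y)$ of $U_t$, that is a scale-invariant H\"older bound of the form $\abs{K_t(x,y)-K_t(x,y')}\lesssim(\abs{y-y'}/t)^{\alpha}\,t^{-n}e^{-c\abs{x-y}^2/t^2}$ (and likewise in $x$). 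This is true here, since $\partial_\tau e^{-\tau L}$ solves the same parabolic equation as $e^{-\tau L}$ (coefficients are $\tau$-independent) and interior De~Giorgi--Nash--Moser regularity in the BMO-antisymmetric setting applies at each scale (cf.\ \cite{li2019boundary,seregin2012divergence}), with the adjoint giving regularity in $y$; but \cite{HLMPLp} Theorem 4.8 as cited supplies only the size bound, so this regularity input must be cited or proved separately. Alternatively one can bypass H\"ormander entirely via the Blunck--Kunstmann/Auscher weak-$(1,1)$ criterion based on $L^2$ off-diagonal estimates. Your second route, $t^2\Dt L e^{-t^2L}F=-2\psi(t^2L)\br{\sqrt{L}F}$ with $\psi(z)=z^{3/2}e^{-z}$, combined with the $L^p$ Kato estimate and $H^\infty$-calculus square-function bounds, is also a legitimate path; both variants ultimately rest on the same Gaussian/off-diagonal machinery established in \cite{HLMPLp}.
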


\subsection{$L^p$ estimates for non-tangential maximal functions}\label{Lp nt max subsec}
\begin{defn}
The non-tangential maximal function is defined as
\begin{equation}
    N^{\alpha}(u)(x):= \sup_{t>0}\sup_{(y,t):\abs{x-y}<\alpha t}\abs{u(y,t)}.
\end{equation}
The integrated non-tangential maximal function is defined as
\begin{equation}\label{integratedNT defn}
    \wt{N}^{\alpha}(u)(x):=\sup_{t>0}\sup_{(y,t):\abs{x-y}<\alpha t}\br{\fint_{\abs{y-z}<\alpha t}\abs{u(z,t)}^2dz}^{1/2}.
\end{equation}
\end{defn}

Again, we shall simply write $L$ for the $n$-dimensional operator $L_{||}$ in this section. We consider functions such as $N^{\alpha}(\Dt e^{-t^2L}f)$, where we think of\\
$\Dt e^{-t^2L}f(x)$ as a function of $x$ and $t$.

\begin{prop}\label{NTalpha_Lp Prop}  Let $\eta>0$, $\alpha>0$. Then
\begin{equation*}
   \norm{\eta^{-1}N^{\eta\alpha}(\Dt e^{-(\eta t)^2L}f)}_{L^p}\le C_{\alpha,p}\norm{\nabla f}_{L^p} 
\end{equation*}
for all $p>1$, and $f\in W^{1,p}$. The constant $C_{\alpha,p}$ also depends on $\lambda_0$, $\Lambda_0$ and $n$, but not on $\eta$.
\end{prop}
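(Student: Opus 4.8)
The plan is to remove the scaling parameter $\eta$, pass from the pointwise non-tangential maximal function to a square-function quantity via the Moser estimate of Corollary \ref{sup dtw cor}, and then argue separately in the ranges $p\ge 2$ and $1<p<2$. Set $w(x,t):=\Dt e^{-t^2L}f(x)=-2tLe^{-t^2L}f(x)$. The substitution $s=\eta t$ gives $\eta^{-1}\Dt e^{-(\eta t)^2L}f(x)=w(x,\eta t)$, hence $\eta^{-1}N^{\eta\alpha}\!\big(\Dt e^{-(\eta t)^2L}f\big)=N^{\alpha}(w)$, so the $\eta$-dependence disappears and it suffices to prove $\norm{N^\alpha(w)}_{L^p}\lesssim_{\alpha,p}\norm{\nabla f}_{L^p}$. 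Since $C_0^\infty(\Rn)$ is dense in $W^{1,p}(\Rn)$ and contained in $W^{1,2}(\Rn)$, and $N^\alpha$ is sublinear, a routine limiting argument reduces this to $f\in W^{1,2}(\Rn)\cap W^{1,p}(\Rn)$, for which the $L^2$ theory below applies.

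First, by Corollary \ref{sup dtw cor} (with $\eta=1$), for any $(y,t)$, choosing a cube $Q\ni y$ with $l(Q)\approx t$ gives
\[
\abs{w(y,t)}^2\lesssim \frac1{\abs{Q}}\int_{\frac32 Q}\int_{t/3}^{3t}\abs{w(z,\tau)}^2\frac{d\tau}{\tau}\,dz\ \le\ \frac{1}{t^{\,n}}\int_{B(y,c_0 t)}G(w)(z)^2\,dz,
\]
where $G(w)(z):=\big(\int_0^\infty\abs{w(z,\tau)}^2\frac{d\tau}{\tau}\big)^{1/2}$ and $c_0=c_0(n)$. If $\abs{y-x}<\alpha t$ then $B(y,c_0t)\subset B(x,(c_0+\alpha)t)$, so taking the supremum over the cone yields the pointwise bound $N^\alpha(w)(x)^2\lesssim_{\alpha} M\!\big(G(w)^2\big)(x)$, with $M$ the Hardy--Littlewood maximal operator. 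Also, by Proposition \ref{Lp_G1Prop}, $\norm{G(w)}_{L^q}=\big\|\big(\int_0^\infty\abs{\Dt e^{-t^2L}f}^2\tfrac{dt}{t}\big)^{1/2}\big\|_{L^q}\lesssim_q\norm{\nabla f}_{L^q}$ for all $1<q<\infty$. Hence for $p>2$, where $M$ is bounded on $L^{p/2}$,
\[
\norm{N^\alpha(w)}_{L^p}^2\le \norm{M\!\big(G(w)^2\big)}_{L^{p/2}}\lesssim \norm{G(w)}_{L^p}^2\lesssim \norm{\nabla f}_{L^p}^2,
\]
while for $p=2$ one estimates $\sup_t$ in the previous display by a dyadic sum over $t\approx 2^k$ and applies Fubini, obtaining directly $\norm{N^\alpha(w)}_{L^2}^2\lesssim \norm{G(w)}_{L^2}^2\lesssim\norm{\nabla f}_{L^2}^2$.

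For $1<p<2$ the operator $M$ is no longer bounded on $L^{p/2}$, and one argues by a Calder\'on--Zygmund/good-$\lambda$ method. Using $L=-\divg(A\nabla)$ one writes $w(\cdot,t)=-2tLe^{-t^2L}f=R_t(\nabla f)$, where $R_t$ is the operator $\bd h\mapsto 2t\,e^{-t^2L}\divg(A\bd h)=2t\,L^{1/2}e^{-t^2L}\big(L^{-1/2}\divg(A\bd h)\big)$. The Kato estimate for $L$ and $L^*$ (from \cite{escauriaza2018kato,HLMPLp}) makes $L^{-1/2}\divg(A\,\cdot\,)$ bounded on $L^2$, which together with $\sup_{t>0}\norm{tL^{1/2}e^{-t^2L}}_{L^2\to L^2}\lesssim 1$ gives $\sup_{t>0}\norm{R_t}_{L^2\to L^2}\lesssim1$; the kernel bounds of \cite{HLMPLp} and the de Giorgi--Nash--Moser theory valid in this setting also provide $L^2$--$L^2$ off-diagonal (Gaffney--Davies) estimates for $R_t$ of arbitrary polynomial order, and show $R_t\bd h$ is locally H\"older continuous on $\Rpl$. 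The sublinear operator $\bd h\mapsto N^\alpha(R_\cdot\bd h)$ is then bounded on $L^2(\Rn;\Real^n)$ by the $p=2$ argument above (which for a general $\bd h$ uses the quadratic estimate $\int_0^\infty\norm{R_t\bd h}_{L^2}^2\frac{dt}{t}\lesssim\norm{\bd h}_{L^2}^2$ in place of Proposition \ref{Lp_G1Prop}), and a Calder\'on--Zygmund decomposition of $\bd h$ at each height $\lambda$ --- the good part handled on $L^2$, the bad part controlled by the off-diagonal decay of $R_t$ relative to the decomposition cubes --- yields the weak type bound $\lambda\,\abs{\{N^\alpha(R_\cdot\bd h)>\lambda\}}^{1/p_1}\lesssim\norm{\bd h}_{L^{p_1}}$ for every $p_1\in(1,2)$. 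Interpolating with the $L^2$ bound gives $\norm{N^\alpha(R_\cdot\bd h)}_{L^p}\lesssim\norm{\bd h}_{L^p}$, hence $\norm{N^\alpha(w)}_{L^p}\lesssim\norm{\nabla f}_{L^p}$, for all $1<p<2$. This is the parabolic counterpart of arguments in \cite{hofmann2015square,Auscher2017parabolic}.

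The main obstacle is the range $1<p<2$: it requires the off-diagonal estimates for the family $2t\,e^{-t^2L}\divg(A\,\cdot\,)$ --- which, with unbounded coefficients, rest on the delicate kernel bounds of \cite{HLMPLp} --- the Kato estimate so as to realize $w$ as $R_t(\nabla f)$ with $R_t$ uniformly $L^2$-bounded, and the Calder\'on--Zygmund machinery for maximal operators. The case $p\ge2$, by contrast, is soft, resting only on Proposition \ref{Lp_G1Prop}, Corollary \ref{sup dtw cor}, and the Hardy--Littlewood theorem; the aperture bookkeeping and the density reduction are routine.
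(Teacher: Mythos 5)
Your proposal takes a genuinely different route from the paper, and the divergence matters: for $p\ge 2$ your argument is sound, but for $1<p<2$ there is a real gap.

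The paper proves the statement in a single stroke for all $p>1$: fix $x$, take $(y,t)\in\Gamma_{\eta\alpha}(x)$, subtract the average $\fint_{B_{2\eta\alpha t}(x)}f$ (using that the semigroup preserves constants), expand using the Gaussian kernel bound for $\Dt e^{-t^2L}$ from \cite{HLMPLp}, split into $B_{2\eta\alpha t}(x)$ and dyadic annuli, and apply Poincar\'e on each piece. This yields the pointwise bound $\eta^{-1}N^{\eta\alpha}(\Dt e^{-(\eta t)^2L}f)(x)\le C_\alpha M(\nabla f)(x)$, from which the $L^p$ estimate for every $p>1$ is just the Hardy--Littlewood maximal theorem. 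Your $p\ge2$ argument instead factors through $N^\alpha(w)^2\lesssim M(G(w)^2)$ and Proposition~\ref{Lp_G1Prop}; this is correct (with the dyadic Fubini variant at $p=2$), but it is weaker than what the paper proves --- a bound by the vertical square function rather than a bare pointwise domination by $M(\nabla f)$ --- and it leaves the range $1<p<2$ to a separate, much heavier argument.

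That $1<p<2$ argument does not go through as written. You define $R_t\bd h = 2t\,e^{-t^2L}\divg(A\bd h)$ and want $R_t$ to be uniformly $L^2$-bounded with off-diagonal decay, so that Calder\'on--Zygmund theory applies. But $A$ has \emph{unbounded} (BMO) entries, so $A\bd h$ is not in $L^2$ for general $\bd h\in L^2(\Rn;\Real^n)$, and $\divg(A\bd h)$ is not a well-defined element of $W^{-1,2}$: the pairing $-\int A\bd h\cdot\nabla g$ requires compensated-compactness structure (e.g.\ $\bd h=\nabla u$ so that the anti-symmetric piece contributes an $\mathcal H^1$ function) to make sense against BMO coefficients, which an arbitrary $\bd h$ does not supply. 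The factorization $R_t=2t\,L^{1/2}e^{-t^2L}\circ\big(L^{-1/2}\divg(A\,\cdot\,)\big)$ runs into the same problem: the Kato estimate gives $\|L^{*-1/2}\divg\|_{L^2\to L^2}\lesssim1$, not $\|L^{-1/2}\divg(A\,\cdot\,)\|_{L^2\to L^2}\lesssim1$, and the extra factor of $A$ is exactly where the BMO hypothesis bites. Consequently the asserted $L^2$ boundedness, the Gaffney off-diagonal estimates, and the good-$\lambda$/CZ step are not available. The paper's proof is designed precisely to sidestep this: it never needs $R_t$ as an operator on general vector fields because the mean-zero normalization plus the Gaussian kernel bound already give a pointwise bound by $M(\nabla f)$, and that is the argument you should adopt.
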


\begin{proof}
 Fix any $x\in\Rn$, and let $(y,t)\in \Gamma_{\eta\alpha}(x)$ so that  $\abs{x-y}<\eta\alpha t$. We claim that for every  $f\in\mathscr{S}(\Rn)$ 
 \begin{equation}\label{DtPt_ptbd_M}
     \abs{\eta^{-1}\Dt e^{-(\eta t)^2L}f(y)}\le C_{\alpha}M(\nabla f)(x).
 \end{equation}
 Let $V_t(x,y)$ be the kernel associated to $\Dt e^{-t^2L}$. Then by \cite{HLMPLp} Theorem 4.8, we have 
\[\abs{V_t(x,y)}\lesssim t^{-n-1}e^{-\frac{\abs{x-y}^2}{ct^2}},\qquad \abs{\eta^{-1}V_{\eta t}(x,y)}\lesssim (\eta t)^{-n-1}e^{-\frac{\abs{x-y}^2}{c(\eta t)^2}}\]
where the implicit constant depends on $\lambda_0$, $\Lambda_0$ and $n$.
We write
\begin{align*}
   \eta^{-1}\Dt e^{-(\eta t)^2L}f(y)&= \eta^{-1}\Dt e^{-(\eta t)^2L}\Bigl(f-\fint_{B_{2\eta
    \alpha t}(x)}f\Bigr)(y)\\
    &=\ \int_{\Rn}\eta^{-1}V_{\eta t}(y,z)\Bigl(f-\fint_{B_{2\eta\alpha t}(x)}f\Bigr)(z)dz.
\end{align*}
Then the estimate for the kernel entails the bound
\begin{multline*}
     \abs{\eta^{-1}\Dt e^{-(\eta t)^2L}f(y)}\lesssim \int_{B_{2\eta \alpha t}(x)}\frac{1}{(\eta t)^{n+1}}e^{-\frac{\abs{y-z}^2}{c(\eta t)^2}}\abs{f(z)-\fint_{B_{2\eta\alpha t}(x)}f}dz\\
     +\sum_{k=1}^{\infty}\int_{2^{k+1}B_{\eta\alpha t}(x)\setminus 2^k B_{\eta\alpha t}(x)}\frac{1}{(\eta t)^{n+1}}e^{-\frac{\abs{y-z}^2}{c(\eta t)^2}}\abs{f(z)-\fint_{B_{2\eta\alpha t}(x)}f}dz\\
    =: I_1+I_2.
\end{multline*}
  
  For $I_1$, we trivially bound $e^{-\frac{\abs{y-z}^2}{c(\eta t)^2}}$ by 1, and then the Poincar\'e inequality gives
\begin{multline*}
    I_1\lesssim\frac{\alpha^n}{\eta t}\fint_{B_{2\eta\alpha t}(x)}\abs{f(z)-\fint_{B_{2\eta\alpha t}(x)}f}dz\lesssim\alpha^{n+1}\fint_{B_{2\eta\alpha t}(x)}\abs{\nabla f}\\
    \lesssim \alpha^{n+1}M(\nabla f)(x),
\end{multline*}
where the implicit constants depend only on $n$.

For $I_2$, we have
\begin{align*}
    I_2&\lesssim\sum_{k=1}^{\infty}\frac{1}{(\eta t)^{n+1}}\exp\set{-\frac{(2^k-1)^2\alpha^2}{c}}\int_{2^{k+1}B_{\eta\alpha t}(x)}\abs{f(z)-\fint_{B_{2\eta\alpha t}(x)}f}dz\\
    &\lesssim\sum_{k=1}^{\infty}\frac{2^{n(k+1)}\alpha^n}{\eta t}\exp\set{-\frac{(2^k-1)^2\alpha^2}{c}}\fint_{2^{k+1}B_{\eta\alpha t}(x)}\abs{f(z)-\fint_{B_{2\eta\alpha t}(x)}f}dz.
\end{align*}
Breaking the integrand into sum of terms containing $\fint_{2^{l+1}B_{\eta\alpha t}(x)}f-\fint_{2^{l}B_{\eta\alpha t}(x)}f$ and using the Poincar\'e inequality again, we obtain
\[
I_2\lesssim\sum_{k=1}^{\infty}\exp\set{-\frac{4k^2\alpha^2}{c}}2^{n(k+1)}\alpha^{n+1}\sum_{l=2}^{k+1}2^l M(\nabla f)(x)\lesssim_{\alpha}M(\nabla f)(x),
\]
and thus \eqref{DtPt_ptbd_M} follows. By the choice of $(y,t)$, this implies  
\[
\eta^{-1}N^{\eta\alpha}(\Dt e^{-(\eta t)^2L}f)(x)\le C_{\alpha}M(\nabla f)(x),
\]
so that
\[
\norm{\eta^{-1}N^{\eta\alpha}(\Dt e^{-(\eta t)^2L}f)}_{L^p}\le C_{\alpha,p}\norm{\nabla f}_{L^p}\qquad\forall\, p>1,\  f\in\mathscr{S}(\Rn).
\]
Then the proposition follows from a standard limiting argument. 
\end{proof}

We also have $L^p$ estimates for the integrated non-tangential maximal function:
\begin{prop}\label{integratedNT_Lp prop}
Let $\eta>0$. Then for any $p>2$, $f\in W^{1,p}(\Rn)$,
\[
\norm{\wt{N}^{\eta}(\nabla e^{-(\eta t)^2L}f)}_{L^p}\le C_p\norm{\nabla f}_{L^p},
\]
where the constant depends on $p$, $\lambda_0$, $\Lambda_0$ and $n$, but not on $\eta$.
\end{prop}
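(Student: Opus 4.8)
\emph{Plan.} The plan is to reduce everything, by a dilation, to the case $\eta=1$, and then to bound $\wt N^1(\nabla e^{-t^2L}f)$ pointwise by Hardy--Littlewood maximal functions of $\nabla f$. The dilation $x\mapsto\eta x$ leaves \eqref{ellipticity def}--\eqref{BMO def} invariant, turns $e^{-(\eta t)^2L}$ into $e^{-t^2\widetilde L}$ for an admissible operator $\widetilde L$, sends $\wt N^\eta$ to $\wt N^1$, and multiplies $\norm{\nabla f}_{L^p}$ by a harmless power of $\eta$, so from now on $\eta=1$ and $w(z,t):=e^{-t^2L}f(z)$. Since $A$ is $t$-independent, $w(\cdot,t)=(e^{-\tau L}f)|_{\tau=t^2}$ where $\partial_\tau(e^{-\tau L}f)=\divg(A\nabla e^{-\tau L}f)$, so for each fixed $t>0$ the function $w(\cdot,t)$ solves the inhomogeneous elliptic equation $\divg(A\nabla w(\cdot,t))=\tfrac1{2t}\Dt w(\cdot,t)$ in $\Rn$. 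Running the Caccioppoli argument from the proof of Proposition~\ref{nablau_RH prop} with this right-hand side (and absorbing the resulting gradient term by Lemma~\ref{GiaLemma3.1}) yields, for every $(y,t)\in\Gamma(x)$ and $B:=B(y,t)$,
\[
\br{\fint_{B}\abs{\nabla w(z,t)}^2dz}^{1/2}\lesssim\frac1t\br{\fint_{2B}\abs{w(z,t)-(w(\cdot,t))_{2B}}^2dz}^{1/2}+\br{\fint_{2B}\abs{\Dt w(z,t)}^2dz}^{1/2}.
\]
It is at this step that the \emph{integrated} (rather than pointwise) non-tangential maximal function is the natural object: no pointwise bound on the gradient of the semigroup kernel is available, only $L^2$ averages of $\nabla w$.

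The above estimate reduces matters to bounding each term on the right by a Hardy--Littlewood maximal function of $\nabla f$, and the crucial input is the \emph{uniform} pointwise bound
\[
\abs{\Dt e^{-s^2L}f(z)}\lesssim M(\nabla f)(z)\qquad\text{for a.e. }z\in\Rn\text{ and every }s>0,
\]
which is exactly what the Gaussian estimate for the kernel of $\Dt e^{-s^2L}$ (scale-invariant, hence valid for all $s$), the conservation property $\Dt e^{-s^2L}1=0$, and the Poincar\'e inequality produce --- this is the heart of the proof of Proposition~\ref{NTalpha_Lp Prop}. It bounds the last term above by $\bigl(\fint_{2B}M(\nabla f)^2\bigr)^{1/2}$. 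For the oscillation term I would split $w(\cdot,t)-(w(\cdot,t))_{2B}=\bigl(f-(f)_{2B}\bigr)+\bigl((w-f)(\cdot,t)-((w-f)(\cdot,t))_{2B}\bigr)$. The first piece contributes $\tfrac1t\bigl(\fint_{2B}\abs{f-(f)_{2B}}^2\bigr)^{1/2}\lesssim\bigl(\fint_{2B}\abs{\nabla f}^q\bigr)^{1/q}$ by the Sobolev--Poincar\'e inequality with $q=\tfrac{2n}{n+2}\in[1,2)$ (whose Sobolev conjugate equals $2$); and since $(y,t)\in\Gamma(x)$ forces $2B\subset B(x,3t)$, this is $\lesssim(M(\abs{\nabla f}^q)(x))^{1/q}$. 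For the second piece one uses $w(z,t)-f(z)=\int_0^t\Dt e^{-s^2L}f(z)\,ds$ --- absolutely convergent for a.e.\ $z$ by the uniform bound, whence $\abs{w(z,t)-f(z)}\lesssim t\,M(\nabla f)(z)$ --- so that $\tfrac1t\bigl(\fint_{2B}\bigl|(w-f)(\cdot,t)-\text{(its average)}\bigr|^2\bigr)^{1/2}\lesssim\bigl(\fint_{2B}M(\nabla f)^2\bigr)^{1/2}$.

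Taking the supremum over $(y,t)\in\Gamma(x)$ and using that $\nabla f$ and $M(\nabla f)$ are $t$-independent --- so that $\sup_{(y,t)\in\Gamma(x)}\bigl(\fint_{B(y,2t)}g^2\bigr)^{1/2}\le c_n\,(M(g^2)(x))^{1/2}$ because $B(y,2t)\subset B(x,3t)$ --- one arrives at the pointwise bound
\[
\wt N^1\bigl(\nabla e^{-t^2L}f\bigr)(x)\lesssim\bigl(M(\abs{\nabla f}^q)(x)\bigr)^{1/q}+\bigl(M(M(\nabla f)^2)(x)\bigr)^{1/2},\qquad q=\tfrac{2n}{n+2}.
\]
For $p>2$ both terms are controlled in $L^p$: the first operator is bounded on $L^p$ since $q<2<p$; and since $p>1$ one has $M(\nabla f)\in L^p$, so (as $p/2>1$) $\norm{(M(M(\nabla f)^2))^{1/2}}_{L^p}=\norm{M(M(\nabla f)^2)}_{L^{p/2}}^{1/2}\lesssim\norm{M(\nabla f)}_{L^p}\lesssim\norm{\nabla f}_{L^p}$. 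This gives $\norm{\wt N^1(\nabla e^{-t^2L}f)}_{L^p}\lesssim\norm{\nabla f}_{L^p}$ for all $p>2$.

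The steps demanding the most care are the verification of the inhomogeneous Caccioppoli inequality in the $\BMO$-coefficient setting (which is the argument already carried out for Proposition~\ref{nablau_RH prop}, relying on the Hardy-space bounds of Section~\ref{Hardy norms subsec}) and the a.e.\ validity of $w(\cdot,t)-f=\int_0^t\Dt e^{-s^2L}f\,ds$ (justified via \eqref{Dtu_Linfty}, a Lebesgue-point argument, and a standard density reduction to $f\in\mathscr S(\Rn)$). The real point, however, is the elliptic-equation-with-right-hand-side reformulation of $w(\cdot,t)$ together with the scale-uniform domination of $\Dt e^{-s^2L}f$ by $M(\nabla f)$: one must resist trying to estimate $\nabla w$ or quantities like $\int_0^t|\Dt e^{-s^2L}f|\,ds$ more crudely, since there are no pointwise gradient kernel bounds and the latter integral may diverge if the maximal function is not used. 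Finally, the restriction $p>2$ is dictated precisely by the iterated maximal function $(M(M(\nabla f)^2))^{1/2}$ appearing above.
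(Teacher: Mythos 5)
Your proof is correct. Both you and the paper derive the same Caccioppoli estimate at a fixed time slice $t$, namely
\[
\br{\fint_{B(y,\eta t)}\abs{\nabla w(\cdot,t)}^2}^{1/2}\lesssim\frac1{\eta t}\br{\fint_{B(y,\frac32\eta t)}\abs{w(\cdot,t)-\text{const}}^2}^{1/2}+\eta^{-1}\br{\fint_{B(y,\frac32\eta t)}\abs{\Dt w(\cdot,t)}^2}^{1/2},
\]
and then majorize the right-hand side by maximal functions. Where you depart is in the bookkeeping: the paper takes the constant to be the time-$0$ average and bounds $\abs{w(z,t)-w(z,0)}\le t\sup_{0\le\tau\le t}\abs{\partial_\tau w(z,\tau)}$, leading to the intermediate majorant $\eta^{-1}\bigl(M(N^{\eta}(\Dt w)^2)(x)\bigr)^{1/2}$, whose $L^p$ norm is then controlled by invoking Proposition~\ref{NTalpha_Lp Prop} (with $\alpha=1$) as a black box. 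You instead subtract the time-$t$ average and split off $f-(f)_{2B}$ (handled by Sobolev--Poincar\'e with exponent $q=\tfrac{2n}{n+2}$), then reduce the remainder $w(\cdot,t)-f=\int_0^t\Dt e^{-s^2L}f\,ds$ to the scale-uniform pointwise bound $\abs{\Dt e^{-s^2L}f(z)}\lesssim M(\nabla f)(z)$; this inlines the core inequality from the proof of Proposition~\ref{NTalpha_Lp Prop} (set $y=x$ there) rather than citing the proposition itself, and yields a genuinely pointwise domination $\wt N^1(\nabla w)(x)\lesssim (M(\abs{\nabla f}^q)(x))^{1/q}+(M(M(\nabla f)^2)(x))^{1/2}$ purely in terms of $\nabla f$, with the restriction $p>2$ transparently coming from the iterated maximal function. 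You also normalize $\eta=1$ up front by a dilation (the operator class and $\BMO$ semi-norm are dilation-invariant, and the powers of $\eta$ cancel), where the paper instead carries $\eta$ through explicitly so that the factor $\eta^{-1}$ pairs with $N^{\eta}$ in the form needed for Proposition~\ref{NTalpha_Lp Prop}. Both routes rest on the same Gaussian kernel estimate for $\Dt e^{-tL}$ and the same $\BMO$-adapted Caccioppoli argument; yours is slightly more self-contained at the price of re-deriving the pointwise kernel bound, while the paper's is shorter given that Proposition~\ref{NTalpha_Lp Prop} is already in hand.
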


\begin{proof}
Let $f\in\mathscr{S}(\Rn)$. Define $u(x,t)=e^{-tL}f(x)$. Then $u$ satisfies the equation $\Dt u-\divg(A\nabla u)=0$ in $L^2$. Now fix $x\in\Rn$, and fix $(y,t)\in \Gamma_{\eta}(x)$. Define $B_s=B(y,(1+s)\eta t)$, the ball centered at $y$ with radius $(1+s)\eta t$.

For $0\le s<s'<\frac{1}{2}$, choose
\[
\Psi\in C_0^{\infty}(B_{\frac{s+s'}{2}}), \quad\text{with }\quad\Psi=1\text{ on }B_s, \quad\abs{\nabla \Psi}\lesssim\frac{1}{(s'-s)\eta t},
\]
and
\[
\wt{\Psi}\in C_0^{\infty}(B_{s'}), \quad\text{with }\quad\wt{\Psi}=1\text{ on }B_{\frac{s+s'}{2}}, \quad\abs{\nabla \wt{\Psi}}\lesssim\frac{1}{(s'-s)\eta t}
\]
Let $\bar u=\fint_{B(y,\frac{3}{2}\eta t)}u(x,0)dx$. Taking $(u-\bar{u})\Psi^2$ as a test function, we obtain
\[
\int_{\Rn}A(x)\nabla u(x,\tau)\cdot\nabla\br{(u(x,\tau)-\bar{u})\Psi^2}dx=-\int_{\Rn}\partial_{\tau}u(x,\tau)(u(x,\tau)-\bar{u})\Psi^2dx
\]
for any $\tau>0$.
Then by an argument similar to the proof of Proposition \ref{nablau_RH prop}, one can write
\begin{multline*}
    \int_{\Rn}A(x)\nabla u(x,\tau)\cdot\nabla\br{(u(x,\tau)-\bar{u})\Psi^2}dx\\
\le\frac{\lambda_0}{2}\int_{B_s}\abs{\nabla u(x,\tau)}^2dx-\frac{C_{\theta}}{(s'-s)^2(\eta t)^2}\int_{B_{s'}}\abs{u(x,\tau)-\bar{u}}^2dx\\
-\theta\int_{B_{s'}}\abs{\nabla u(x,\tau)}^2dx,  
\end{multline*}
and
\begin{align*}
 &\abs{\int_{\Rn}\partial_{\tau}u(x,\tau)(u(x,\tau)-\bar{u})}\\
 &\lesssim (s'-s)^2(\eta t)^2\int_{B_{\frac{s+s'}{2}}}\abs{\partial_{\tau}u(x,\tau)}^2dx+\frac{1}{(s'-s)^2(\eta t)^2}\int_{B_{\frac{s+s'}{2}}}\abs{u(x,\tau)-\bar{u}}^2. 
\end{align*}
Combining, we have
\begin{multline*}
    \int_{B_s}\abs{\nabla u(x,\tau)}^2dx
    \le\frac{C}{(s'-s)^2(\eta t)^2}\int_{B_{s'}}(u(x,\tau)-\bar{u})^2dx\\
    +C(\eta t)^2\int_{B_{s'}}\abs{\partial_{\tau}u(x,\tau)}^2dx+C\theta\int_{B_{s'}}\abs{\nabla u(x,\tau)}^2dx.
\end{multline*}
Choosing $\theta$ sufficiently small, then Lemma \ref{GiaLemma3.1} gives
\begin{multline}\label{integratedNTest_nablau}
     \fint_{B(y,\eta t)}\abs{\nabla u(x,\tau)}^2dx\\
     \lesssim\frac{1}{(\eta t)^2}\fint_{B(y,\frac{3}{2}\eta t)}\abs{u(x,\tau)-\bar{u}}^2dx+(\eta t)^2\fint_{B(y,\frac{3}{2}\eta t)}\abs{\partial_{\tau}u(x,\tau)}^2dx, 
\end{multline}
for any $\tau>0$.

Let $w(z,t)=u(z,\eta^2t^2)$. Then it suffices to show \[\norm{\wt{N}^{\eta}(\nabla_x w)}_{L^p}\le C_p\norm{\nabla f}_{L^p} \qquad\forall\, p>2.\]
To this end, let $\tau=\eta^2t^2$ in \eqref{integratedNTest_nablau}. Noticing that $\Dt w(z,t)=2\eta^2t\Dt u(z,\eta^2t^2)$, we have
\begin{multline}\label{integratedNT_nablaw}
    \fint_{B(y,\eta t)}\abs{\nabla w(z,t)}^2dz\\
    \lesssim\frac{1}{(\eta t)^2}\fint_{B(y,\frac{3}{2}\eta t)}\abs{w(z,t)-\bar{w}}^2dz+\eta^{-2}\fint_{B(y,\frac{3}{2}\eta t)}\abs{\Dt w(z,t)}^2dz
\end{multline}
where $\bar{w}=\fint_{B(y,\frac{3}{2}\eta t)}w(z,0)dz$.

The expression $\fint_{B(y,\frac{3}{2}\eta t)}\abs{\Dt w(z,t)}^2dz$ on the right-hand side of \eqref{integratedNT_nablaw} can be controlled by
$M\br{N^{\eta}(\Dt w)^2}(x)$. To estimate the first term on the right-hand side of \eqref{integratedNT_nablaw}, we write
\[
\abs{w(z,t)-\bar{w}}\le\abs{w(z,t)-w(z,0)}+\abs{w(z,0)-\bar{w}}.
\]
Using Poinca\'re inequality, we have
\begin{align*}
    \frac{1}{(\eta t)^2}\fint_{B(y,\frac{3}{2}\eta t)}\abs{w(z,0)-\bar{w}}^2dz&\lesssim\fint_{B(y,\frac{3}{2}\eta t)}\abs{\nabla w(z,0)}^2dz\\
    &\lesssim M\br{\abs{\nabla w(\cdot,0)}^2}(x)=M\br{\abs{\nabla f}^2}(x).
\end{align*}
Since 
\[
\abs{w(z,t)-w(z,0)}\le\int_0^t\abs{\partial_{\tau}w(z,\tau)}d\tau\le t\sup_{0\le\tau\le t}\abs{\partial_{\tau}w(z,\tau)},
\]
we conclude that
\begin{multline*}
    \fint_{B(y,\frac{3}{2}\eta t)}\abs{w(z,t)-w(z,0)}^2dz
    \le t^2\fint_{B(y,\frac{3}{2}\eta t)}\sup_{0\le\tau\le t}\abs{\partial_{\tau}w(z,t)^2}dz\\
    \le t^2M(N^{\eta}(\Dt w)^2)(x).
\end{multline*}
Therefore, we have obtained the estimate
\[
\fint_{B(y,\eta t)}\abs{\nabla w(z,t)}^2dz\lesssim M\br{\abs{\nabla f}^2}(x)+\eta^{-2} M\br{N^{\eta}(\Dt w)^2}(x)
\]
for any $(y,t)\in\Gamma_{\eta}(x)$. This implies, by the definition of the integrated non-tangential maximal function $\wt{N}^{\eta}$, that
\[
\wt{N}^{\eta}(\nabla_z w)(x)\lesssim\br{M\br{\abs{\nabla f}^2}(x)}^{1/2}+\eta^{-1}\br{M\br{N^{\eta}(\Dt w)^2}(x)}^{1/2},\quad\forall\, x\in\Rn. 
\]
Thus for any $p>2$,
\begin{multline*}
    \norm{\wt{N}^{\eta}\br{\nabla e^{-\eta^2t^2L}f}}_{L^p(\Rn)}\lesssim\norm{\nabla f}_{L^p}+\norm{\eta^{-1}N^{\eta}\br{\Dt e^{-\eta^2t^2L}f}}_{L^p(\Rn)}\\
    \le C_p\norm{\nabla f}_{L^p(\Rn)},
\end{multline*}
where we have used Proposition \ref{NTalpha_Lp Prop} with $\alpha=1$. 
\end{proof}

\section{Construction of F and sawtooth domains associated with F}\label{F section}
\subsection{The set F}\label{set F subsection}
We define the following maximal differential operator
\begin{equation}
    D_pf(x):=\sup_{r>0}\br{\fint_{\abs{x-y}<r}\br{\frac{\abs{f(x)-f(y)}}{\abs{x-y}}}^pdy}^{1/p}.
\end{equation}

\begin{lem}\label{DLp_lem}
\[\norm{D_{p_1}f}_{L^p(\Rn)}\le C_{p,p_1,n}\norm{\nabla f}_{L^p} \qquad\forall\ 1\le p_1<p<\infty. \]
\end{lem}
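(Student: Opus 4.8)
The plan is to reduce this to the classical Hardy--Littlewood maximal estimate via the pointwise domination
\[
D_{p_1}f(x)\le\br{M\br{\abs{\nabla f}^{p_1}}(x)}^{1/p_1}\qquad\text{for a.e.\ }x\in\Rn,
\]
where $M$ denotes the Hardy--Littlewood maximal operator; once this is in hand the lemma follows by integrating and using $p/p_1>1$.

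First I would reduce to smooth $f$. For $f\in\dot W^{1,1}_{\loc}(\Rn)$, the absolute continuity on lines (ACL) characterization of Sobolev functions makes the following valid for a.e.\ pair $(x,y)$: integrating $\nabla f$ along the segment $\gamma(t)=(1-t)y+tx$, $t\in[0,1]$, yields
\[
\frac{\abs{f(x)-f(y)}}{\abs{x-y}}\le\int_0^1\abs{\nabla f\br{(1-t)y+tx}}\,dt.
\]
Raising to the power $p_1\ge1$, averaging in $y$ over the ball $\Delta(x,r)$, and applying Minkowski's integral inequality in the $y$-variable gives
\[
\br{\fint_{\Delta(x,r)}\br{\frac{\abs{f(x)-f(y)}}{\abs{x-y}}}^{p_1}dy}^{1/p_1}\le\int_0^1\br{\fint_{\Delta(x,r)}\abs{\nabla f\br{(1-t)y+tx}}^{p_1}dy}^{1/p_1}dt.
\]
For each fixed $t\in(0,1)$, the substitution $z=(1-t)y+tx$ carries $\Delta(x,r)$ onto $\Delta(x,(1-t)r)$ with Jacobian $(1-t)^{-n}$, which is exactly $\abs{\Delta(x,r)}/\abs{\Delta(x,(1-t)r)}$; hence the inner average equals $\fint_{\Delta(x,(1-t)r)}\abs{\nabla f}^{p_1}$, which is bounded by $M(\abs{\nabla f}^{p_1})(x)$ uniformly in $t$. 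Integrating this $t$-independent bound over $[0,1]$ and taking the supremum over $r>0$ proves the claimed pointwise estimate.

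It then remains to integrate. Since $p/p_1>1$, the Hardy--Littlewood maximal theorem applies on $L^{p/p_1}(\Rn)$, so
\[
\int_{\Rn}\br{D_{p_1}f(x)}^p\,dx\le\int_{\Rn}\br{M\br{\abs{\nabla f}^{p_1}}(x)}^{p/p_1}dx\le C_{p,p_1,n}\int_{\Rn}\abs{\nabla f(x)}^p\,dx,
\]
and taking $p$-th roots gives $\norm{D_{p_1}f}_{L^p}\le C_{p,p_1,n}\norm{\nabla f}_{L^p}$. I do not anticipate a genuine obstacle here; the only point requiring care is the justification of the segment inequality, and the measurability of the averages defining $D_{p_1}f$, for non-smooth $f$ --- precisely what the ACL property together with a density reduction to $C_0^\infty(\Rn)$ supplies.
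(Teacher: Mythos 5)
Your argument is correct and takes a somewhat different route from the paper. You establish the single-maximal-function pointwise bound $D_{p_1}f(x)\le\br{M\br{\abs{\nabla f}^{p_1}}(x)}^{1/p_1}$ directly from the segment integral, Minkowski's integral inequality, and the change of variables $z=(1-t)y+tx$, after which the lemma is exactly $L^{p/p_1}$-boundedness of $M$. The paper instead invokes the two-sided Morrey inequality $\frac{\abs{f(x)-f(y)}}{\abs{x-y}}\le C_n\br{M(\nabla f)(x)+M(\nabla f)(y)}$ as a black box; taking the $L^{p_1}$-average in $y$ then gives the slightly clunkier $D_{p_1}f(x)\lesssim M(\nabla f)(x)+\br{M\br{M(\nabla f)^{p_1}}(x)}^{1/p_1}$, and one closes by boundedness of $M$ on $L^p$ and on $L^{p/p_1}$. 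Both routes need the same gap $p>p_1$ for the $L^{p/p_1}$ step. Yours buys a cleaner pointwise estimate with no iterated maximal function, essentially by reproducing internally the computation that the Morrey lemma packages; the paper buys a two-line proof by citing a standard inequality. The two loose ends you flag are indeed routine: measurability of the $r$-supremum is handled by restricting to rational $r$ (averages are continuous in $r$), and the segment inequality for Sobolev functions, valid for a.e.\ pair $(x,y)$, should be upgraded by Fubini to ``for a.e.\ $x$, for a.e.\ $y$'' before you average in $y$.
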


This lemma follows from a Morrey type inequality 
\begin{equation*}
    \frac{\abs{f(x)-f(y)}}{\abs{x-y}}\le M(\nabla f)(x)+M(\nabla f)(y) \qquad\forall\, x,y\in\Rn,
\end{equation*}
and the $L^p$ bound for the Hardy-Littlewood maximal function.

We introduce a few notations. Recall that we use $\mathcal{P}_t$ to denote $e^{-t^2L_{||}}$, and $\mP^*_t=e^{-t^2L_{||}^*}$. Define
\[
\Lambda_1:=\eta^{-1}N^{\eta}(\Dt\mathcal{P}^*_{\eta t}\vp)+N(\Dt\mathcal{P}^*_{t}\vp)+\wt{N}^{\eta}(\nabla_x\mathcal{P}^*_{\eta t}\vp)+\br{M(\abs{\nabla\vp}^2)}^{1/2},
\]
\[
\Lambda_2:=\eta^{-1}N^{\eta}(\Dt\mathcal{P}_{\eta t}\wt{\vp})+N(\Dt\mathcal{P}_{t}\wt{\vp})+\wt{N}^{\eta}(\nabla_x\mathcal{P}_{\eta t}\wt{\vp})+\br{M(\abs{\nabla\wt{\vp}}^2)}^{1/2},
\]
where $\vp$ and $\wt{\vp}$ are as in Proposition \ref{Hodgedecp_prop}, and the non-tangential maximal operator $N$ in the second terms on the two right hand sides in defined with respect to the cones of aperture 1.

Let $Q\subset\Rn$ and $\kappa_0\gg 1$ be given. Fix $p_1\in(1,2)$ and define the set $F$ as follows
\begin{equation}\label{F def}
    F:=\set{x\in Q: \Lambda_1(x)+\Lambda_2(x)+D_{p_1}\vp(x)+D_{p_1}\wt{\vp}(x)\le \kappa_0}.
\end{equation}

\begin{lem}\label{F estimate lem}
Let $\epsilon_0$ be as in Proposition \ref{Hodgedecp_prop}. Then 
\begin{equation}
    \abs{Q\setminus F}\lesssim \kappa_0^{-2-\epsilon_0}\abs{Q} 
\end{equation}
uniformly in $\eta$.
\end{lem}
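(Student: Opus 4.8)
The plan is to bound $|Q\setminus F|$ by distributing the measure across the (finitely many) terms defining $\Lambda_1+\Lambda_2+D_{p_1}\vp+D_{p_1}\wvp$ and estimating the super-level set of each term separately via Chebyshev's inequality at the appropriate $L^p$-exponent. Concretely, since
\[
Q\setminus F\subset\bigcup\set{x\in Q:\ (\text{one of the terms})(x)>\kappa_0/c}
\]
for a suitable dimensional constant $c$, it suffices to show each such super-level set has measure $\lesssim\kappa_0^{-2-\epsilon_0}|Q|$. The key observation is that each of the constituent operators is controlled in $L^{2+\epsilon_0}(\Rn)$ by $\norm{\nabla\vp}_{L^{2+\epsilon_0}}$ or $\norm{\nabla\wvp}_{L^{2+\epsilon_0}}$, which by the Hodge decomposition estimate \eqref{nabla vp_Lp} of Proposition \ref{Hodgedecp_prop} are bounded (after accounting for the fact that $\vp,\wvp$ are supported in $5Q$) by $C|Q|^{1/(2+\epsilon_0)}$; then Chebyshev gives the super-level set bound $\kappa_0^{-(2+\epsilon_0)}\norm{\cdot}_{2+\epsilon_0}^{2+\epsilon_0}\lesssim\kappa_0^{-2-\epsilon_0}|Q|$.

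First I would record the $L^{2+\epsilon_0}$ bounds for each term, with constants independent of $\eta$. For $\eta^{-1}N^{\eta}(\Dt\mP^*_{\eta t}\vp)$ and the analogous term in $\Lambda_2$, this is exactly Proposition \ref{NTalpha_Lp Prop} (with $\alpha=1$, applied to $L_{||}^*$), giving $\lesssim\norm{\nabla\vp}_{L^{2+\epsilon_0}}$. For $N(\Dt\mP^*_t\vp)$ — aperture-$1$ cones, $\eta=1$ — this is again Proposition \ref{NTalpha_Lp Prop}. For $\wt N^{\eta}(\nabla_x\mP^*_{\eta t}\vp)$ this is Proposition \ref{integratedNT_Lp prop}, valid for $p>2$, in particular $p=2+\epsilon_0$. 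For $(M(|\nabla\vp|^2))^{1/2}$, the Hardy–Littlewood maximal theorem on $L^{(2+\epsilon_0)/2}$ gives $\norm{(M(|\nabla\vp|^2))^{1/2}}_{L^{2+\epsilon_0}}\lesssim\norm{\nabla\vp}_{L^{2+\epsilon_0}}$. For $D_{p_1}\vp$ with $p_1\in(1,2)$, Lemma \ref{DLp_lem} gives $\norm{D_{p_1}\vp}_{L^{2+\epsilon_0}}\lesssim\norm{\nabla\vp}_{L^{2+\epsilon_0}}$ since $p_1<2\le 2+\epsilon_0$. The same list applies verbatim with $\vp$ replaced by $\wvp$ and $L_{||}^*$ by $L_{||}$.

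Next I would assemble: since $\vp\in W^{1,2}_0(5Q)$ with zero extension, and \eqref{nabla vp_Lp} says $\fint_{5Q}|\nabla\vp|^{2+\epsilon_0}\le C(n,\lambda_0,\Lambda_0)$, we get $\norm{\nabla\vp}_{L^{2+\epsilon_0}(\Rn)}^{2+\epsilon_0}=\int_{5Q}|\nabla\vp|^{2+\epsilon_0}\le C|5Q|\lesssim C|Q|$, and likewise for $\wvp$. Hence each term $T$ in the definition of $F$ satisfies $\norm{T}_{L^{2+\epsilon_0}(\Rn)}^{2+\epsilon_0}\lesssim|Q|$. By Chebyshev's inequality,
\[
\abs{\set{x\in Q:\ T(x)>\kappa_0/c}}\le (c/\kappa_0)^{2+\epsilon_0}\norm{T}_{L^{2+\epsilon_0}}^{2+\epsilon_0}\lesssim\kappa_0^{-2-\epsilon_0}|Q|.
\]
Summing over the finitely many terms yields $|Q\setminus F|\lesssim\kappa_0^{-2-\epsilon_0}|Q|$, with all constants depending only on $n,\lambda_0,\Lambda_0,p_1$ and \emph{not} on $\eta$ (this $\eta$-uniformity is guaranteed because Propositions \ref{NTalpha_Lp Prop} and \ref{integratedNT_Lp prop} are stated with $\eta$-independent constants).

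The main obstacle I anticipate is purely bookkeeping rather than conceptual: one must make sure that every maximal-function estimate invoked is actually available at the exponent $2+\epsilon_0$ (the square-function propositions restrict $p\le 2+\epsilon_0$, and $\wt N^\eta$ requires $p>2$, so $2+\epsilon_0$ sits precisely in the usable window — but this is why $\epsilon_0$ had to be taken as the minimum of the various $\epsilon$'s in the Remark following Proposition \ref{Lp_G2xProp}), and that the $L^p$ bounds genuinely hold with constants independent of $\eta$ so the conclusion is uniform in $\eta$ as claimed. One minor point worth a line: the operators $N(\Dt\mP^*_t\vp)$ and $N(\Dt\mP_t\wvp)$ use cones of aperture $1$ whereas Proposition \ref{NTalpha_Lp Prop} is stated for general aperture; taking $\eta=1,\alpha=1$ there covers it directly, so no change-of-aperture comparison is needed.
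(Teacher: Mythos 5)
Your proposal is correct and is essentially the paper's own proof: the paper likewise applies Chebyshev at exponent $2+\epsilon_0$ and then invokes Propositions \ref{NTalpha_Lp Prop}, \ref{integratedNT_Lp prop}, and Lemma \ref{DLp_lem} (together with the Hardy--Littlewood maximal theorem) to reduce everything to the Hodge estimate \eqref{nabla vp_Lp}, with $\eta$-uniformity coming from the $\eta$-independent constants in those propositions. The only cosmetic difference is that you distribute the level set over the individual summands before applying Chebyshev, whereas the paper integrates $(\Lambda_1+\Lambda_2+D_{p_1}\vp+D_{p_1}\wvp)^{2+\epsilon_0}$ directly; the two are equivalent.
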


\begin{proof}
By Chebyshev's inequality,
\[
\kappa_0^{2+\epsilon_0}\abs{Q\setminus F}\le\int_{Q\cap\set{\Lambda_1+\Lambda_2+D_{p_1}\vp+D_{p_1}\wt{\vp}>\kappa_0}}\br{\Lambda_1+\Lambda_2+D_{p_1}\vp+D_{p_1}\wt{\vp}}^{2+\epsilon_0}dx.
\]
We apply Proposition \ref{NTalpha_Lp Prop}, Propostion \ref{integratedNT_Lp prop}, and Proposition \ref{DLp_lem}, and their analogs for the adjoint operators, with $p=2+\epsilon_0$, to see that the right-hand side is bounded by 
\[
C\int_{\Rn}\abs{\nabla\vp}^{2+\epsilon_0}+M\br{\abs{\nabla\vp}^2}^{\frac{2+\epsilon_0}{2}}+\abs{\nabla\wt{\vp}}^{2+\epsilon_0}+M\br{\abs{\nabla\wt{\vp}}^2}^{\frac{2+\epsilon_0}{2}}dx,
\]
which, in turn, is bounded by 
\[
C\abs{Q}\fint_{5Q}\br{\abs{\nabla\vp}^{2+\epsilon_0}+\abs{\nabla\wt{\vp}}^{2+\epsilon_0}}.
\]
Then the lemma follows from \eqref{nabla vp_Lp}. 
\end{proof}

We can now choose $\kappa_0$, depending only on $\lambda_0$, $\Lambda_0$ and $n$, such that 
\begin{equation}\label{kappa0}
    \abs{Q\setminus F}\le\frac{1}{1000}\abs{Q}. 
\end{equation}
This completes the construction of $F$ and from now on $\kappa_0$ is fixed.

\subsection{Sawtooth domains and related estimates}\label{theta subsection}

Define $\Omega_0$ to be the sawtooth domain
\begin{equation}
    \Omega_0:=\bigcup_{x\in F}\Gamma_{\eta}(x).
\end{equation}
Define
\begin{equation}
    \theta_t:= \vp-\mathcal{P}^*_t\vp, \qquad \wt{\theta}_t:= \wvp-\mathcal{P}_t\wvp.
\end{equation}

We observe that 
\[
\theta_{\eta t}(x)=-\int_0^{\eta t}\partial_s\mP^*_{s}\vp(x), \text{ and}\quad
\wt{\theta}_{\eta t}(x)=-\int_0^{\eta t}\partial_s\mP_{s}\wvp(x).
\]
So by the definition of the set $F$,
\begin{equation}\label{theta_sawtooth0}
    \abs{\theta_{\eta t}(x)}\le \eta t\kappa_0, \qquad \abs{\wt{\theta}_{\eta t}(x)}\le \eta t\kappa_0\qquad\forall\, (x,t)\in F\times (0,\infty).
\end{equation}

We show that such estimates also hold in the truncated sawtooth domain. Note that we shall eventually choose $\eta>0$ to be small, so we can assume in the sequel that $\eta<1/2$. 

\begin{lem}\label{theta_sawtooth1 Lem} Retain the notation above. The following estimates hold:
\begin{equation*}
    \abs{\theta_{\eta t}(x)}\lesssim\eta t\kappa_0 \quad\text{and}\quad \abs{\wt{\theta}_{\eta t}(x)}\lesssim\eta t\kappa_0,\qquad\forall\, (x,t)\in\Omega_0\cap \br{2Q\times (0,4l(Q))}.
\end{equation*}
\end{lem}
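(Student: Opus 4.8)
The idea is to extend the pointwise bound \eqref{theta_sawtooth0}, which holds on the "abstract" sawtooth $F\times(0,\infty)$, to every point of the truncated sawtooth $\Omega_0\cap(2Q\times(0,4l(Q)))$, at the cost of an absolute constant. A point $(x,t)\in\Omega_0$ means by definition that there is some $x_0\in F$ with $|x-x_0|<\eta t$; thus $(x,t)$ lies in the cone $\Gamma_\eta(x_0)$ of a boundary point where all the maximal functions in \eqref{F def} are controlled by $\kappa_0$. The plan is to write, as in the display preceding \eqref{theta_sawtooth0},
\[
\theta_{\eta t}(x)=-\int_0^{\eta t}\partial_s\mathcal{P}^*_s\varphi(x)\,ds,
\]
and to estimate $|\partial_s\mathcal{P}^*_s\varphi(x)|$ for $s\in(0,\eta t)$ by comparing the point $x$ to the "good" point $x_0$. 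Writing $s=\eta\tau$ with $\tau<t$, we have $\partial_s\mathcal{P}^*_s\varphi(x)=\eta^{-1}\partial_\tau\big(\mathcal{P}^*_{\eta\tau}\varphi\big)(x)$, and since $|x-x_0|<\eta t$, the point $(x,\tau)$ is close to being in the cone $\Gamma_\eta(x_0)$; more precisely, enlarging the aperture by a fixed factor (absorbing the "$t$ versus $\tau$" discrepancy), $(x,\tau)\in\Gamma_{C\eta}(x_0)$ for $\tau$ comparable to $t$, and for the remaining range $\tau\ll t$ one still has $|x-x_0|<\eta t$, which is $\lesssim \eta\tau\cdot(t/\tau)$ — so one wants the maximal function at aperture proportional to $t/\tau$.

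To handle all scales uniformly, the cleaner route is: fix $(x,t)\in\Omega_0\cap(2Q\times(0,4l(Q)))$ and $x_0\in F$ with $|x-x_0|<\eta t$, and split the integral $\int_0^{\eta t}$ dyadically, $\int_0^{\eta t}=\sum_{k\ge 0}\int_{2^{-k-1}\eta t}^{2^{-k}\eta t}$. On the $k$-th piece, $s\approx 2^{-k}\eta t$, i.e. $s=\eta\tau$ with $\tau\approx 2^{-k}t$, and $|x-x_0|<\eta t = 2^k(\eta\tau)\cdot O(1)$, so $(x,\tau)$ lies in a cone over $x_0$ of aperture $\approx 2^k$. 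Hence $|\partial_\tau(\mathcal{P}^*_{\eta\tau}\varphi)(x)|\le N^{\eta\cdot 2^k}(\partial_\tau\mathcal{P}^*_{\eta\tau}\varphi)(x_0)$. This forces us to control non-tangential maximal functions of $\partial_\tau\mathcal{P}^*_{\eta\tau}\varphi$ at arbitrarily large aperture, whereas the set $F$ only controls aperture $\eta$ (and aperture $1$ for the non-scaled semigroup). The resolution is to use instead the interior estimate for $\partial_s\mathcal{P}^*_s\varphi$ — Corollary \ref{sup dtw cor}, applied on a Whitney-type cube around $(x,s)$ — together with the fact that the Whitney cube around $(x,s)$, once $s<\eta t$ and $|x-x_0|<\eta t$, is contained in a fixed dilate of the cone $\Gamma_\eta(x_0)$, so that the $L^2$ average on the right-hand side of Corollary \ref{sup dtw cor} is bounded by $\tilde N^{\eta}(\nabla_x\mathcal{P}^*_{\eta t}\varphi)(x_0)^2$ or by the square-function data, ultimately by $\kappa_0$. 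Summing the resulting geometric series $\sum_k 2^{-k}\eta t\kappa_0\lesssim \eta t\kappa_0$ gives the claim.

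Concretely, I would proceed in the following steps. \emph{Step 1:} Fix $(x,t)$ in the truncated sawtooth and $x_0\in F$ with $|x-x_0|<\eta t$; record that $t<4l(Q)$ and $x_0\in Q$ force all relevant points to stay in a bounded multiple of $2Q\times(0,4l(Q))$, so no issue at infinity arises. \emph{Step 2:} Write $\theta_{\eta t}(x)=-\int_0^{\eta t}\partial_s\mathcal{P}^*_s\varphi(x)\,ds$ and break the $s$-integral into dyadic annuli $s\approx 2^{-k}\eta t$, $k\ge 0$. \emph{Step 3:} On each annulus, use Corollary \ref{sup dtw cor} (with $2^{-k}$ there corresponding to $s^2\approx(2^{-k}\eta t)^2$, i.e. $2^{-k}$ of the Corollary replaced by the appropriate power of $2^{-k}\eta t$) to bound $\sup|\partial_s\mathcal{P}^*_s\varphi|$ on a Whitney ball around $(x,s)$ by an $L^2$ average of $\partial_s\mathcal{P}^*_s\varphi$ over a slightly larger parabolic box; then observe that this box lies inside $\Gamma_\eta(x_0)$ (after enlarging the aperture by a universal factor, using $|x-x_0|<\eta t$ and $s\le \eta t$), so that the $L^2$ average is $\lesssim N^{\eta}(\dots)(x_0)$ or $\tilde N^\eta(\dots)(x_0)$, hence $\lesssim\kappa_0$ by the membership $x_0\in F$ and \eqref{F def}. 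Actually, the simplest is to bound directly by $\eta^{-1}N^\eta(\partial_t\mathcal P^*_{\eta t}\varphi)(x_0)\cdot(\text{length of }s\text{-annulus})$ after the Corollary reduces pointwise values to controlled averages; either way one gets the $k$-th contribution $\lesssim 2^{-k}\eta t\,\kappa_0$. \emph{Step 4:} Sum over $k$ to get $|\theta_{\eta t}(x)|\lesssim \eta t\kappa_0$, and repeat verbatim for $\widetilde\theta_{\eta t}$ using $\mathcal P_t$, $\widetilde\varphi$, and $\Lambda_2$.

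\emph{Main obstacle.} The delicate point is the aperture bookkeeping: a point $(x,s)$ with $|x-x_0|<\eta t$ and $s\le\eta t$ but $s\ll \eta t$ is \emph{deep} inside a wide cone over $x_0$, not in $\Gamma_\eta(x_0)$, so one cannot directly invoke the $\eta$-aperture maximal functions appearing in $\Lambda_1,\Lambda_2$. The trick that makes it work is that the interior Moser/Caccioppoli estimate (Corollary \ref{sup dtw cor}, Proposition \ref{nablau_RH prop}) lets us replace the pointwise value of $\partial_s\mathcal P^*_s\varphi$ at the deep point $(x,s)$ by an $L^2$ average over a Whitney box of size $\sim s$, and \emph{that} box — having radius comparable to its height $s$ — does sit inside a cone $\Gamma_{C\eta}(x_0)$ of fixed aperture, because its center is within $\eta t$ of $x_0$ while its radius is $\lesssim s\le \eta t$; thus the average is dominated by $\tilde N^{C\eta}$ applied to $\nabla_x\mathcal P^*_{\eta t}\varphi$ at $x_0$, which (after the standard comparison of different fixed apertures) is controlled by $\kappa_0$. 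Getting this geometric containment and the change of variables $s=\eta\tau$ exactly right — so that the constant that multiplies $\kappa_0$ after summing the geometric series is genuinely independent of $\eta$, $t$, $x$, $Q$ — is the one place requiring care; everything else is a routine dyadic summation.
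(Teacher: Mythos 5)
Your plan has a genuine gap at precisely the point you flagged as the ``main obstacle,'' and the resolution you propose does not actually overcome it. Concretely: fix $(x,t)$ with $|x-x_0|<\eta t$, $x_0\in F$, and consider the dyadic piece $s\approx 2^{-k}\eta t$ (equivalently $\tau=s/\eta\approx 2^{-k}t$). The Whitney cube that Corollary~\ref{sup dtw cor} produces around $(x,\tau)$ has side length $\approx\eta\tau\approx 2^{-k}\eta t$, so every point $(z,\tau')$ in it satisfies $|z-x_0|\le|z-x|+|x-x_0|\lesssim 2^{-k}\eta t+\eta t\approx\eta t$, while $\eta\tau'\approx 2^{-k}\eta t$. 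Hence the ratio $|z-x_0|/\tau'$ is of order $2^{k}\eta$, which is unbounded as $k\to\infty$: the Whitney box around $(x,\tau)$ is \emph{not} contained in $\Gamma_{C\eta}(x_0)$ for any fixed $C$ once $s\ll\eta t$, and the $L^2$ average you get from the interior Moser estimate therefore cannot be dominated by $N^{\eta}(\cdot)(x_0)$ or $\widetilde N^{\eta}(\cdot)(x_0)$ — the only quantities that membership in $F$ controls. The trouble is that the Moser step shrinks the spatial scale to match $s$, but leaves the box centered at $x$, which stays at distance $\approx\eta t\gg s$ from the good point $x_0$; interior regularity cannot bridge that distance. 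One also cannot simply use a wider-aperture maximal function at $x_0$, because $F$ is built from fixed-aperture quantities, and the constants in Proposition~\ref{NTalpha_Lp Prop} grow with the aperture too fast to sum the dyadic series.

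The paper's argument avoids the aperture issue entirely by a different decomposition: it writes $\theta_{\eta t}(x)=(\varphi(x)-\varphi(x_0))+\theta_{\eta t}(x_0)+(\mathcal{P}^*_{\eta t}\varphi(x_0)-\mathcal{P}^*_{\eta t}\varphi(x))$, uses the conservation property to replace $\varphi$ by $\varphi-(\varphi)_{2B_{\eta t}(x_0)}$ in the last bracket, and then controls: (a) the oscillation $|\varphi(x)-\varphi(x_0)|$ by Proposition~\ref{HodgeDecompMoser_prop} plus $D_{p_1}\varphi(x_0)\le\kappa_0$; (b) $\theta_{\eta t}(x_0)$ directly by \eqref{theta_sawtooth0}; and (c) $\mathcal{P}^*_{\eta t}(\varphi-c)$ at $x$ and $x_0$ by the Gaussian kernel bound plus Poincar\'e, yielding $\lesssim\eta t\,M(\nabla\varphi)(x_0)\lesssim\eta t\kappa_0$. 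Everything there happens at the single scale $\eta t$, so the non-tangential maximal functions and $M(|\nabla\varphi|^2)(x_0)$ in the definition of $F$ are invoked only at points genuinely inside $\Gamma_\eta(x_0)$. If you want to salvage your route, you would have to either (i) transfer from $x$ to $x_0$ at scale $s=\eta t$ only (and then compare $\theta_{\eta t}(x_0)$ to $\theta_{\eta t}(x)$ via the semigroup, which is essentially the paper's step (c)), or (ii) enlarge $F$ to control maximal functions at all dyadic apertures with summable constants — but that breaks Lemma~\ref{F estimate lem}.
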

\begin{proof}
We only show the estimate for $\theta_{\eta t}$, for the proof for $\wt{\theta}_{\eta t}$ is similar. Let $(x,t)\in \Omega_0\cap \br{2Q\times (0,4l(Q))}$. Then there exists $x_0\in F$ such that $\abs{x-x_0}\le\eta t$. Since $t<4l(Q)$, and $\eta<\frac{1}{2}$, we have $2B(x_0,\eta t)\subset 5Q$.
We write
\begin{align}\label{theta_sawtooth1}
    \abs{\theta_{\eta t}(x)}
    &\le \abs{\vp(x)-\vp(x_0)}+\abs{\theta_{\eta t}(x_0)}
   +\abs{\mP^*_{\eta t}\br{\vp-\br{\vp}_{2B_{\eta t}(x_0)}}(x_0)}\nonumber\\
   &\qquad+\abs{\mP^*_{\eta t}\br{\vp-\br{\vp}_{2B_{\eta t}(x_0)}}(x)}
\end{align}
where $\br{\vp}_{2B_{\eta t}(x_0)}=\fint_{2B_{\eta t}(x_0)}\vp$. Note that we have used the conservation property, and $\mP^*_{\eta t}\br{\vp}_{B_{\eta t}(x_0)}$ is a constant.

By Proposition \ref{HodgeDecompMoser_prop}, the first term on right-hand side of \eqref{theta_sawtooth1} is bounded by 
\[
C\br{\fint_{2B_{\eta t}(x_0)}\abs{\vp-\vp(x_0)}^{p_1}}^{1/{p_1}}+C\eta t(\norm{\mathbf{c}^s}_{L^{\infty}}+\norm{\mathbf{c}^a}_{\BMO}).
\]
By the definition of $D_{p_1}$ and the set $F$, this is bounded by
\[
C\eta t\br{D_{p_1}\vp(x_0)+\lambda_0+\Lambda_0}\le C\eta t(\kappa_0+\lambda_0+\Lambda_0)\le C\eta t\kappa_0,
\]
with $C=C(\lambda_0,\Lambda_0,n,p_1)$.

By \eqref{theta_sawtooth0}, the second term on the right-hand side of \eqref{theta_sawtooth1} is also bounded by $C\eta t\kappa_0$. Now we take care of the last two terms in \eqref{theta_sawtooth1}. We claim that for any $(y,s)\in\Gamma_{\eta}(x_0)$, 
\begin{equation*}
    \mP^*_{\eta s}\br{\vp-\br{\vp}_{2B_{\eta s}(x_0)}}(y)\lesssim\eta sM(\nabla\vp)(x_0)\lesssim\eta s\kappa_0.
\end{equation*}

Consider the kernel $K_{(\eta s)^2}^*$ associated to $\mP_{\eta s}^*$. Then by the Gaussian estimate for the kernel of the semigroup, 
\[
\abs{K_{(\eta s)^2}^*(y,z)}\lesssim\frac{1}{(\eta s)^n}e^{-\frac{c\abs{y-z}^2}{(\eta s)^2}}.
\]
Then, for  $(y,s)\in\Gamma_{\eta}(x_0)$, 
\begin{multline*}
    \quad\abs{\mP^*_{\eta s}\br{\vp-\br{\vp}_{2B_{\eta s}(x_0)}}(y)}\lesssim\int_{\Rn}\frac{1}{(\eta t)^n}e^{-\frac{c\abs{y-z}^2}{(\eta s)^2}}\abs{\vp(z)-(\vp)_{2B_{\eta s}(x_0)}}dz\\
    \lesssim\int_{2B_{\eta s}(x_0)}\frac{1}{(\eta s)^n}\abs{\vp(z)-(\vp)_{2B_{\eta s}(x_0)}}dz\\
    \qquad+\sum_{k=1}^{\infty}\int_{2^{k+1}B_{\eta s}(x_0)\setminus 2^kB_{\eta s}(x_0)}\frac{1}{(\eta s)^n}e^{-\frac{c\abs{y-z}^2}{(\eta s)^2}}\abs{\vp(z)-(\vp)_{2B_{\eta s}(x_0)}}dz.
\end{multline*}
Since $\abs{y-x_0}\le\eta s$, $\abs{y-z}\ge(2^k-1)\eta s$ for $z\in 2^{k+1}B_{\eta s}(x_0)\setminus 2^kB_{\eta s}(x_0)$. Therefore,
\begin{multline*}
    \abs{\mP^*_{\eta s}\br{\vp-\br{\vp}_{2B_{\eta s}(x_0)}}(y)}\lesssim(\eta s)\fint_{2B_{\eta s}(x_0)}
\abs{\nabla \vp(z)}dz\\
+\sum_{k=1}^{\infty}\int_{2^{k+1}B_{\eta s}(x_0)\setminus 2^kB_{\eta s}(x_0)}\frac{1}{(\eta s)^n}e^{-c(2^k-1)^2}\abs{\vp(z)-(\vp)_{2B_{\eta s}(x_0)}}dz\\
\lesssim(\eta s) M(\nabla\vp)(x_0)+\sum_{k=1}^{\infty}2^{k(n+1)}e^{-c(2^k-1)^2}\eta sM(\nabla \vp)(x_0)\\
\lesssim \eta s M(\nabla\vp)(x_0)\lesssim \eta s\br{M\br{\abs{\nabla\vp}^2}(x_0)}^{1/2}\lesssim \eta s\kappa_0.
\end{multline*}
This finishes the proof.  
\end{proof}

\begin{lem}\label{theta_integral lem} Retain the notation above. The following estimates hold:
\begin{equation*}
    \iint_{\Real_+^{n+1}}\abs{\theta_{\eta t}(x)}^2\frac{dxdt}{t^3}\lesssim\eta^2\abs{Q}, \quad\mbox{and}\quad \iint_{\Real_+^{n+1}}\abs{\wt{\theta}_{\eta t}(x)}^2\frac{dxdt}{t^3}\lesssim\eta^2\abs{Q},
\end{equation*}
where the implicit constants only depend on $\lambda_0$, $\Lambda_0$ and $n$.
\end{lem}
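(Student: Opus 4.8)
The plan is to scale the factor $\eta^2$ out of the $t$-integral and then reduce to the $L^2$ vertical square function estimate of Proposition~\ref{Lp_G1Prop} applied to the Hodge datum $\vp$ (resp.\ $\wt\vp$), using the uniform $W^{1,2}$ bound coming from \eqref{nabla vp_Lp}. I treat $\theta_{\eta t}$ only; the bound for $\wt\theta_{\eta t}$ follows verbatim with $L_{||}^*,\vp$ replaced by $L_{||},\wt\vp$. First I would change variables $\tau=\eta t$: since $\frac{dt}{t^3}=\eta^2\frac{d\tau}{\tau^3}$,
\[
\iint_{\Real_+^{n+1}}\abs{\theta_{\eta t}(x)}^2\frac{dx\,dt}{t^3}=\eta^2\iint_{\Real_+^{n+1}}\abs{\theta_{\tau}(x)}^2\frac{dx\,d\tau}{\tau^3}=\eta^2\int_{\Rn}\int_0^\infty\Bigl|\frac{\theta_\tau(x)}{\tau}\Bigr|^2\frac{d\tau}{\tau}\,dx,
\]
so it suffices to bound the last integral by $C\abs{Q}$.

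Since $\vp$, extended by zero, lies in $W^{1,2}(\Rn)$ and $s\mapsto\mathcal{P}^*_s\vp=e^{-s^2L_{||}^*}\vp$ is smooth in $L^2$ for $s>0$ with $\mathcal{P}^*_s\vp\to\vp$ in $L^2$ as $s\to0^+$, I would write, as an $L^2$-valued (hence a.e.\ pointwise) identity,
\[
\theta_\tau=\vp-\mathcal{P}^*_\tau\vp=-\int_0^\tau\partial_s\bigl(\mathcal{P}^*_s\vp\bigr)\,ds=\int_0^\tau g_s\,ds,\qquad g_s:=2sL_{||}^*e^{-s^2L_{||}^*}\vp .
\]
Thus $\frac{\theta_\tau}{\tau}$ is the mean of $g_s$ over $s\in(0,\tau)$, and the elementary Hardy-type inequality
\[
\Bigl\|\tau\mapsto\tfrac1\tau\,{\textstyle\int_0^\tau}f(s)\,ds\Bigr\|_{L^2((0,\infty),\,d\tau/\tau)}\le\|f\|_{L^2((0,\infty),\,ds/s)}
\]
(which follows by writing the averaging operator, after $\tau=e^u$, $s=\tau e^{-w}$, as convolution on $\Real$ with the kernel $e^{-w}\mathbbm 1_{\{w>0\}}$ of unit $L^1$-mass) applied pointwise in $x$, and then integrated in $x$, gives
\[
\int_{\Rn}\int_0^\infty\Bigl|\frac{\theta_\tau(x)}{\tau}\Bigr|^2\frac{d\tau}{\tau}\,dx\le\int_{\Rn}\int_0^\infty\abs{g_s(x)}^2\frac{ds}{s}\,dx=4\Bigl\|\Bigl(\int_0^\infty\abs{sL_{||}^*e^{-s^2L_{||}^*}\vp}^2\frac{ds}{s}\Bigr)^{1/2}\Bigr\|_{L^2(\Rn)}^2 .
\]
By Proposition~\ref{Lp_G1Prop} with $p=2$, $L=L_{||}^*$, $F=\vp$, the right-hand side is $\lesssim\|\nabla\vp\|_{L^2(\Rn)}^2=\int_{5Q}\abs{\nabla\vp}^2$, which by \eqref{nabla vp_Lp} is $\lesssim\abs{5Q}\lesssim\abs{Q}$. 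Combining with the scaling identity yields $\iint_{\Real_+^{n+1}}\abs{\theta_{\eta t}(x)}^2\frac{dx\,dt}{t^3}\lesssim\eta^2\abs{Q}$; the estimate for $\wt\theta_{\eta t}$ is identical, using $\int_{5Q}\abs{\nabla\wt\vp}^2\lesssim\abs{Q}$.

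I do not expect a serious obstacle here: the only points requiring (routine) care are the $L^2$-valued fundamental theorem of calculus used to represent $\theta_\tau$ and the justification of the Hardy/averaging inequality, while the genuinely nontrivial inputs --- the $L^2$ vertical square function bound for the semigroup $e^{-s^2L_{||}^*}$ (Proposition~\ref{Lp_G1Prop}) and the uniform $W^{1,2}$ control of the Hodge data (Proposition~\ref{Hodgedecp_prop}) --- have already been established.
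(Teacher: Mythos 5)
Your proof is correct and follows essentially the same route as the paper: represent $\theta_\tau$ as the integral of $\partial_s\mathcal{P}^*_s\vp$, apply a weighted Hardy inequality in the $t$-variable, and then invoke the $L^2$ vertical square function bound of Proposition~\ref{Lp_G1Prop} together with the uniform $W^{1,2}$ control of the Hodge datum from \eqref{nabla vp_Lp}. The only cosmetic differences are that you rescale $\tau=\eta t$ at the outset while the paper rescales inside the Hardy step, and you justify the Hardy inequality via a convolution/Young's inequality argument after a logarithmic change of variables, while the paper gives a direct H\"older--Fubini proof.
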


\begin{proof}
We only prove the estimate for $\wt{\theta}_{\eta t}$, for the proof for ${\theta}_{\eta t}$ is similar.
We have the following weighted Hardy's inequality:
\begin{equation}\label{Hardysineq}
    \int_0^{\infty}\br{\frac{1}{t}\int_0^t\abs{f(s)}ds}^p\frac{dt}{t}\le\int_0^{\infty}\abs{f(t)}^p\frac{dt}{t}, \quad\forall\, 1<p<\infty.
\end{equation}
A short and direct proof of \eqref{Hardysineq} is provided at the end.
Recall that 
\[
\abs{\wt{\theta}_{\eta t}}=\abs{\int_0^{\eta t}\partial_s\mP_s\wvp ds}\le\int_0^{\eta t}\abs{\partial_s\mP_s\wvp}ds,
\]
so that
\begin{multline*}
    \int_0^{\infty}\br{\frac{1}{t}\abs{\wt{\theta}_{\eta t}}}^2\frac{dt}{t}\le\int_0^{\infty}\br{\frac{1}{t}\int_0^{\eta t}\abs{\partial_s\mP_s\wvp}ds}^2\frac{dt}{t}\\
    =\eta^2\int_0^{\infty}\br{\frac{1}{t}\int_0^{t}\abs{\partial_s\mP_s\wvp}ds}^2\frac{dt}{t}.
\end{multline*}
By \eqref{Hardysineq}, the last term is bounded by $\eta^2\int_0^{\infty}\abs{\Dt\mP_t\wvp}^2\frac{dt}{t}$.
Then Proposition \ref{Lp_G1Prop} gives
\[
\int_{\Rn}\br{\int_0^{\infty}\abs{\wt{\theta}_{\eta t}}^2\frac{dt}{t^3}}^{p/2}dx
\le\eta^2\int_{\Rn}\br{\int_0^{\infty}\abs{\Dt\mP_t\wvp}^2\frac{dt}{t}}^{p/2}dx\lesssim\eta^2\int_{\Rn}\abs{\nabla\wvp}^pdx
\]
for any $p\ge2$. In particular, with $p=2$, we obtain
\[
\iint_{\Real_+^{n+1}}\abs{\wt{\theta}_{\eta t}(x)}^2\frac{dxdt}{t^3}\lesssim\eta^2\int_{\Rn}\abs{\nabla\wvp}^2dx\lesssim\eta^2\fint_{5Q}\abs{\nabla\wvp}^2dx\abs{Q}\lesssim\eta^2\abs{Q}.
\]
\underline{Proof of \eqref{Hardysineq}.} By H\"older's inequality,
\[\int_0^t\abs{f(s)}ds\le\br{\int_0^t\abs{f(s)}^pds}^{1/p}t^{1-\frac{1}{p}}.\]
    And so \[\br{\frac{1}{t}\int_0^t\abs{f(s)}ds}^p\le\frac{1}{t}\int_0^t\abs{f(s)}^pds.\]
    Integrating in $t$ and using Fubini, we obtain
\begin{multline*}
    \int_0^{\infty}\br{\frac{1}{t}\int_0^t\abs{f(s)}ds}^p\frac{dt}{t}\le\int_0^{\infty}\int_0^t\abs{f(s)}^pds\frac{dt}{t^2}\\
    =\int_0^{\infty}\abs{f(s)}^p\int_s^{\infty}\frac{1}{t^2}dtds
    =\int_0^{\infty}\abs{f(s)}^p\frac{ds}{s}.
\end{multline*} 
\end{proof}

\subsection{The cut-off function}\label{cutoff section}
In this subsection, we define the cut-off function adapted to a thinner sawtooth domain. Define 
\begin{equation}
    \Omega_1=\bigcup_{x\in F}\Gamma_{\frac{\eta}{8}}(x).
\end{equation}
Let $\Phi\in C^{\infty}(\Real)$ with $0\le\Phi\le1$, $\Phi(r)=1$ if $r\le\frac{1}{16}$, and $\Phi(r)=0$ if $r>\frac{1}{8}$.
Define
\begin{equation}\label{def cutoff}
    \Psi(x,t):=\Psi_{\eta,\epsilon}:=\Phi\br{\frac{\delta(x)}{\eta t}}\Phi\br{\frac{t}{32l(Q)}}\br{1-\Phi\br{\frac{t}{16\epsilon}}},
\end{equation}
where $\delta(x):= \dist(x,F)$.

Then $\Psi$ has following properties. First, 
$$\Psi=1 \quad \mbox{on} \quad \bigcup_{x\in F}\Gamma_{\frac{\eta}{16}}\cap\set{2\epsilon<t\le 2l(Q)}.$$
Secondly, $\supp\Psi\subset\Omega_1\cap\set{\epsilon<t<4l(Q)}$. And finally, 
$$\supp\nabla\Psi\subset E_1\cup E_2 \cup E_3, $$
where
\begin{align*}
    E_1&=\set{(x,t)\in 2Q\times (0,4l(Q)):\frac{\eta t}{16}\le\delta(x)\le\frac{\eta t}{8}},\\
    E_2&=\set{(x,t)\in 2Q\times (2l(Q),4l(Q)):\delta(x)\le\frac{\eta t}{8}},\\
    E_3&=\set{(x,t)\in 2Q\times (\epsilon,2\epsilon):\delta(x)\le\frac{\eta t}{8}}.
\end{align*}
In addition,  a direct computation shows
\begin{equation}\label{gradientPsi_ptbound}
    \abs{\nabla\Psi(x,t)}\lesssim\frac{1}{\eta t}\1_{E_1}+\frac{1}{l(Q)}\1_{E_2}+\frac{1}{\epsilon}\1_{E_3}.
\end{equation}

\begin{lem}\label{cutoff est lem} Under the assumptions above,
\begin{equation}\label{gradientPsialpha_integral}
    \int_{\Rn}\br{\int_0^{\infty}\abs{\nabla\Psi}^{\alpha}t^{\alpha-1}dt}^p dx\le C(\eta,\alpha,p,n)\abs{Q},
\end{equation}
for any $\alpha>0$, $p>0$, and
\begin{equation}\label{gradientPsi_integral}
    \underset{\supp\nabla\Psi}{\iint}\frac{dxdt}{t}\le C_n\abs{Q}.
\end{equation}

\end{lem}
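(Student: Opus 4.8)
The plan is to use only the pointwise estimate \eqref{gradientPsi_ptbound} together with the geometric description $\supp\nabla\Psi\subset E_1\cup E_2\cup E_3$; both bounds then reduce to elementary one-dimensional integrations in $t$ for fixed $x$, followed by the trivial observation that in each of $E_1,E_2,E_3$ one has $x\in 2Q$, so the final $x$-integral is over a set of measure $\le\abs{2Q}=2^n\abs{Q}$.

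For \eqref{gradientPsialpha_integral} I would fix $x$ and split the inner integral according to $E_1,E_2,E_3$. For fixed $x$ the $t$-slice of $E_1$ is the interval $\{t:\ 8\delta(x)/\eta\le t\le 16\delta(x)/\eta\}\cap(0,4l(Q))$, on which $\abs{\nabla\Psi}^\alpha t^{\alpha-1}\lesssim\eta^{-\alpha}t^{-1}$, so $\int_{E_1\text{-slice}}\abs{\nabla\Psi}^\alpha t^{\alpha-1}\,dt\lesssim\eta^{-\alpha}\log 2$ (and this slice degenerates to a point, contributing nothing, when $x\in F$, i.e. $\delta(x)=0$). On the $E_2$-slice, where $t\approx l(Q)$, one has $\abs{\nabla\Psi}^\alpha t^{\alpha-1}\lesssim l(Q)^{-\alpha}t^{\alpha-1}$, which integrates over $t\in(2l(Q),4l(Q))$ to a quantity $\lesssim_\alpha 1$; likewise on the $E_3$-slice, $t\approx\epsilon$, $\abs{\nabla\Psi}^\alpha t^{\alpha-1}\lesssim\epsilon^{-\alpha}t^{\alpha-1}$ integrates over $t\in(\epsilon,2\epsilon)$ to $\lesssim_\alpha 1$. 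Hence $\int_0^\infty\abs{\nabla\Psi}^\alpha t^{\alpha-1}\,dt\le C(\eta,\alpha)\1_{G}(x)$ for some $G\subset 2Q$; raising to the power $p$ and integrating in $x$ gives $\int_{\Rn}(\cdots)^p\,dx\le C(\eta,\alpha)^p\abs{2Q}=C(\eta,\alpha,p,n)\abs{Q}$.

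For \eqref{gradientPsi_integral} I would carry out the same splitting, but now integrate $dx\,dt/t$ over each of $E_1,E_2,E_3$. In each case the $t$-slice (for fixed $x$) is an interval whose two endpoints are comparable — to $\delta(x)/\eta$ on $E_1$, to $l(Q)$ on $E_2$, to $\epsilon$ on $E_3$ — so $\int_{\text{slice}}dt/t\le\log 2$ in all three cases; note this is scale-invariant and, crucially, independent of $\eta$, which is why the constant here is purely dimensional (in contrast to \eqref{gradientPsialpha_integral}, where the $E_1$-contribution produces the factor $\eta^{-\alpha}$). Integrating the constant $\log 2$ over the corresponding $x$-supports, each contained in $2Q$, yields $\iint_{\supp\nabla\Psi}dx\,dt/t\lesssim_n\abs{2Q}\lesssim_n\abs{Q}$. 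There is no real obstacle in this lemma; the only points needing attention are the degeneracy of the $E_1$-slice when $x\in F$ and the bookkeeping that ensures all $x$-supports sit inside $2Q$.
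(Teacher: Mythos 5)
Your proposal is correct and follows essentially the same route as the paper: split $\supp\nabla\Psi$ into $E_1,E_2,E_3$, use the pointwise bound \eqref{gradientPsi_ptbound}, integrate in $t$ first (each slice has comparable endpoints, giving a bounded $t$-integral with the $\eta^{-\alpha}$ factor only from $E_1$), and then integrate in $x$ over $2Q$. The only cosmetic difference is that you bound the inner $t$-integral by a constant before raising to the power $p$, whereas the paper raises to the power $p$ first and distributes it over the three pieces — the same computation.
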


\begin{proof}
Using \eqref{gradientPsi_ptbound}, we compute
\begin{multline*}
   \int_{\Rn}\br{\int_0^{\infty}\abs{\nabla\Psi}^{\alpha}t^{\alpha-1}dt}^p dx
    \lesssim_n\int_{2Q}\br{\int_{\frac{8\delta(x)}{\eta}}^{\frac{16\delta(x)}{\eta}}\br{\frac{1}{\eta t}}^{\alpha}t^{\alpha-1}dt}^pdx\\
   +\int_{2Q}\br{\int_{2l(Q)}^{4l(Q)}\br{\frac{1}{l(Q)}}^{\alpha}t^{\alpha-1}dt}^p dx
    +\int_{2Q}\br{\int_{\epsilon}^{2\epsilon}\br{\frac{1}{\epsilon}}^{\alpha}t^{\alpha-1}dt}^pdx\\
    \lesssim_n \frac{1}{\eta^{\alpha p}}\int_{2Q}\br{\int_{\frac{8\delta(x)}{\eta}}^{\frac{16\delta(x)}{\eta}}\frac{1}{t}\,dt}^p dx
    +C_{\alpha,p}\abs{2Q}\\
    \le C(\alpha,p,n)\br{1+\frac{1}{\eta^{\alpha p}}}\abs{Q}\le C(\eta,\alpha,p,n)\abs{Q}.
\end{multline*}
This shows \eqref{gradientPsialpha_integral}, and \eqref{gradientPsi_integral} can be derived similarly:
\begin{align*}
    \underset{\supp\nabla\Psi}{\iint}\frac{dxdt}{t}&\le\iint_{E_1}\frac{dxdt}{t}+\iint_{E_2}\frac{dxdt}{t}+\iint_{E_3}\frac{dxdt}{t}\\
     &\le\int_{2Q}\int_{\frac{8\delta(x)}{\eta}}^{\frac{16\delta(x)}{\eta}}\frac{dt}{t}\,dx
     +\int_{2Q}\int_{2l(Q)}^{4l(Q)}\frac{dt}{t}\,dx+\int_{2Q}\int_{\epsilon}^{2\epsilon}\frac{dt}{t}\,dx\\
     &\le C_n\abs{Q},
\end{align*}
as desired.  
\end{proof}



\section{Proof of the Carleson measure estimate}\label{proof of Carl sec}

Throughout this section, let $Q\subset\Rn$ be fixed, and construct $F\subset Q$ and the cut-off function $\Psi$ as in Section \ref{F section}. Recall that $\kappa_0$ is fixed to ensure that \eqref{kappa0} holds.

Recall that we have the matrix $A=A(x)$ whose entries are functions on $\Rn$, or, independent of $t$, and we write
$$
A=\begin{bmatrix}
\begin{BMAT}{c.c}{c.c}
A_{||} & \mathbf{b} \\
\mathbf{c} & d
\end{BMAT}
\end{bmatrix}.
$$
Write the $n\times 1$ vector $\bd b$ as  $\bd b=\bd {b}_1+\bd b_2$, with $\divg_x\bd b_2=0$. We define a new matrix $A_1$ as follows:
$$
A_1=\begin{bmatrix}
\begin{BMAT}{c.c}{c.c}
A_{||} & \mathbf{b}_1 \\
\mathbf{c}+\bd{b}_2^\intercal & d
\end{BMAT}
\end{bmatrix}
$$
and define $L_1=-\divg A_1\nabla$. Then $L_1$ and $L$ actually define the same operator. To be precise, we have the following
\begin{lem}
For any $u\in W^{1,2}(\Rpl)$ and $v\in W_0^{1,2}(\Real_+^{n+1})$
\begin{equation}\label{AandA1}
    \iint_{\Rpl}A(x)\nabla u(x,t)\cdot\nabla v(x,t)dxdt=\iint_{\Rpl}A_1(x)\nabla u(x,t)\cdot\nabla v(x,t)dxdt.
\end{equation}
In particular, a weak solution to $Lu=0$ in $\Rpl$ is also a weak solution to $L_1u=0$ in $\Rpl$.

\end{lem}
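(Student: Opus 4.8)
The plan is to compute the difference of the two bilinear forms and show it vanishes, using only that $\mathbf b_2$ is divergence-free in $x$ and that $A_{||}$, $d$, and the first $n$ entries of the last row are essentially unchanged.

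First I would record the explicit shape of the discrepancy. By construction
$$A-A_1=\begin{bmatrix}\begin{BMAT}{c.c}{c.c}0&\mathbf b_2\\ -\mathbf b_2^\intercal&0\end{BMAT}\end{bmatrix},$$
an anti-symmetric $(n+1)\times(n+1)$ matrix whose only nonzero entries are $(A-A_1)_{i,n+1}=b_{2,i}$ and $(A-A_1)_{n+1,i}=-b_{2,i}$ for $1\le i\le n$. Writing $\nabla=(\nabla_x,\partial_t)$, this gives the pointwise identity
$$(A-A_1)\nabla u\cdot\nabla v=\sum_{i=1}^n b_{2,i}\big(\partial_i v\,\partial_t u-\partial_i u\,\partial_t v\big),$$
so \eqref{AandA1} is equivalent to $\iint_{\Rpl}\sum_i b_{2,i}(\partial_i v\,\partial_t u-\partial_i u\,\partial_t v)=0$. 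Both bilinear forms are continuous in $v$ for the $W^{1,2}$ topology, so by density of $C_0^\infty(\Rpl)$ in $W_0^{1,2}(\Rpl)$ it suffices to treat $v\in C_0^\infty(\Rpl)$; for such $v$ the expression on the right is an ordinary absolutely convergent Lebesgue integral, since $\nabla_{x,t}v$ is bounded with compact support, $\nabla_{x,t}u\in L^2$, and $\mathbf b_2\in\BMO(\Rn)\subset L^2_{\loc}(\Rn)$.

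Next, for $u\in W^{1,2}(\Rpl)$ and $v\in C_0^\infty(\Rpl)$ I would invoke the elementary identity, valid in $L^1_{\loc}(\Rpl)$,
$$\partial_i v\,\partial_t u-\partial_i u\,\partial_t v=\partial_t\big(u\,\partial_i v\big)-\partial_i\big(u\,\partial_t v\big),$$
which follows from the Leibniz rule for products of a $W^{1,2}$ function with a smooth one together with the symmetry of the second derivatives of $v$. Since each $b_{2,i}$ is independent of $t$ and $u\,\partial_i v$ is supported in a compact subset of $\Rpl$, Fubini and the fundamental theorem of calculus in $t$ give $\iint_{\Rpl}b_{2,i}\,\partial_t(u\,\partial_i v)\,dx\,dt=0$. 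For the remaining term, for a.e.\ fixed $t$ the function $x\mapsto u(x,t)\,\partial_t v(x,t)$ belongs to $W^{1,2}(\Rn)$ and has compact support (by Fubini applied to $u$, and since $v\in C_0^\infty(\Rpl)$), hence is an admissible test function in the weak formulation of $\divg_x\mathbf b_2=0$; therefore $\int_{\Rn}\mathbf b_2(x)\cdot\nabla_x\big(u(x,t)\partial_t v(x,t)\big)\,dx=0$ for a.e.\ $t$, and integrating in $t$ yields $\sum_i\iint_{\Rpl}b_{2,i}\,\partial_i(u\,\partial_t v)\,dx\,dt=0$. Combining the two contributions gives $\iint_{\Rpl}(A-A_1)\nabla u\cdot\nabla v=0$, i.e.\ \eqref{AandA1}; applying this with $v$ ranging over $C_0^\infty(\Rpl)$ (for which $u\in W^{1,2}_{\loc}(\Rpl)$ suffices) shows that $u$ is a weak solution of $Lu=0$ if and only if it is a weak solution of $L_1u=0$.

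The main obstacle, and really the only delicate point, is that $\mathbf b_2$ is merely $\BMO$ — unbounded and non-differentiable — so one cannot perform the $x$-integration by parts by differentiating $\mathbf b_2$; the cancellation has to be read off directly from the hypothesis $\divg_x\mathbf b_2=0$, which forces one to check that $u(\cdot,t)\partial_t v(\cdot,t)$ is a legitimate test function for a.e.\ $t$ (compact support and $W^{1,2}(\Rn)$ regularity, both extracted from $v\in C_0^\infty$ via Fubini). A secondary point is to keep the reduction to $v\in C_0^\infty(\Rpl)$ and the identity above inside the duality framework in which $\iint A\nabla u\cdot\nabla v$ is defined for $\BMO$ anti-symmetric part (see \cite{li2019boundary}), rather than treating the forms as genuine Lebesgue integrals before the reduction.
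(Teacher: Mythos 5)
Your proof is correct and follows essentially the same route as the paper's: reduce to smooth compactly supported $v$ by density, then observe that the excess $(A-A_1)\nabla u\cdot\nabla v$ is exactly $\sum_i b_{2,i}(\partial_iv\,\partial_tu-\partial_iu\,\partial_tv)$, which vanishes upon integration because the $\partial_t$ part kills itself by the fundamental theorem of calculus in $t$ (using $t$-independence of $\mathbf b_2$) and the $\partial_i$ part kills itself by $\divg_x\mathbf b_2=0$. Your pointwise ``div-curl'' identity $\partial_iv\,\partial_tu-\partial_iu\,\partial_tv=\partial_t(u\partial_iv)-\partial_i(u\partial_tv)$ just packages in one line the paper's three-step chain (integrate by parts in $t$, commute $\partial_t$ with $\nabla_x v$, apply the product rule and divergence-free property); the content is identical.
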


\begin{proof}
We first show \eqref{AandA1} for $u\in W^{1,2}(\Rpl)$ and $v\in C_0^2(\Rpl)$. To this end, we write
\begin{multline}\label{AandA1 eq}
    \iint_{\Rpl}A(x)\nabla u(x,t)\cdot\nabla v(x,t)dxdt\\
    =\iint A_{||}\nabla_x u\cdot\nabla_x v +\br{\bd b_1^\intercal+\bd b_2^{\intercal}}\cdot\nabla_x v\,\Dt u
    +\bd c\cdot\nabla_x u\,\Dt v+d\,\Dt u\,\Dt v\,dxdt,
\end{multline}
and
\begin{multline*}
    \iint_{\Rpl}\bd b_2^\intercal\cdot\nabla_x v\,\Dt u\, dxdt=-\iint_{\Rpl}\Dt\br{\bd b_2^{\intercal}\cdot\nabla_x v}u\,dxdt\\
   =-\iint\bd b_2^{\intercal}\cdot\nabla_x (\Dt v)\,u\,dxdt
   =-\iint\bd b_2^{\intercal}\cdot\nabla_x (\Dt v\,u)dxdt\\+\iint \bd b_2^{\intercal}\cdot\nabla_xu\,\Dt v\,dxdt
   =\iint \bd b_2^{\intercal}\cdot\nabla_x u\,\Dt v\,dxdt,
\end{multline*}
where in the second equality we have used the facts that $\bd b_2$ is $t$-independent and that $v\in C^2$, and in the last equality we have used the divergence-free property of $\bd b_2$. Then \eqref{AandA1 eq} is further equal to
\begin{align*}
    &\iint_{\Rpl} A_{||}\nabla_x u\cdot\nabla_x v +\bd b_1^\intercal\cdot\nabla_x v\,\Dt u
    +\br{\bd c+\bd b_2^\intercal}\cdot\nabla_x u\,\Dt v+d\,\Dt u\,\Dt v\,dxdt\\
    &=\iint_{\Rpl}A_1(x)\nabla u(x,t)\cdot\nabla v(x,t)dxdt.
\end{align*}
Now since $C_0^2(\Rpl)$ is dense in $W_0^{1,2}(\Rpl)$, a limiting argument shows that \eqref{AandA1} holds for all $u\in W^{1,2}(\Rpl)$, $v\in W_0^{1,2}(\Rpl)$. 
\end{proof}

Define 
\begin{equation}\label{A0}
   A_0=\begin{bmatrix}
\begin{BMAT}{c.c}{c.c}
A_{||} & \mathbf{b}-\br{\bd b^a}_{2Q} \\
\mathbf{c}-\br{\bd{c}^a}_{2Q} & d
\end{BMAT}
\end{bmatrix}, 
\end{equation}
where $\br{\bd b^a}_{2Q}=\fint_{2Q}\bd{b}^a$,
and let $L_0:=-\divg A_0\nabla$. 

Note that $\br{\bd b^a}_{2Q}^{\intercal}=-\br{\bd{c}^a}_{2Q}$ by definition of $\bd b^a$ and $\bd c^a$. Also, $\br{\bd b^a}_{2Q}$ is a constant vector so we of course have $\divg_x\br{\bd b^a}_{2Q}=0$. Hence, we can apply the lemma with $\bd b_2=\br{\bd b^a}_{2Q}$. Moreover, observe that 
$$
 A_0=\begin{bmatrix}
\begin{BMAT}{c.c}{c.c}
A_{||} & \mathbf{b}-\br{\bd b^a}_{2Q} \\
\mathbf{c}-\br{\bd{c}^a}_{2Q} & d
\end{BMAT}
\end{bmatrix} 
= 
\begin{bmatrix}
\begin{BMAT}{c.c}{c.c}
A_{||}^s & \mathbf{b}^s \\
\mathbf{c}^s & d
\end{BMAT}
\end{bmatrix} 
+
\begin{bmatrix}
\begin{BMAT}{c.c}{c.c}
A_{||}^a & \mathbf{b}^a-\br{\bd b^a}_{2Q} \\
\mathbf{c}^a-\br{\bd{c}^a}_{2Q} & 0
\end{BMAT}
\end{bmatrix}, 
$$
where $\begin{bmatrix}
\begin{BMAT}{c.c}{c.c}
A_{||}^s & \mathbf{b}^s \\
\mathbf{c}^s & d
\end{BMAT}
\end{bmatrix} $ is the symmetric part of $A_0$, which is the same as the symmetric part of $A$, and
$\begin{bmatrix}
\begin{BMAT}{c.c}{c.c}
A_{||}^a & \mathbf{b}^a-\br{\bd b^a}_{2Q} \\
\mathbf{c}^a-\br{\bd{c}^a}_{2Q} & 0
\end{BMAT}
\end{bmatrix}$ is anti-symmetric, $\BMO$, with the same $\BMO$ semi-norm as $A^a$. We summarize these observations in the following lemma.

\begin{lem}\label{L to L0 lem}
A weak solution to $Lu=0$ in $\Rpl$ is also a weak solution to $L_0u=0$ in $\Rpl$. Moreover, the operator $L_0$ has the same ellipticity constant and $\BMO$ semi-norm as $L$.
\end{lem}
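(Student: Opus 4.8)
The plan is to deduce this lemma immediately from the preceding lemma (the one furnishing the identity \eqref{AandA1}), once we recognize that $A_0$ is precisely the matrix $A_1$ produced by that lemma for a particular splitting of $\bd b$.

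First I would apply that preceding lemma with the decomposition $\bd b = \bd b_1 + \bd b_2$ in which $\bd b_2 := (\bd b^a)_{2Q}$ and $\bd b_1 := \bd b - (\bd b^a)_{2Q}$. Since $\bd b_2$ is a constant vector we have $\divg_x \bd b_2 = 0$, so the hypotheses are met, and \eqref{AandA1} yields
\[
\iint_{\Rpl} A\nabla u\cdot\nabla v = \iint_{\Rpl} A_1\nabla u\cdot\nabla v \qquad \forall\, u\in W^{1,2}(\Rpl),\ v\in W_0^{1,2}(\Rpl),
\]
where $A_1$ has submatrix $A_{||}$, column $\bd b_1 = \bd b - (\bd b^a)_{2Q}$, row $\bd c + \bd b_2^\intercal$, and corner $d$. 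Next I would invoke the anti-symmetry of $A^a$: since $A^a = -(A^a)^\intercal$ we have $\bd c^a = -(\bd b^a)^\intercal$ pointwise, hence $\bd b_2^\intercal = (\bd b^a)_{2Q}^\intercal = -(\bd c^a)_{2Q}$ and so $\bd c + \bd b_2^\intercal = \bd c - (\bd c^a)_{2Q}$. Comparing with \eqref{A0}, this identifies $A_1$ with $A_0$, so $\iint_{\Rpl} A\nabla u\cdot\nabla v = \iint_{\Rpl} A_0\nabla u\cdot\nabla v$ for all admissible test functions, i.e.\ a weak solution of $Lu=0$ in $\Rpl$ is a weak solution of $L_0u=0$. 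For the statement on the constants I would just read off the symmetric/anti-symmetric splitting of $A_0$ displayed just before the lemma: the symmetric part of $A_0$ agrees entry by entry with that of $A$ (the only changed entries are in the blocks $\bd b,\bd c$, and there only by the anti-symmetric constant shifts $-(\bd b^a)_{2Q}$, $-(\bd c^a)_{2Q}$), so $A_0^s$ still satisfies \eqref{ellipticity def} with the same $\lambda_0$; and the anti-symmetric part of $A_0$ is $A^a$ with constants added in the relevant entries, which leaves the $\BMO$ semi-norm \eqref{BMO def} unchanged, so $A_0^a$ has the same $\BMO$ semi-norm $\Lambda_0$ as $A^a$.

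I do not expect a genuine obstacle here: the argument is pure bookkeeping, and the one analytic step — the density passage from $v\in C_0^2(\Rpl)$ to $v\in W_0^{1,2}(\Rpl)$ — is already carried out inside the preceding lemma and requires nothing new. The only point to keep an eye on is that the identity $\bd b_2^\intercal = -(\bd c^a)_{2Q}$ hinges on choosing $\bd b_2$ to be the average of the \emph{anti-symmetric} part $\bd b^a$, not of $\bd b$ itself, which is exactly how \eqref{A0} is arranged; this is also why the particular cube $2Q$ in the averages is irrelevant to the lemma, any constant vector being equally admissible.
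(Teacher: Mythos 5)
Your proposal is correct and follows exactly the paper's own reasoning: apply the preceding lemma with $\bd b_2=(\bd b^a)_{2Q}$ (a constant, hence divergence-free, vector), use $\bd c^a=-(\bd b^a)^\intercal$ to identify $A_1$ with $A_0$, and read off from the displayed symmetric/anti-symmetric splitting of $A_0$ that the ellipticity constant and $\BMO$ semi-norm are unchanged. Nothing to add.
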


Let $u$ be a bounded weak solution to $Lu=0$ in $\Rpl$ with $\norm{u}_{L^{\infty}}\le 1$. Then $u$ is also a bounded weak solution to $L_0u=0$ in $\Rpl$.
Recall that 
\begin{equation*}
    J_{\eta,\epsilon}=\iint_{\Real_+^{n+1}}A_0\nabla u\cdot \nabla u\,\Psi^2 t\,dxdt.
\end{equation*}
Then by ellipticity of $A_0$ and the support property of $\Psi$, we have
\begin{equation*}
    J_{\eta,\epsilon}\ge\lambda_0\int_{2\epsilon}^{l(Q)}\int_F\abs{\nabla u(x,t)}^2 t\, dxdt.
\end{equation*}
 
The goal of this section is to prove Lemma \ref{main lem}.
Once it is proved, we choose $\sigma$ and $\eta$ to be sufficiently small, so that
\begin{equation}\label{eq Jupb}
    J_{\eta,\epsilon}\le 2\tilde{c}\abs{Q}.
\end{equation}
Now that $\eta$ is fixed, and $\tilde{c}$ is independent of $\epsilon$, we let $\epsilon\to 0$ and thus obtain
\[
\int_0^{l(Q)}\int_F\abs{\nabla u(x,t)}^2 t\, dxdt\le 2\tilde{c},
\]
as desired.

Let us further reduce the statement to the case of smooth coefficients before we prove Lemma \ref{main lem}. We claim that we can assume that $A$ is smooth (and thus $A_0$ is smooth) in Lemma \ref{main lem}, as long as all bounds depend on $A$ only through its ellipticity constant and $\BMO$ semi-norm. If $A$ is not smooth, we take $A_\delta=\xi_\delta*A_0$, where $\xi_\delta(X)=\delta^{-n-1}\xi(\frac{X}{\delta})$ is an approximate identity. Then $A_\delta$ converges to $A_0$ locally in $L^p(\Real^{n+1})$ for all $1\le p<\infty$ as $\delta\to0$, and 
\begin{equation}\label{eq BMObd}
    \norm{A^a_\delta}_{\BMO(\Real^{n+1})}\le\norm{A_0^a}_{\BMO(\Real^{n+1})}.
\end{equation}
See e.g. \cite{qian2019parabolic} Proposition 3.3 for a proof of \eqref{eq BMObd}. Then the desired result in the non-smooth case follows from a limiting argument. 
To see this, fix any $u$ that satisfies $Lu=0$ in $\Real^{n+1}_+$ and $\norm{u}_{L^\infty(\Real^{n+1}_+)}\le 1$. Then fix a cube $Q\subset\Rn$ and define the cutoff function $\Psi=\Psi_{Q,\epsilon,\eta}$ as in \eqref{def cutoff}. Take cubes $\wt{Q}_0$ and $\wt{Q}_1$ such that 
$$\supp\Psi\subset\subset\wt{Q}_0\subset\subset\wt{Q}_1\subset\subset\Real^{n+1}_+.$$ 
Now let $u_\delta$ satisfy $L_\delta u_\delta=-\divg(A_\delta\nabla u_\delta)=0$ in $\wt{Q}_0$ and $u_\delta=u$ on $\bdy\wt{Q}_0$. Let furthermore
 \[J_\delta=J_{\eta,\epsilon,\delta}=\iint_{\Real_+^{n+1}}A_\delta\nabla u_\delta\cdot \nabla u_\delta\,\Psi^2 t\,dxdt.\]
Since $A_\delta$ is smooth, we can use the result in the smooth case and have $J_\delta\le C\abs{Q}$ by \eqref{eq Jupb}. The constant $C$ is independent of $\epsilon$, and can be independent of $\delta$ because of \eqref{eq BMObd}. Hence, it remains to show that $\abs{J_\delta-J}\to 0$ as $\delta\to 0$, where 
\[J=J_{\eta,\epsilon}=\iint_{\Real_+^{n+1}}A_0\nabla u\cdot \nabla u\,\Psi^2 t\,dxdt.\]
Notice that $A^a_\delta\nabla u_\delta\cdot \nabla u_\delta=A^a_0\nabla u\cdot \nabla u=0$ by anti-symmetry, and thus
\[
\abs{J_\delta-J}=\abs{\iint_{\Real_+^{n+1}}A^s_\delta\nabla u_\delta\cdot \nabla u_\delta\,\Psi^2 t\,dxdt-\iint_{\Real_+^{n+1}}A_0^s\nabla u\cdot \nabla u\,\Psi^2 t\,dxdt}.
\]
We write
\begin{multline*}
    A^s_\delta\nabla u_\delta\cdot \nabla u_\delta-A_0^s\nabla u\cdot \nabla u\\
    =A^s_\delta\nabla(u_\delta-u)\cdot \nabla u_\delta+A^s_\delta\nabla u\cdot\nabla(u_\delta-u)+(A_\delta^s-A_0^s)\nabla u\cdot \nabla u,
\end{multline*}
and get
\begin{multline*}
    \abs{J_\delta-J}
    \le\,\sup_{\wt Q_0}\,(\Psi^2 t)\bigg\{\abs{\iint_{\wt Q_0}A^s_\delta\nabla(u_\delta-u)\cdot \nabla u_\delta\, dxdt}\\
    +\abs{\iint_{\wt Q_0}A^s_\delta\nabla u\cdot\nabla(u_\delta-u)dxdt}+\abs{\iint_{\wt Q_0}(A_\delta^s-A_0^s)\nabla u\cdot \nabla u\,dxdt}\bigg\}\\
    \le C
    \Big\{\iint_{\wt Q_0}\abs{\nabla(u_\delta-u)}(\abs{\nabla u_\delta}+\abs{\nabla u})dxdt+\iint_{\wt Q_0}\abs{A_\delta^s-A_0^s}\abs{\nabla u}^2dxdt\Big\}\\
    =:C(I_1+I_2).
\end{multline*}
Using ellipticity of $A_\delta$, and taking $u-u_\delta$ as a test function to both $L_\delta u_\delta=0$ and $L_0 u=0$ in $\wt{Q}_0$, one can get 
\begin{multline*}
    \iint_{\wt Q_0}\abs{\nabla(u_\delta-u)}^2dxdt\le\lambda_0^{-1}\iint_{\wt Q_0}A_\delta\nabla(u_\delta-u)\cdot\nabla(u_\delta-u)dxdt\\
    =\lambda_0^{-1}\iint_{\wt Q_0}(A_0-A_\delta)\nabla u\cdot\nabla(u_\delta-u)dxdt.
\end{multline*}
Then by Cauchy-Schwarz inequality, 
\[
    \iint_{\wt Q_0}\abs{\nabla(u_\delta-u)}^2dxdt\le \lambda_0^{-2}\iint_{\wt Q_0}\abs{A_0-A_\delta}^2\abs{\nabla u}^2dxdt.
\]
Using H\"older inequality, reverse H\"older inequality for $\nabla u$, and the fact that
$\norm{A_0-A_\delta}_{L^p(Q_0)}\to 0$ as $\delta\to 0$ ($p$ will be large), we obtain 
\begin{equation}\label{udelta conv}
    \iint_{\wt Q_0}\abs{\nabla(u_\delta-u)}^2dxdt\to 0\quad \mbox{as}\quad \delta\to 0.
\end{equation}
 Notice that \[\iint_{\wt Q_0}\abs{\nabla u_\delta}^2dxdt\le \lambda_0^{-4}\iint_{\wt Q_0}\abs{\nabla u}^2dxdt,\] 
and thus $I_1\to 0$ by Cauchy-Schwarz inequality and \eqref{udelta conv}. The second term, $I_2$, converges to 0 by the dominated convergence theorem. 

This justifies the claim that we only need to prove Lemma \ref{main lem} for $A$ smooth. Notice that in this case, $\vp$, $\wt{\vp}$, $\mP_t\wt{\vp}$, $\mP^*_t\wt{\vp}$ and $u$ are all smooth by interior regularity of elliptic equations.
\vskip 0.08 in

Now we are ready for the \\
\noindent {\it Proof of Lemma \ref{main lem}.}  
In the sequel, we shall simply write $J$ for $J_{\eta,\epsilon}$. We shall not distinguish a column vector and a row vector, namely, we shall not use the sign of transposition, as it should be clear from the context. We denote by $c$ some constant depending only on $\lambda_0$, $\Lambda_0$ and $n$, and by $\tilde{c}$ a constant depending additionally on $\sigma$ and $\eta$.

Since $u$ is a weak solution to $L_0u=0$ in $\Rpl$, 
\[
\iint_{\Rpl}A_0\nabla u\cdot\nabla( u \,\Psi^2 t)\,dxdt=0,
\]
where we have chosen $u\,\Psi^2 t$ to be the test function. Therefore,
\begin{multline*}
    J=\iint_{\Rpl} A_0\nabla u \cdot \nabla u\,\Psi^2t\,dxdt\\
    = -\iint A_0\nabla u\cdot\nabla(\Psi^2)ut\,dxdt-\iint A_0\nabla u\cdot \nabla t\, u\Psi^2dxdt
    =: J_1+J_2.
\end{multline*}
The first term 
\begin{align*}
    J_1&=-\iint A_{||}\nabla_x u\cdot\nabla_x(\Psi^2)ut\,dxdt-\iint (\bd b-\br{\bd{b}^a}_{2Q})\cdot\nabla_x(\Psi^2)\Dt u\,ut\,dxdt\\
    &-\iint (\bd c-\br{\bd{c}^a}_{2Q})\cdot\nabla_x u\, \Dt(\Psi^2)ut\,dxdt-\iint d\,\Dt u\,\Dt(\Psi^2)ut\,dxdt\\
    &=: J_{11}+J_{12}+J_{13}+J_{14}.
\end{align*}

For $J_{11}$, we claim that 
\begin{equation}\label{J11equal}
    J_{11}=-\iint \br{A_{||}-(A_{||}^a)_{2Q}}\nabla_x u\cdot\nabla_x(\Psi^2)ut\,dxdt.
\end{equation}
This is because 
\[
\iint (A_{||}^a)_{2Q}\nabla_x u\cdot\nabla_x(\Psi^2)ut\,dxdt=\frac{1}{2}\iint (A_{||}^a)_{2Q}\nabla_x (u^2)\cdot\nabla_x(\Psi^2t)dxdt,
\]
and the last integral is 0 because $(A_{||}^a)_{2Q}$ is a constant anti-symmetric matrix, and $\Psi^2t$ is $C^2$. Therefore, 
\begin{align*}
    J_{11}&=-\iint A_{||}^s\nabla_x u\cdot\nabla_x(\Psi^2)ut\,dxdt-\iint \br{A_{||}^a-(A_{||}^a)_{2Q}}\nabla_x u\cdot\nabla_x(\Psi^2)ut\,dxdt\\
    &=: J_{111}+J_{112}.
\end{align*}
For $J_{111}$, we have
\begin{multline*}
    \abs{J_{111}}=2\abs{\iint_{\Rpl}A_{||}^s\nabla_x u\, \Psi\, u \nabla_x\,\Psi\, t\,dxdt}\\
  \le\frac{2}{\lambda_0}\iint_{\Rpl}\abs{\nabla_xu}\abs{\Psi}t^{1/2}\abs{\nabla_x\Psi}t^{1/2}dxdt\\
  \le \sigma\lambda_0\iint_{\Rpl}\abs{\nabla u}^2\Psi^2 t\, dxdt+\frac{c}{\sigma}\iint_{\Rpl}\abs{\nabla\Psi}^2t\,dxdt
  \le\sigma J+\tilde{c}\abs{Q},
\end{multline*}
where in the first inequality we have used $\norm{A_{||}^s}_{L^{\infty}}\le\lambda_0^{-1}$ and $\norm{u}_{L^{\infty}}\le1$, and in the last step we have used Lemma \ref{cutoff est lem}. For $J_{112}$, by H\"older's inequality,
\begin{multline}\label{J112}
     \frac{1}{2}\abs{J_{112}}=\abs{\int_{2Q}\int_0^{4l(Q)}\br{A_{||}^a-(A_{||}^a)_{2Q}}\nabla_xu\cdot\nabla_x{\Psi} (u\Psi t)dtdx}\\
    \le \br{\int_{2Q}\abs{A_{||}^a-(A_{||}^a)_{2Q}}^{\alpha'}dx}^{\frac1{\alpha'}}\br{\int_{2Q}\Big(\int_0^{4l(Q)}\abs{\nabla u}\abs{\Psi t^{1/2}}\abs{\nabla\Psi}t^{1/2}dt\Big)^{\alpha}dx}^{\frac1{\alpha}}\\
    \le c\abs{Q}^{\frac1{\alpha'}}\norm{A_{||}^a}_{\BMO}
    \Big\{\int_{2Q}\Big(\int_0^{4l(Q)}\abs{\nabla u}^2\Psi^2t\,dt\Big)^{\frac{\alpha}{2}}\Big(\int_0^{4l(Q)}\abs{\nabla\Psi}^2t\,dt\Big)^{\frac{\alpha}{2}}dx \Big\}^{\frac1{\alpha}},
\end{multline}
where $\alpha$ is any number between 1 and 2. Now we use H\"older inequality with $p=\frac{2}{\alpha}$ to bound \eqref{J112} by
\begin{align*}
   c\abs{Q}^{\frac1{\alpha'}}\br{\int_{2Q}\int_0^{4l(Q)}\abs{\nabla u}^2\Psi^2t\,dtdx}^{1/2}\br{\int_{2Q}\Big(\int_0^{4l(Q)}\abs{\nabla\Psi}^2t\,dt\Big)^{\frac{\alpha}{2-\alpha}}dx}^{\frac{2-\alpha}{2\alpha}},
\end{align*}
which by Lemma \ref{cutoff est lem} can then be bounded by
\[
\tilde{c}J^{1/2}\abs{Q}^{\frac{1}{\alpha'}+\frac{2-\alpha}{2\alpha}}=\tilde{c}J^{1/2}\abs{Q}^{1/2}.
\]
Then Young's inequality gives
\[
\abs{J_{112}}\le\sigma J+\tilde{c}\abs{Q}.
\]

Note that $J_{12}$ and $J_{13}$ can be estimated similar as \eqref{J11equal}. So both of then are bounded by $\sigma J+\tilde{c}\abs{Q}$. Since $\norm{d}_{L^{\infty}}\le\lambda_0^{-1}$, $J_{14}$ can be also bounded by $\sigma J+\tilde{c}\abs{Q}$ using Young's inequality and Lemma \ref{cutoff est lem}.

For $J_2$, we compute
\begin{align*}
   J_2&=-\iint_{\Rpl}A_0\nabla u\cdot \bd{e}_{n+1}\,u\Psi^2dxdt\\
   &=-\iint_{\Rpl}\br{\bd c-(\bd{c}^a)_{2Q}}\cdot\nabla_x u(u\Psi^2)dxdt-\iint_{\Rpl}d\,\Dt u(u\Psi^2)dxdt\\
   &=: J_{21}+J_{22}.
\end{align*}

For $J_{22}$, since $d$ is $t$-independent, integration by parts gives
\[
J_{22}=-\frac{1}{2}\iint_{\Rpl}d\,\Dt(u^2)\Psi^2dxdt=\iint_{\Rpl}d\,u^2\Psi\Dt\Psi\,dxdt.
\]
Thus $\abs{J_{22}}\le \tilde{c}\abs{Q}$ again by Lemma \ref{cutoff est lem}.
For $J_{21}$, we write
\begin{multline*}
     J_{21}=-\iint_{\Rpl}\br{\bd c-(\bd{c}^a)_{2Q}}\cdot\nabla_x\br{\frac{u^2\Psi^2}{2}}dxdt\\
     +\iint_{\Rpl}\br{\bd c-(\bd{c}^a)_{2Q}}\cdot\nabla_x\Psi (u^2\Psi) dxdt
  =: J_{211}+J_{212}.
\end{multline*}
Going further, 
\begin{multline*}
    J_{212}=\iint_{\Rpl}\bd c^s\cdot\nabla_x\Psi (u^2\Psi) dxdt+\iint_{\Rpl}\br{\bd c^a-(\bd{c}^a)_{2Q}}\cdot\nabla_x\Psi (u^2\Psi) dxdt\\=: J_{2121}+J_{2122}.
\end{multline*}
Then again by Lemma \ref{cutoff est lem}, $\abs{J_{2121}}\le \tilde{c}\abs{Q}$. For $J_{2122}$, 
\begin{align*}
    \abs{J_{2122}}&\le\int_{2Q}\abs{\bd c^a-(\bd{c}^a)_{2Q}}\br{\int_0^{4l(Q)}\abs{\nabla_x\Psi(u^2\Psi)}dt}dx\\
    &\le\br{\int_{2Q}\abs{\bd c^a-(\bd{c}^a)_{2Q}}^{\alpha'}dx}^{1/{\alpha'}}\br{\int_{2Q}\br{\int_0^{4l(Q)}\abs{\nabla\Psi}\abs{u^2\Psi}dt}^{\alpha}}^{1/{\alpha}}\\
    &\le c\abs{Q}^{1/{\alpha'}}\br{\int_{2Q}\br{\int_0^{4l(Q)}\abs{\nabla\Psi}dt}^{\alpha}dx}^{1/{\alpha}}\le \tilde{c}\abs{Q}.
\end{align*}
For $J_{211}$, we use \eqref{cHodege} to get
\[
J_{211}=\iint_{\Rpl}A^*_{||}\nabla_x\vp\cdot\nabla_x\br{\frac{u^2\Psi^2}{2}}dxdt.
\]
Recall that we defined $\theta_{\eta t}=\vp-\mP^*_{\eta t}\vp$ in Section \ref{theta subsection}. We compute
\begin{multline*}
   J_{211}=\iint_{\Rpl}A_{||}^*\nabla_x\theta_{\eta t}\cdot\nabla_x\Big(\frac{u^2\Psi^2}{2}\Big)dxdt
    +\iint_{\Rpl}A_{||}^*\nabla_x\mP^*_{\eta t}\vp\cdot\nabla_x\Big(\frac{u^2\Psi^2}{2}\Big)dxdt\\
    =\iint_{\Rpl}\br{A_{||}^*-(A_{||}^{*a})_{2Q}}\nabla_x\theta_{\eta t}\cdot\nabla_x\Big(\frac{u^2\Psi^2}{2}\Big)dxdt\\
     +\iint_{\Rpl}A_{||}^*\nabla_x\mP^*_{\eta t}\vp\cdot\nabla_x\Big(\frac{u^2\Psi^2}{2}\Big)dxdt
     =: J_{2111}+J_{2112},
\end{multline*}
where in the second equality we have used the assumption that the coefficients are smooth, which implies that  $u^2$ is smooth, and thus $(A_{||}^{*a})_{2Q}$ being a constant anti-symmetric matrix gives
\[
\iint_{\Rpl}(A_{||}^{*a})_{2Q}\nabla_x\theta_{\eta t}\cdot\nabla_x\br{\frac{u^2\Psi^2}{2}}dxdt=0.
\]
For $J_{2112}$, integration by parts with respect to $t$ gives
\begin{multline*}
     J_{2112}=-\iint_{\Rpl}\Dt\br{A_{||}^*\nabla_x\mP_{\eta t}^*\vp\cdot\nabla_x\br{\frac{u^2\Psi^2}{2}}}t\,dxdt\\
    =-\iint_{\Rpl}A_{||}^*\nabla_x\Dt\mP_{\eta t}^*\vp\cdot\nabla_x\br{\frac{u^2\Psi^2}{2}}t\,dxdt\\
    -\iint_{\Rpl}A_{||}^*\nabla_x\mP_{\eta t}^*\vp\cdot\nabla_x\Dt\br{\frac{u^2\Psi^2}{2}}t\,dxdt
    =: I_1+I_2.
\end{multline*}
By the same reasoning as for \eqref{J11equal}, we have
\begin{align*}
    I_1&=-\iint_{\Rpl}\br{A_{||}^*-(A_{||}^{*a})_{2Q}}\nabla_x\Dt\mP_{\eta t}^*\vp\cdot\nabla_x\br{\frac{u^2\Psi^2}{2}}t\,dxdt\\
    &=-\iint_{\Rpl}A_{||}^{*s}\nabla_x\Dt\mP_{\eta t}^*\vp\cdot\nabla_x\br{\frac{u^2\Psi^2}{2}}t\,dxdt\\
    &\qquad-\iint_{\Rpl}\br{A_{||}^{*a}-(A_{||}^{*a})_{2Q}}\nabla_x\Dt\mP_{\eta t}^*\vp\cdot\nabla_x\br{\frac{u^2\Psi^2}{2}}t\,dxdt\\
    &=: I_{11}+I_{12}.
\end{align*}

Then, applying Proposition \ref{Lp_G2xProp} to the operator $L^*_{||}=-\divg A^*_{||}\nabla$, with $p=2$, 
\[
\br{\iint_{\Rpl}\abs{\nabla_x\Dt\mP_{\eta t}^*\vp}^2t\,dxdt}^{1/2}\le c\norm{\nabla\vp}_{L^2(\Rn)}.
\]
So by Cauchy-Schwartz inequality and by \eqref{nabla vp_Lp}, 
\begin{align*}
    \abs{I_{11}}&\le c\br{\iint_{\Rpl}\abs{\nabla_x\Dt\mP_{\eta t}^*\vp}^2t\,dxdt}^{1/2}
    \br{\iint_{\Rpl}\abs{\nabla_x(u^2\Psi^2)}^2t\,dxdt}^{1/2}\\
    &\le c\abs{Q}^{1/2}\br{\iint_{\Rpl}\abs{\nabla_x u}^2\Psi^2\, t\,dxdt+\iint_{\Rpl}\abs{\nabla_x\Psi}^2t\,dxdt}^{1/2}.
\end{align*}
Then Lemma \ref{cutoff est lem} and Young's inequality give
\begin{align*}
    \abs{I_{11}}\le c\abs{Q}^{1/2}\br{J+\tilde{c}\abs{Q}}^{1/2}\le\sigma J+\tilde{c}\abs{Q}.
\end{align*}
For $I_{12}$, we use H\"older inequality to get
\begin{multline*}
    \abs{I_{12}}\le \frac{1}{2}\br{\int_{2Q}\abs{A_{||}^{*a}-(A_{||}^{*a})_{2Q}}^{\alpha'}dx}^{\frac1{\alpha'}}\\
\times    \Big\{\int_{\Rn}\br{\int_0^{\infty}\abs{\nabla_x\Dt\mP_{\eta t}^*\vp}\abs{\nabla_x\br{u^2\Psi^2}}t\,dt}^{\alpha}dx\Big\}^{1/{\alpha}}\\
    \le c\abs{Q}^{1/{\alpha'}}\br{\int_{\Rn}\br{\int_0^{\infty}\abs{\nabla_x\Dt\mP^*_{\eta t}\vp}^2t\,dt}^{\frac{\alpha}{2-\alpha}}dx}^{\frac{2-\alpha}{2\alpha}}\\
\times    \br{\int_{\Rn}\int_0^{\infty}\abs{\nabla_x(u^2\Psi^2)}^2t\,dtdx}^{1/2}.
\end{multline*}
Letting $\frac{\alpha}{2-\alpha}=\frac{2+\epsilon_0}{2}$, then by Proposition \ref{Lp_G2xProp}, \eqref{nabla vp_Lp} and Lemma \ref{cutoff est lem}, 
\begin{align*}
    \abs{I_{12}}\le c\abs{Q}^{1/{\alpha'}}\abs{Q}^{\frac{2-\alpha}{2\alpha}}\br{J+\tilde{c}\abs{Q}}^{1/2}
    \le \sigma J+\tilde{c}\abs{Q}.
\end{align*}

For $I_2$, by the definition of $L^*_{||}$, we can write $I_2$ as 
\[
I_2=-\frac{1}{2}\int_0^{\infty}\int_{\Rn}L_{||}^*\mP^*_{\eta t}\vp\,\Dt(u^2\Psi^2)dx\,t\,dt.
\]
By the Cauchy-Schwartz inequality,
\begin{align*}
   \abs{I_2}\le c\br{\iint_{\Rpl}\abs{L^*_{||}\mP^*_{\eta t}\vp}^2t\,dtdx}^{1/2}
    \br{\iint_{\Rpl}\abs{\Dt(u^2\Psi^2)}^2t\,dtdx}^{1/2}.
\end{align*}
By Proposition \ref{Lp_G1Prop},
\[
\br{\iint_{\Rpl}\abs{L^*_{||}\mP^*_{\eta t}\vp}^2t\,dtdx}^{1/2}\le \tilde{c}\norm{\nabla\vp}_{L^2(\Rn)}.
\]
So by \eqref{nabla vp_Lp} and Lemma \ref{cutoff est lem}, we have
\begin{align*}
    \abs{I_2}&\le \tilde{c}\norm{\nabla\vp}_{L^2(\Rn)}\br{\iint_{\Rpl}\abs{\Dt(u^2\Psi^2)}^2t\,dtdx}^{1/2}\\
    &\le \tilde{c}\abs{Q}^{1/2}(J+\tilde{c}\abs{Q})^{1/2}\le\sigma J+\tilde{c}\abs{Q}. 
\end{align*}

We now return to $J_{2111}$. Write
\begin{align*}
    J_{2111}&=\iint_{\Rpl}\br{A_{||}^*-(A_{||}^{*a})_{2Q}}\nabla_x\theta_{\eta t}\cdot\nabla_x u(u\Psi^2)dxdt\\
    &\qquad+\frac{1}{2}\iint_{\Rpl}\br{A_{||}^*-(A_{||}^{*a})_{2Q}}\nabla_x\theta_{\eta t}\cdot\nabla_x(\Psi^2)u^2dxdt\\
    &=: II_1+II_2.
\end{align*}
For $II_2$, we split it up into the integral involving $A_{||}^{*s}$ and the integral involving $A_{||}^{*a}-(A_{||}^{*a})_{2Q}$ as before. We only treat the integral involving $A_{||}^{*a}-(A_{||}^{*a})_{2Q}$ (denoted by $II_2^a$) as the estimate for the former is similar and easier. By the Cauchy-Schwarz inequality and \eqref{gradientPsi_ptbound}, we can write
\begin{multline}\label{II_2^a}
     \abs{II_2^a}\le c\Big(\int_{2Q}\abs{A_{||}^{*a}-(A_{||}^{*a})_{2Q}}^2dx\Big)^{\frac1{2}}\br{\int_{2Q}\Big(\int_0^{4l(Q)}\abs{\nabla_x\theta_{\eta t}}\abs{\nabla_x\Psi}dt\Big)^2dx}^{\frac1{2}}\\
    \le \tilde{c}\abs{Q}^{1/2}\Big\{ \br{\int_{2Q}\Big(\int_0^{4l(Q)}\abs{\nabla_x\theta_{\eta t}}\1_{E_1}\frac{dt}{t}\Big)^2dx}^{1/2}\\
    +\br{\int_{2Q}\Big(\int_0^{4l(Q)}\abs{\nabla_x\theta_{\eta t}}\1_{E_2}\frac{dt}{l(Q)}\Big)^2dx}^{1/2}\\
    +\br{\int_{2Q}\Big(\int_0^{4l(Q)}\abs{\nabla_x\theta_{\eta t}}\1_{E_3}\frac{dt}{\epsilon}\Big)^2dx}^{1/2}
    \Big\}\\
    =: \tilde{c}\abs{Q}^{1/2}\br{(II_{21}^a)^{1/2}+(II_{22}^a)^{1/2}+(II_{23}^a)^{1/2}}.
\end{multline}
Observing that 
$$\int_0^{4l(Q)}\1_{E_1}(x,t)\frac{dt}{t}\le\int_{\frac{8\delta(x)}{\eta}}^{\frac{16\delta(x)}{\eta}}\frac{dt}{t}=\ln{2}$$ 
and  using the Cauchy-Schwarz inequality, we show that
\begin{align*}
    II_{21}^a&\le\int_{2Q}\br{\int_0^{4l(Q)}\abs{\nabla_x\theta_{\eta t}}^2\1_{E_1}\frac{dt}{t}}\br{\int_0^{4l(Q)}\1_{E_1}\frac{dt}{t}}dx\\
    &\le c\sum_{k}\sum_{Q'\in\mathcal{D}^{\eta}_k}\int_{Q'}\int_{2^{-k}}^{2^{-k+1}}\abs{\nabla_x\theta_{\eta t}}^2\1_{E_1}\frac{dt}{t}dx,
\end{align*}
where $\mathcal{D}^{\eta}_k$ denotes the grid of dyadic cubes such that
\begin{equation}\label{dyadicgrid}
    \frac{1}{64}\eta 2^{-k}\le l(Q')<\frac{1}{32}\eta 2^{-k}, \quad Q'\in\mathcal{D}^{\eta}_k.
\end{equation}

Consider for any fixed $k$ and $Q'\in \mathcal{D}^{\eta}_k$, for which $Q'\times [2^{-k},2^{-k+1}]\cap E_1\neq\emptyset$. One can show that for such $Q'$, there exists some $x_0\in F$ such that 
\begin{equation}\label{Q'subset}
    2Q'\subset B(x_0,\eta 2^{-k}).
\end{equation}
This implies that for any $t\in[2^{-k},2^{-k+1}]$,
\begin{align}
    \fint_{Q'}\abs{\nabla_x\theta_{\eta t}}^2dx&\lesssim_n \fint_{B(x_0,\eta 2^{-k})}\abs{\nabla_x\mP^*_{\eta t}\vp(x)}^2dx+\fint_{B(x_0,\eta 2^{-k})}\abs{\nabla\vp(x)}^2dx\nonumber\\
    &\lesssim_n\fint_{B(x_0,\eta t)}\abs{\nabla_x\mP^*_{\eta t}\vp(x)}^2dx+\fint_{B(x_0,\eta t)}\abs{\nabla\vp(x)}^2dx\nonumber\\
    &\lesssim_n\br{\wt{N}^{\eta}(\nabla_x\mP^*_{\eta t}\vp)}^2(x_0)+M\br{\abs{\nabla_x\vp}^2}(x_0)\lesssim\kappa_0^2\label{II21(1)},
\end{align}
by definition of the integrated non-tangential maximal function \eqref{integratedNT defn} and the definition of the set $F$.

By \eqref{dyadicgrid} and the definition of $E_1$, one can show there exists some uniform constant $C>1$ such that
\begin{equation*}
    Q'\times [2^{-k},2^{-k+1}]\subset \wt{E}_1:= \set{(y,s)\in 2Q\times(0,4l(Q)):\frac{\eta s}{C}\le \delta(y)\le C\eta s},
\end{equation*}
which implies
\begin{equation}\label{II21(2)}
    \abs{Q'}\lesssim\int_{Q'}\int_{2^{-k}}^{2^{-k+1}}\1_{\wt{E}_1}(y,s)\frac{ds}{s}dy.
\end{equation}
Using \eqref{II21(1)} and \eqref{II21(2)}, we estimate $II_{21}^a$ as follows:
\begin{multline*}
    II_{21}^a\le c\sum_{k}\sum_{Q'\in\mathcal{D}^{\eta}_k}\int_{2^{-k}}^{2^{-k+1}}\fint_{Q'}\abs{\nabla_x \theta_{\eta t}}^2dx\abs{Q'}\frac{dt}{t}\\
    \le c\kappa_0^2\sum_{k}\sum_{Q'\in\mathcal{D}^{\eta}_k}\br{\int_{2^{-k}}^{2^{-k+1}}\frac{dt}{t}}\int_{Q'}\int_{2^{-k}}^{2^{-k+1}}\1_{\wt{E}_1}(y,s)\frac{ds}{s}dy\\
    \le c\iint_{\Rpl}\1_{\wt{E}_1}(y,s)\frac{dsdy}{s}
    \le c\int_{2Q}\int_{\frac{\delta(y)}{C\eta}}^{\frac{C\delta(y)}{\eta}}\frac{ds}{s}dy\le c\abs{Q}.
\end{multline*}

For $II_{22}^a$, notice that 
\begin{multline}\label{eq II22a}
    II_{22}^a=\int_{2Q}\br{\int_{2l(Q)}^{4l(Q)}\abs{\nabla_x\theta_{\eta t}}\1_{E_2}\frac{dt}{l(Q)}}^2dx\\
        \le \frac{4}{l(Q)}\int_{2l(Q)}^{4l(Q)}\int_{2Q}\abs{\nabla_x\mP^*_{\eta t}\vp(x)}^2\1_{E_2}dxdt\\
    +\frac{4}{l(Q)}\int_{2l(Q)}^{4l(Q)}\int_{2Q}\abs{\nabla_x\vp(x)}^2\1_{E_2}dxdt.
\end{multline}
By the definition of $E_2$, one has $\delta(x)\le\frac{\eta}{2}l(Q)$ for any $(x,t)\in E_2$. Denote by $\pi_{E_2}$ the projection of $E_2$ onto $\set{t=0}$, then $\pi_{E_2}$ can be covered by balls $B(x_i,2\eta l(Q))$ with $x_i\in F$, and the number $N$ of these balls can be bounded by $c_n\eta^{-n}$, where $c_n$ is a constant depending only on the dimension. So the first term on the right-hand side of \eqref{eq II22a} is bounded by 
\begin{multline*}
    c\eta^nl(Q)^{n-1}\sum_{i=1}^N\int_{2l(Q)}^{4l(Q)}\fint_{B(x_i,2\eta l(Q))}\abs{\nabla_x\mP^*_{\eta t}\vp}^2dxdt\\
    \le  \frac{c\eta^n\abs{Q}}{l(Q)}\sum_{i=1}^N\int_{2l(Q)}^{4l(Q)}\fint_{B(x_i,2\eta l(Q))}\wt{N}^{\eta}(\nabla_x\mP^*_{\eta t}\vp)^2(x_i)^2dt\le c\kappa_0^2\abs{Q},
\end{multline*}
using the definition of $\wt{N}^{\eta}$, the definition of the set $F$, and $N\le c\eta^{-n}$. For the second term on the right-hand side of \eqref{eq II22a}, notice that $\pi_{E_2}\subset B(x_0,2l(Q))$ for any $x_0\in F$. Then the second term is bounded by
\begin{multline*}
    \frac{c\abs{Q}}{l(Q)}\int_{2l(Q)}^{4l(Q)}\fint_{B(x_0,2l(Q))}\abs{\nabla\vp(x)}^2dxdt
    \le \frac{c\abs{Q}}{l(Q)}\int_{2l(Q)}^{4l(Q)}M\br{\abs{\nabla\vp}^2}(x_0)dt\\
    \le c\kappa_0^2\abs{Q},
\end{multline*}
using again the definition of the set $F$. Combining these two estimates with \eqref{eq II22a} we obtain the bound $II_{22}^a\le c\abs{Q}$.

By a similar argument, one can show that $II_{23}^a\le\tilde{c}\abs{Q}$ as well. Combining these results with \eqref{II_2^a}, we have shown that $\abs{II_{2}^a}\le\tilde{c}\abs{Q}$, and thus $\abs{II_2}\le\tilde{c}\abs{Q}$.

We now deal with $II_1$. Write
\begin{align*}
   II_1&=\iint_{\Rpl}\br{A_{||}^*-(A^{*a}_{||})_{2Q}}\nabla_x(\theta_{\eta t}u\Psi^2)\cdot\nabla_x u dxdt  \\
  &\quad-\iint_{\Rpl}\br{A_{||}^*-(A^{*a}_{||})_{2Q}}\nabla_x u\cdot\nabla_x u\,(\theta_{\eta t}\Psi^2) dxdt\\
  &\quad-\iint_{\Rpl}\br{A_{||}^*-(A^{*a}_{||})_{2Q}}\nabla_x(\Psi^2)\cdot\nabla_x u\,(u\theta_{\eta t}) dxdt\\
  &=: II_{11}+II_{12}+II_{13}.
\end{align*}
We use Lemma \ref{theta_sawtooth1 Lem} to bound $II_{12}$ and $II_{13}$. We rewrite Lemma \ref{theta_sawtooth1 Lem} in the following way 
\begin{equation}\label{theta_sawtooth1 suppPsi}
    \abs{\theta_{\eta t}(x)}\lesssim\kappa_0\eta t \quad\text{for } (x,t)\in\supp\Psi. 
\end{equation}
Note that by anti-symmetry,
\[
II_{12}=-\iint_{\Rpl}A_{||}^{*s}\nabla_x u\cdot\nabla_x u\,(\theta_{\eta t}\Psi^2) dxdt,
\]
and thus
\begin{align*}
    \abs{II_{12}}\le c\eta\iint_{\Rpl}\abs{\nabla u}^2\Psi^2 t\,dxdt\le c\eta J.
\end{align*}
For $II_{13}$, we have
\[
\abs{II_{13}}\le c\kappa_0\eta \iint_{\Rpl}\abs{A_{||}^*-(A^{*a}_{||})_{2Q}}\abs{\nabla_x(\Psi^2)}\abs{\nabla_x u}t\, dxdt,
\]
which is bounded by $\sigma J+\tilde{c}\abs{Q}$ by the same reasoning for the term $J_{11}$.

For $II_{11}$, observe first that 
\[
II_{11}=\iint_{\Rpl}A_{||}^*\nabla_x(\theta_{\eta t}u\Psi^2)\cdot\nabla_x u dxdt
=\iint_{\Rpl}A_{||}\nabla_x u \cdot\nabla_x(\theta_{\eta t}u\Psi^2)dxdt.
\]

Taking $\theta_{\eta t}u\Psi^2$ as a test function (this is admissible due to the smoothness assumption) in the equation $L_0u=0$ in $\Rpl$, one gets
\begin{align*}
    0&=\iint_{\Rpl}A_0\nabla u\cdot\nabla(\theta_{\eta t}u\Psi^2)dxdt\\
    &=\iint_{\Rpl}A_{||}\nabla_x u\cdot\nabla_x(\theta_{\eta t}u\Psi^2)
    +\iint_{\Rpl}\br{\bd b-(\bd b^a)_{2Q}}\cdot\nabla_x(\theta_{\eta t}u\Psi^2)\Dt u\\
    &\quad +\iint_{\Rpl}\br{\bd c-(\bd c^a)_{2Q}}\cdot\nabla_x u\,\Dt(\theta_{\eta t}u\Psi^2)
    +\iint_{\Rpl}d\,\Dt u\,\Dt(\theta_{\eta t}u\Psi^2).
\end{align*}
So we have
\begin{align*}
     II_{11}&=-\iint_{\Rpl}\br{\bd b-(\bd b^a)_{2Q}}\cdot\nabla_x(\theta_{\eta t}u\Psi^2)\Dt u\\
    &\quad-\iint_{\Rpl}\br{\bd c-(\bd c^a)_{2Q}}\cdot\nabla_x u\,\Dt(\theta_{\eta t}u\Psi^2)
    -\iint_{\Rpl}d\,\Dt u\,\Dt(\theta_{\eta t}u\Psi^2)\\
    &=: II_{111}+II_{112}+II_{113}.
\end{align*}

We treat $II_{113}$ first. Write
\begin{multline*}
    II_{113}=-\iint_{\Rpl}d\,\Dt u\,\Dt\theta_{\eta t}(u\Psi^2)-\iint_{\Rpl}d\,\Dt u\,\Dt u(\theta_{\eta t}\Psi^2)\\
    -\iint_{\Rpl}d\,\Dt u\,\Dt(\Psi^2)\theta_{\eta t}u
    =: II_{1131}+II_{1132}+II_{1133}.
\end{multline*}
Note that $\Dt\theta_{\eta t}=-\Dt\mP^*_{\eta t}\vp$. So $II_{1131}=\iint_{\Rpl}d\,\Dt u\,\Dt\mP^*_{\eta t}\vp(u\Psi^2)$. We first use Cauchy-Schwartz and then Proposition \ref{Lp_G1Prop} to get
\begin{align*}
    \abs{II_{1131}}&\le c\br{\iint_{\Rpl}\abs{\Dt u}^2\Psi^2t\,dxdt}^{1/2}\br{\iint_{\Rpl}\abs{\Dt\mP^*_{\eta t}\vp}^2\frac{dt}{t}dx}^{1/2}\\
    &\le \tilde cJ^{1/2}\norm{\nabla\vp}_{L^2(\Rn)}\le \sigma J+\tilde{c}\abs{Q}.
\end{align*}
For $II_{1132}$, we use \eqref{theta_sawtooth1 suppPsi} to get
\[
\abs{II_{1132}}\le c\kappa_0\eta\iint_{\Rpl}\abs{\nabla u}^2\Psi^2t\,dxdt\le c\eta J.
\]
By \eqref{theta_sawtooth1 suppPsi}, Young's inequality and Lemma \ref{cutoff est lem}, 
\begin{align*}
    \abs{II_{1133}}\le c\kappa_0\eta\iint_{\Rpl}\abs{\Dt u}\abs{\Dt\Psi}\Psi tdxdt\le \sigma J+\tilde{c}\abs{Q}.
\end{align*}

We now treat $II_{112}$. Write
\begin{align*}
    II_{112}&=-\iint_{\Rpl}\br{\bd c-(\bd c^a)_{2Q}}\cdot\nabla_x u\,\Dt u\,(\theta_{\eta t}\Psi^2)dxdt\\
    &\quad+\iint_{\Rpl}\br{\bd c-(\bd c^a)_{2Q}}\cdot\nabla_x u\,\Dt \mP^*_{\eta t}\vp(u\Psi^2)dxdt\\
    &\quad-2\iint_{\Rpl}\br{\bd c-(\bd c^a)_{2Q}}\cdot\nabla_x u\,\Dt\Psi(\theta_{\eta t}u\Psi)dxdt\\
    &=: II_{1121}+II_{1122}+II_{1123}.
\end{align*}
For $II_{1122}$, we only focus on the anti-symmetric part, namely, the integral involving $\bd c^a-(\bd c^a)_{2Q}$ (denoted by $I_{1122}^a$), for the integral involving $\bd c^s$ is easier to estimate. We have
\begin{multline*}
    \abs{II_{1122}^a}\le\br{\int_{2Q}\abs{\bd c-(\bd c^a)_{2Q}}^{\alpha'}}^{\frac1{\alpha'}}
    \br{\int_{2Q}\Big(\int_0^{4l(Q)}\abs{\nabla_x u}\abs{\Dt\mP^*_{\eta t}\vp}\Psi^2dt\Big)^{\alpha}dx}^{\frac1{\alpha}}\\
    \le c\abs{Q}^{1/{\alpha'}}\br{\int_{\Rn}\br{\int_0^{\infty}\abs{\nabla u}\Psi^2t\,dt}^{\alpha/2}\br{\int_0^{\infty}\abs{\Dt\mP^*_{\eta t}\vp}^2\frac{dt}{t}}^{\alpha/2}dx}^{1/{\alpha}}\\
    \le \abs{Q}^{1/{\alpha'}}J^{1/2}\br{\int_{\Rn}\br{\int_0^{\infty}\abs{\Dt\mP^*_{\eta t}\vp}^2\frac{dt}{t}}^{\frac{\alpha}{2-\alpha}}dx}^{\frac{2-\alpha}{2\alpha}}.
\end{multline*}
Choosing $\alpha$ so that $\frac{\alpha}{2-\alpha}=\frac{2+\epsilon_0}{2}$ and applying Proposition \ref{Lp_G1Prop} with $p=\frac{2\alpha}{2-\alpha}=2+\epsilon_0$, as well as \eqref{nabla vp_Lp}, we get
\begin{align*}
    \abs{II_{1122}^a}\le c\eta\abs{Q}^{1/{\alpha'}} J^{1/2}\norm{\nabla\vp}_{L^{\frac{2\alpha}{2-\alpha}}(\Rn)}\le c\eta J^{1/2}\abs{Q}^{1/2}\le\sigma J+\tilde{c}\abs{Q}.
\end{align*}
Using the bound \eqref{theta_sawtooth1 suppPsi}, $II_{1123}$ can be estimated like $II_{13}$, and hence bounded by $\sigma J+\tilde{c}\abs{Q}$. 

For $II_{1121}$, we write
\begin{align}
    II_{1121}&=-\iint_{\Rpl}\bd c^s\cdot\nabla_x u\,\Dt u\,\theta_{\eta t}\Psi^2
   -\iint_{\Rpl}\br{\bd c^a-(\bd c^a)_{2Q}}\cdot\nabla_x u\,\Dt u\,\theta_{\eta t}\Psi^2\nonumber\\
   &=-\iint_{\Rpl}\bd c^s\cdot\nabla_x u\,\Dt u\,\theta_{\eta t}\Psi^2
   +\iint_{\Rpl}\br{\bd b^a-(\bd b^a)_{2Q}}\cdot\nabla_x u\,\Dt u\,\theta_{\eta t}\Psi^2.\label{II 1121}
\end{align}
The first term in \eqref{II 1121} can be estimated as $II_{1132}$. We leave the second term aside for now.

We write $II_{111}$ as follows
\begin{align*}
    II_{111}&=-\iint_{\Rpl}\br{\bd b-(\bd b^a)_{2Q}}\cdot\nabla_x\theta_{\eta t}(u\Psi^2\Dt u)dxdt\\
    &\quad-\iint_{\Rpl}\br{\bd b-(\bd b^a)_{2Q}}\cdot\nabla_xu\,(\theta_{\eta t}\Psi^2\Dt u)dxdt\\
    &\quad-2\iint_{\Rpl}\br{\bd b-(\bd b^a)_{2Q}}\cdot\nabla_x\Psi(\theta_{\eta t}u\Psi\Dt u)dxdt\\
    &=: II_{1111}+II_{1112}+II_{1113}.
\end{align*}
The term $\abs{II_{1113}}$ can be estimated like $II_{1123}$, and hence bounded by $\sigma J+\tilde{c}\abs{Q}$. For $II_{1112}$, we write
\begin{equation}\label{II 1112}
    II_{1112}=-\iint_{\Rpl}\bd b^s\cdot\nabla_xu\,(\theta_{\eta t}\Psi^2\Dt u)
    -\iint_{\Rpl}\br{\bd b^a-(\bd b^a)_{2Q}}\cdot\nabla_xu\,(\theta_{\eta t}\Psi^2\Dt u)
\end{equation}
The first term can be estimated as the first term in \eqref{II 1121}. And the second term in \eqref{II 1112} cancels the second term in \eqref{II 1121}. 

It remains to estimate $II_{1111}$. Integration by parts in $t$ gives
\begin{multline*}
    2II_{1111}=\iint_{\Rpl}\br{\bd b-(\bd b^a)_{2Q}}\cdot\Dt(\nabla_x\theta_{\eta t})u^2\Psi^2\\
    +\iint_{\Rpl}\br{\bd b-(\bd b^a)_{2Q}}\cdot\nabla_x\theta_{\eta t}\Dt(\Psi^2)u^2\\
    =\iint_{\Rpl}\br{\bd b-(\bd b^a)_{2Q}}\cdot\nabla_x(\Dt\mP^*_{\eta t}\vp\Psi^2 u^2)\\
    -\iint_{\Rpl}\br{\bd b-(\bd b^a)_{2Q}}\cdot\nabla_x(\Psi^2u^2)\Dt\mP^*_{\eta t}\vp\\
    +\iint_{\Rpl}\br{\bd b-(\bd b^a)_{2Q}}\cdot\nabla_x\theta_{\eta t}\Dt(\Psi^2)u^2
    =: III_1+III_2+III_3.
\end{multline*}
For $III_2$, we write
\begin{multline*}
    III_2=-2\iint_{\Rpl}\br{\bd b-(\bd b^a)_{2Q}}\cdot\nabla_x u\Dt\mP^*_{\eta t}\vp(u\Psi^2)\\
    -2\iint_{\Rpl}\br{\bd b-(\bd b^a)_{2Q}}\cdot\nabla_x \Psi\Dt\mP^*_{\eta t}\vp(u^2\Psi).
\end{multline*}
The first term on the right-hand side can be estimated as $II_{1122}$. The second term can be estimated using $\norm{u}_{L^\infty}\le1$, H\"older's inequality, Lemma \ref{cutoff est lem} and \eqref{Lp_G1Dt}. Together, one obtains $\abs{III_2}\le\sigma J+\tilde{c}\abs{Q}$. Finally, $III_3$ can be estimated as $II_2$, and thus $\abs{III_3}\le\tilde{c}\abs{Q}$.

For $III_1$, note that it is similar to $J_{211}$ except that it has an extra $\Dt\mP^*_{\eta t}\vp$. It turns out that this term will do our favor. We proceed like $J_{211}$ by recalling that $\divg_x\br{\bd b-(\bd b^a)_{2Q}}=\divg_x A_{||}\nabla_x\wvp=-L_{||}\wvp$ (see \eqref{bHodge}). So we have
\begin{align*}
    III_1=\iint_{\Rpl}A_{||}\nabla_x\wvp\cdot\nabla_x(\Dt\mP^*_{\eta t}\vp\Psi^2 u^2).
\end{align*}
Writing $\wvp=\wt{\theta}_{\eta t}+\mP_{\eta t}\wvp$, we get
\begin{multline*}
    III_1
    =\iint_{\Rpl}A_{||}\nabla_x\wt{\theta}_{\eta t}\cdot\nabla_x\br{\Dt\mP^*_{\eta t}\vp(\Psi^2 u^2)}\\
    +\iint_{\Rpl}A_{||}\nabla_x\mP_{\eta t}\wvp\cdot\nabla_x\br{\Dt\mP^*_{\eta t}\vp(\Psi^2 u^2)}\\
    =\iint_{\Rpl}\br{A_{||}-(A_{||}^a)_{2Q}}\nabla_x\wt{\theta}_{\eta t}\cdot\nabla_x\br{\Dt\mP^*_{\eta t}\vp(\Psi^2 u^2)}\\
    +\iint_{\Rpl}L_{||}\mP_{\eta t}\wvp\,\Dt\mP^*_{\eta t}\vp(\Psi^2 u^2)
    =: III_{11}+III_{12},
\end{multline*}
where in the second equality we have used the smoothness assumption to obtain
\[
\iint_{\Rpl}(A_{||}^a)_{2Q}\nabla_x\wt{\theta}_{\eta t}\cdot\nabla_x\br{\Dt\mP^*_{\eta t}\vp(\Psi^2 u^2)}=0.
\]

For $III_{12}$, the Cauchy-Schwartz inequality gives
\begin{align*}
    \abs{III_{12}}\le c\br{\iint_{\Rpl}t\abs{L_{||}\mP_{\eta t}\wvp}^2dxdt}^{1/2}\br{\iint_{\Rpl}\abs{\Dt\mP^*_{\eta t}\vp}^2\frac{dxdt}{t}}^{1/2}.
\end{align*}
So by Proposition \ref{Lp_G1Prop}, $\abs{III_{12}}\le\tilde{c}\abs{Q}$.

For $III_{11}$, we write
\begin{align*}
    III_{11}&=\iint_{\Rpl}\br{A_{||}-(A_{||}^a)_{2Q}}\nabla_x\wt{\theta}_{\eta t}\cdot\nabla_x(u^2)\Dt\mP^*_{\eta t}\vp\,\Psi^2\\
    &\quad+\iint_{\Rpl}\br{A_{||}-(A_{||}^a)_{2Q}}\nabla_x\wt{\theta}_{\eta t}\cdot\nabla_x(\Psi^2) \Dt\mP^*_{\eta t}\vp\, u^2\\
    &\quad+\iint_{\Rpl}\br{A_{||}-(A_{||}^a)_{2Q}}\nabla_x\wt{\theta}_{\eta t}\cdot\nabla_x \Dt\mP^*_{\eta t}\vp (\Psi^2u^2)\\
    &=: III_{111}+III_{112}+III_{113}.
\end{align*}

Since $N^{\eta}\br{\Dt\mP^*_{\eta t}\vp}(x)\le c\kappa_0\eta$ for any $x\in F$ by the construction of $F$, $\abs{\Dt\mP^*_{\eta t}\vp}\le c\kappa_0\eta$ on the support of $\Psi$. Therefore, $III_{112}$ can be estimated like the term $II_2$ and thus $\abs{III_{112}}\le\tilde{c}\abs{Q}$. 

For $III_{113}$, note that Proposition \ref{Lp_G2t prop} implies
\begin{equation}\label{III 113}
    \iint_{\Rpl}\abs{t^2L_{||}^*\Dt\mP^*_{\eta t}\vp}^2\frac{dxdt}{t}\le c\eta^{-2}\abs{Q}.
\end{equation}
We write
\begin{multline*}
    III_{113}=\iint_{\Rpl}\nabla_x(\wt{\theta}_{\eta t} u^2\Psi^2)\cdot A^*_{||}\nabla_x \Dt\mP^*_{\eta t}\vp\\
    -\iint_{\Rpl}\wt{\theta}_{\eta t}\nabla_x(u^2\Psi^2)\cdot\br{A^*_{||}-(A_{||}^{a*})_{2Q}}\nabla_x \Dt\mP^*_{\eta t}\vp\\
    =\iint_{\Rpl}\wt{\theta}_{\eta t} u^2\Psi^2 L^*_{||}\nabla_x \Dt\mP^*_{\eta t}\vp\\
    -\iint_{\Rpl}\wt{\theta}_{\eta t}\nabla_x(u^2\Psi^2)\cdot\br{A^*_{||}-(A_{||}^{a*})_{2Q}}\nabla_x \Dt\mP^*_{\eta t}\vp
    =: III_{1131}+III_{1132}.
\end{multline*}
By the Cauchy-Schwartz inequality, Lemma \ref{theta_integral lem} and \eqref{III 113},
\begin{align*}
    \abs{III_{1131}}&\le c\br{\iint_{\Rpl}\abs{\wt{\theta}_{\eta t}}^2\frac{dxdt}{t^3}}^{1/2}\br{\iint_{\Rpl}\abs{t^2L_{||}^*\Dt\mP^*_{\eta t}\vp}^2\frac{dxdt}{t}}^{1/2}\\
    &\le c\abs{Q}.
\end{align*}
By \eqref{theta_sawtooth1 suppPsi}, $\abs{III_{1132}}$ is bounded by  
\[
c\kappa_0\eta\iint_{\Rpl}\abs{A^*_{||}-(A_{||}^{a*})_{2Q}}\abs{\nabla_x(u^2\Psi^2)}\abs{\nabla_x \Dt\mP^*_{\eta t}\vp}t\,dxdt,
\]
which is bounded by $\sigma J+\tilde{c}\abs{Q}$ using the same method of estimating $I_{12}$.

Now it remains to estimate $III_{111}$. Note that the integration is over the support of $\Psi$ instead of support of $\nabla\Psi$, so we cannot use the same method as estimating $II_2$. Like before, we only deal with the term involving $A_{||}^a-(A_{||}^a)_{2Q}$, as the term with the symmetric matrix $A_{||}^s$ is easier to estimate. We have
\begin{multline}\label{III 11}
    \abs{III^a_{11}}=\abs{\iint_{\Rpl}\br{A^a_{||}-(A_{||}^a)_{2Q}}\nabla_x\wt{\theta}_{\eta t}\cdot\nabla_x(u^2)\Dt\mP^*_{\eta t}\vp\,\Psi^2}\\
    \le c\abs{Q}^{\frac1{\alpha'}}\br{\int_{2Q}\br{\int_0^{4l(Q)}\abs{\nabla_x\wt{\theta}_{\eta t}}\abs{\nabla_xu}\Psi^2\abs{\Dt\mP^*_{\eta t}\vp}dt}^{\alpha}dx}^{\frac1{\alpha}}\\
    \le c\abs{Q}^{\frac1{\alpha'}}\br{\sigma J+\tilde c\int_{\Rn}\br{\int_0^{\infty}\abs{\nabla_x\wt{\theta}_{\eta t}}^2\abs{\Dt\mP^*_{\eta t}\vp}^2\1_{\supp\Psi}\frac{dt}{t}}^{\frac{\alpha}{2-\alpha}}dx}^{\frac1{\alpha}}.
\end{multline}
We write
\begin{align*}
    &\int_{\Rn}\br{\int_0^{\infty}\abs{\nabla_x\wt{\theta}_{\eta t}}^2\abs{\Dt\mP^*_{\eta t}\vp}^2\1_{\supp\Psi}\frac{dt}{t}}^{\frac{\alpha}{2-\alpha}}dx\\
    &\qquad=\sup_{\substack{\xi\in\mathscr{S}(\Rn)\\\norm{\xi}^{\frac{\alpha}{2\alpha-2}}\le1}}
    \abs{\iint_{\Rpl}\abs{\nabla_x\wt{\theta}_{\eta t}}^2\abs{\Dt\mP^*_{\eta t}\vp}^2\xi(x)\1_{\supp\Psi}\frac{dxdt}{t}}^{\frac{\alpha}{2-\alpha}}.
\end{align*}
As before, let $\mathcal{D}^{\eta}_k$ be the grid of dyadic cubes such that \eqref{dyadicgrid} holds. Then 
\begin{align}
    &\iint_{\Rpl}\abs{\nabla_x\wt{\theta}_{\eta t}}^2\abs{\Dt\mP^*_{\eta t}\vp}^2\xi(x)\1_{\supp\Psi}\frac{dxdt}{t}\nonumber\\
    &\qquad=\sum_k\sum_{Q'\in\mathcal{D}^{\eta}_k}\int_{Q'}\int_{2^{-k}}^{2^{-k+1}}\abs{\nabla_x\wt{\theta}_{\eta t}}^2\abs{\Dt\mP^*_{\eta t}\vp}^2\xi(x)\1_{\supp\Psi}\frac{dtdx}{t}.\label{III 11'}
\end{align}

By Corollary \ref{sup dtw cor}, we bound \eqref{III 11'} by
\begin{multline}\label{III 11''}
    c\eta\sum_k\sum_{Q'\in\mathcal{D}^{\eta}_k}\br{\int_{2Q'}\int_{2^{-k-1}}^{2^{-k+1}}\abs{\Dt\mP^*_{\eta t}\vp}^2\frac{dydt}{t}}\\
    \times\int_{2^{-k}}^{2^{-k+1}}\frac{1}{\abs{Q'}}\int_{Q'}\abs{\nabla_x\wt{\theta}_{\eta t}}^2\abs{\xi(x)}\1_{\supp\Psi}\frac{dtdx}{t}. 
\end{multline}
We now estimate the integral  in the second line of \eqref{III 11''}. 

Let $r=1+\epsilon$ with $\epsilon>0$ sufficiently small. We use H\"older's inequality, then definition of $\wt{\theta}_{\eta t}$, and Corollary \ref{nablaw RHp cor} as well as the reverse H\"older estimates for $\nabla\wvp$, to get
\begin{align*}
   &\int_{2^{-k}}^{2^{-k+1}}\frac{1}{\abs{Q'}}\int_{Q'}\abs{\nabla_x\wt{\theta}_{\eta t}}^2\abs{\xi(x)}\1_{\supp\Psi}\frac{dtdx}{t}\\
   &\le\int_{2^{-k}}^{2^{-k+1}}\br{\fint_{Q'}\abs{\nabla_x\wt{\theta}_{\eta t}}^{2r'}dx}^{1/{r'}}\br{\fint_{Q'}\abs{\xi(x)}^rdx}^{1/r}\1_{\supp\Psi}\frac{dt}{t}\\
   &\le\br{\fint_{Q'}\abs{\xi(x)}^rdx}^{1/r}\\
   &\qquad\times\int_{2^{-k}}^{2^{-k+1}}\Big\{\br{\fint_{Q'}\abs{\nabla_x\mP_{\eta t}\wvp}^{2r'}dx}^{1/{r'}}+\br{\fint_{Q'}\abs{\nabla\wvp}^{2r'}dx}^{1/{r'}}\Big\}\1_{\supp\Psi}\frac{dt}{t}\\
   &\le c\br{\fint_{Q'}\abs{\xi(x)}^rdx}^{1/r}
   \int_{\frac{1}{2^k}}^{\frac1{2^{k-1}}}\Big\{\fint_{2Q'}\abs{\nabla_x\mP_{\eta t}\wvp}^{2}dx+\eta^{-2}\br{\fint_{2Q'}\abs{\Dt\mP_{\eta t}\wvp}^{2r'}}^{\frac1{r'}}\\
   &\qquad\qquad+\fint_{2Q'}\abs{\nabla\wvp}^2\Big\}\1_{\supp\Psi}\frac{dt}{t}\\
   &\le c\br{\fint_{Q'}\abs{\xi(x)}^rdx}^{1/r}
   \int_{\frac{1}{2^k}}^{\frac1{2^{k-1}}}\Big\{\fint_{B(x_0,\eta 2^{-k})}\abs{\nabla_x\mP_{\eta t}\wvp}^{2}dx\\
   &\qquad+\eta^{-2}\br{\fint_{B(x_0,\eta 2^{-k})}\abs{\Dt\mP_{\eta t}\wvp}^{2r'}dx}^{1/{r'}}
   +\fint_{B(x_0,\eta 2^{-k})}\abs{\nabla\wvp}^2dx\Big\}\frac{\1_{\supp\Psi}dt}{t},
\end{align*}
where in the last inequality we have used \eqref{Q'subset}, with $x_0\in F$. Therefore, we can bound this by
\begin{multline*}
     c\br{\fint_{Q'}\abs{\xi(x)}^rdx}^{1/r}
   \int_{2^{-k}}^{2^{-k+1}}\wt{N}^{\eta}(\nabla_x\mP_{\eta t}\wvp)(x_0)^2
   +\eta^{-2}N^{\eta}(\Dt\mP_{\eta t}\wvp)(x_0)^2\\
   +M\br{\abs{\nabla\wvp}^2}(x_0)\,\frac{dt}{t}
   \le c\kappa_0^2\br{\fint_{Q'}\abs{\xi(x)}^rdx}^{1/r}.
\end{multline*}
So \eqref{III 11''} is bounded by
\begin{align*}
    & c\eta\sum_k\sum_{Q'\in\mathcal{D}^{\eta}_k}\br{\int_{2Q'}\int_{2^{-k-1}}^{2^{-k+1}}\abs{\Dt\mP^*_{\eta t}\vp}^2\frac{dydt}{t}}\br{\fint_{Q'}\abs{\xi(x)}^rdx}^{1/r}\\
    &\le c\eta\sum_k\sum_{Q'\in\mathcal{D}^{\eta}_k}\int_{2Q'}\br{M(\abs{\xi}^r)}^{1/r}(y)\int_{2^{-k-1}}^{2^{-k+1}}\abs{\Dt\mP^*_{\eta t}\vp(y)}^2\frac{dt}{t}dy\\
    &\le c\eta\int_{\Rn}M(\abs{\xi}^r)(y)^{1/r}\int_{0}^{\infty}\abs{\Dt\mP^*_{\eta t}\vp(y)}^2\frac{dt}{t}dy\\
    &\le c\eta\br{\int_{\Rn}M(\abs{\xi}^r)(y)^{q/r}dy}^{1/q}
    \br{\int_{\Rn}\br{\int_0^{\infty}\abs{\Dt\mP^*_{\eta t}\vp(y)}^2\frac{dt}{t}}^{q'}}^{1/{q'}}.
\end{align*}
Choosing $q=\frac{\alpha}{2\alpha-2}$, the above is bounded by
\begin{multline*}
     c\eta\br{\int_{\Rn}\abs{\xi}^{\frac{\alpha}{2\alpha-2}}}^{\frac{2\alpha-2}{\alpha}}
    \br{\int_{\Rn}\br{\int_0^{\infty}\abs{\Dt\mP^*_{\eta t}\vp(y)}^2\frac{dt}{t}}^{\frac{\alpha}{2-\alpha}}dy}^{\frac{2-\alpha}{\alpha}}\\
    \le\tilde{c}\norm{\xi}_{L^{\frac{\alpha}{2\alpha-2}}}\abs{Q}^{\frac{2-\alpha}{\alpha}},
\end{multline*}
where in the last step we have used Proposition \ref{Lp_G1Prop}.
Combining these estimates with \eqref{III 11}, we obtain
\[
\abs{III_{111}^a}\le c\abs{Q}^{1/{\alpha'}}(\sigma J+\tilde{c}\abs{Q})^{1/{\alpha}}\le \frac{\sigma}{2}J+\tilde{c}\abs{Q}.
\]
This finishes the proof of Lemma \ref{main lem}.

\section{Proof of Uniqueness and Theorem \ref{Fatou thm}}\label{uniqueness sec}
In this section, we prove the uniqueness part in the statement of Theorem \ref{main thm}. One can prove the uniqueness of $L^p$ Dirichlet problem in bounded domains as in \cite{kenig1994harmonic} Theorem 1.7.7. But that argument can not be modified to work for unbounded domains. We present here a different and simpler proof that works in a rather general setting.  

Recall that we have proved that for any cube $Q_0\subset\Rn$, $\omega^{X_{Q_0}}\in A_{\infty}(Q_0)$, which implies that there is some $q\in (1,\infty)$ such that the Radon-Nikodym derivative  $k(X_{Q_0},\cdot)$ satifies the reverse H\"older inequality \eqref{RHq}.  
We now show that we have the following non-tangential maximal function estimate:
\begin{lem}\label{uniquness mainlem}
Let $p\ge q'$, where $q$ is the exponent in the reverse H\"older inequality \eqref{RHq}. If $f\in L^p(\Rn,d\mu)$ and $u(X)=\int_{\Rn}f(y)k(X,y)d\mu(y)$, then 
\begin{equation}\label{ntmaxLq}
    \norm{Nu}_{L^p(\Rn,d\mu)}\lesssim\norm{f}_{L^p(\Rn,d\mu)}.
\end{equation} Moreover, $u$ converges non-tangentially $\mu$- a.e. to $f$.
\end{lem}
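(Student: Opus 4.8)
The plan is to prove the two assertions of Lemma~\ref{uniquness mainlem}---the non-tangential maximal function bound \eqref{ntmaxLq} and the non-tangential convergence $u\to f$---separately, reducing both to standard harmonic-analysis machinery once the reverse H\"older inequality \eqref{RHq} and the kernel estimate \eqref{k in Lq} are in hand. First I would establish \eqref{ntmaxLq}. Fix $x\in\Rn$ and a point $(y,t)\in\Gamma(x)$, so $|x-y|<t$. Since $u(y,t)=\int_{\Rn}f(z)k((y,t),z)\,d\mu(z)$, the goal is to dominate $|u(y,t)|$ by a suitable maximal operator applied to $f$ at $x$. The natural device is to decompose $\Rn$ into the ball $\Delta_0=\Delta(x,ct)$ and dyadic annuli $\Delta_j\setminus\Delta_{j-1}$ with $\Delta_j=\Delta(x,2^j c t)$, and on each piece use H\"older's inequality with exponents $q$ and $q'$ together with the reverse H\"older bound \eqref{RHq} for $k(X_{\Delta_j},\cdot)$ (after changing the pole via the kernel function $K$ and Harnack, as in the displayed identity $k(X,y)=K(X_0,X,y)k(X_0,y)$). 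The kernel function estimates---the same ones underlying \eqref{k in Lq}, namely $k((y,t),z)\lesssim \frac{1}{t^n}$ for $z$ near $x$ with Gaussian/polynomial decay in $|x-z|/t$, and the doubling of $\omega$---give $\big(\frac{1}{|\Delta_j|}\int_{\Delta_j}k((y,t),z)^q\,d\mu(z)\big)^{1/q}\lesssim 2^{-j\gamma}\, t^{-n}$ for some $\gamma>0$. Summing the annular contributions then yields the pointwise bound $N u(x)\lesssim M(|f|)(x)$ (or $\lesssim (M(|f|^{q'}))^{1/q'}(x)$ if one wants a cleaner argument; for $p\ge q'$ either suffices), and the Hardy--Littlewood maximal theorem gives $\|Nu\|_{L^p}\lesssim\|f\|_{L^p}$.

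For the non-tangential convergence, I would first prove it for a dense class and then pass to the limit using the maximal bound just obtained. A convenient dense class is $C_c(\Rn)\subset L^p(\Rn,d\mu)$: for $f\in C_c(\Rn)$, $u(X)=\int f\,d\omega^X$ is exactly the solution of the Dirichlet problem with continuous data constructed in Section~\ref{overview sec}, hence $u\in W^{1,2}_{\loc}(\Rpl)\cap C(\overline{\Rpl})$ with $u\to f$ pointwise, a fortiori non-tangentially, $\mu$-a.e. For general $f\in L^p$, write $f=g+h$ with $g\in C_c(\Rn)$ and $\|h\|_{L^p}<\eps$, let $v(X)=\int g\,d\omega^X$ and $w(X)=\int h\,d\omega^X=u-v$. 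Then $\limsup_{(y,t)\to(x,0),\,(y,t)\in\Gamma(x)}|u(y,t)-f(x)|\le \limsup|v(y,t)-g(x)|+N w(x)+|h(x)| = Nw(x)+|h(x)|$ for $\mu$-a.e.\ $x$. The set where this exceeds $\lambda$ has measure at most $\mu\{Nw>\lambda/2\}+\mu\{|h|>\lambda/2\}\lesssim \lambda^{-p}\|h\|_{L^p}^p\lesssim \lambda^{-p}\eps^p$ by \eqref{ntmaxLq} and Chebyshev. Letting $\eps\to0$ and then $\lambda\to0$ through a countable sequence shows the non-tangential limit equals $f$ $\mu$-a.e.

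The main obstacle is the first step: proving the pointwise domination $Nu\lesssim M f$, which requires careful bookkeeping of the kernel function estimates at the \emph{correct pole}. One must verify that, for $(y,t)\in\Gamma(x)$, comparing $\omega^{(y,t)}$ with $\omega^{X_{\Delta(x,2^jct)}}$ via Harnack chains and the change-of-pole formula $k((y,t),\cdot)=K(X_{\Delta},(y,t),\cdot)\,k(X_{\Delta},\cdot)$ costs only a bounded factor at scale $j=0$ and an acceptable (at worst polynomial, ideally exponentially small) factor $K(X_{\Delta_j},(y,t),z)$ on the $j$-th annulus, and that on each annulus the reverse H\"older inequality \eqref{RHq} may legitimately be invoked for $\Delta=\Delta_j\subset Q_0$ with some large fixed $Q_0$; here the scale-invariant, pole-independent form of the $A_\infty$ property recorded after the definition of $A_\infty$ (and in Theorem~\ref{main thm-carl}) is exactly what makes the estimate uniform. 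Once that decay is secured, the annular sum converges and the rest is routine. This argument is essentially the one in \cite{kenig1994harmonic} and \cite{hofmann2018carleson}, adapted to the present operators; I would cite those for the kernel estimates rather than reprove them, since \eqref{k in Lq} and the underlying bounds are already quoted in the excerpt.
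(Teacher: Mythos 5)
Your proposal takes essentially the same route as the paper. For \eqref{ntmaxLq}, the paper simply cites Lemma~5.32 of \cite{hofmann2018carleson}, noting that the argument there relies only on H\"older continuity of solutions, Harnack, and the comparison principle (not on the form of the equation), so it transfers verbatim; you sketch what is inside that black box (dyadic annular decomposition, change of pole, reverse H\"older for $k$), but ultimately also cite \cite{kenig1994harmonic}, \cite{hofmann2018carleson} for the details. For the non-tangential convergence, your density argument via $C_0(\Rn)$, the maximal function bound, and Chebyshev is the same as the paper's.

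Two small corrections to your sketch of the maximal-function bound. First, the H\"older/reverse-H\"older decomposition yields $Nu(x)\lesssim \bigl(M(|f|^{q'})(x)\bigr)^{1/q'}$, not $M(|f|)(x)$; the latter does not follow from pairing $k\in L^q$ against $f\in L^{q'}$, and is false in general (the kernel is not bounded). Second, the operator $g\mapsto \bigl(M(|g|^{q'})\bigr)^{1/q'}$ is bounded on $L^p$ only for $p>q'$, so at the endpoint $p=q'$ you only get weak type directly; to get the strong bound for all $p\ge q'$ as asserted, one needs the self-improving (open-ended) property of the reverse H\"older class, i.e.\ that \eqref{RHq} holds for some $q+\epsilon$ as well. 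Both points are standard and implicit in the references, but your parenthetical ``for $p\ge q'$ either suffices'' glosses over them.
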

\begin{proof}
We first note that \eqref{ntmaxLq} may be obtained as in the proof of  Lemma 5.32 in \cite{hofmann2018carleson}. Indeed, the argument in \cite{hofmann2018carleson} relies only on H\"older continuity of solutions, Harnack principle, and comparison
principle. The coefficients (in $\BMO$) do not affect the argument since the equation is not used explicitly. 
It therefore suffices to show that $u$ converges non-tangentially $\mu$-a.e. to $f$.

 For any $\epsilon>0$, choose $f_{\epsilon}\in C_0(\Rn)$ such that $\norm{f-f_\epsilon}_{L^p(\Rn,\mu)}<\epsilon$. Define $u_\epsilon(X)=\int_{\Rn}f_\epsilon(y)k(X,y)d\mu(y)$. Then $u_\epsilon\in C(\overline{\Rpl})$ and $u_\epsilon=f_\epsilon$ on $\Rn$. 
  We note that the latter fact may be gleaned from the analogous fact on bounded domains (see\cite{li2019boundary}), the construction at the beginning of Section 2 (applied with $u=u_\epsilon$), and an equicontinuity argument using \cite{li2019boundary} Lemma 3.9, and Lemma 4.5.
 So \[\lim_{\Gamma(x)\ni(y,t)\to(x,0)}u_\epsilon(y,t)=f_\epsilon(x)\qquad\forall\,x\in \Rn.
\]
Since we have the non-tangential convergence for a dense class, the non-tangential convergence of $u$ follows from \eqref{ntmaxLq} and a standard argument. In fact, we have

\[\limsup _{\Gamma(x) \ni(y, t) \rightarrow(x, 0)}|u(y, t)-f(x)| \leq\left|N\left(u-u_{\epsilon}\right)(x)\right|+\left|\left(f-f_{\epsilon}\right)(x)\right| \qquad\forall\,x\in\Rn.\]
For any $\lambda>0$, we apply Chebyshev's inequality and \eqref{ntmaxLq} to get
\begin{align*}
   &\mu\left(\left\{x \in \mathbb{R}^{n} : \limsup _{\Gamma(x) \ni(y, t) \rightarrow(x, 0)}|u(y, t)-f(x)|>\lambda\right\}\right)\\
    &\le \mu\left(\left\{x \in \mathbb{R}^{n} : N\left(u-u_{\epsilon}\right)(x)>\lambda / 2\right\}\right)+\mu\left(\left\{x \in \mathbb{R}^{n} :\left|\left(f-f_{\epsilon}\right)(x)\right|>\lambda / 2\right\}\right)\\
    &\lesssim \lambda^{-p}\br{\norm{N(u-u_\epsilon)}^p_{L^p(\Rn,d\mu)}+\norm{f-f_\epsilon}_{L^p(\Rn,d\mu)}^p}\\
    &\lesssim\lambda^{-p}\norm{f-f_\epsilon}_{L^p(\Rn,d\mu)}^p\lesssim\epsilon\lambda^{-p}.
\end{align*}
Since $\epsilon>0$ is arbitrary, it shows that $\lim _{\Gamma(x) \ni(y, t) \rightarrow(x, 0)} u(y, t)=f(x)$ for $\mu$- a.e. $x\in\Rn$. 
\end{proof}

The $L^p$ boundedness of the non-tangential maximal function implies certain decay properties. To be precise, we have the following
\begin{lem}\label{decay lem}
Let $u(x,t)$ be a function in $\Rpl$. If there exists some constant $C$ such that $\norm{Nu}_{L^p(\Rn)}<C$ for some $p>0$, then $u$ satisfies the following properties:
\begin{enumerate}
    \item\label{decay_t} \(\abs{u(x,t)}<C't^{-\frac{n}{p}}\)  for all $(x,t)\in\Rpl$, where the constant $C'$ only depends on $n$ and $C$.
    \item\label{decay_x} For any $\epsilon>0$, any $\delta>0$, there exists some $R_0=R_0(u,\epsilon,\delta)>1$ such that for all $\abs{x}\ge R_0$ and $t\ge \delta$, we have $\abs{u(x,t)}<\epsilon$.
\end{enumerate}
\end{lem}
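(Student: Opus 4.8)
The plan is to prove Lemma~\ref{decay lem} by exploiting the definition of the non-tangential maximal function as a pointwise supremum over cones, together with the finiteness of $\norm{Nu}_{L^p(\Rn)}$.

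\medskip

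\emph{Proof of \eqref{decay_t}.} First I would fix $(x_0,t_0)\in\Rpl$. The key observation is that for \emph{every} $y\in\Delta(x_0,t_0)=\set{y\in\Rn:\abs{y-x_0}<t_0}$, the point $(x_0,t_0)$ lies in the cone $\Gamma(y)$, since $\abs{y-x_0}<t_0$. Hence by the very definition \eqref{nt def} of $N$, we have $N(u)(y)\ge\abs{u(x_0,t_0)}$ for all such $y$. Therefore
\[
\abs{u(x_0,t_0)}^p\abs{\Delta(x_0,t_0)}\le\int_{\Delta(x_0,t_0)}\abs{N(u)(y)}^p\,d\mu(y)\le\norm{Nu}_{L^p(\Rn)}^p\le C^p.
\]
Since $\abs{\Delta(x_0,t_0)}=c_n t_0^n$ for a dimensional constant $c_n$, this yields $\abs{u(x_0,t_0)}\le C'(n,C)\,t_0^{-n/p}$, which is \eqref{decay_t}.

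\medskip

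\emph{Proof of \eqref{decay_x}.} The idea is the same, but now I would use the tail smallness of $\norm{Nu}_{L^p(\Rn)}$ rather than its total size. Since $Nu\in L^p(\Rn,d\mu)$, given $\epsilon>0$ and $\delta>0$, choose $\rho>0$ such that
\[
\int_{\Rn\setminus\Delta(0,\rho)}\abs{N(u)(y)}^p\,d\mu(y)<c_n\,\delta^n\,\epsilon^p,
\]
where $c_n$ is the constant with $\abs{\Delta(x,r)}=c_n r^n$; this is possible by absolute continuity of the integral (or dominated convergence). Now set $R_0:=\rho+1$ and let $\abs{x}\ge R_0$ and $t\ge\delta$. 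For every $y\in\Delta(x,t)$ we have $(x,t)\in\Gamma(y)$, hence $\abs{u(x,t)}\le N(u)(y)$; moreover $\abs{y}\ge\abs{x}-t$ is not automatically $\ge\rho$, so instead I restrict to $y$ in the smaller ball $\Delta(x,\delta)\subset\Delta(x,t)$, on which $\abs{y}\ge\abs{x}-\delta\ge R_0-\delta$. Choosing $R_0$ larger if necessary (say $R_0>\rho+\delta$) guarantees $\Delta(x,\delta)\subset\Rn\setminus\Delta(0,\rho)$, and then
\[
\abs{u(x,t)}^p\,c_n\delta^n=\abs{u(x,t)}^p\abs{\Delta(x,\delta)}\le\int_{\Delta(x,\delta)}\abs{N(u)(y)}^p\,d\mu(y)\le\int_{\Rn\setminus\Delta(0,\rho)}\abs{N(u)(y)}^p\,d\mu(y)<c_n\delta^n\epsilon^p,
\]
so $\abs{u(x,t)}<\epsilon$. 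This is \eqref{decay_x}.

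\medskip

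I do not expect a serious obstacle here: both parts are soft consequences of the cone structure and the $L^p$ bound, with no use of the equation. The only point requiring a little care is bookkeeping the radii in part~\eqref{decay_x} — making sure the small ball $\Delta(x,\delta)$ around the far-away point $x$ is genuinely disjoint from a fixed large ball $\Delta(0,\rho)$ where the tail of $Nu$ is controlled, which is arranged simply by taking $R_0$ large enough depending on $\rho$ and $\delta$. Note also that part~\eqref{decay_x} only needs $t\ge\delta$ to bound $\abs{\Delta(x,\delta)}$ from below by a positive constant; the largeness of $\abs{x}$ does all the real work.
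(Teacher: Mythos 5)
Your proof of part \eqref{decay_t} is identical to the paper's: both use that $(x,t)\in\Gamma(y)$ for every $y\in\Delta(x,t)$, then average $Nu^p$ over that ball. For part \eqref{decay_x} the underlying observation is the same — when $t\ge\delta$, the cone $\Gamma(y)$ contains $(x,t)$ for every $y\in\Delta(x,\delta)$, so $Nu\ge\abs{u(x,t)}$ on a ball of fixed volume $c_n\delta^n$ — but you run the argument directly, using absolute continuity of the integral to make the tail $\int_{\Rn\setminus\Delta(0,\rho)}\abs{Nu}^p$ small and then placing $\Delta(x,\delta)$ inside that tail region for $\abs{x}$ large. The paper instead argues by contradiction: it extracts a sequence $(x_k,t_k)$ violating the conclusion, passes to a subsequence with pairwise disjoint balls $\Delta(x_{k_j},\delta)$, and sums to contradict $Nu\in L^p$. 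Both are correct and of comparable length; your direct version has the mild advantage of producing an explicit (if non-quantitative) $R_0$ and avoiding a subsequence-extraction step, while the paper's contradiction avoids invoking absolute continuity of the integral by name. Your radius bookkeeping (taking $R_0>\rho+\delta$ so that $\Delta(x,\delta)\subset\Rn\setminus\Delta(0,\rho)$) is the only detail one needs to be careful about, and you handle it correctly.
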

\begin{proof}
To see \eqref{decay_t}, we observe for any fixed $(x,t)\in \Rpl$, for all $y\in \Delta(x,t)$, $(x,t)\in \Gamma(y)$. So we have
\begin{align*}
    \abs{u(x,t)}^p\le \frac{1}{\abs{\Delta(x,t)}}\int_{\Delta(x,t)}Nu(y)^pd\mu(y)\le C_n C^pt^{-n}.
\end{align*}
We prove \eqref{decay_x} by contradiction. If this is not true, then there exist some $\epsilon>0$ and $\delta>0$ such that for any $k\in\mathbb N$, we can find $\abs{x_k}\ge k$ and $t_k\ge\delta$, for which $\abs{u(x_k,t_k)}\ge\epsilon$. Since $t_k\ge\delta$, $(x_k,t_k)\in\Gamma(y)$ for all $y\in\Delta(x_k,\delta)$. This implies that 
\[
Nu(y)\ge u(x_k,t_k)\ge\epsilon \qquad\forall\, y\in\Delta(x_k,\delta).
\]
Choose a subsequence $x_{k_j}$ so that the collection of surface balls $\{\Delta(x_{k_j},\delta)\}$ is pairwise disjoint. Then
\[
C^p>\int_{\Rn}\abs{Nu(y)}^pd\mu(y)\ge\sum_{j=0}^{\infty}\int_{\Delta(x_{k_j},\delta)}\epsilon^pd\mu(y)=C_n\sum_{j=0}^{\infty}\epsilon^p\delta^n=\infty,
\]
which yields a contradiction. 
\end{proof}

We now prove the uniqueness of the $L^p$ Dirichlet problem.

\noindent {\it Proof of uniqueness.}  
Fix $q$ so that $k(X,\cdot)\in L^q(\Rn)$ for all $X\in \Real^{n+1}_+$ as in \eqref{RHq}, and let $p=\frac{q}{q-1}$. We show that if $u$ is a solution of $(D)_p$, that is,
\begin{equation*}
    \begin{cases}
    Lu=0 \quad\text{in } \Rpl,\\
    u\to f\in L^p(\Rn,d\mu) \text{ non-tangentially } \mu \text{-a.e. on } \Rn,\\
    Nu\in L^p(\Rn, d\mu),
  \end{cases}
\end{equation*} 
then 
\begin{equation}\label{u form}
    u(X)=\int_{\Rn}g(y)k(X,y)d\mu(y) \qquad\text{for some } g\in L^p(\Rn,d\mu).
\end{equation}
Then by Lemma \ref{uniquness mainlem}, $u$ converges non-tangentially $\mu$-a.e. to $g$. This implies that 
\(
u(X)=\int_{\Rn}f(y)k(X,y)d\mu(y)\), which proves that the solution is unique. We now show \eqref{u form}.

For any $m\in\mathbb N$, set $f_m(x):= u(x,\frac{1}{m})$. Note that by the interior estimates for  weak solutions, $f_m$ is continuous on $\Rn$. Moreover,
\begin{equation}\label{eq fm_Lp}
    \norm{f_m}_{L^p(\Rn)}\le\sup_{t>0}\norm{u(\cdot,t)}_{L^p(\Rn)}\le\norm{Nu}_{L^p}<\infty.
\end{equation}
Since $Nu\in L^p(\Rn)$, we can apply Lemma \ref{decay lem} (2) and get 
\begin{equation}\label{eq fm_limit}
    \lim_{\abs{x}\to\infty}f_m(x)=0, \qquad \norm{f_m}_{L^\infty(\Rn)}<\infty.
\end{equation}
We define \[u_m(x,t):=\int_{\Rn}f_m(y)k\br{(x,t),y}d\mu(y), \text{ and }\,\delta_m(x,t):= u(x,t+\frac{1}{m})-u_m(x,t).\]
 Since $f_m$ is continuous on $\Rn$ and satisfies \eqref{eq fm_limit}, from the definition of elliptic measures it follows that
\begin{equation}\label{eq um_Linfty}
    \norm{u_m}_{L^\infty(\Real^{n+1}_+)}\le\norm{f_m}_{L^\infty(\Rn)}.
\end{equation}
Moreover, we claim that $u_m$ is a solution to the continuous Dirichlet problem, with
data $f_m$; in particular, $u_m(x,0)=f_m(x)$ for all $x\in \Rn$. To see this, for $R>0$ and large, let
$\Phi_R$ be a smooth
cut-off function defined on $\Rn$, identically $1$ in $\Delta(0,R)$, supported in $\Delta(0,2R)$,
with $0\leq \Phi_R \leq 1$. Set $f_{m,R} := f_m \Phi_R$, and let $u_{m,R}$ be the elliptic measure
solution with data $f_{m,R}$. Then $u_{m,R}(\cdot,0) = f_{m,R}$ continuously, since the data
belongs to $C_0(\Rn)$. In particular, $u_{m,R}(x,0) =f_m(x)$ for all $|x|<R$.
Given $\epsilon>0$, we note that by Lemma \ref{decay lem} (2), $|f_m(x)-f_{m,R}(x)| \leq \epsilon$, for
all $x\in \Rn$, provided that $R$ is large enough, hence also $|u_{m,R}(x,t) - u_m(x,t)| \leq \epsilon$,
since elliptic measure has total mass $1$.  The claim now follows. 
This means that
\begin{equation}\label{deltam_btm}
    \delta_m(x,0)=0 \qquad\text{for all } x\in\Rn.
\end{equation}
Notice that $\delta_m$ is a solution to $Lv=0$ in $\Rpl$, which vanishes continuously on $\{t\equiv 0\}$.  We claim that $\delta_m \equiv 0$ in $\Rpl$.  To prove this claim, we observe that by the maximum principle, it suffices to show that
\[
\lim_{|x|+t\to \infty}\big|u\big(x,t+\frac1m\big)\big| + |u_m(x,t)|=0.
\]
For $u\big(x,t+\frac1m\big)$, this follows immediately from Lemma \ref{decay lem} and our assumption that $Nu\in L^p(\Rn)$. To see that decay to $0$ holds for $u_m$,
we define $f_{m,R}, u_{m,R}$ as above.
Given $\epsilon>0$, fix $R$ so that $\norm{f_m-f_{m,R}}_{L^\infty(\Rn)} <\epsilon$,  hence also $\norm{u_m -u_{m,R}}_{L^\infty(\Rpl)}<\epsilon$.
By H\"older continuity at the boundary, we may choose $\delta>0$ small enough
that for $|x|>3R$, and $t<\delta$,
we have
\[ |u_{m,R}(x,t)| \lesssim \delta^\alpha ||f_m||_\infty <\epsilon,\]
and thus also $|u_m(x,t)| <2\epsilon$.  Moreover, with this value of $\delta$ now fixed, it
follows immediately from \eqref{eq fm_Lp}, the definition of $u_m$ and \eqref{ntmaxLq}, and Lemma \ref{decay lem}, that
\[
\lim_{|x|+t\to \infty}|u_m(x,t)|\1_{[\delta,\infty)}(t)=0.
\] We conclude
that $\delta_m\equiv 0$.  In turn, the latter is equivalent to
\begin{equation}\label{um form}
    u(x,t+\frac{1}{m})=\int_{\Rn}f_m(y)k((x,t),y)d\mu(y), \qquad\forall\, m\in\mathbb N.
\end{equation}
Since $\sup_m\norm{f_m}_{L^p(\Rn)}\le\norm{Nu}_{L^p}<\infty$, there is some $g\in L^p(\Rn,d\mu)$ and $\{f_{m'}\}$ such that $f_{m'}$ converges to $g$ weakly. Note that $k(X,\cdot)\in L^q(\Rn,d\mu)$ (see \eqref{k in Lq}), so by letting $m'$ go to infinity in \eqref{um form} we obtain \eqref{u form}.
 
\vskip 0.08 in
From the proof of uniqueness, one can see that we have actually proved the stronger result, Theorem \ref{Fatou thm}. In fact, we did not use $u\to f\in L^p(\Rn,d\mu)$ non-tangentially $\mu$-a.e. on $\Rn$ to obtain \eqref{u form}. Once we express $u$ as in \eqref{u form}, we apply Lemma \ref{uniquness mainlem} to conclude that the non-tangential limit of $u$ exists $\mu$- a.e. and is in $L^p(\Rn,d\mu)$.

\appendix

\section{Appendix: Weak solution of parabolic equations}\label{weak sol Appen}

\begin{lem}\label{Evanslem}
Suppose $u,v\in L^2\br{(0,T),W^{1,2}(\Rn)}$ with $\Dt u, \Dt v \in L^2\br{(0,T),\wt{W}^{-1,2}(\Rn)}$. Then
\begin{enumerate}[(i)]
    \item $u\in C\br{[0,T], L^2(\Rn)}$;
    \item The mapping $t\mapsto\norm{u(\cdot,t)}_{L^2(\Rn)}$ is absolutely continuous, with
    \[\frac{d}{dt}\norm{u(\cdot,t)}_{L^2(\Rn)}^2=2\Re\act{\Dt u(\cdot,t),u(\cdot,t)} \quad\text{for a.e. }t\in[0,T].\]
    As a consequence, 
    \[
    \frac{d}{dt}\br{u(\cdot,t),v(\cdot,t)}_{L^2(\Rn)}=\act{\Dt u(\cdot,t),v(\cdot,t)}+\overline{\langle{\Dt v(\cdot,t),u(\cdot,t)\rangle}}_{\wt{W}^{-1,2},W^{1,2}}\,\text{a.e.}
    \]
\end{enumerate}
\end{lem}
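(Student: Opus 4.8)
This is the standard Lions--Magenes lemma for the Gelfand triple $W^{1,2}(\Rn)\hookrightarrow L^2(\Rn)\hookrightarrow\wt{W}^{-1,2}(\Rn)$, and the plan is to prove it by mollification in the time variable. As preliminaries I would record: $W^{1,2}(\Rn)$ is dense in $L^2(\Rn)$; the inclusion $L^2(\Rn)\hookrightarrow\wt{W}^{-1,2}(\Rn)$ sending $f$ to the semilinear functional $g\mapsto(f,g)_{L^2}$ is bounded and injective and satisfies $\act{f,g}=(f,g)_{L^2}$ for $f\in L^2$, $g\in W^{1,2}$; and, by Cauchy--Schwarz, $\tau\mapsto\act{\Dt u(\tau),u(\tau)}$ lies in $L^1(0,T)$, being dominated by $\norm{\Dt u(\tau)}_{\wt{W}^{-1,2}}\norm{u(\tau)}_{W^{1,2}}$, a product of two $L^2(0,T)$ functions. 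I would also note that $u$ itself lies in $L^2((0,T),\wt{W}^{-1,2})$ with $\Dt u\in L^2((0,T),\wt{W}^{-1,2})$, so $u$ has a representative in $C([0,T],\wt{W}^{-1,2})$; in particular its values at $t=0$ and $t=T$ are well defined in $\wt{W}^{-1,2}$.

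The core step is to extend $u$ by even reflection across $t=0$ and $t=T$ to a slightly larger interval $(-\sigma,T+\sigma)$; since the reflected function is continuous into $\wt{W}^{-1,2}$ at the endpoints, its distributional time derivative picks up no boundary term, so the extension still satisfies the hypotheses on $(-\sigma,T+\sigma)$. Setting $u^{\eps}:=\rho_{\eps}\ast_t u$ for a standard mollifier $\rho_{\eps}$, the function $u^{\eps}$ is $C^{\infty}$ in $t$ with values in $W^{1,2}$, one has $u^{\eps}\to u$ in $L^2((0,T),W^{1,2})$, and $\Dt u^{\eps}=\rho_{\eps}\ast_t\Dt u\to\Dt u$ in $L^2((0,T),\wt{W}^{-1,2})$. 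For fixed $\eps$, $t\mapsto u^{\eps}(t)$ is $C^1$ into $L^2$, so $\frac{d}{dt}\norm{u^{\eps}(t)}_{L^2}^2=2\Re\act{\Dt u^{\eps}(t),u^{\eps}(t)}$, whence
\[
\norm{u^{\eps}(t)-u^{\delta}(t)}_{L^2}^2=\norm{u^{\eps}(s)-u^{\delta}(s)}_{L^2}^2+2\int_s^t\Re\act{\Dt(u^{\eps}-u^{\delta})(\tau),(u^{\eps}-u^{\delta})(\tau)}\,d\tau
\]
for all $s,t\in[0,T]$. Choosing $s$ in a set of full measure on which $u^{\eps}(s)\to u(s)$ in $L^2$, and bounding the integral by $2\norm{\Dt(u^{\eps}-u^{\delta})}_{L^2((0,T),\wt{W}^{-1,2})}\norm{u^{\eps}-u^{\delta}}_{L^2((0,T),W^{1,2})}$, I would conclude that $(u^{\eps})$ is Cauchy in $C([0,T],L^2)$; its uniform limit is a continuous $L^2$-valued representative of $u$, which is (i).

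For (ii) I would pass to the limit $\eps\to0$ in $\norm{u^{\eps}(t)}_{L^2}^2=\norm{u^{\eps}(s)}_{L^2}^2+2\int_s^t\Re\act{\Dt u^{\eps},u^{\eps}}$ --- uniform convergence handles the terms outside the integral, the two $L^2$-convergences above handle the integrand --- to get $\norm{u(t)}_{L^2}^2=\norm{u(s)}_{L^2}^2+2\int_s^t\Re\act{\Dt u(\tau),u(\tau)}\,d\tau$ for all $s,t$. As the integrand is in $L^1(0,T)$, this exhibits $t\mapsto\norm{u(t)}_{L^2}^2$ as an indefinite integral, hence absolutely continuous with a.e.\ derivative $2\Re\act{\Dt u(t),u(t)}$. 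The product-rule statement then follows by polarization: apply the identity just proved to $u+v$, $u-v$, $u+iv$, $u-iv$ (each of which satisfies the same hypotheses), combine via $4(u,v)_{L^2}=\norm{u+v}^2-\norm{u-v}^2+i\norm{u+iv}^2-i\norm{u-iv}^2$, and use sesquilinearity of $\act{\cdot,\cdot}$ to collect terms, arriving at $\frac{d}{dt}(u(\cdot,t),v(\cdot,t))_{L^2}=\act{\Dt u(\cdot,t),v(\cdot,t)}+\overline{\act{\Dt v(\cdot,t),u(\cdot,t)}}$ a.e. The one genuinely delicate point is the endpoint extension --- one must check that even reflection across $t=0$ and $t=T$ yields the right distributional time derivative, which works precisely because the $\wt{W}^{-1,2}$-traces of $u$ at the endpoints match from both sides after reflection; the remainder is routine Hilbert-space mollification.
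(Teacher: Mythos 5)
The paper does not give its own proof of this lemma; it simply cites Evans' \emph{Partial Differential Equations}, Section 5.9.2, Theorem 3. Your argument --- extend to a slightly larger time interval, mollify in $t$, observe that the mollified sequence is Cauchy in $C([0,T],L^2)$ via the $L^2$-pairing estimate, pass to the limit in the energy identity, and obtain the product rule by polarization --- is precisely the standard proof given in that reference, so you have reconstructed the cited argument rather than found a new route. Your added care about the extension step (even reflection, checked via the $\wt{W}^{-1,2}$-valued traces) is a legitimate way to make precise what Evans leaves as ``extend $u$ to a bigger interval''; note that the justification only needs integration by parts for the Banach-space-valued Sobolev class $W^{1,2}\bigl((0,T),\wt{W}^{-1,2}\bigr)$, which is elementary and not circular with what is being proved.
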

For the proof see, e.g., \cite{evans1998partial}, Section 5.9.2, Theorem 3.

Suppose that $A=A(x)=A^s(x)+A^a(x)$ is a real, $n\times n$ matrix, with $A^s$ being symmetric, elliptic with constant $\lambda_0>0$, $\norm{A^s}_{L^{\infty}(\Rn)}\le\lambda_0^{-1}$, and $A^a$ being anti-symmetric and $\norm{A^a}_{\BMO(\Rn)}\le\Lambda_0$. 

\begin{prop}\label{wellpose_IVP}
For any $u_0\in L^2(\Rn)$, the initial value problem
\begin{equation}\label{Appen_IVP}
   \begin{cases}
\Dt u-\divg(A\nabla u)=0 \quad\text{in }\Rn\times(0,\infty),\\
u(x,0)=u_0(x),
\end{cases} 
\end{equation}
has a unique weak solution $u(x,t)=e^{-tL}(u_0)(x)$. Here, $\divg=\divg_x$ and $\nabla=\nabla_x$.
\end{prop}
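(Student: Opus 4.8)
The plan is to establish Proposition~\ref{wellpose_IVP} by the standard accretive-operator / sesquilinear-form machinery, adapted to the unbounded $\BMO$ anti-symmetric part. First I would define the form
\[
\mathfrak{a}(u,v):=\int_{\Rn}A\nabla u\cdot\overline{\nabla v}\,dx,\qquad u,v\in W^{1,2}(\Rn),
\]
and check that it is well-defined despite $A^a\notin L^\infty$: this is exactly the point where one invokes the div-curl / Hardy space structure from Section~\ref{Hardy norms subsec}. Concretely, for $u,v\in W^{1,2}(\Rn)$ one writes the anti-symmetric contribution as $\int a^a_{ij}\Dj u\,\overline{\Di v}=\tfrac12\int a^a_{ij}(\Dj u\,\overline{\Di v}-\Di u\,\overline{\Dj v})$, observes that $\Dj u\,\overline{\Di v}-\Di u\,\overline{\Dj v}\in\mathcal H^1(\Rn)$ by Proposition~\ref{HardyProp1}, and uses the $\mathcal H^1$--$\BMO$ duality together with $\norm{a^a_{ij}}_{\BMO}\le\Lambda_0$ to get $|\mathfrak a(u,v)|\lesssim(\lambda_0^{-1}+\Lambda_0)\norm{\nabla u}_{L^2}\norm{\nabla v}_{L^2}$; I would also need this argument localized/with a correction so that $\mathfrak a$ controls $\norm{u}_{W^{1,2}}\norm{v}_{W^{1,2}}$, not just the homogeneous seminorms, but since we are on $\Rn$ and will shift by $+I$ this is routine. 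Ellipticity (G\aa rding's inequality) is immediate from $\Re\mathfrak a(u,u)=\int A^s\nabla u\cdot\nabla u\ge\lambda_0\norm{\nabla u}_{L^2}^2$, using $A^a\nabla u\cdot\nabla u\equiv0$.

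Next I would let $L$ be the maximal-accretive operator on $L^2(\Rn)$ associated with $\mathfrak a$ (equivalently with $\mathfrak a+1$, which is closed, densely defined, sectorial), so that $-L$ generates a bounded analytic semigroup $e^{-tL}$ on $L^2(\Rn)$; this is standard Lions--Kato form theory once the boundedness and coercivity of $\mathfrak a$ above are in hand. Then $u(x,t):=e^{-tL}u_0(x)$ satisfies $u\in C([0,\infty),L^2)\cap L^2_{\loc}((0,\infty),W^{1,2})$, $\Dt u\in L^2_{\loc}((0,\infty),\widetilde W^{-1,2})$, solves $\Dt u+Lu=0$, and $u(\cdot,0)=u_0$; I would spell out that this is the weak-solution formulation recorded right after Proposition~\ref{wellpose_IVP} in the main text, i.e. the integrated identity pairing against test functions $\vp\in L^2([0,T],W^{1,2})$ with $\Dt\vp\in L^2([0,T],\widetilde W^{-1,2})$, which follows from the semigroup identity and Lemma~\ref{Evanslem}(ii).

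For uniqueness, suppose $u$ is a weak solution of \eqref{Appen_IVP} with zero initial data; then by Lemma~\ref{Evanslem}(i)--(ii) the map $t\mapsto\norm{u(\cdot,t)}_{L^2}^2$ is absolutely continuous and
\[
\frac{d}{dt}\norm{u(\cdot,t)}_{L^2(\Rn)}^2=2\Re\act{\Dt u(\cdot,t),u(\cdot,t)}=-2\Re\,\mathfrak a(u(\cdot,t),u(\cdot,t))\le-2\lambda_0\norm{\nabla u(\cdot,t)}_{L^2}^2\le 0
\]
for a.e.\ $t$, so $\norm{u(\cdot,t)}_{L^2}^2\le\norm{u(\cdot,0)}_{L^2}^2=0$, hence $u\equiv0$. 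Here one must be slightly careful that $u$ a priori only lies in $L^2_{\loc}((0,\infty),W^{1,2})$ with $\Dt u\in L^2_{\loc}((0,\infty),\widetilde W^{-1,2})$, so the energy identity is applied on $[\varepsilon,T]$ and then one lets $\varepsilon\to0$ using continuity of $u$ into $L^2$ and $u(\cdot,0)=0$; this uses that the pairing $\act{\cdot,\cdot}$ is exactly the one making $\mathfrak a$ the form of $L$, so that $\act{\Dt u,u}=-\mathfrak a(u,u)$ for the weak solution.

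The main obstacle is the first step: making sense of $\mathfrak a$ and proving it is bounded and coercive on $W^{1,2}(\Rn)$ when $A^a$ is merely $\BMO$ and hence $\int A^a\nabla u\cdot\overline{\nabla v}$ is not absolutely convergent in the naive sense. Everything else (form theory, semigroup generation, the energy uniqueness argument) is then standard. I expect to handle this obstacle precisely as in \cite{seregin2012divergence} and \cite{li2019boundary}: define the anti-symmetric part of the form through the $\mathcal H^1$--$\BMO$ pairing via Propositions~\ref{HardyProp1} and \ref{HardyProp3}, note this agrees with the classical integral when $A^a$ is smooth, and then approximate; the only mild subtlety is passing from homogeneous $\dot W^{1,2}$ estimates to inhomogeneous $W^{1,2}$ ones, which is resolved by working with $\mathfrak a+I$ and, if needed, by a partition-of-unity/commutator argument exploiting $\mathcal H^1$-boundedness of the relevant bilinear expressions.
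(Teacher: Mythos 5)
Your proposal follows essentially the same route as the paper: realize $L$ as the maximal accretive operator associated to the sesquilinear form (the $\BMO$ anti-symmetric part handled via the $\mathcal H^1$--$\BMO$ pairing of Propositions~\ref{HardyProp1}, \ref{HardyProp3}), show that $u=e^{-tL}u_0$ is a weak solution, and prove uniqueness by the energy identity of Lemma~\ref{Evanslem}(ii). Your first paragraph re-derives the form-boundedness that the paper simply quotes from prior work; that is consistent, not a difference of method.

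Where the proposal is too quick is in the two places the paper spends its actual effort. On the existence side, for $u_0\in L^2$ only, the paper approximates by $u_{0,\epsilon}\in D(L)$ to get the integral identity on $[\tau,T]$, then tests with $u_\epsilon$ itself to extract the global energy bound $\int_0^\infty\norm{\nabla e^{-tL}u_0}_{L^2}^2\,dt\le\lambda_0^{-1}\norm{u_0}_{L^2}^2$, and only then lets $\tau\to0^+$; you compress all of this into ``follows from the semigroup identity and Lemma~\ref{Evanslem}(ii),'' but without the energy bound the identity $\int_0^T\int A\nabla u\cdot\overline{\nabla\vp}$ is not even manifestly convergent near $t=0$. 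On the uniqueness side, you write ``$u$ a priori only lies in $L^2_{\loc}((0,\infty),W^{1,2})$ with $\Dt u\in L^2_{\loc}((0,\infty),\wt W^{-1,2})$'' and then invoke Lemma~\ref{Evanslem}, but the regularity $\Dt v\in L^2((0,T),\wt W^{-1,2})$ is \emph{not} part of the weak-solution definition and must be derived before $v$ is an admissible test function. The paper does this by defining $F(\vp)=\int_0^T\int A\nabla v\cdot\overline{\nabla\vp}$, noting it is bounded on $L^2([0,T],W^{1,2})$ (exactly the form-boundedness you established), applying Riesz representation to get $w$ with $F=\langle -\Delta w+w,\cdot\rangle$, and then testing the weak formulation with separable $\vp(x,t)=\Psi(x)\overline{\eta}(t)$ to identify $\Dt v=\Delta w-w$. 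Your proposal contains all the ingredients for both steps, so these are gaps of detail rather than of idea, but they are the non-one-line parts of the proof and should be written out.
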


\begin{proof}
\underline{Existence.}

Since the domain of $L$ (denoted by $D(L)$) is dense in $W^{1,2}(\Rn)$, and thus dense in $L^2(\Rn)$, we can find a sequence $\set{u_{0,\epsilon}}\subset D(L)$ such that $u_{0,\epsilon}$ converges to $u_0$ in $L^2(\Rn)$. Denote $u_\epsilon(x,t):=e^{-tL}(u_{0,\epsilon})(x)$. Then by semigroup theory,
\begin{equation}\label{Appen_sol}
   \Dt u_\epsilon+Lu_\epsilon=0 \quad\text{in } L^2(\Rn) \quad\forall\, t\ge0.
\end{equation}
 For any $0<\tau<T$, and any $\vp\in L^2\br{(0,T),W^{1,2}(\Rn)}$, with $\Dt\vp\in L^2\br{(0,T),\wt{W}^{-1,2}(\Rn)}$, \eqref{Appen_sol} implies
\begin{equation}\label{eq Appen1}
 \int_{\tau}^{T}\br{\Dt u_\epsilon,\vp}_{L^2}dt+\int_{\tau}^T\br{Lu_\epsilon,\vp}_{L^2}dt=0.   
\end{equation}
Since $\Dt u_\epsilon\in L^2_{\loc}\br{(0,\infty),L^2(\Rn)}$ (see \cite{HLMPLp} Theorem 4.9), and by Lemma \ref{Evanslem} (ii), \eqref{eq Appen1} can be written as 
\begin{multline}\label{Appen_ueps}
    \int_{\Rn}u_\epsilon(x,T)\overline{\vp(x,T)}dx+\int_{\tau}^T\int_{\Rn}A\nabla u_\epsilon\cdot\overline{\nabla\vp}dxdt\\
    =\int_{\Rn}u_\epsilon(x,\tau)\overline{\vp(x,\tau)}dx+\int_{\tau}^T\overline{\langle \Dt\vp,u_\epsilon\rangle}_{\wt{W}^{-1,2},W^{1,2}}.
\end{multline}
Notice that $u_\epsilon\to u$ in $C((\tau,T),W^{1,2}(\Rn))$ (see \cite{HLMPLp} Theorem 4.9), and so letting $\epsilon\to 0^+$ we get
\begin{multline}\label{Appen_utest}
    \int_{\Rn}u(x,T)\overline{\vp(x,T)}dx+\int_{\tau}^T\int_{\Rn}A\nabla u\cdot\overline{\nabla\vp}dxdt\\
    =\int_{\Rn}u(x,\tau)\overline{\vp(x,\tau)}dx+\int_{\tau}^T\overline{\langle \Dt\vp,u\rangle}_{\wt{W}^{-1,2},W^{1,2}}.
\end{multline}
Letting $\vp=u_\epsilon$ in \eqref{Appen_ueps} and applying Lemma \ref{Evanslem} (ii) again, one obtains
\[
    \int_{\Rn}\abs{u_\epsilon(x,T)}^2dx+2\Re\int_{\tau}^T\int_{\Rn}A\nabla u_\epsilon\cdot\overline{\nabla u_\epsilon}dxdt
    =\int_{\Rn}\abs{u_\epsilon(x,\tau)}^2dx.
\]
By ellipticity and the definition of $u_\epsilon$, we have
\begin{multline*}
    2\lambda_0\int_{\tau}^T\int_{\Rn}\abs{\nabla u_\epsilon}^2dxdt\le\norm{e^{-\tau L}(u_{0,\epsilon})}_{L^2(\Rn)}^2\\
    \le2\norm{e^{-\tau L}(u_{0,\epsilon}-u_0)}_{L^2(\Rn)}^2+2\norm{e^{-\tau L}(u_{0})}_{L^2(\Rn)}^2.
\end{multline*}
Letting $\epsilon\to 0^+$, $\tau\to 0^+$, $T\to\infty$, we obtain $\int_0^{\infty}\int_{\Rn}\abs{\nabla u}^2dxdt\le\lambda_0^{-1}\norm{u_0}_{L^2}^2<\infty$. This enables us to take limit as $\tau$ go to $0^+$ on both sides of \eqref{Appen_utest} and get
\begin{multline*}
    \int_{\Rn}u(x,T)\overline{\vp(x,T)}dx+\int_0^T\int_{\Rn}A\nabla u\cdot\overline{\nabla\vp}dxdt\\
    =\int_{\Rn}u(x,0)\overline{\vp(x,0)}dx+\int_0^T\overline{\langle \Dt\vp,u\rangle}_{\wt{W}^{-1,2},W^{1,2}},
\end{multline*}
i.e. $u(x,t)$ is a weak solution of \eqref{Appen_IVP}. 

\vskip 0.08 in

\underline{Uniqueness.}

Let $v$ be a weak solution of \eqref{Appen_IVP}. We first show that 
$\Dt v\in L^2\br{(0,T),\wt{W}^{-1,2}(\Rn)}$ for any $T\in(0,\infty)$.
Define a semilinear functional $F$ on $L^2\br{[0,T],W^{1,2}(\Rn)}$ as follows:
for any $\vp\in L^2\br{[0,T],W^{1,2}(\Rn)}$, let 
\[
\langle F,\vp \rangle:=\int_0^T\int_{\Rn}A\nabla v\cdot\overline{\nabla\vp}dxdt.
\]
Obviously,
\begin{equation*}
    \abs{\langle F,\vp\rangle}\le C\norm{\nabla v}_{L^2\br{[0,T],L^2(\Rn)}}\norm{\nabla\vp}_{L^2\br{[0,T],L^2(\Rn)}}.
\end{equation*}
Then by Riesz representation theorem, there exists $w(x,t)\in L^2\br{[0,T],W^{1,2}(\Rn)}$ such that
\begin{align*}
    \langle F,\vp \rangle&=\int_0^T\int_{\Rn}(\nabla w\cdot\nabla\overline{\vp}+w\overline{\vp})dxdt\\
    &=\int_0^T\act{-\Delta w(\cdot,t)+w(\cdot,t),\vp}dt,
\end{align*}
and
\[
\norm{-\Delta w+w}_{L^2\br{[0,T],\wt{W}^{-1,2}(\Rn)}}\le\norm{w}_{L^2([0,T],W^{1,2}(\Rn))}\le C\norm{\nabla v}_{L^2\br{[0,T],L^2(\Rn)}}.
\]

Choose $\vp(x,t)=\Psi(x)\overline{\eta}(t)$ as a test function in \eqref{Appen_IVP}, where $\Psi\in W^{1,2}(\Rn)$, $\eta\in C_0^1\br{(0,T)}$. Then since $v$ is a weak solution, we have
\begin{align*}
 \int_0^T\br{v(\cdot,t),\Psi}_{L^2}\eta'(t)dt&=\int_0^T\int_{\Rn}A\nabla v\cdot\overline{\nabla\Psi}\eta(t)dxdt\\
&=\int_0^T\act{-\Delta w(\cdot,t)+w(\cdot,t),\Psi}\eta(t)dt.   
\end{align*}
Since $\Psi\in W^{1,2}(\Rn)$ is arbitrary,
\[
\int_0^Tv(x,t)\eta'(t)dt=\int_0^T(-\Delta w+w)\eta(t)dt\qquad\text{in }\wt{W}^{-1,2}(\Rn),
\]
which gives $\Dt v=\Delta w-w\in L^2\br{(0,T),\wt{W}^{-1,2}(\Rn)}$. Therefore, we can take $\vp=v$ as a test function in \eqref{Appen_IVP} and get
\[
\int_{\Rn}\abs{v(x,T)}^2+\int_0^T\int_{\Rn}A\nabla v\cdot\nabla\overline{v}dxdt=\int_0^T\overline{\langle \Dt v,v\rangle}_{\wt{W}^{-1,2},W^{1,2}}+\int_{\Rn}\abs{v(x,0)}^2dx.
\]
Using this and Lemma \ref{Evanslem} (ii), we have
\[
\int_{\Rn}\abs{v(x,T)}^2+2\Re\int_0^T\int_{\Rn}A\nabla v\cdot\nabla\overline{v}dxdt=\int_{\Rn}\abs{v(x,0)}^2dx.
\]
So we get
\begin{align*}
   \int_{\Rn}\abs{v(x,T)}^2+2\lambda_0\int_0^T\int_{\Rn}\abs{\nabla v}^2dxdt
   \le\int_{\Rn}\abs{v(x,0)}^2dx, 
\end{align*}
which implies that if $v(x,0)=0$ then $v\equiv 0$. 
\end{proof}

\begin{re}\label{Dt sol Re}
Let $u(x,t)$ be the weak solution to \eqref{Appen_IVP}. Since the coefficients are independent of $t$, a standard argument shows that $\Dt u$ is a weak solution to $\Dt v-\divg(A\nabla v)=0$ in $\Rn\times(0,\infty)$. That is, for any $T>0$, any $\vp\in L^2\br{[0,T],W^{1,2}(\Rn)}$ with $\Dt\vp\in L^2\br{[0,T],\wt{W}^{-1,2}(\Rn)}$ and $\vp=0$ when $0\le t\le\eps$ for some $0<\eps<T$,
\[
\int_{\Rn}\Dt u(x,T)\overline{\vp(x,T)}dx+\int_0^T\int_{\Rn}A\nabla(\Dt u)\cdot\overline{\nabla\vp}dxdt=\int_0^T\overline{\langle \Dt \vp,\Dt u\rangle}_{\wt{W}^{-1,2},W^{1,2}}dt.
\]
Moreover, since $\Dt^lu\in L^2_{\loc}\br{(0,\infty),L^2(\Rn)}$ and $\Dt^l\nabla u\in L^2_{\loc}\br{(0,\infty),L^2(\Rn)}$ for any $l\in\mathbb N$, 
one can show that for any $l\in\mathbb N$, $\Dt^l u$ is a weak solution to $\Dt v-\divg(A\nabla v)=0$ in $\Rn\times(0,\infty)$.
\end{re}

\section*{Acknowledgement}
We would like to thank the referee for a careful review and for insightful comments and questions on the manuscript that have allowed us to improve and clarify the exposition, and to correct some omissions. In particular, we thank the referee for noting that our proof of uniqueness actually yields a stronger Fatou-type theorem, which, following the referee's suggestion, we have now formulated as our Theorem \ref{Fatou thm}.

%

\bibliographystyle{plain}
\bibliography{reference}

\Addresses

\end{document}